\DeclareUrlCommand\doi{\def\UrlLeft##1\UrlRight{doi:\href{http://dx.doi.org/##1}{##1}}\urlstyle{rm}}
\makeatletter\@addtoreset{chapter}{part}\makeatother%
\def\HiLi{\leavevmode\rlap{\hbox to \hsize{\color{yellow!50}\leaders\hrule height .8\baselineskip depth .5ex\hfill}}}
	\newlist{condenum}{enumerate}{1} 
	\setlist[condenum]{label=\bfseries Condition \arabic*., 
                   ref=\arabic*, wide}
\newcommand{\removelatexerror}{\let\@latex@error\@gobble}
\declaretheorem[style=definition,numberwithin=section]{definition}
\declaretheorem[style=definition,numberwithin=section]{remark}
\declaretheorem[style=definition,numberwithin=section]{example}
\declaretheorem[style=definition,numberwithin=section]{theorem}
\declaretheorem[style=definition,numberwithin=section]{proposition}
\DeclareMathOperator*{\argmin}{arg\,min}
\newtheoremstyle{case}{}{}{}{}{}{:}{ }{}
\theoremstyle{case}
\newtheorem{case}{Case}
\newenvironment{claim}[1]{\par\noindent\underline{Claim:}\space#1}{}
\newenvironment{claimproof}[1]{\par\noindent\underline{Proof:}\space#1}{\hfill $\blacksquare$}
\def\@author#1{\g@addto@macro\elsauthors{\normalsize%
    \def\baselinestretch{1}%
    \upshape\authorsep#1\unskip\textsuperscript{%
      \ifx\@fnmark\@empty\else\unskip\sep\@fnmark\let\sep=,\fi
      \ifx\@corref\@empty\else\unskip\sep\@corref\let\sep=,\fi
      }%
    \def\authorsep{\unskip,\space}%
    \global\let\@fnmark\@empty
    \global\let\@corref\@empty  
    \global\let\sep\@empty}%
    \@eadauthor={#1}
}
\journal{Theoretical Computer Science}
\begin{document}

\begin{frontmatter}

\title{A Framework for Population-Based Stochastic Optimization on Abstract Riemannian Manifolds
}

\author{Robert Simon Fong\corref{cor1}}
\ead{r.s.fong@cs.bham.ac.uk}
\cortext[cor1]{Corresponding author}

\author{Peter Ti\v no}
\ead{p.tino@cs.bham.ac.uk}

\address{Department of Computer Science, University of Birmingham, Birmingham, B15 2TT, United Kingdom}

\begin{abstract}

We present Extended Riemannian Stochastic Derivative-Free Optimization (Extended RSDFO), a novel population-based stochastic optimization algorithm on Riemannian manifolds that addresses the local restrictions and implicit assumptions of manifold optimization in the literature. 



We begin by investigating the Information (statistical) Geometrical structure of statistical model over Riemannian manifolds. This establishes a geometrical framework to construct Extended RSDFO incorporating both the statistical geometry of the decision space and the Riemannian geometry of the search space. We first construct locally inherited probability distribution via an orientation-preserving diffeomorphic bundle morphism, and then extend the information geometrical structure to mixture densities over totally bounded subsets of manifolds. The former relates the information geometrical structure of the decision space and the local point estimations on the search space manifold. The latter overcomes the local restrictions of parametric probability distributions on Riemannian manifolds.

Using the geometrical framework we construct Extended RSDFO and study its evolutionary step and properties from a geometrical perspective. We show that Extended RSDFO's expected fitness improves monotonically and that it converges globally eventually in finitely many steps on connected compact Riemannian manifolds.

Extended RSDFO is compared to Riemannian Trust-Region method, Riemannian CMA-ES and Riemannian Particle Swarm Optimization on a set of multi-modal optimization problems over a variety of Riemannian manifolds.

In particular, we perform a novel synthetic experiment on Jacob's ladder to motivate and necessitate manifold optimization. Jacob's ladder is a non-compact manifold of countably infinite genus, which cannot be expressed as polynomial constraints and does not have a global representation in an ambient Euclidean space. Optimization problems on Jacob's ladder therefore cannot be addressed by traditional (constraint) optimization techniques on Euclidean spaces, which necessitates the development of manifold optimization algorithms.
\begin{keyword} Manifold Optimization \sep Population-based Stochastic Optimization \sep Derivative-Free Optimization

\end{keyword}
\end{abstract}

\end{frontmatter}

\section{Introduction}
\label{sec:intro}
In this paper we consider multi-modal, black-box optimization problems over Riemannian manifolds $M$:
\begin{align*}
    \max_{x\in M} f(x) \quad ,
\end{align*}
where $f:M \rightarrow \mathbb{R}$ denotes the objective function over the search space Riemannian manifold $M$. The search space manifold $M$ is regarded as an abstract, stand-alone non-linear search space, that is free from an ambient Euclidean space.

Multi-modal, black-box optimization problems over Euclidean spaces are approached by population-based optimization methods. Notable examples include meta-heuristics such as Particle Swarm Optimization \citep{eberhart1995particle}, and population-based Stochastic Derivative-Free Optimization (SDFO) methods such as Estimation of Distribution Algorithms (EDA) \cite{larranaga2001estimation} and Covariance Matrix Adaptation Evolution Strategy \cite{hansen2006cma}.


Population-based optimization methods are adapted to tackle optimization problems on Riemannian manifolds in two distinctive fashions, each with their own drawbacks.


In manifold optimization methods developed in recent literature \cite{gabay1982minimizing,absil2019collection}, the search space manifold is considered as an abstract, stand-alone non-linear search space. The work in the literature focuses on adapting existing optimization methods from Euclidean spaces to Riemannian manifolds. Examples include gradient-based optimization methods \cite{absil2009optimization}, population-based meta-heuristics \cite{borckmans2010modified} and model-based stochastic optimization \cite{colutto2010cma}. Whilst the structure of the pre-adapted algorithm is preserved, the computations and estimations are locally confined by the normal neighbourhood. Furthermore, additional assumptions on the search space manifold has to be made to accommodate the adaptation process.

In the classical optimization literature, optimization on Riemannian manifolds falls into the category of constraint optimization methods \cite{absil2009optimization}.  Indeed, due to Whitney's embedding theorem \cite{whitney1944self,whitney1944singularities}, all manifolds can be embedded in a sufficiently large ambient Euclidean space. As traditional optimization techniques are more established and well-studied in Euclidean spaces, one would be more inclined to address optimization problems on Riemannian manifold by first finding an embedding onto the manifold, and then applying familar classical optimization techniques. The search space manifold is therefore considered as a subset of an ambient Euclidean space, which is then described by a set of functional constraints. However, the global structure of Riemannian manifolds is generally difficult to determine. Therefore a set of functional constraints that describes general search space manifolds can often be difficult, or even impossible to obtain.

The paper advances along these two directions. We construct a stochastic optimization method on Riemannian manifolds that overcomes the local restrictions and implicit assumptions of manifold optimization methods in the literature. We address multi-modal, black-box optimization problems on Riemannian manifolds using only the \textit{intrinsic} statistical geometry of the decision space and the \textit{intrinsic} Riemannian geometry of the search space \footnote{That is, the search space manifold does not have to be embedded into an ambient Euclidean space.}. To motivate and necessitate manifold optimization, we perform a novel synthetic experiement on Jacob's ladder, a search space manifold that cannot be addressed by classical constraint optimization techniques.


To this end, we take the long route and investigate information geometrical structures of statistical models over manifolds. We describe the statistical geometry of locally inherited probability densities on smooth manifolds using a local orientation-preserving diffeomorphic bundle morphism (Proposition \ref{naturality}, Section \ref{Subsection:localDensity}), this generalizes the use of Riemannian exponential map described in both the manifold optimization \cite{colutto2010cma} and manifold statistics literature \cite{Pennec2004ProbabilitiesAS,oller1993intrinsic}. 

To overcomes the local restrictions of manifold optimization algorithms and parametric probability distributions on Riemannian manifolds in the literature, we require a family of parametrized probability
densities defined beyond the normal neighbourhoods of Riemannian manifolds. We therefore construct a family of mixture densities on totally bounded subsets of $M$ as a mixture of the locally inherited densities (Section \ref{sec:mixturedefn}). We show that the family of mixture densities has a product statistical manifold structure (Theorem \ref{prop:lvdstruct}, Remark \ref{rmk:lvisprod}), this allows us to handle statistical parameter estimations and computations of mixture coefficients and mixture components independently.


This constitutes a geometrical framework for stochastic optimization on Riemannian manifolds by combining the information geometrical structure of the decision space and the Riemannian geometry of the search space, which:
\begin{enumerate}
\item Relates the statistical parameters of the statistical manifold decision space and local point estimations on the Riemannian manifold, and
\item Overcomes the local restrictions of manifold optimization algorithms and parametric probability distributions on Riemannian manifolds in the literature.
\end{enumerate}

Using the product statistical geometry of mixture densities, we propose Extended  Riemannian Stochastic Derivative-Free Optimization (Extended RSDFO) (Algorithm \ref{alg:ereda}), a novel population-based stochastic optimization algorithm on Riemannian manifolds, which addresses the local restrictions and implicit assumptions (Section \ref{sec:manopt:assmption})of manifold optimization in the literature.

The geometrical framework also allows us to study the more general properties of Extended RSDFO, previously unavailable to population-based manifold optimization algorithms due to the local restrictions (Section \ref{sec:shortcoming:rsdfo}). We discuss the geometry and dynamics of the evolutionary steps of Extended RSDFO (Section \ref{sec:theo}) using a novel metric (Equation \eqref{eqn:new-metric}) on the simplex of mixture coefficients. We show that expected fitness obtained by Extended RSDFO improves monotonically (Proposition \ref{rmk:monoincreasing}), and show that Extended RSDFO converges globally eventually in finitely many steps on connected compact Riemannian manifolds (Theorem \ref{thm:conv}).



We wrap up our investigation by comparing Extended RSDFO with state-of-the-art manifold optimization methods in the literature (Section \ref{sec:experiment}), such as Riemannian Trust-Region method \cite{absil2007trust,absil2009optimization}, Riemannian CMA-ES \cite{colutto2010cma} and Riemannian Particle Swarm Optimization  \cite{borckmans2010modified,borckmans2010oriented}, using optimization problems defined on the $n$-sphere, Grassmannian manifolds, and Jacob's ladder. 

Jacob's ladder is a non-compact manifold of countably infinite genus, which cannot be expressed as polynomial constraints and does not have a global representation in an ambient Euclidean space. Optimization problems on Jacob's ladder therefore cannot be addressed by traditional (constraint) optimization techniques on Euclidean spaces, which necessitates the development of manifold optimization algorithms.

 
%

The organization of the rest of the paper is summarized as follows:

In Section \ref{sec:dg:prelim}, we review the definition and properties of the essential elements of Differential Geometry. This provides the foundations for the discussions in this paper. 

In Section \ref{sec:principle:manopt}, we discuss the general principle of manifold optimization in the literature, which we then use to describe a generalized framework, the Riemannian Stochastic Derivative-Free Optimization (RSDFO) algorithms, for adapting SDFO algorithms from Euclidean spaces to Riemannian manifolds. We discuss the main drawback, i.e. the local restrictions of the Riemannian adaptation approach.

In Section \ref{app:localdistn}, we describe the notion of locally inherited probability densities on smooth manifolds and derive its statistical geometrical structure via an orientation-preserving diffeomorphic bundle morphism (Diagram \eqref{tikz:OPbundle}). The use of Riemannian exponential map described in both the manifold optimization \cite{colutto2010cma} and manifold statistics literature \cite{Pennec2004ProbabilitiesAS,oller1993intrinsic} is then recovered as special case of this construction.

In Section \ref{Subsection:MixtureDensity}, we construct a family of mixture densities on totally bounded subsets of $M$ as a mixture of the locally inherited densities, and its product statistical geometry is derived therein (Theorem \ref{prop:lvdstruct}, Remark \ref{rmk:lvisprod}). 

In Section \ref{ch:paper2}, we propose a novel algorithm -- Extended RSDFO using the product Riemannian structure of mixture densities, which extends and augments an RSDFO core and addresses the local restriction of RSDFO (and manifold optimization algorithms). 

In Section \ref{sec:theo}, we describe the geometry of the evolutionary steps of Extended RSDFO using a novel metric (Equation \eqref{eqn:new-metric}) on the simplex of mixture coefficients.

In Section \ref{sec:conv}, we show that Extended RSDFO converges globally eventually in finitely many steps on connected compact Riemannian manifolds (Theorem \ref{thm:conv}).

In Section \ref{sec:experiment}, we compare Extended RSDFO against state-of-the-art manifold optimization methods such as Riemannian Trust-Region method, Riemannian CMA-ES and Riemannian Particle Swarm Optimization on the $n$-sphere, Grassmannian manifold, and Jacob's ladder.
 


In \ref{ch:litreview:manopt}, we reviews manifold optimization algorithms in the literature: Riemannian adaptations of trust-region method \cite{absil2007trust}, Particle Swarm Optimization (PSO) \cite{borckmans2010modified} and Covariance Matrix Adaptation Evolution Strategy (CMA-ES) \cite{colutto2010cma}. This illustrates the Riemannian adaptation approach described in Section \ref{sec:principle:manopt}. \ref{app:proof:induceddualisticgeo} details proofs of some results described in Section \ref{Section:InducedDualisticGeo}. In \ref{app:proof:OPcover}, we proof Lemma \ref{lemma:makediffeoOP} in detail.

\section{Preliminaries  from Riemannian geometry}
\label{sec:dg:prelim}
In this section we review notions and machinery in Riemannian geometry required for the subsequent sections. See textbooks \cite{lee2006riemannian,petersen2006riemannian} for further details.

Let $(M,g)$ be a \textbf{Riemannian manifold} with its corresponding Riemannian metric. For each point $x\in M$, let the vector space $T_x M$ denote the \textbf{tangent space} at $x$. Tangent vectors $v$ on a tangent space $T_x M$ can be thought of as directional derivatives or velocity vectors at $x \in M$.

Given a point $x$ in Riemannian manifold $(M,g)$ and a tangent vector $v\in T_x M$, there exists a unique \textbf{geodesic} $\gamma_v: \left[0,I_v \right) \subset \mathbb{R} \rightarrow M$ with initial point $x$ and initial velocity $v\in T_x M$. For sufficiently small $I_v$, the geodesic is the ``curve of minimal length" on $(M,g)$ that connects $x$ and $\gamma_v(I_v)$ (with respect to the distance generated by the Riemannian metric $g$), analogous to a straight line in $M = \mathbb{R}^n$. The \textbf{Riemannian exponential map} $\exp_x: T_x M \rightarrow M$ is a function that maps a tangent vector $v$ in the tangent space $T_x M$ to $M$ by tracing along the geodesic $\gamma_v$ starting at $x$ with initial velocity $v \in T_x M$ for time $1$.
%
%

For each $x\in M$, there exists a open star-shaped neighbourhood  $U_x$ around $\vec{0} \in T_x M$ where the Riemannian exponential map is a \textit{local} diffeomorphism. The image of $U_x$ under $\exp_x$ is the open neighbourhood $N_x := \exp_x(U_x) \subset M$ of $x\in M$ called the \textbf{normal neighbourhood} of $x$. 

Moreover, since $\exp_x : U_x \rightarrow N_x$ is a local diffeomorphism within $U_x$, the Riemannian exponential map is invertible \textit{within} the normal neighbourhood and the inverse is called the \textbf{Riemannian logarithm map} $\log_x := \exp_x^{-1} : N_x \rightarrow U_x$.


\subsection{Parallel Transport in Geodesic balls/Normal neighborhoods}
\label{sec:paralleltransport}
For any point $ x \in M$, normal neighborhood $N_x$ of $x$ is diffeomorphic to a star-shaped open neighbourhood of $\vec{0} \in T_x M$ via the Riemannian exponential map. Computations can therefore be performed on the tangent space in a similar fashion as in the Euclidean case, and the results subsequently translated back onto the manifold locally via the Riemannian exponential map. 

The algorithms discussed in this paper will therefore focus on normal neighbourhoods centered around the search iterates in the manifold $M$. In particular, we will use \textbf{parallel transport} to transfer search information from (the tangent space of) the current iterate to the next, within the normal neighbourhood of the current iterate.

Given a point $x \in M$ and fixing a normal neighbourood $N_x$ of $x$, let $\left\{e_1(x), \ldots, e_(x) \right\}$ denote an orthonormal basis for $T_x M$. \footnote{This can be generated using the Gram–Schmidt process.} This induces a linear isomorphism:
$E: \mathbb{R}^n \rightarrow T_x M$ mapping $(v^1,\ldots, v^n) \mapsto \sum_{i=1}^n v^i e_i(x)$. Together with the Riemannian exponential map, we obtain a coordinate function within $N_x$, called \textbf{normal coordinates centered at $x$}, given by:
$E^{-1} \circ \exp_x^{-1}: N_x \rightarrow \mathbb{R}^n$.

Since $\exp_x$ is a diffeomorphism with range $N_x$, for any point $y$ in the normal neighbourhood $N_x$ centered at $x$ can be connected to $x$ via a unique geodesic with some initial velocity $v$. Within the normal coordinate centered at $x$, this geodesic is represented by a radial line segment radiating from $x$ in $N_x$: let $v = \sum_{i=1}^n v^i e_i(x) \in T_x M$, the geodesic $\gamma_v(t): \left[0,1 \right) \rightarrow M$ with initial velocity $v$ starting at $x$ is given by the radial line segment \cite{lee2006riemannian}: 
\begin{align}
\label{eqn:radialgeodesic}
\gamma_v(t) = \left(tv^1,\ldots,tv^n \right) \quad .
\end{align}


The tangent space of any point in $N_x$ thus admits an orthonormal basis corresponding to the $N_x$ and it's Riemannian exponential map. Let $P_{x,y}: T_x M \rightarrow T_y M$ denote the parallel transport from $T_x M$ to $T_y M$ along $\gamma_v$, and $\left\{e_1(x), \ldots, e_n(x) \right\}$ denote an orthonormal basis for $T_x M$. Since parallel transport is an isometry, the set of tangent vectors $\left\{e_1(y), \ldots, e_n(y) \right\} := \left\{P_{x,y} e_1(x), \ldots, P_{x,y} e_n(y) \right\}$ is an orthonormal basis for $T_y M$. Together with Equation \eqref{eqn:radialgeodesic}, the parallel transport of any tangent vector $w:= \sum_{i=1}^n w^i e_i(x) \in T_x M$ from $T_x M$ to $T_y M$ \textit{within the normal neighbourhood} is given by:
\begin{align}
\label{eqn:parallelinnormalnhbd}
P_{x,y} w = \sum_{i=1}^n w^i e_i(y) \in T_y M \quad .
\end{align}

The above discussion is illustrated in Figure \ref{fig:paralleltransport} below.
%
\begin{figure}[htbp]
    \centering
    \def\svgwidth{\columnwidth}
    \includegraphics[scale=0.5]{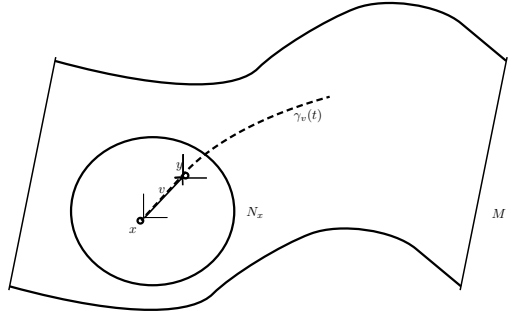}
    \caption{Illustration of parallel transport from $x$ to $y$ within normal neighbourhood $N_x$ of $x$, described in Section \ref{sec:paralleltransport}. The orthonormal basis at $x$ is parallel transported to an orthonormal basis at $y$.}
    \label{fig:paralleltransport}
\end{figure}

Finally, when restricting our attention to a metric ball within the pre-imagie of the normal neighbourhood, we retrieve a metric ball in $M$ under normal coordinates. 

\begin{definition}
Given a point $x\in M$, the \textbf{injectivity radius} at $x$ is the real number \cite{petersen2006riemannian}:
\begin{align*}
\operatorname{inj}(x) &:= \sup_{r\in \mathbb{R}} \left\{ \operatorname{exp}_x: B(\vec{0},r) \subset T_x M \rightarrow M \text{ is a diffeomorphism} \right\} \quad ,
\end{align*}
where $ B(\vec{0},r)$ is an ball of radius $r$ centered at $\vec{0}$ in $T_x M$.

For $x\in M$, the \textbf{ball of injectivity radius centered at $\vec{0}$} denoted by $B_x := B(\vec{0},\operatorname{inj}(p)) \subset U_x \subset T_x M$ is the largest metric ball in $T_x M$ such that the Riemannian exponential map $\exp_x$ is a diffeomorphism. 

For any $j_x \leq \operatorname{inj}(x)$, the set $\exp_x \left(B(\vec{0},j_x)\right) \subset M$ is a neighbourhood of $x\in M$ called a \textbf{geodesic ball}. 
\end{definition}
Geodesic balls in $M$ are also metric balls in $M$ of the same radius. For the rest of the paper, the geodesic balls are \textit{closed} unless specified otherwise.

\section{Riemannian Adaptation of Optimization Algorithms}
\label{sec:principle:manopt}

In this section we review the general principle of Riemannian adaptation of optimization algorithms from Euclidean spaces to Riemannian manifolds, otherwise known as manifold optimization or Riemannian optimization in the literature \cite{absil2019collection}. Using this Riemannian adaptation approach, we describe a generalized framework, the Riemannian Stochastic Derivative-Free Optimization (RSDFO) algorithms, for adapting Stochastic Derivative-Free Optimization (SDFO) algorithms from Euclidean spaces to Riemannian manifolds. RSDFO encompasses Riemannian adaptation of CMA-ES \cite{colutto2010cma}, and accentuates the main drawback, i.e. the local restrictions of the Riemannian adaptation approach.

Manifold optimization in the literature uses tangent spaces and Riemannian exponential maps \footnote{Gradient-based algorithms can have a slightly relaxed variations of this map called retraction \cite{absil2009optimization}, as we will discuss in the Appendix. The principle concept is the same.} to translate optimization methods from Euclidean spaces to Riemannian manifolds. The procedure of Riemannian adaptation of optimization algorithms in the literature can be summarized as follows: for each iteration, the optimization method determines a search direction on the tangent space centered at the current iterate. The next iterate is then obtained by tracing along the \emph{local} geodesic from the current iterate along the search direction via the Riemannian exponential map. The search information is subsequently parallel transported to (the tangent space of) the new search iterate. The decision space of Riemannian adapted algorithms is thus given by the tangent (Euclidean) space, whereas the search space is given by the Riemannian manifold.

The computations and estimations within a \textit{single} normal neighbourhood of Riemannian adapted optimization algorithms are therefore ``externalized" to (the pre-image of normal neighbourhood in) the tangent space, which is a vector space. This means local computations on the manifold can be done on the tangent space in the same fashion as the pre-adapted algorithms. The above discussion is summarized in Figure \ref{fig:manopt} below.

\begin{figure*}[h!]
    \centering
    \begin{subfigure}[t]{0.5\textwidth}
        \centering
        \includegraphics[width=1\textwidth]{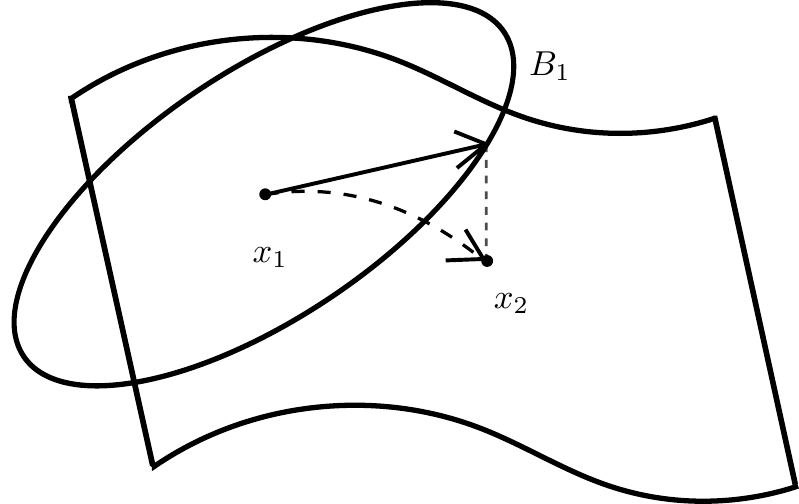} 
        \caption{Local computation in $B_1$ around $x_1$}    \label{fig:manopt:1}
    \end{subfigure}%
    ~ 
    \begin{subfigure}[t]{0.5\textwidth}
        \centering
        \includegraphics[width=1\textwidth]{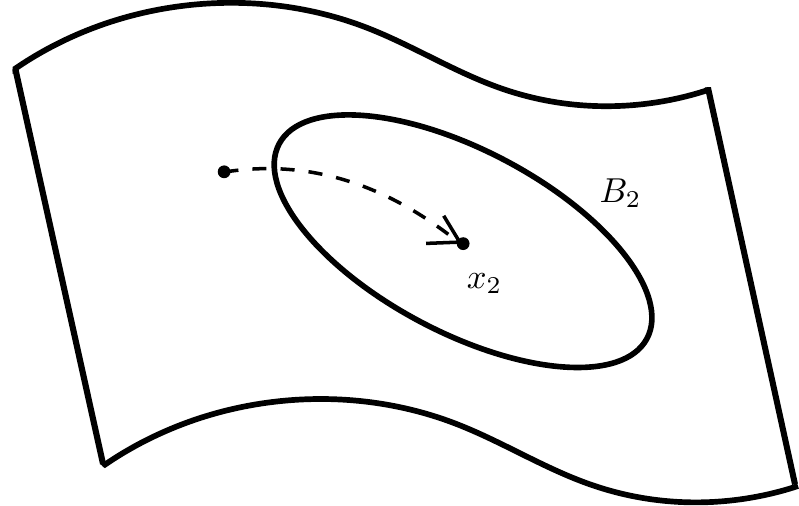} 
        \caption{Translation to new search iterate $x_2$}    \label{fig:manopt:2}
    \end{subfigure}
        \caption{Illustration of Riemannian adaptation of optimization algorithms. For each iteration, Riemannian optimization initiate with $x_1 \in M$, search direction is obtained by performing local computation on $B_1 \subset T_{x_1} M$ and a new search iterate $x_2$ is obtained by tracing along the geodesic defined by the search direction illustrated in Figure \ref{fig:manopt:1}. This process is repeated for $x_2$ on $B_2 \subset T_{x_2}M$ illustrated on Figure \ref{fig:manopt:2}}
        \label{fig:manopt}
\end{figure*}

This principle has since been adopted by authors to translate optimization methods from Euclidean space to \textbf{complete} Riemannian manifolds \footnote{The authors assume completeness of the search space manifold in order to extend the estimations in normal neighbourhood as far as possible.}. Notable examples include gradient-based methods on matrix manifolds \cite{gabay1982minimizing,absil2009optimization}, meta-heuristics (derivative-free methods) such as Particle Swarm Optimization (PSO)\cite{eberhart1995particle} on complete Riemannian manifolds \cite{borckmans2010modified} and stochastic derivative-free optimization algorithms such as Covariance Matrix Adaptation Evolutioanry Stragegies (CMA-ES) \cite{hansen1996adapting,kern2004learning, hansen2006cma} on spherical manifolds \cite{colutto2010cma}. 

Detailed descriptions of examples of Riemannian Adaptation of Optimization Algorithms in the literature will be discussed in \ref{ch:litreview:manopt}.

\subsection{Riemannian Stochastic Derivative-Free Optimization Algorithms}
\label{sec:rsdfo}
The scope of this paper is to solve multi-modal optimization problems over Riemannian manifolds. In this section, we focus on  Stochastic Derivative-Free Optimization (SDFO) algorithms, which is traditionally employed to solve multi-modal optimization problems over Euclidean spaces. Using the adaptation approach, we describe a generalized framework, the Riemannian Stochastic Derivative-Free Optimization (RSDFO) algorithms, for adapting SDFO algorithms from Euclidean spaces to Riemannian manifolds, which encompasses Riemannian adaptation of CMA-ES \cite{colutto2010cma}. We wrap up the section by describing the shortcomings of the Riemannian adaptation approach under RSDFO.


We begin by describing general Stochastic Derivative-Free Optimization (SDFO) algorithms on Euclidean spaces. Let $f: \mathbb{R}^n \rightarrow \mathbb{R}$ denote the objective function of an optimization problem over $\mathbb{R}^n$. Consider a parametrized family of probability densities over $\mathbb{R}^n$: $\left\{ p(\cdot \vert \theta) \middle| \theta \in \Xi\subset \mathbb{R}^m \right\}$. Examples of SDFO algorithms include Estimation of Distribution Algorithms (EDA) \cite{larranaga2001estimation}, Evolutionary Algorithms such as CMA-ES \cite{hansen1996adapting}, and natural gradient optimization methods \cite{infogeo13}. We summarize SDFO algorithms on Euclidean space in Algorithm \ref{alg:eda} below:

\begin{algorithm}
 \KwData{Initial point $x_0 \in \mathbb{R}^n$, initial parameter $\theta^0 \in \Xi$}
 \While{stopping criterion not satisfied}{
	Generate a sample $X^k \sim p(\cdot \vert \theta)$ \; 
	
	Evaluate fitness $f(x)$ of each $x\in X^k$ \;
	
	Update $\theta^{k+1}$ based on the fittest subset of the sample: $\hat{X}^k \subset X^k$ \; 
	
  	k = k+1\;
 } 
 \caption{General SDFO on Euclidean spaces}
 \label{alg:eda}
\end{algorithm}

Let $(M,g)$ be a Riemannian manifold, and let the function $f: M \rightarrow \mathbb{R}$ denote the objective function of an optimization problem over $M$. For each $x \in M$, let $\tilde{f}_x := f\circ \exp_x: T_x M \rightarrow \mathbb{R}$ denote the locally converted objective function on each tangent space $T_x M$ of $M$. For each $x \in M$, let  $U_x := \exp_x^{-1} N_x \subset T_x M$, centered at $\vec{0} \in T_x M$, denote the pre-image of a normal neighbourhood $N_x$ of $x$ under the Riemannian exponential map. We fix a family of parametrized distributions $S_x$, parametrized by $\theta\in \Xi \subset \mathbb{R}^\ell$, $\ell \in \mathbb{N}_+$, over \emph{all} pre-images of normal neighbourhoods in tangent spaces $U_x \subset T_x M$.

RSDFO is an iterative algorithm that generates search directions based on observations within the normal neighbourhood of the current search iterate $x \in M$. For each iteration, tangent vectors are sampled from a parametrized probability distribution in the predefined family $S_x$ over  $U_x\subset T_x M$ of the search iterate $x \in M$. The fittest sampled tangent vectors are then used to determine the search direction and to estimate the stochastic parameter $\theta\in \Xi$ for $S_x$. The new search iterate is obtained by tracing along the local geodesic starting at $x$ in the direction of the mean of the fittest tangent vectors, and the estimated statistical parameters are subsequently parallel transported to the new search iterate. This procedure is summarized in Algorithm \ref{alg:greda} and Figure \ref{fig:rsdfo}.
\newline





\begin{algorithm}[H]
 \KwData{Initial point $\mu_0 \in M$, initial statistical parameter $\theta^0  \in \Xi$, set iteration counter $k = 0$, and step size functions $\sigma_\theta(k), \sigma_v(k)$.}
 \While{stopping criterion not satisfied}{
	Generate set of sample vectors $V^k \sim P(\cdot \vert \theta^k) \in \text{Prob}(U_{\mu_k})$ \protect\footnotemark \; 
	
	Evaluate locally converted fitness $\tilde{f}_{\mu_k}(v)$ of each $v \in V^k \subset T_{\mu_k} M$ \;
	
	Estimate statistical parameter $\hat{\theta}^{k+1}$ based on the fittest tangent vectors $\hat{V}^k \subset V^k$ \label{rsdfo:parameter} \; 
	
	Translate the center of search: $\hat{v}_k:= \sigma_v(k)\cdot \mu(\hat{V}^k)$, and $\mu_{k+1} := \exp_{\mu_k} (\hat{v}_k)$.
	
	Parallel transport the statistical parameters: $\theta^{k+1} := P_k^{k+1} \left(\sigma_\theta(k)\cdot \hat{\theta}^{k+1}\right)$, where $P_k^{k+1}: T_{\mu_k} M \rightarrow T_{\mu_{k+1}} M$ denote the parallel transport from $T_{\mu_k} M$  to  $T_{\mu_{k+1}} M$ \;
	
  	k = k+1\;
 } 
 \caption{Riemannian SDFO}
 \label{alg:greda}
\end{algorithm}

\footnotetext{Note that $V^k \subset U_{\mu_k} \subset T_{\mu_k} M$ are vectors on the tangent space, which is a Euclidean space.}
It is worth noting that, adjustments of statistical parameters in line \ref{rsdfo:parameter} of RSDFO depend on the pre-adapted SDFO algorithm.
%


\begin{figure}[hbt!]
    \centering
    \def\svgwidth{\columnwidth}
    \includegraphics[width=0.6\textwidth]{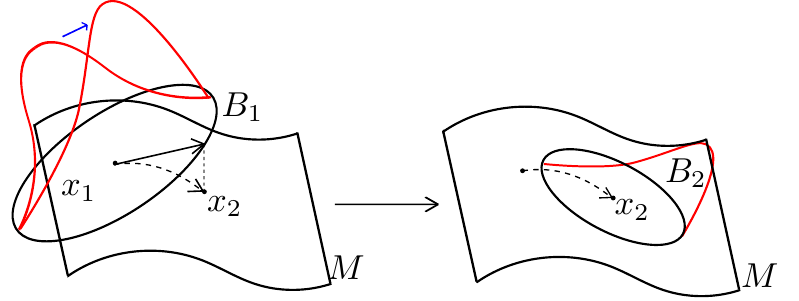}
    \caption{Illustration of an iteration of RSDFO. For each iteration, the algorithm initiates with $x_1 \in M$. A set of sample tangent vectors is drawn from a sampling distribution on $B_1 \subset T_{x_1} M$, and a new sampling distribution is estimated on the overarching statistical model $S_{x_1}$. The new sampling distribution is then parallel transported to the new search iterate $x_2 \in M$. Notice the estimations and computations are \textit{strictly local} within the tangent spaces of the search iterates.}
    \label{fig:rsdfo}
\end{figure}

\subsection{Discussion, Shortcoming and Outlook}
\label{sec:shortcoming:rsdfo}

In RSDFO, parametrized densities are inherited from tangent spaces to manifolds locally via Riemannian exponential map \footnote{Similar to the approach of \cite{Pennec2004ProbabilitiesAS}. This will be further generalized via the notion of orientation-preserving diffeomorphism as we will discuss in the next sections (Section \ref{app:localdistn}).}, this preserves both the local/step-wise computations and the overall structure of the original SDFO algorithm on Euclidean spaces. This is due to the fact that tangent spaces are vector spaces, meaning computations can be perform locally on the tangent spaces as in the pre-adapted algorithm. The only additional tool needed to translate the result back to the Riemannian manifold was the Riemannian exponential map and to some extent the knowledge of the injectivity radius of the Riemannian manifold. 

However, Riemannian SDFO algorithms constructed with the Riemannian adaptation approach also lead to two drawbacks due to the local-ness of the normal neighbourhood and inherited parametrized densities on the manifold:
\begin{itemize}
\item 
First of all, the local-ness of the inherited parametrized densities implies that the quality of the Riemannian version of SDFOs depends strongly on the structure of the manifold. For each iteration, the search region centered at $x \in M$ is bounded by the normal neighbourhood $N_x$ of $x$. Manifolds with high scalar curvature will have small injectivity radius (thus small normal neighbourhoods) \cite{abresch1997injectivity,klingenberg1961riemannsche, cheeger1975comparison}, thus the observations made in the search region bounded by normal neighbourhoods would be restrictive, especially in manifolds with high scalar curvature.

Therefore, whilst SDFO such as CMA-ES have an advantage in tackling multi-model optimization problems \cite{hansen2004evaluating}, this advantage may not be reflected on the Riemannian version of SDFO in more complex manifolds. 

\item 
Secondly, tangent spaces of a manifold are disjoint. Since the support of the parametrized density in Algorithm \ref{alg:greda} varies from iteration to iteration, the local densities do not belong to the same family of probability densities, even if they share the same parametrization. Comparison of solution quality and convergence behaviors of such algorithms would be difficult to study in the fashion of \cite{zhang2004convergence,beyer2014convergence}, as there is no explicit relation between densities from one iteration to another.

\end{itemize}

In order to overcome the local restrictions of the RSDFO under the Riemannian adaptation approach, we require a parametrized probability densities defined beyond the confines of a single normal neighbourhoods in Riemannian manifolds. In the next two sections, we study families of \textit{intrinsic}, \textit{computable}, \textit{parametrized} distribution on manifolds. In particular, we study the information geometrical structures of finitely parametrized statistical models over Riemannian manifolds. This establishes an extension framework for Riemannian adapted model-based stochastic optimization algorithms, which we then use to propose a novel algorithms in the latter sections of the paper.

\section{Dualistic Geometry of Locally Inherited Parametrized Densities on Riemannian Manifolds}
\label{app:localdistn}

In the next two sections, we study the Information Geometrical \cite{Amari2000} structure of families of intrinsic, (finitely) parametrized probability densities supported on the manifold beyond the normal neighbourhoods, where the statistical parameters are compatible with the point estimations. This establishes the geometrical foundations of an extension framework for stochastic optimization on Riemannian manifold, linking the Riemannian geometry of the search space manifold and the Information Geometry of the decision space. The latter bridges the Information Geometrical structure, statistical parameters and point estimations on the search space manifold beyond the confines of a single normal neighbourhood.

In the literature, there are two approaches of constructing intrinsic probability distributions on Riemannian manifolds $M$:
\begin{enumerate}
\item The first approach \cite{oller1993intrinsic,Pennec2004ProbabilitiesAS,pennec2006intrinsic,bhattacharya2008statistics} is motivated by statistical analysis on Riemannian manifold, which originated from fields including directional statistics \cite{jupp1989unified,mardia1975} and shape space analysis \cite{small2012statistical,le1993riemannian}. Similar to the Riemannian adaptation principle described in the previous section, this branch focuses on a \textit{single} normal neighbourhood $N_x$ centered at $x\in M$, where computational and statistical tools on $N_x \subset M$ can be translated locally from equivalent notions on the tangent space (which is a Euclidean space). Local statistical tools and probability distributions can thus be re-established on $M$ within a \textit{single} normal neighbourhood via Riemannian exponential map.
\item The second approach \cite{khesin2013geometry,bruveris2016geometry,bauer2016uniqueness} stems from the study of probability measure as a differential $n$-form. Motivated by the theoretical information geometrical interpretation of probability densities over Euclidean spaces, it studies the metric, geodesic and other geometrical structure of the vector bundle of \textit{all} probability densities over compact manifolds as an infinite dimensional manifold. \footnote{This is formally known as a Fréchet manifold. Just as finite dimensional manifolds are locally modeled on Euclidean spaces, Fréchet manifolds are locally modeled on Fréchet spaces. \cite{michor1980manifolds}.}
\end{enumerate}

However, neither approach is suitable for our purpose of constructing a geometrical framework for stochastic optimization on Riemannian manifolds:
\begin{enumerate}
\item The first approach is too restrictive: Whilst the statistical properties can be locally inherited through Riemannian exponential map, all the computations, similar to the Riemannian adaptation approach described in the previous section, are confined to a \textit{single} normal neighbourhood in $M$.
\item The second approach is too general: There is no clear way to associate the geometry and statistical parameter of a specific family of finitely parametrized probability densities on $M$.
\end{enumerate}

We begin our construction by combining the essence of the two approaches: Given a normal neighbourhood $N_x \subset M$, family of locally inherited finitely parametrized probability densities on $N_x$ is viewed as a submanifold of $\operatorname{Prob}\left(N_x\right)$ endowed with a statistical geometry \cite{Amari2000}. The statistical geometry of the locally inherited probability densities on $N_x$ is then described explicitly by inheriting the statisical geometry of probability densities on the tangent space via the Riemannian exponential map. This process is described by a bundle morphism, summarized in Figure \ref{fig:localditn}. 


\begin{figure}[hbt!]
    \centering

	\begin{tikzpicture}[thick,scale=0.9, every node/.style={transform shape}]

    \node at (-3,1.5) {$S_x \subset \text{Prob}(U_x)$};
    \node at (3,1.5) {$\tilde{S}_x \subset \text{Prob}(M)$};
    \node at (-3,-1.5) {$U_x \subset T_x M$};
    \node at (3,-1.5) {$M$};
    \draw[<-] (-2,-1.5) -- (2.5,-1.5);
    \draw[->] (-1.5,1.5) -- (1.5,1.5);
    \draw[->] (3,1.3) -- (3,-1.3);
    \draw[->] (-3,1.3) -- (-3,-1.3);
    \node at (0,1.85) {$\log_x^{^*}$};
    \node at (0,-1.15) {$\log_x$};
    
    \node[blue] at (3,-3) {Probability Distribution on $M$};
    \draw[draw=blue] (5,-2.5) rectangle (1,3);
    
    \node[black!60!green] at (-3,-3) {Information Geometry};
    \draw[draw=black!60!green] (-5,-2.5) rectangle (-1,3);
    
    \node[red] at (2,-4.5) {Locally Inherited Distributions on $M$ Sec. \ref{SectionProbOnMfold}};
    \draw[draw=red] (-6.5,-4) rectangle (6.5,4.5);
    
      \node[black] at (2.5,3.75) {Naturality of Dualistic Geometry Sec. \ref{Section:InducedDualisticGeo}};
    \draw[draw=black] (-5.5,0.5) rectangle (5.5,3.5);
    
    \end{tikzpicture}
    \caption{A summary of the induced statistical dualistic geometry of locally inherited parametrized densities on $M$ described in Section \ref{app:localdistn},}
    \label{fig:localditn}
\end{figure}

This section is organized as follows:

\begin{enumerate}
\item In Section \ref{Section:InducedDualisticGeo}, we discuss in detail the ``naturality" of dualistic geometry between two manifolds. That is, given a manifold with a predefined dualistic structure $\left(S,g,\nabla,\nabla^*\right)$, a smooth manifold $\tilde{S}$ and diffeomorphism $\varphi:\tilde{S} \rightarrow S$, we show that a dualistic structure $(\varphi^* g,\varphi^* \nabla, \varphi^* \nabla^*)$ can be induced on $\tilde{S}$ via $\varphi$.

This is an adaptation of the notion of naturality of Riemannian structure to the context of dualistic structure of Hessian-Riemannian manifolds. Furthermore, we show that the induced dualistic structure and the corresponding divergence can be computed explicitly via the pulled-back local coordinates. This describes the top horizontal part of the bundle morphism in Figure \ref{fig:localditn} boxed in black. The proofs of this section will be detailed in \ref{app:proof:induceddualisticgeo}.

\item In Section \ref{SectionProbOnMfold}, we discuss how families of probability densities on $M$ can be inherited locally from open subsets of $\mathbb{R}^n$ via an orientation-preserving bundle diffeomorphism. We discuss how the locally inherited family of probability densities on $M$ inherit the geometrical properties from the family of probability densities on $\mathbb{R}^n$, and show that it generalizes the first approach. This is summarized by the entire bundle morphism of Figure \ref{fig:localditn} boxed in red.
\end{enumerate}
For the rest of the paper we will, without loss of generality, assume $M$ is \textit{connected} and \textit{orientable} unless specified otherwise.


\subsection{Naturality of Dualistic Structure}
\label{Section:InducedDualisticGeo}

We begin the discussion by formally introducing the notion of \textbf{statistical manifolds} \cite{Amari2000,calin2014geometric} -- the Riemannian manifold structure of (finitely parametrized) statistical models. Let  $S := \left\{ p_\theta \middle| \theta \in \Xi \subset \mathbb{R}^n \right\}$  be a statistical model over a measurable space $M$ described by the (smooth) injective immersion:
\begin{align*}
\iota: \Xi  \subset \mathbb{R}^n &\hookrightarrow P\left( M\right)=  \left\{p: M \rightarrow \mathbb{R} \middle| p \geq 0,\int_M p = 1 \right\} \\
\theta = \left(\theta^1, \ldots ,\theta^n \right) &\mapsto p_\theta \quad .
\end{align*} 
The statistical model $S$ can be endowed with a Riemannian manifold structure \cite{Amari2000,calin2014geometric} described by a Riemannian metric $g$ \footnote{This is typically given by the Fisher-Rao metric \cite{Amari2000}.} and a pair of $g$-conjugate connections satisfying:
\begin{align*}
X\langle Y,Z \rangle_g = \langle \nabla_X Y,Z \rangle_g + \langle Y, \nabla^*_X Z \rangle_g\quad, \quad \forall X,Y,Z \in \mathcal{E}\left(TS\right)\quad ,
\end{align*}
where $\mathcal{E}\left(TS\right)$ denote the smooth sections of the tangent bundle $TS$. The triplet $\left(g,\nabla,\nabla^*\right)$, called a \textbf{dualistic structure} of statistical manifold $S$ fundamental to the study of the intrinsic geometry of statistical manifolds. A statistical manifold is called \textbf{dually flat} if it is flat with respect to one of $\left(\nabla, \nabla^*\right)$.

In this section we show how dualistic structure of $\left(S,g,\nabla,\nabla^*\right)$ can be inherited onto an arbitrary smooth manifold $\tilde{S}$ via a diffeomorphism.  We show explicitly the relation between induced dualistic structure, Hessian structure \cite{shima1997geometry} and local coordinate systems on finite dimensional statistical manifolds. In particular, we show how one structure can be determined from another computationally. This is represented by the top horizontal (black) part of the bundle morphism in Figure \ref{fig:localditn}.

We discuss two different ways of pulling back (dually flat) dualistic structures given a diffeomorphism from one manifold to another. We first show that general dualistic structures can be pulled back directly via diffeomorphism. We then show when the manifolds are dually flat, the induced dualistic structure can be computed implicitly via the pulled-back coordinates and metric. 

Whilst the first method arises more naturally in a theoretical setting, the second provides a more computable way to describe the induced dualistic structure which is equivalent to the first method when the manifolds are dually flat.



The result of this section is an adaptation of naturality of Levi-Civita connection in classical Riemannian geometry \cite{lee2006riemannian} to the notion of dualistic structures on statistical manifolds, which provides the foundations for the subsequent discussions of the geometry of inherited probabilty densities on manifolds. The proofs of this section will be detailed in \ref{app:proof:induceddualisticgeo}.

Suppose $S$ is  a finite dimensional manifold equipped with torsion-free dually flat dualistic structure $(g,\nabla,\nabla^*)$, then we can induce via a diffeomorphism a (dually flat) dualistic structure onto another manifold $\tilde{S}$, as described by the following proposition:
\begin{restatable}[Naturality of Dualistic Structure]{proposition}{naturality}
\label{naturality}
If $\varphi: \tilde{S} \rightarrow S$ is a diffeomorphism between smooth manifolds, and $S$ is equipped with torsion-free dualistic structure $(g,\nabla,\nabla^*)$, then $\tilde{S}$ is a Riemannian manifold with induced dualistic structure $(\varphi^* g,\varphi^* \nabla, \varphi^* \nabla^*)$.
\end{restatable}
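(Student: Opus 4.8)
The plan is to push forward every piece of structure along $\varphi$ and then verify that the defining identities transfer. Concretely, since $\varphi:\tilde S\to S$ is a diffeomorphism, its differential $\varphi_*$ gives a bundle isomorphism $T\tilde S\to\varphi^*TS$, and I define the pulled-back objects in the usual way: $(\varphi^*g)_p(X,Y):=g_{\varphi(p)}(\varphi_*X,\varphi_*Y)$ for $X,Y\in T_p\tilde S$, and the pulled-back connection by $(\varphi^*\nabla)_X Y:=\varphi_*^{-1}\bigl(\nabla_{\varphi_*X}(\varphi_*Y)\bigr)$, interpreted via $\varphi$-related vector fields (for $X,Y\in\mathcal E(T\tilde S)$, the fields $\varphi_*X,\varphi_*Y$ are genuine vector fields on $S$ because $\varphi$ is a diffeomorphism), and likewise for $\varphi^*\nabla^*$. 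First I would check that $\varphi^*g$ is a genuine Riemannian metric: symmetry and smoothness are immediate, and positive-definiteness follows because $\varphi_*$ is a linear isomorphism on each tangent space, so this is essentially the classical naturality of Riemannian structure quoted from Lee.

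Next I would verify that $\varphi^*\nabla$ and $\varphi^*\nabla^*$ are affine connections, i.e. $C^\infty(\tilde S)$-linearity in the first slot and the Leibniz rule in the second. This is a routine transport of the corresponding properties of $\nabla,\nabla^*$ through $\varphi$, using that pullback of functions commutes with the construction and that $\varphi_*(fX)=(f\circ\varphi^{-1})\,\varphi_*X$. The only genuinely substantive step is the conjugacy relation: I must show
\begin{align*}
X\langle Y,Z\rangle_{\varphi^*g}=\langle(\varphi^*\nabla)_XY,Z\rangle_{\varphi^*g}+\langle Y,(\varphi^*\nabla^*)_XZ\rangle_{\varphi^*g}
\end{align*}
for all $X,Y,Z\in\mathcal E(T\tilde S)$. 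The idea is to evaluate at a point $p$, abbreviate $q=\varphi(p)$, and rewrite each term using the definitions: $\langle Y,Z\rangle_{\varphi^*g}=\langle\varphi_*Y,\varphi_*Z\rangle_g\circ\varphi$, so $X\langle Y,Z\rangle_{\varphi^*g}(p)=(\varphi_*X)\langle\varphi_*Y,\varphi_*Z\rangle_g(q)$; then apply the conjugacy identity for $(g,\nabla,\nabla^*)$ on $S$ with the $\varphi$-related fields $\varphi_*X,\varphi_*Y,\varphi_*Z$, and finally pull the two resulting terms back through $\varphi_*^{-1}$ to recognize them as $\langle(\varphi^*\nabla)_XY,Z\rangle_{\varphi^*g}$ and $\langle Y,(\varphi^*\nabla^*)_XZ\rangle_{\varphi^*g}$. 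The delicate point here — the main obstacle — is making the ``$\varphi$-relatedness'' bookkeeping rigorous: derivatives of functions, Lie brackets, and the connection all have to be transported consistently so that the identity on $S$ can be applied verbatim; once one is careful that $\varphi$ being a diffeomorphism makes every pushforward well-defined and every relatedness relation an honest equality (not just relatedness along a map), the computation closes.

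Finally, for the parenthetical ``(dually flat)'' claim I would note that flatness is a statement about the curvature tensor $R^{\nabla}$ and torsion $T^{\nabla}$, and both are natural under pullback: $R^{\varphi^*\nabla}=\varphi^*R^{\nabla}$ and $T^{\varphi^*\nabla}=\varphi^*T^{\nabla}$, which follow formally from the definition of $\varphi^*\nabla$ together with $\varphi_*[X,Y]=[\varphi_*X,\varphi_*Y]$. Hence if $(g,\nabla,\nabla^*)$ is torsion-free and $\nabla$ (or $\nabla^*$) is flat, the same holds for the induced structure, so $(\varphi^*g,\varphi^*\nabla,\varphi^*\nabla^*)$ is again a torsion-free dually flat dualistic structure. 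I expect the write-up to lean on the coordinate description promised earlier in the section — expressing $\varphi^*\nabla$ through pulled-back local coordinates — to make the conjugacy computation concrete, but the coordinate-free argument above is the conceptual skeleton.
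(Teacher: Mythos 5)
Your proposal is correct and follows essentially the same route as the paper's proof: define $(\varphi^*g,\varphi^*\nabla,\varphi^*\nabla^*)$ via pushforward of $\varphi$-related fields, note that torsion-freeness is preserved, and transport the $g$-conjugacy identity through $\varphi$ by evaluating at a point and identifying $\tilde{X}\langle\tilde{Y},\tilde{Z}\rangle_{\varphi^*g}$ with $(\varphi_*\tilde{X})\langle\varphi_*\tilde{Y},\varphi_*\tilde{Z}\rangle_g$ at the image point. The additional remarks on curvature naturality and dual flatness are handled in the paper as a separate corollary, but they do not affect the correctness of your argument for the proposition itself.
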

If $\left(\nabla,g\right)$ satisfies Codazzi's equation on $S$ \cite{nomizu1994affine}:
\begin{align*}
X g\left( Y,Z \right) - g\left( \nabla_X Y,Z \right) -  g\left( Y,\nabla_X Z \right) = \left( \nabla_X g \right) \left(Y,Z \right) = \left( \nabla_Z g \right) \left(Y,X \right) \quad ,
\end{align*}
where $X,Y,Z \in \mathcal{E}\left(TS \right)$ denote vector fields on $S$. Then the pulled back connection and metric $\left( \tilde{\nabla},\tilde{g}\right) := \left(\varphi^* \nabla ,\varphi^* g \right)$ also satisfies Codazzi's equation. Therefore by \cite{shima2007geometry}, $\tilde{g}$ is a Hessian metric with respect to $\tilde{\nabla}$. This means there exists a potential function $\tilde{\psi}$ on $\tilde{S}$ such that $\tilde{g} = \tilde{\nabla}\,\tilde{\nabla}\,\tilde{\psi}$ is a Hessian metric, and $(\tilde{S},\tilde{g},\tilde{\psi})$ is a Hessian manifold.

Moreover, the induced dualistic structure $(\varphi^*g,\varphi^*\nabla,\varphi^*\nabla^*)$ on $\tilde{S}$ is invariant under sufficient statistics if $\left(g,\nabla,\nabla^*\right)$ is invariant under sufficient statistics on $S$ \cite{chenstov1982statistical}.

We can also determine the pull-back curvature on $\left(\tilde{S}, \varphi^*g,\varphi^*\nabla, \varphi^*\nabla^* \right)$ with the following immediate corollary:

\begin{restatable}{corollary}{curvaturenaturality}
\label{df}Let $\varphi: \tilde{S} \rightarrow S$ be a local  diffeomorphism. Suppose $S$ has $g$-conjugate connections $(\nabla,\nabla^*)$, and let $(\tilde{\nabla},\tilde{\nabla}^*) := (\varphi^*\nabla, \varphi^*\nabla^*)$ be the induced $\varphi^*g$-conjugate connections on $\tilde{S}$, then $\varphi^* R = \tilde{R}$ and $\varphi^* R^* = \tilde{R}^*$. In particular if $S$ is dually flat, then so is $\tilde{S}$.
\end{restatable}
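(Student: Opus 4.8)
The plan is to treat Corollary \ref{df} as a direct consequence of the naturality of the pulled-back connections established in Proposition \ref{naturality}, together with the functoriality of the curvature tensor under diffeomorphisms. The key observation is that the curvature of a connection is a tensorial object built out of the connection and the Lie bracket, and diffeomorphisms intertwine both: if $\varphi:\tilde S \to S$ is a (local) diffeomorphism and $\tilde\nabla = \varphi^*\nabla$ is the pulled-back connection, then the curvature $\tilde R$ of $\tilde\nabla$ equals the pullback $\varphi^* R$ of the curvature $R$ of $\nabla$. So the whole corollary reduces to (i) recalling the definition of $\varphi^*\nabla$, (ii) computing $\tilde R$ from it, and (iii) observing that the same argument applies verbatim to the conjugate connection $\nabla^*$ and its pullback.

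Concretely, I would first recall that the pulled-back connection is defined by $(\varphi^*\nabla)_X Y = \varphi^{-1}_* \bigl( \nabla_{\varphi_* X} (\varphi_* Y) \bigr)$ for vector fields $X,Y$ on $\tilde S$ (locally, since $\varphi$ is a local diffeomorphism, $\varphi_*$ makes sense on a neighbourhood), which is exactly the structure used to prove Proposition \ref{naturality}. Next I would write out the curvature $\tilde R(X,Y)Z = \tilde\nabla_X \tilde\nabla_Y Z - \tilde\nabla_Y \tilde\nabla_X Z - \tilde\nabla_{[X,Y]} Z$ and substitute the definition of $\tilde\nabla$. Using that $\varphi_*$ is a Lie algebra homomorphism on vector fields, i.e. $\varphi_*[X,Y] = [\varphi_* X, \varphi_* Y]$, and that $\varphi^{-1}_*$ is its inverse, each term transforms into $\varphi^{-1}_*$ applied to the corresponding term of $R(\varphi_* X,\varphi_* Y)(\varphi_* Z)$; assembling them gives $\tilde R(X,Y)Z = \varphi^{-1}_*\bigl( R(\varphi_* X,\varphi_* Y)(\varphi_* Z)\bigr)$, which is precisely the statement $\tilde R = \varphi^* R$. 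The identical computation with $\nabla^*$ in place of $\nabla$ yields $\tilde R^* = \varphi^* R^*$. Finally, since pullback of the zero tensor is zero, if $R = 0$ (or $R^* = 0$) then $\tilde R = 0$ (resp. $\tilde R^* = 0$), so dual flatness of $S$ implies dual flatness of $\tilde S$; one should also note in passing that $\tilde\nabla$ and $\tilde\nabla^*$ are indeed $\varphi^*g$-conjugate, which is part of Proposition \ref{naturality}.

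I expect there is no serious obstacle here — the result is "immediate" as the paper says — but the one point requiring a little care is the bookkeeping of $\varphi_*$ versus $\varphi^{-1}_*$ and the fact that everything is only local: since $\varphi$ is merely a \emph{local} diffeomorphism, the pushforward $\varphi_* X$ of a vector field is not globally well-defined, so the identities must be read pointwise (or on small enough neighbourhoods on which $\varphi$ restricts to a genuine diffeomorphism), after which the tensoriality of $R$ guarantees the pointwise identity $\tilde R = \varphi^* R$ patches to a global statement. A secondary, even more routine, point is to make explicit which definition of $\varphi^* R$ one uses (pullback of a $(1,3)$-tensor) and to check it agrees with "curvature of the pulled-back connection"; this is standard but worth a one-line remark. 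The cleanest write-up simply invokes the well-known fact that affine connections and their curvature tensors are natural with respect to diffeomorphisms, specializes it to both $\nabla$ and $\nabla^*$, and then reads off the dually flat case.
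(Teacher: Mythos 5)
Your proposal is correct and follows essentially the same route as the paper's own proof in \ref{app:proof:induceddualisticgeo}: substitute the definition $(\varphi^*\nabla)_{\tilde X}\tilde Y = \varphi_*^{-1}\bigl(\nabla_{\varphi_*\tilde X}\varphi_*\tilde Y\bigr)$ into the curvature formula, use that $\varphi_*$ commutes with Lie brackets to identify each term with the corresponding term of $R$, repeat for $\nabla^*$, and read off dual flatness from the vanishing of the pulled-back tensors. Your added remarks on the local-versus-global bookkeeping and on matching the two meanings of $\varphi^*R$ are sensible points of care but do not change the argument.
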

\begin{proof}
See \ref{app:proof:induceddualisticgeo}.
\end{proof}
This result has been known for Levi-Civita connection on Riemannian manifold. Here we extend it slightly to pair of $g$-conjugate dual connections.

\subsubsection{Computing inherited dualistic structure}
\label{Subsection:computingInducedDualisticGeo}

Let $(\theta_i)_{i=1}^n$ denote local coordinates on $\left(S,g,\nabla,\nabla^*\right)$, and $\varphi:\tilde{S} \rightarrow S$ denote a diffeomorphism between smooth manifolds described in the previous section. In this section, we illustrate how  the pulled-back dually flat dualistic structure  $(\varphi^*g,\varphi^*\nabla, \varphi^*\nabla^*)$ on $\tilde{S}$ can be determined explicitly in the pulled-back local coordinate system $\left(\overline{\theta_i}:= \theta_i \circ \varphi\right)_{i=1}^n$ via the pulled-back metric and the corresponding induced potential function. 

\begin{definition}
\label{defn:divergence}
Given a smooth manifold $S$, a \textbf{divergence} \cite{Amari2000} $D$ or \textbf{contrast function}\cite{calin2014geometric} on $S$ is a smooth function  $D:S \times S \rightarrow \mathbb{R}_+$  satisfying the following:
\begin{enumerate}
\item $D(p;q) \geq 0$, and
\item $D(p,q) = 0$ iff $p = q$ . 
\end{enumerate}
\end{definition}

A dualistic structure $\left(g^D, \nabla^D, \nabla^{D^*} \right)$ on $S$ can be determined by divergence function $D$ via the following equations \cite{eguchi1992geometry,Amari2000,calin2014geometric} for each point $x\in S$:
\begin{align*}
\left. g^D_{ij}\right|_p =g^D_p\left( \partial_i,\partial_j\right) &:= - \partial^1_i \partial^2_j \left. D[p;q] \right|_{q=p} \\
\left. \Gamma_{ijk}^D \right|_p = \left. \langle\nabla^D_{\partial_i} \partial_j,\partial_k \rangle \right|_p &:= - \partial^1_i \partial^1_j \partial^2_k  \left. D[p;q] \right|_{q=p} \quad ,
\end{align*}
where $(\theta_i)_{i=1}^n$ denote local coordinates on $S$ with corresponding local coordinate frame $\left( \partial_i \right)_{i=1}^n$ about $x$. Let $\partial^\ell_i$ denote the $i^{th}$ partial derivative on the $\ell^{th}$ argument of $D$. By an abuse of notation, we may write \cite{Amari2000} as:
\begin{align}
\label{Dabusenotation}
D[\partial_i ; \partial_j] &:= - \partial^1_i \partial^2_j \left. D[p;q] \right|_{q=p}, \quad \text{and} \nonumber \\
- D[ \partial_i \partial_j ; \partial_k ] &:= - \partial^1_i \partial^1_j \partial^2_k  \left. D[p;q] \right|_{q=p} \quad .
\end{align}

\begin{remark}
\label{Rmk:InducedDivergence}
Conversely, given a torsion-free dualistic structure and a local coordinate system, there exists a divergence that generates the dualistic structure \cite{matumoto1993}. We will refer to the divergence $\tilde{D}$ on $\tilde{S}$ corresponding to the pulled-back dualistic structure $(\varphi^*g,\varphi^*\nabla, \varphi^*\nabla^*)$ (not necessarily dually flat) as the \textbf{induced divergence} on $\tilde{S}$.
\end{remark}

Consider smooth pulled back coordinates $\left(\overline{\theta_i}:= \theta_i \circ \varphi\right)_{i=1}^n$ on $\tilde{S}$ and the corresponding local coordinate frame $\left(\overline{\partial_i}\right)_{i=1}^n :=\left(\frac{\partial}{\partial \overline{\theta}_i} \right)_{i=1}^n$ on $T\tilde{S}$. Since $\tilde{g}$ is a Hessian metric with respect to $\tilde{\nabla}$, there exists a potential function $\tilde{\psi}$ such that $\tilde{g} = \tilde{\nabla}\tilde{\nabla}\tilde{\psi}$ and $\tilde{g}_{ij} = \overline{\partial}_i \overline{\partial}_j \tilde{\psi}$. The corresponding $\tilde{g}$-dual local coordinate system of $\tilde{S}$ with respect to $\left( \overline{\theta}_i \right)_{i=1}^n$, denoted by $\left( \overline{\eta}_i \right)_{i=1}^n$ can be defined by $\left(\overline{\eta}_i\right)_{i=1}^n = \left(\overline{\partial_i}\tilde{\psi} \right)_{i=1}^n$ with correspond local coordinate frame $\left( \overline{\partial}^i\right)_{i=1}^n = \left( \frac{\partial}{\partial \overline{\eta_i}} \right)_{i=1}^n$  of $T\tilde{S}$ \cite{Amari2000}. Consider the following divergence function on $\tilde{S}$ in the canonical form \cite{Amari2000}:
\begin{align}
\label{eqn:div}
\overline{D}: \tilde{S} \times \tilde{S} &\rightarrow \mathbb{R}_+ \nonumber \\
(\tilde{p},\tilde{q}) &\mapsto \overline{D}(\tilde{p},\tilde{q}) =  \tilde{\psi}(\tilde{p}) + \tilde{\psi}^\dagger(\tilde{q}) - \langle \overline{\theta}(\tilde{p}), \overline{\eta}(\tilde{q}) \rangle \quad ,
\end{align}
where $\tilde{\psi}^\dagger$ is a smooth function on $\tilde{S}$ representing the Legendre-Fr\'echet transformation of $\bar{\psi}$ with respect to the pair of $\tilde{g}$-dual local coordinates $\left(\overline{\theta}_i\right)_{i=1}^n,\left(\overline{\eta}_i\right)_{i=1}^n$ on $\tilde{S}$. Using the divergence function $\overline{D}$, we can compute the inherited dualistic structure  $\left(\varphi^* g,\varphi^* \nabla, \varphi^* \nabla^*\right)$ on $\tilde{S}$:

\begin{restatable}{proposition}{computenaturality}
\label{computenaturality}
Let $\left(S,g,\nabla,\nabla^*\right)$ be a dually flat manifold, and let $\varphi: \tilde{S} \rightarrow S$ be a local  diffeomorphism. On dually flat manifold $\left( \tilde{S}, \varphi^* g,\varphi^* \nabla, \varphi^* \nabla^*\right)$ the induced dualistic structure $\left(\varphi^* g,\varphi^* \nabla, \varphi^* \nabla^*\right)$ coincides with the dualistic structure $\left(\overline{g}, \overline{\nabla},\overline{\nabla}^* \right)$ generated by $\overline{D}$ on the local pulled-back coordinates $\left(\overline{\theta_i}:= \theta_i \circ \varphi\right)_{i=1}^n$.
\end{restatable}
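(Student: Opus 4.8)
The plan is to show that the two dualistic structures on $\tilde{S}$ — namely the pulled-back structure $(\varphi^*g,\varphi^*\nabla,\varphi^*\nabla^*)$ from Proposition \ref{naturality}, and the structure $(\overline{g},\overline{\nabla},\overline{\nabla}^*)$ generated by the canonical divergence $\overline{D}$ of \eqref{eqn:div} — agree coordinate-by-coordinate in the pulled-back chart $(\overline{\theta}_i)_{i=1}^n$, from which the coordinate-free identity follows. The key observation is that the canonical divergence $\overline{D}$ on $\tilde{S}$ built from the potential $\tilde{\psi}$, its Legendre dual $\tilde{\psi}^\dagger$, and the $\tilde{g}$-dual coordinate pair $(\overline{\theta}_i),(\overline{\eta}_i)$ is, \emph{by construction}, the $\varphi$-pullback of the canonical divergence $D$ on $S$ associated to the dually flat structure $(g,\nabla,\nabla^*)$ in the coordinates $(\theta_i)$. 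Indeed, since $\varphi$ is a diffeomorphism, $\tilde{\psi} = \psi\circ\varphi$ where $\psi$ is the $\nabla$-potential on $S$ (Codazzi plus dual flatness gives existence of $\psi$ with $g = \nabla\nabla\psi$, as recalled after Proposition \ref{naturality}), the $\tilde{g}$-dual coordinates satisfy $\overline{\eta}_i = \overline{\partial}_i\tilde{\psi} = (\eta_i)\circ\varphi$ by the chain rule, and the Legendre transform commutes with this relabelling so that $\tilde{\psi}^\dagger = \psi^\dagger\circ\varphi$. Hence $\overline{D}(\tilde{p},\tilde{q}) = D(\varphi(\tilde{p}),\varphi(\tilde{q}))$.

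Granting this, I would proceed in three steps. First, establish the pullback identity $\overline{D} = D\circ(\varphi\times\varphi)$ just sketched, being careful that the Legendre–Fréchet transform used to define $\psi^\dagger$ (resp. $\tilde{\psi}^\dagger$) is taken with respect to coordinate pairs that correspond under $\varphi$ — this is where dual flatness is essential, since it guarantees the global affine coordinates and the well-definedness of the Legendre transform. Second, invoke the formulas of Definition \ref{defn:divergence} and Equation \eqref{Dabusenotation}: the dualistic structure generated by a divergence is obtained by partial differentiation of $D$ in its two arguments and restriction to the diagonal. Since differentiation in the chart $(\overline{\theta}_i)$ on $\tilde{S}$ corresponds, via $\varphi$, to differentiation in the chart $(\theta_i)$ on $S$ (the coordinates are literally $\overline{\theta}_i = \theta_i\circ\varphi$, so $\overline{\partial}_i$ is $\varphi$-related to $\partial_i$), the derivative expressions $-\partial^1_i\partial^2_j\overline{D}|_{\tilde{q}=\tilde{p}}$ and $-\partial^1_i\partial^1_j\partial^2_k\overline{D}|_{\tilde{q}=\tilde{p}}$ equal the $\varphi$-pullbacks of the corresponding expressions for $D$. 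Thus the components of $(\overline{g},\overline{\nabla})$ in the chart $(\overline{\theta}_i)$ coincide with the components of $(\varphi^*g,\varphi^*\nabla)$, and likewise for the dual connection (either by the same argument applied to $\nabla^*$, or by noting $\nabla^*$ is determined from $g$ and $\nabla$ via the conjugacy relation, which is preserved under pullback). Third, conclude: two dualistic structures whose metric and connection components agree in one chart agree as tensor fields, so $(\varphi^*g,\varphi^*\nabla,\varphi^*\nabla^*) = (\overline{g},\overline{\nabla},\overline{\nabla}^*)$ on all of $\tilde{S}$.

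The main obstacle I anticipate is the first step — verifying cleanly that $\overline{D}$ is genuinely the pullback of the canonical divergence on $S$, rather than merely \emph{a} divergence inducing the right structure. One must check that the Legendre–Fréchet transform, the dual coordinates $\overline{\eta}_i$, and the potential $\tilde{\psi}$ are all the $\varphi$-transports of their counterparts on $S$; this requires knowing (or re-deriving) that dual flatness of $(S,g,\nabla,\nabla^*)$ supplies a $\nabla$-affine coordinate system in which $g$ is a Hessian $\nabla\nabla\psi$, and that the construction of \eqref{eqn:div} applied on $S$ in those coordinates reproduces precisely the divergence of Definition \ref{defn:divergence} (this is the Bregman-divergence characterization in the dually flat case, cf. \cite{Amari2000}). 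Once the pullback identity for the divergences is in hand, the remainder is a routine chain-rule computation, since $\overline{\partial}_i$ and $\partial_i$ are $\varphi$-related and the structure-from-divergence formulas are purely differential. An alternative route that sidesteps some of this bookkeeping is to appeal directly to Remark \ref{Rmk:InducedDivergence}: the induced divergence $\tilde{D}$ generating $(\varphi^*g,\varphi^*\nabla,\varphi^*\nabla^*)$ exists and is unique up to the relevant equivalence, and in the dually flat case both $\tilde{D}$ and $\overline{D}$ are Bregman divergences of the same potential $\tilde{\psi}$ in the same affine chart, hence coincide; then the structures they generate coincide by the formulas of Definition \ref{defn:divergence}.
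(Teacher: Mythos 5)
Your argument is correct, and it takes a genuinely different route from the one in \ref{app:proof:induceddualisticgeo}. You work at the level of the divergence itself: choosing the potential so that $\tilde{\psi}=\psi\circ\varphi$, you get $\overline{\eta}_i=\eta_i\circ\varphi$ and $\tilde{\psi}^\dagger=\psi^\dagger\circ\varphi$ by the chain rule and invariance of the Legendre--Fenchel transform under the bijection $\varphi$, hence $\overline{D}=D\circ(\varphi\times\varphi)$; the conclusion then follows from naturality of the divergence-to-structure formulas \eqref{Dabusenotation} under the $\varphi$-related frames $\overline{\partial}_i\leftrightarrow\partial_i$, plus the standard fact that the canonical divergence of a dually flat manifold regenerates its dualistic structure. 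The paper never pulls back $D$; it works at the level of coordinate frames: a Frobenius argument shows that the pushed-forward frame $\tilde{\partial}_i:=\varphi^{-1}_*\partial_i$ integrates to $\varphi^*\nabla$-affine coordinates $\tilde{\theta}_i$, the computation $\tilde{\partial}_i(\overline{\theta}_j)=\delta_i^j$ forces $\tilde{\theta}_i=\overline{\theta}_i+c_i$ and hence $\tilde{\partial}_i=\overline{\partial}_i$, the Hessian identity $\tilde{g}_{ij}=\overline{\partial}_i\overline{\partial}_j\tilde{\psi}=g^{\overline{D}}_{ij}$ gives $\tilde{g}=\overline{g}$ directly, and the connections are then forced to agree by evaluating the two conjugacy relations on a frame in which both $\tilde{\nabla}$ and $\overline{\nabla}$ have vanishing Christoffel symbols. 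Your route is shorter and more conceptual once one grants the Amari characterization of the canonical divergence; the paper's route is more self-contained (it re-derives the affineness of the pulled-back coordinates rather than citing that theorem) and yields the explicit Christoffel symbols as a by-product. The one point you should make explicit is the affine ambiguity in the potential: $\tilde{\psi}$ is determined only up to an affine function of $\overline{\theta}$, so $\tilde{\psi}=\psi\circ\varphi$ is a choice of representative; since the canonical form \eqref{eqn:div} is invariant under that ambiguity this costs nothing, but it is exactly the gap you flagged in your first step and it deserves a sentence.
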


\begin{proof}
See \ref{app:proof:induceddualisticgeo}.
\end{proof}


\subsection{Locally Inherited Probability Densities on Manifolds}
\label{SectionProbOnMfold}
%
%

In this section, we construct a family of locally inherited probability densities over smooth manifolds via orientation-preserving bundle morphism. This extends and generalizes the construction of probability distributions on geodesically complete Riemannian manifolds via Riemanian exponential map  described in \cite{pennec2006intrinsic}. The discussion in this section is represented by the entire bundle morphism of Figure \ref{fig:localditn} boxed in red. 

%
%

In the approach of \cite{Pennec2004ProbabilitiesAS} (similarly on the Riemannian adaptation principle described in Section \ref{sec:principle:manopt}), probability densities and the corresponding statistical properties on geodesically complete Riemannian manifolds are constructed by inheriting probability densities on the tangent space via the Riemannian exponential map.  \footnote{It is also worth noting that a similar direction has been pursued by related work in \cite{jermyn2005invariant}, where the author described the inheritance of metric through a \textit{single} local coordinate charts. However, further inheritance of the whole geometrical structure such as dualistic structure was not discussed.} 

Let $U_x \subset T_x M$ denote the region where the exponential map is a diffeomorphism. For each $y\in N_x = \exp_x(U_x)$ there is a unique $v\in U_x$ such that $\exp_x(v) = y$. Given a probability density function $x$ supported on $U_x \subset T_x M$, a probability density function $\tilde{p}$ on $N_x \subset M$ whose function value on $y = \exp_x(v) \in N_x =  \exp_x(U_x)$ can be constructed by:
\begin{align*}
p(v) = \tilde{p}(\exp_x(v)) \quad ,
\end{align*}
or equivalently, since $\exp_x$ is a local diffeomorphism on $U_x$ we may write:
\begin{align}
\label{pennecdistn2}
\tilde{p}(y) = p(\log_x(y)) \quad ,
\end{align}
where $\log_x$ denote the Riemannian logarithm map, which is the inverse of the Riemannian exponential map on $U_x$.


However, determining the explicit expression for Riemannian exponential map for general Riemannian manifolds could be computationally expensive in practice \cite{absil2009optimization}, as it involves solving the geodesic equation, which is a second order differential equation.  Therefore this section aims to find the explicit expression of parametrized probability distributions on manifolds with a more general map. We extend and generalize the above construction in two ways: 
\begin{enumerate}
\item Finitely parametrized probability densities can be locally inherited via\textit{ orientation-preserving diffeomorphisms} from subsets of Euclidean spaces instead of Riemannian exponential map on tangent spaces. The orientation-preserving diffeomorphism serves as a sufficient condition to inherit dualistic geometry of family of probability distributions form Euclidean spaces to Riemannian manifolds. This generalizes the use of exponential map, whose closed form expression might be difficult to determine.
\item Moreover, we can inherit the entire statistical geometry from densities on $\mathbb{R}^n$ (equivalently tangent spaces). This allow us to draw correspondence between the statistical estimations of manifold data sets $y_i \in N_x$ and their corresponding set of tangent vectors $v_i := \log_x \left( y_i \right) \in U_x = \log_x \left( N_x \right)$.
\end{enumerate}

\subsubsection{Locally Inherited Probability Densities via Bundle Morphism}
\label{Subsection:localDensity}

In this section, we describe locally inherited family of finitely parametrized probability densities on smooth manifolds induced via bundle morphism. 

Let $M$ be a smooth topological manifold (not necessarily Riemannian), and let $\text{Prob}(M) \subset \text{Vol}(M)$ denote the vector bundle of compactly supported positive differential $n$-forms over $M$ that integrates to $1$. Compactly supported positive differential $n$-forms over $M$ can be natually associated to smooth positive functions over $M$: Let $U \subset M$ be compact, denote the positive differential $n$-forms on $U$ by $\Omega^n_+\left( U \right)$ and the set of positive smooth real-valued functions $f:U \rightarrow \mathbb{R}_+$ by  $C^\infty_+ \left(M \right)$. There exists a diffeomorphism: \cite{bruveris2016geometry,bauer2016uniqueness}:
\begin{align}
\label{eqn:geoapproach:diffeo}
R: \hat{S} \subset C^\infty_+ \left(M \right) &\rightarrow S \subset \mathcal{E} \left( \Omega^n_+\left( M \right)\right) \nonumber \\
f &\mapsto f^2 \mu_0 \quad .
\end{align}
$S$ can therefore inherit the dualistic structure of $\hat{S}$ via the diffeomorphism $R: C^\infty_+ \left(U \right)\rightarrow \mathcal{E} \left( \Omega^n_+\left( U \right)\right)$. $\text{Prob}(M) \subset \Omega^n_+\left( M \right) \subset \text{Vol}(M)$ can therefore be naturally associated to density functions over $M$ \cite{lee2001introduction} given a reference measure $\mu_0$ on $M$. \footnote{The reference measure on Riemannian manifold $M$ is typically given by the Riemannian volume form: $\mu_0 := dV_g$.} Analogously, we let $\text{Prob}(\mathbb{R}^n)$ denote the volume form over $\mathbb{R}^n$ that integrates to $1$.

Let $U$ be a compact subset of $\mathbb{R}^n$, consider a family of finitely parametrized probability density functions on $U \subset \mathbb{R}^n$: $\hat{S} := \left\{ p_\theta: U \rightarrow \mathbb{R} \middle| \theta \in \Xi \subset \mathbb{R}^\ell, \int_U p_\theta = 1, \operatorname{supp}\left(p_\theta\right) \subset U  \right\}$. Suppose without loss of generality that elements of $\hat{S}$ are mutually absolutely continuous, then $\hat{S}$ has the structure of a statistical manifold \cite{Amari2000}. 

Let $\mu$ be an arbitrary reference measure on $U \subset \mathbb{R}^n$, and let $S := \left\{ \nu_\theta = p_\theta d\mu \middle| \theta \in \Xi \right\} \subset \text{Prob}(U)$ denote the set of differential $n$-forms over $U$ naturally associated to $\hat{S}$ (the space of functions) with respect to $\mu$. Since $U$ is compact, $S$ inherits the dualistic structure of $\hat{S}$ via diffeomorphism $R: C^\infty_+ \left(U \right)\rightarrow \mathcal{E} \left( \Omega^n_+\left( U \right)\right)$ in Equation \eqref{eqn:geoapproach:diffeo}.

Consider a map $\rho: U \subset \mathbb{R}^n \rightarrow M$ and the corresponding pullback $\rho^{-1^*} : S \subset \text{Prob}\left( U \right) \rightarrow \operatorname{Prob} \left( M\right)$. In order to inherit the dualistic geometry of $S$ to $\rho ^{-1^*} S =: \tilde{S} \subset \operatorname{Prob} \left( M\right)$, a sufficient condition for $\rho$ is that it must be a diffeomorphism. Moreover, to ensure the pulled-back $n$-forms in $\tilde{S}$ integrates to $1$, $\rho$ must also be orientation-preserving.

Therefore it suffices to consider orientation-preserving diffeomorphism $\rho: U \subset \mathbb{R}^n \rightarrow M$ and the locally inherited family of probability densities on $M$ induced by  $\rho$. In particular, locally inherited family of probability densities over $\rho(U) \subset M$ is constructed via the pullback bundle morphism defined by orientation-preserving diffeomorphism\footnote{Since $\rho$ is an orientation-preserving diffeomorphism, then so is $\rho^{-1}$}: $\rho^{-1} : M \rightarrow U$:
%
\begin{center}
\begin{equation}
\label{tikz:OPbundle}
\begin{tikzcd}[ampersand replacement=\&]
\hat{S} \subset C^\infty_+\left( U\right) \arrow{r}{R} \& S \subset \text{Prob}(U) \arrow{r}{\rho^{-1^*}} \arrow{d} \& \tilde{S}:= \rho ^{-1^*} S  \subset \text{Prob}(M)\arrow{d} \\ 
\& U\subset \mathbb{R}^n \& \rho (U ) \subset M \arrow[swap]{l}{\rho^{-1}}
\end{tikzcd} 
\end{equation}
\end{center} 

where $\tilde{S}  \subset \text{Prob}(M)$  is a family of probability densities over $M$  given by:
\begin{gather*}
\tilde{S}  := \rho ^{-1^*} S = \left\{ \tilde{\nu}_\theta = \rho^{-1^*} \nu_\theta \right\} \quad .
\end{gather*}
More precisely, let $x \in V := \rho(U) \subseteq M$, and let $X_1,\ldots, X_n \in T_x M$  be arbitrary vectors. Given a probability density $\nu_\theta \in S \subset \text{Prob}(U)$, the pulled-back density $\tilde{\nu}_\theta$ on $V$ is given by:
\begin{gather}
\label{logstar}
\tilde{\nu}_\theta:=\left. \rho_\alpha^{-1^*} \nu_\theta  (X_1,\ldots, X_n)\right|_y = \left. \nu_\theta (\rho_{\alpha_*}^{-1} X_1,\ldots ,\rho_{\alpha_*}^{-1} X_n)\right|_{\rho_{\alpha}^{-1}(y)} \quad , \quad \forall y \in V_\alpha \subseteq M.
\end{gather}
The above diagram commutes: since $\rho$ is a local diffeomorphism, for each $v\in U $, there exists a unique $y \in \rho (U ) \subset M$ such that $y= \rho  (v)$, and $(v,p_\theta)$ is a section in the line bundle $\pi_{U }: \operatorname{Prob}(U ) \rightarrow U $. We have the following equalities:
\begin{align*}
\rho  \circ \pi_{U } (v,p_\theta) &= \rho  (v) = y \quad , \\ 
\pi_M \circ \rho^{-1^*} (v,p_\theta) &= \pi_M \left( \rho^{-1^{-1}}(v), \rho^{-1^*} p_\theta \right) = \rho^{-1^{-1}}(v) = \rho  (v) = y \quad .
\end{align*}

Moreover, since $\rho$ is an orientation-preserving diffeomorphism, so is $\rho^{-1}$. Therefore we have the following equality on the compact subset $V \subset M$: 
\begin{gather*}
1 = \int_{U} \nu_\theta = \int_{V:=\rho (U)} \rho^{-1^*} \nu_\theta  \quad .
\end{gather*}
Suppose $\nu_\theta \in S$ has probability density function $p_\theta$ with respect to the reference measure $\mu$ on $U$, i.e. $\nu_\theta = p_\theta d\mu$ on $U\subset \mathbb{R}^n$, then in local coordinates $\left(x^1,\ldots,x^n \right)$ of $M$, the above integral has the following form:
\begin{align*}
1 = \int_U p_\theta d\mu_0 = \int_V \left(p_\theta \circ \rho^{-1} \right) \left(\operatorname{det} D \rho^{-1} \right)dx^1 \wedge \cdots \wedge dx^n  \quad ,
\end{align*}
where $dx^1 \wedge \cdots \wedge dx^n$ denote the reference measure on $M$ corresponding to the local coordinates $\left(x^1,\ldots,x^n\right)$.
Finally, suppose $S $ has dualistic structure given by $\left(g,\nabla,\nabla^* \right)$. Since $\rho^{-1}$ is a diffeomorphism, so is $\rho^{-1^*}$. Therefore by the discussion in Section \ref{Section:InducedDualisticGeo}, $\tilde{S} $ has inherited dualistic structure $(\varphi^* g,\varphi^* \nabla, \varphi^* \nabla^*)$ with $\varphi = \rho^{-1^*}$. In particular, the locally inherited family of probability distributions $ \tilde{S}= \rho ^{-1^*} S$ inherits the dualistic geometrical structure of $S$ via the bundle morphism.

\begin{remark}
\label{lazyparameter}
Note that both the local coordinate map $p_\theta \in S \mapsto \theta \in \mathbb{R}^\ell$ and $\varphi := \rho^{-1^*}$ are diffeomorphisms. For the rest of the paper we will, without loss of generality, assume $\tilde{S}$ to be parametrized by $\left(\theta_i\right)_{i=1}^n$ instead of the pulled-back local coordinates $\left( \theta_i \circ \rho^{-1^*} \right)_{i=1}^n$  unless specified otherwise (described in Section \ref{Subsection:computingInducedDualisticGeo} and Remark \ref{Rmk:pullbackcoord}) if the context is clear.
\end{remark}

\begin{example}
\label{inheritalphafamily}
Suppose $\left(S,g,\nabla^{(\alpha)}, \nabla^{(-\alpha)}\right) \subset \text{Prob}(U)$ is an $\alpha$-affine statistical manifold for some $\alpha \in \mathbb{R}$, with Fisher metric $g$, the associated $g$-dual $\alpha$-connections $\left(\nabla^{(\alpha)}, \nabla^{(-\alpha)}\right)$, and the corresponding $\alpha$-divergence $D_\alpha$ on $S$ \cite{Amari2000}. Since $\rho:U\rightarrow M$ is a (local) diffeomorphism, it is injective, hence a sufficient statistic for $S$ \cite{Amari2000}. 

By the invariance of Fisher metric and $\alpha$-connection under sufficient statistic, the induced family $\tilde{S}$ is also an $\alpha$-affine statistical manifold. Furthermore, due to the monotonicity of $\alpha$-divergence (as a special case of $f$-divergence), the induced divergence $\tilde{D}_\alpha$ (see Remark \ref{Rmk:InducedDivergence}) on $\tilde{S}$ can be computed by:
\begin{align*}
 \tilde{D}_\alpha(\rho^{-1^*}p,\rho^{-1^*}q)=  D_\alpha(p,q) \quad , \quad \text{for } p,q\in S \quad .
\end{align*}
\end{example}

\subsubsection{Special case: Riemannian exponential map}

We conclude the section by illustrating how the above framework encapsulates the Riemannian adaptation approach (Section \ref{sec:principle:manopt}) and approach of \cite{oller1993intrinsic,Pennec2004ProbabilitiesAS,bhattacharya2008statistics}. In particular, we show that the approach using Riemannian exponential map on complete Riemannian manifolds is a special case of the framework described above.

\begin{example}
\label{EX:expmap}

Let $M$ be a (complete) Riemannian manifold. For each $x\in M$, let $U_x \subset T_x M$ denote the region where the Riemannian exponential map $\exp_x: T_x M \rightarrow M$ is a local diffeomorphism.

Since each tangent $T_x M$ is a topological vector space, it can be considered naturally as a metric space with the metric topology induced by the Riemannian metric. Since finite dimensional topological vector spaces of the same dimension $n := \text{dim}(M)$ are unique up to isomorphism,  $T_x M$ is isomorphic to $\mathbb{R}^n$. Moreover, since the Euclidean metric and Riemannian metric are equivalent on finite dimensional topological vector spaces, the respective induced metric topologies are also equivalent. This means probability density functions over $T_x M$ can be considered naturally as density functions over $\mathbb{R}^n$ \cite{lee2001introduction,petersen2006riemannian}.

Let $S_x$ denote a finitely parametrized family of probability densities (equivalently positive differential $n$-forms that integrates to $1$) over $U_x$. Since $\exp_x$ is a diffeomorphism in $U_x$, we can construct a parametrized family of probability distributions on $N_x = \exp_x (U_x)$ by:


\begin{center}
\begin{tikzcd}[ampersand replacement=\&]
S_x \subset \text{Prob}(U_x) \arrow{r}{\log_x^{^*}} \arrow{d} \& \tilde{S}_x \subset \text{Prob}(N_x)\arrow{d} \\ 
U_x \subset T_x M \& N_x \subset M 
\arrow[swap]{l}{\log_x}
\end{tikzcd}
\end{center}
where $\log_x^{^*}$ denote the pullback of the Riemannian logarithm function $\log_x = \exp_x^{-1}$ on $N_x$. For $p(\cdot \vert \theta) \in S_x$, the inherited probability density $\tilde{p}(\cdot \vert \theta) \in \tilde{S}_x$ over $M$ is given by the following, coinciding with Equation \eqref{pennecdistn2}:
\begin{align*}
    \tilde{p}(y \vert \theta) = \log_x^* p(y \vert\theta) = p(\exp_x^{-1}(y)\vert\theta) = p(\log_x(y)\vert\theta) \quad .
\end{align*}
Since $\exp_x$ is an orientation preserving diffeomorphism on $U_x$, the Riemannian adaptation approach and the approach  of \cite{oller1993intrinsic,Pennec2004ProbabilitiesAS,bhattacharya2008statistics} in the literature is therefore a special case of the construction of locally inherited densities via orientation-preserving bundle morphism discussed in Section \ref{Subsection:localDensity} above. 

It is important to note that for general Riemannian manifolds, this approach maybe quite limiting since $U_x$ maybe a small region in the tangent space (bounded by the injectivity radius) (see the discussion of Section \ref{sec:shortcoming:rsdfo}).  

Throughout the rest of the paper, we will be using the Riemannian exponential map as an example to illustrate our approach. It is however worth noting that our generalized construction applies to all orientation-preserving diffeomorphisms, not just the Riemannian exponential map.
\end{example}

\section{Mixture Densities on Totally Bounded Subsets of Riemannian Manifolds}
\label{Subsection:MixtureDensity}

In the previous section, we discussed how parametrized family of probability densities over Riemannian manifolds $M$ can be locally inherited from $\mathbb{R}^n$ via an orientation-preserving diffeomorphism. This provides us with a notion of computable, parametrized probability densities over manifolds, generalizing the Riemannian adaptation approach and the approach of \cite{oller1993intrinsic,Pennec2004ProbabilitiesAS,bhattacharya2008statistics}. However, the local restriction of the Riemannian adaptation still persisted.
In particular, the locally inherited densities are only defined within $\rho\left( U \right)$, where $\rho: U\subset \mathbb{R}^n \rightarrow M$ is the aforementioned orientation-preserving diffeomorphism. In the case when $\rho: \mathbb{R}^n \rightarrow M$ is the Riemannian exponential map, the construction is still confined within a single normal neighhbourhood. 

In this section we extend the notion of parametrized probability densities over manifolds beyond the confines of normal neighbourhoods. In particular, we describe the information geometry of mixture densities over totally bounded subsets $V$ of Riemannian manifolds.

Given a totally bounded subset $V$ of $M$, we can cover it by a finite number of metric balls (geodesic balls) where each metric ball is mapped to a subset of $\mathbb{R}^n$ via an orientation-preserving diffeomorphism. This set of metric balls (together with the mapping to $\mathbb{R}^n$) forms an orientation-preserving open cover of $V$.

Suppose each element of the open cover of $V$ is equipped with a local family of inherited probability distributions, a family of parametrized mixture densities $\mathcal{L}_V$ on the entire $V$ can be constructed by ``gluing" together the locally inherited densities in the form of a mixture. The statistical geometry of the family of mixture densities on $V$ can thus be described by the product of the mixture component families and the mixture coefficients.

The section is outlined as follows:
\begin{enumerate}
\item In section \ref{sec:orientationpreservingOC}, we describe the notion of orientation-preserving open cover on Riemannian manifolds $M$. We show that such open covers admits a refinement of geodesic balls. Thus totally bounded subsets $V$ of $M$ admits a finite orientation-preserving open cover of (compact) geodesic balls. 

\item In section \ref{sec:mixturedefn}, we describe finitely parametrized mixture distributions $\mathcal{L}_V$ over totally bounded subsets $V$ on Riemannian manifolds $M$. These parametrized probability densities extend the notion of locally inherited probability densities beyond a single normal neighbourhood/geodesic ball, while preserving the locally inherited dualistic statistical geometry of the mixture components.

\item From the section \ref{Subsection:GeoLv} onwards, we describe and derive the geometrical structure of the family of mixture densities $\mathcal{L}_V$. By viewing the closure of the simplex of mixture coeffieients and family of mixture component densities as statistical manifolds, we show that $\mathcal{L}_V$ is a smooth manifold under two condition. Furthermore, we show that a torsion-free dualistic structure can be defined on $\mathcal{L}_V$ by a ``mixture" divergence $D$ from the mixture coefficient and mixture components.

In section \ref{Subsection:DualGeoLv}, we show that $\mathcal{L}_V$ is in fact a product Riemannian manifold of the simplex of mixture coefficients and the families of mixture component statistical manifolds. Finally, we show that when both the simplex of mixture coefficients and mixture component statistical manifolds are dually flat, then $\mathcal{L}_V$ is also dually flat with canonical ``mixture" divergence $D$.
\end{enumerate}

\subsection{Refinement of Orientation-Preserving Open Cover}
\label{sec:orientationpreservingOC}
We begin by formally introducing the notion of orientation-preserving open cover. It is important to note that the notion of orientation-preserving open cover is different from orientable atlas in the differential geometry literature, as we do not require the transition maps between elements of the open cover to be compatible.

\begin{definition}
Let $(M,g)$ be a smooth $n$-dimensional Riemannian manifold, an \textbf{orientation-preserving open cover} of $M$ is an at-most countable set of pairs $\mathcal{E}_M := \left\{\left(\rho_\alpha,U_\alpha \right) \right\}_{\alpha \in \Lambda_M}$  satisfying: 
\begin{enumerate}
\item $U_\alpha\subset \mathbb{R}^n$, $\rho_\alpha: U_\alpha \rightarrow M$ are orientation preserving diffeomorphisms for each $\alpha \in \Lambda_M$ 
\item the set $\left\{ \rho_\alpha(U_\alpha) \right\}_{\alpha \in \Lambda_M}$ is an open cover of $M$.
\end{enumerate} 
\end{definition}

One notable example of an orientation-preserving open cover is given by geodesic balls:
Using the notation of Section \ref{sec:dg:prelim}: for $x \in M$, let $B_x := B(\vec{0},\operatorname{inj}(x)) \subset T_x M$ denote the geodesic ball of injectivity radius. Riemannian exponential map $\exp_x: B_x \subset T_x M \rightarrow M$ is an orientation-preserving diffeomorphism within $B_x$, hence $\left\{ \left( \exp_x, B_x \right) \right\}_{x\in M}$ is an orientation-preserving open cover of $M$. 
In particular, for any positive $j_x \leq \operatorname{inj}(x)$, the set $\left\{ \left( \exp_x, B(\vec{0},j_x) \right) \right\}_{x\in M}$ is also an orientation-preserving open cover of $M$.

Given an orientation-preserving open cover over \textit{Riemannian} manifold $M$, there exists a refinement of open cover by metric balls (geodesic balls) in $M$. This will be used to construct a finite orientation-preserving open cover of totally bounded subsets of $M$.
\begin{restatable}{lemma}{orientlemma}
\label{orientlemma}
Let $M$ be a $n$-dimensional Riemannian manifold and an orientation-preserving open cover of $\left\{\left( \rho_\alpha, U_\alpha \right) \right\}_{\alpha \in \Lambda_M}$ of $M$. There always exist refinement $\left\{\left( \rho_\beta, W_\beta \right)\right\}_{\beta \in \Lambda'_M}$ of $\left\{\left( \rho_\alpha, U_\alpha \right) \right\}_{\alpha \in \Lambda_M}$ satisfying:
\begin{enumerate}
\item $\left\{ \rho_\beta\left(W_\beta\right)\right\}_{\beta \in \Lambda'_M}$ covers $M$, and 
\item $\rho_\beta(W_\beta)$ are metric balls in $M$ for all $\beta \in \Lambda'_M$. 
\end{enumerate}
\end{restatable}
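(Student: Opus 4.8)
The statement is local-to-global and I would prove it by a direct shrinking argument: around each point of $M$ I find a metric ball sitting inside one of the sets $\rho_\alpha(U_\alpha)$, pull that ball back through $\rho_\alpha$ to obtain a new orientation-preserving chart whose image is exactly that ball, and then pass to an at-most-countable subfamily. First I would recall that the Riemannian metric $g$ induces a distance function $d_g$ making $M$ a metric space whose metric topology coincides with the manifold topology; hence the open metric balls $B_{d_g}(x,r)$ form a neighbourhood basis at every $x \in M$.

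Now fix $x \in M$. Since $\{\rho_\alpha(U_\alpha)\}_{\alpha \in \Lambda_M}$ is an open cover of $M$, pick $\alpha(x)$ with $x \in \rho_{\alpha(x)}(U_{\alpha(x)})$, and, using that this set is open, pick $r_x > 0$ with $B_{d_g}(x, r_x) \subseteq \rho_{\alpha(x)}(U_{\alpha(x)})$. Set $W_x := \rho_{\alpha(x)}^{-1}(B_{d_g}(x, r_x)) \subseteq U_{\alpha(x)} \subseteq \mathbb{R}^n$; since $\rho_{\alpha(x)}$ is a homeomorphism onto its (open) image, $W_x$ is open, and the restriction $\rho_{\alpha(x)}|_{W_x} : W_x \to B_{d_g}(x, r_x)$ is again an orientation-preserving diffeomorphism, as restricting a diffeomorphism to an open subset preserves both the diffeomorphism property and the sign of the Jacobian determinant. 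Thus $(\rho_{\alpha(x)}|_{W_x}, W_x)$ is a legitimate chart of an orientation-preserving open cover, its image $\rho_{\alpha(x)}|_{W_x}(W_x) = B_{d_g}(x, r_x)$ is a metric ball in $M$, the collection $\{B_{d_g}(x, r_x)\}_{x \in M}$ covers $M$ because $x \in B_{d_g}(x, r_x)$, and each $B_{d_g}(x, r_x) \subseteq \rho_{\alpha(x)}(U_{\alpha(x)})$, so the collection refines $\{(\rho_\alpha, U_\alpha)\}_{\alpha \in \Lambda_M}$.

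It remains to enforce the at-most-countability required by the definition of an orientation-preserving open cover. Since $M$ is a second-countable manifold it is Lindel\"of, so the open cover $\{B_{d_g}(x, r_x)\}_{x \in M}$ admits an at-most-countable subcover $\{B_{d_g}(x_k, r_{x_k})\}_k$; the corresponding charts $\{(\rho_{\alpha(x_k)}|_{W_{x_k}}, W_{x_k})\}_k$ are the desired refinement $\{(\rho_\beta, W_\beta)\}_{\beta \in \Lambda'_M}$. I do not expect a substantive obstacle here: the argument uses only that metric balls form a neighbourhood basis, that restriction preserves orientation-preserving diffeomorphisms, and second countability of $M$; the one place to be slightly careful is the orientation bookkeeping under restriction. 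If one additionally wants the refining balls to be genuine geodesic balls (as in Section~\ref{sec:mixturedefn}), it suffices to shrink further, choosing $r_x < \operatorname{inj}(x)$ as well, since for such radii $B_{d_g}(x, r_x) = \exp_x(B(\vec{0}, r_x))$ is a geodesic ball.
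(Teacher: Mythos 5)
Your proof is correct and follows essentially the same shrinking argument as the paper's: for each point, find a metric ball contained in some chart image and pull it back through the corresponding orientation-preserving diffeomorphism to obtain the refining chart. The only differences are cosmetic — the paper interposes a normal neighbourhood so that its metric balls are automatically geodesic balls (which you recover by also requiring $r_x < \operatorname{inj}(x)$), and you add the Lindel\"of step to guarantee at-most-countability of the refinement, a point the paper's proof leaves implicit even though its own definition of an orientation-preserving open cover requires it.
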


\begin{proof}

Given (orientation-preserving) open cover $\left\{ \left( \rho_\alpha, U_\alpha \right) \right\}_{\alpha \in \Lambda_M}$ of $M$. For each $\alpha$, $\forall x^\beta_\alpha \in \rho_\alpha(U_\alpha)$ there exists normal neighbourhood $N_{x^\beta_\alpha} \subset \rho_\alpha(U_\alpha)$.

Since $N_{x^\beta_\alpha}$ is open for all $x^\beta_\alpha \in \rho_\alpha(U_\alpha)$, there exists $\epsilon_{x^\beta_\alpha} > 0$ such that the metric ball centred at $x^\beta_\alpha$ denoted by $B_{x^\beta_\alpha}$ satisfies: $B_{x^\beta_\alpha} := B(x^\beta_\alpha, \epsilon_{x^\beta_\alpha}) \subset N_{x^\beta_\alpha} \subset \rho_\alpha(U_\alpha) \subset M$.

In other words, $B_{x^\beta_\alpha}$ is the metric ball centred at $x^\beta_\alpha$ in normal coordinates under the radial distance function.

Since $\rho_\alpha$ is a diffeomorphism for all $\alpha \in \Lambda_M$, this implies $\rho_\alpha^{-1}(B_{x^\beta_\alpha}) =: W_{x^\beta_\alpha} \subset U_\alpha$ for each $x^\beta_\alpha \in \rho_\alpha(U_\alpha)$. Hence $\left\{ \left( \rho_\alpha, W_{x^\beta_\alpha} \right) \right\}_{\beta \in \Lambda'_M}$ is the desired refinement of $\left\{ \left( \rho_\alpha, U_\alpha \right) \right\}_{\alpha \in \Lambda_M}$.

\end{proof}

Observe that the proof did not use the fact that $\rho_\alpha$ is orientation-preserving, as the transition map between elements of the open cover does not have to be orientation-preserving. Therefore it suffices to consider open cover $\left\{ \left( \rho_\alpha,U_\alpha \right) \right\}_{\alpha \in \Lambda_M}$ of $M$ such that $\rho_\alpha$ are just diffeomorphisms with the following result:

\begin{restatable}{lemma}{lemmamakediffeoOP}
\label{lemma:makediffeoOP}
Let $f:M\rightarrow N$ be a local diffeomorphism between manifolds $M,N$. Then there exists  local orientation-preserving diffeomorphism $\tilde{f}:M \rightarrow N$.
\end{restatable}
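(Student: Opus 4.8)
The plan is to repair orientation component-by-component on the connected pieces of $M$, composing with a reflection where necessary. First I would reduce to the case where $M$ is connected: since $M$ is a manifold, it is the disjoint union of its connected components, and it suffices to produce an orientation-preserving local diffeomorphism on each component separately (the union of these is still a local diffeomorphism $M \to N$). So assume $M$ is connected. A local diffeomorphism $f : M \to N$ pulls back the orientation line bundle (or local orientation sheaf) of $N$ to a line bundle on $M$; more concretely, once we fix an orientation on $M$ (if $M$ is orientable) and work in oriented charts on both sides, the Jacobian determinant $\det Df$ is a nowhere-vanishing continuous real-valued function on $M$, hence has constant sign on the connected set $M$.

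If that sign is positive everywhere, $f$ is already orientation-preserving and we take $\tilde f = f$. If the sign is negative everywhere, I would post- or pre-compose with an orientation-reversing diffeomorphism to flip it. The cleanest way: since $f$ is a \emph{local} diffeomorphism (not necessarily surjective), precompose with an orientation-reversing self-diffeomorphism of $M$. Concretely, pick a chart $(\mathcal{U}, \phi)$ on $M$ with $\phi(\mathcal{U}) = \mathbb{R}^n$ and, using a bump function, build a diffeomorphism $r : M \to M$ that is the identity outside a compact subset of $\mathcal{U}$ and acts inside the chart as a map whose Jacobian determinant is negative at some point — but a compactly supported diffeomorphism of $\mathbb{R}^n$ is isotopic to the identity and hence orientation-preserving, so this naive approach fails. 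The honest fix depends on whether $M$ is orientable. In this paper $M$ is assumed connected and orientable (stated just before Section \ref{Section:InducedDualisticGeo}), so here is the subtlety: an orientable connected manifold admits an orientation-reversing self-diffeomorphism only sometimes (e.g. $S^1$ does, $\mathbb{CP}^2$ does not), so precomposition on $M$ is not available in general. Therefore I would instead postcompose on the target: the image $f(M) \subseteq N$ is open, and $f$ viewed as a map onto $N$ need only have its sign fixed up locally. Actually the right move is to observe we are allowed to choose \emph{any} local orientation-preserving diffeomorphism $\tilde f : M \to N$ — it need not be related to $f$ at all — so if $M$ embeds as an open subset of $N$ via some chart-compatible map at all, we may simply exhibit one directly.

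The hard part will be pinning down exactly what freedom the statement grants: re-reading, the lemma only asserts \emph{existence} of some local orientation-preserving diffeomorphism $\tilde f : M \to N$, given that a local diffeomorphism exists. So the real content is: a local diffeomorphism $f$ restricted to a sufficiently small connected open set $\mathcal{U} \subseteq M$ is a diffeomorphism onto an open set $f(\mathcal{U}) \subseteq N$, on which $\det Df$ has a constant sign; composing $f|_{\mathcal{U}}$ with a reflection in a chart of $f(\mathcal{U})$ yields an orientation-preserving diffeomorphism $\mathcal{U} \to f(\mathcal{U}) \subseteq N$, and extending arbitrarily (or simply taking the domain to be $\mathcal{U}$, since in the intended application — Lemma \ref{orientlemma} and the construction of orientation-preserving open covers — we only need the diffeomorphisms $\rho_\alpha$ on the pieces $U_\alpha$ of a cover, not globally on $M$) gives $\tilde f$. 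So in the proof I would: (1) pass to a connected component, or to a small connected neighbourhood; (2) note $\det Df$ has constant sign there by connectedness and continuity; (3) if the sign is negative, postcompose with an orientation-reversing chart transition of $N$ (a coordinate swap $x^1 \leftrightarrow x^2$, or negation of one coordinate) supported where needed; (4) conclude $\tilde f$ is the desired local orientation-preserving diffeomorphism. The main obstacle is the bookkeeping of whether the reflection must be done globally or can be done chart-by-chart — and the observation that, because orientation-preservation is a local condition and we only need a \emph{local} diffeomorphism in the conclusion, the chart-by-chart fix suffices and no global topological obstruction (of the $\mathbb{CP}^2$ type) arises.
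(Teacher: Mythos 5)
Your proposal is correct and, after the detour through (and correct rejection of) precomposition with an orientation-reversing self-diffeomorphism, lands on essentially the same argument as the paper: note that $\det Df$ is nowhere zero and of constant sign on a connected piece, and if negative, compose with a coordinate transposition (Jacobian determinant $-1$) in a chart of the target to flip the sign. Your extra observation that the fix need only be performed chart-by-chart, since the lemma is only invoked on the individual pieces $U_\alpha$ of an orientation-preserving open cover, is in fact more careful than the paper's proof, which performs the swap "in local coordinates" without addressing how it globalizes.
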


\begin{proof}
See \ref{app:proof:OPcover}.
\end{proof}

For the rest of the discussion we will consider the orientation-preserving open cover by metric balls $\left\{ \left( \rho_\alpha, W_{x^\beta_\alpha} \right) \right\}_{\beta \in \Lambda'_M}$ of \textit{Riemannian} manifold $M$. 





\subsection{Mixture Densities on Totally Bounded subsets of Riemannian Manifolds}
\label{sec:mixturedefn}
In this section, we construct parametrized family of mixture probability densities on totally bounded subsets of Riemannian manifold $M$. Let $V \subset M$ be totally bounded subset of $M$, and let $\left\{(\rho_\alpha, U_\alpha) \right\}_{\alpha \in \Lambda_M}$ be an orientation-preserving open cover of $M$. Let $\left\{( \rho_{x_\alpha}, W_{x^\beta_\alpha}) \right\}_{\beta \in \Lambda'_M}$ denote a refinement of $\left\{(\rho_\alpha, U_\alpha) \right\}_{\alpha \in \Lambda_M}$ by open metric balls in $M$ discussed in Lemma \ref{orientlemma}.

Since $\left\{\rho_\alpha\left( W_{x^\beta_\alpha}\right) \right\}_{\beta \in \Lambda'_M}$  is an open cover of $M$ by metric balls, it is an open cover of $V\subset M$ as well. Moreover, since $V$ is totally bounded, there exists a finite subcover $\left\{\rho_\alpha\left( W_{x^\beta_\alpha}\right) \right\}_{\beta = 1}^\Lambda$ of $V$. For simplicity, by an abuse of notation, we remove the excessive indices and rewrite the finite subcover as $\left\{ \left( \rho_\alpha, W_\alpha  \right) \right\}_{\alpha=1}^\Lambda := \left\{ \rho_\alpha\left( W_{x^\beta_\alpha}\right)\right\}_{\beta = 1}^\Lambda$.

For each $\alpha \in \left[1,\ldots, \Lambda\right]$, let $S_\alpha := \left\{\nu_\alpha := \nu_{\theta^\alpha} \vert \theta^\alpha \in \Xi_\alpha \subset \mathbb{R}^{m_\alpha} \right\} \subset \operatorname{Prob}(W_\alpha) $ denote a finitely parametrized volume form over $W_\alpha$ parametrized by $\theta^\alpha \in \Xi_\alpha  \subset \mathbb{R}^{m_\alpha}$, $m_\alpha \in \mathbb{N}_+$. Note that in general we allow the parametric families $S_\alpha$ to have different parametrizations and dimensions $m_\alpha$.
Furthermore, consider for each $\alpha$ the induced family of local probability densities $\tilde{S}_\alpha := \rho_\alpha^{-1^*}S_\alpha = \left\{ \tilde{v}_\alpha = \rho^{-1^*}_\alpha \nu_\alpha \right\} \subset \text{Prob}(M) \subset \operatorname{Vol}(M)$ on $\rho_\alpha(W_\alpha) \subset M$ (see Section \ref{Subsection:localDensity}), we can then define  parametrized mixture densities over $V \subset M$ by patching together the locally induced ones:
\begin{align}
\label{globaldistn}
\tilde{\nu} := \sum_{\alpha=1}^\Lambda \varphi_\alpha \cdot \tilde{\nu}_\alpha \quad ,
\end{align}
where $\varphi_\alpha \in \left[0,1\right]$ for all $\alpha \in \left[1,\ldots,\Lambda \right]$, and $\sum_{\alpha = 1}^\Lambda \varphi_\alpha = 1$.  Let $\overline{S}_0$ denote the closure of the simplex of mixture coefficients:
\begin{align*}
\overline{S}_0 &:= \left\{ \left\{\varphi_\alpha\right\}_{\alpha=1}^\Lambda \middle| \varphi_\alpha \in \left[0,1\right], \sum_{\alpha = 1}^\Lambda \varphi_\alpha = 1 \right\} \subset \left[0,1\right]^{\Lambda-1} 
\end{align*}
\begin{remark}
\label{rmk:s0struct}
The closure of simplex of mixture coefficients $\overline{S}_0$ is has the structure of a dually flat statistical manifold using the modified Fisher metric and Bregman  divergence pair discussed in Section \ref{sec:theo}.  We denote the set of parameters of $\overline{S}_0$ by $\Xi_0 := \left\{ \theta^0 \right\}$.
\end{remark}
We denote the set of mixture densities by $\mathcal{L}_V$ by:
\begin{align*}
\mathcal{L}_V := \left\{ \tilde{\nu} = \sum_{\alpha=1}^\Lambda \varphi_\alpha \cdot \tilde{\nu}_\alpha \, \vert \, \left\{\varphi_\alpha\right\}_{\alpha=1}^\Lambda \in \overline{S}_0, \, \tilde{\nu}_\alpha \in \tilde{S}_\alpha \right\} \quad .
\end{align*}

In local coordinates $\left(x^1,\ldots,x^n \right)$ of $M$, the mixture volume form $\tilde{\nu}_\alpha$ can be expressed as:
\begin{gather}
\label{densitypotential}
\left.\tilde{\nu}\right|_x = \tilde{p}(x,\xi)dx^1 \wedge \cdots \wedge dx^n = \sum_{\alpha=1}^\Lambda \varphi_\alpha \cdot  \tilde{p}_\alpha (x,\theta^\alpha)dx^1 \wedge \cdots \wedge dx^n \quad ,
\end{gather}
where $\tilde{p}_\alpha(x,\theta^\alpha)$ are parametrized probability density functions naturally associated to $\tilde{\nu}_\alpha$ that are parametrized by sets of parameters $\left(\theta^\alpha \right) \in \Xi_\alpha$ (see remark \ref{lazyparameter}). In local coordinates $\left(x^1,\ldots,x^n \right)$ of $M$, $\tilde{p}_\alpha(x,\theta^\alpha)$ is defined implicitly by $\tilde{p}_\alpha(x,\theta^\alpha)dx^1 \wedge \cdots \wedge dx^n := \tilde{\nu}_\alpha$ \footnote{When $M$ is a Riemannian manifold, we may choose to use the Riemannian volume form $dV_g$ instead of $dx^1 \wedge \cdots \wedge dx^n$. In which case we obtain $\tilde{p}_\alpha(x,\theta^\alpha)$ by: $\tilde{p}_\alpha(x,\theta^\alpha) dV_g := \tilde{\nu}_\alpha$ }. The parameters of the mixture distribution $\tilde{p}(x,\xi)$ are collected in $\left( \xi_i \right)_{i=1}^d := \left(\theta^0,\theta^1,\ldots,\theta^\Lambda\right) \in \Xi_0 \times \Xi_1 \times \cdots \times \Xi_\Lambda$, where $d:=\text{dim}\left(\Xi_0 \times \Xi_1 \times \cdots \times \Xi_\Lambda\right)$ denote the dimension of $\mathcal{L}_V$.

\subsubsection{Exhaustion by Compact Set and the Extent of Mixture Densities}
\label{rmk:exhaustion}
Since every smooth topological manifold is $\sigma$-locally compact, $M$ admits a compact exhaustion: 
\begin{definition} 
An \textbf{exhaustion by compact sets} is an increasing sequence of compact subsets $\overline{W}^k$ of $M$ such that $\phi \neq \overline{W}^1 \subset  \operatorname{int}(\overline{W}^2) \subset \overline{W}^2 \subseteq \cdots \subseteq M$ and:
\begin{gather*}
\lim_{k \rightarrow \infty} \overline{W}^k = \bigcup_{k=1}^\infty \overline{W}^k = M \quad .
\end{gather*}
\end{definition} 




In particular, for any totally bounded subset $V\subsetneq M$, there exists $K \in \mathbb{N}$ such that for all $k > K$:
\begin{gather*}
\overline{W^k} \,(\, \supset W^k \,) \, \supset V \quad.
\end{gather*}
Therefore even though at first glance totally bound-ness of $V$ might be quite restrictive, this shows that it allows us to approximate the manifold sufficiently well.


\subsection{Geometrical Structure of \texorpdfstring{$\mathcal{L}_V$}{Mixture densities}}
\label{Subsection:GeoLv}
In this section we describe the geometrical structure of family of mixture densities $\mathcal{L}_V$ defined over totally bounded subsets $V$ of $M$. We first show that $\mathcal{L}_V$ is a smooth manifold under two conditions. We then show that $\mathcal{L}_V$ is a product Riemannian manifold of the closure of simplex of mixture cofficients and the locally inherited families of component densities. The product dualistic Riemannian structure on $\mathcal{L}_V$ is given by a ``mixture'' divergence function on $\mathcal{L}_V$ constructed by the divergence functions on the mixture coefficients and family of component densities. Furthermore, if the families of mixture component densities are all dually flat, we show that $\mathcal{L}_V$ is also dually flat where the canonical divergence is given by the ``mixture" divergence 

We begin by considering a totally bounded subset $V \subset M$ and an orientation-preserving finite open cover $\left\{ \left( \rho_\alpha, W_\alpha \right) \right\}_{\alpha=1}^{\Lambda}$ of $V$ by metric balls.
For each $\alpha = 1,\ldots, \Lambda$, let $\left( S_\alpha, g_\alpha, \nabla^\alpha, \nabla^{\alpha^*}\right) \subset \text{Prob}(W_\alpha)$ be a family of parametrized probability densities over $W_\alpha$ with the corresponding statistical manifold structure. Let $\tilde{S}_\alpha := \rho_\alpha^{-1^*} S_\alpha$ denote the pulled-back family of probability densities on $\rho_\alpha(W_\alpha)\subset M$ with corresponding pulled-back dualistic structures $\left( \tilde{g}_\alpha, \tilde{\nabla}^\alpha, \tilde{\nabla}^{\alpha^*} \right)$ (see Section \ref{Section:InducedDualisticGeo} and \ref{Subsection:localDensity}). Let $\overline{S}_0$ denote the closure of simplex of mixture coefficients with corresponding \textit{dually flat} dualistic structure $\left(g_0,\nabla^0, \nabla^{0^*} \right)$ (see for example the dually flat dualistic structure defined by the Bregman divergence defined in Equation \eqref{eqn:b-div-clS0}). 

\subsubsection{ \texorpdfstring{$\mathcal{L}_V$}{Mixture densities} as a smooth manifold} 
\label{sec:lvsmoothmfold}

In this section we show that the family of mixture densities  $\mathcal{L}_V$ over totally bounded subsets $V$ of Riemannian manifolds $M$ is itself a smooth manifold under two natural conditions. In particular, we show that the parametrization map of $\mathcal{L}_V$ is an  injective immersion, which makes $\mathcal{L}_V$ an immersion submanifold of $\operatorname{Prob}(M)$.

For simplicity, let $\tilde{p}_\alpha(x) := \tilde{p}(x,\theta^\alpha)$ denote the probability density function corresponding to $\tilde{\nu}_\alpha \in \tilde{S}_\alpha$ for all $\alpha$. We show that $\mathcal{L}_V$ is a smooth manifold under the following two natural conditions:
\begin{condenum}
\item \textbf{Family of mixture component distributions have different proper support}:  Let
$V_\alpha := \left\{ x\in M \, \vert \, \tilde{p}_\alpha (x) > 0, \, \forall \tilde{p}_\alpha \in \tilde{S}_\alpha \right\}$ denote the proper support of probability densities $\tilde{p}_\alpha \in \tilde{S}_\alpha$ for each $\alpha \in \left\{1,\ldots, \Lambda \right\}$. We assume $V_\alpha \setminus V_\beta \neq \emptyset$ for $\beta \neq \alpha$. \label{cond:prod1}
\item \textbf{No functional dependency between mixture component densities} : We construct mixture densities in $\mathcal{L}_V$ as unconstrained mixtures, meaning there is no functional dependency between mixture component.
In other words, changing parameters $\theta^\beta \in \Xi_\beta$ of mixture component $\tilde{p}_\beta \in \tilde{S}_\beta$ has no influence on $\tilde{p}_\alpha\in \tilde{S}_\alpha$ for $\beta \neq \alpha$ and vice versa. We write this condition as follows:
For each $\tilde{p}_\alpha \in \tilde{S}_\alpha$, $\frac{\partial \tilde{p}_\alpha}{\partial \theta^\beta_k} = 0 , \quad \forall \theta^\beta_k \in \Xi^\beta \subset \mathbb{R}^{m_\beta}$, $\forall \beta \neq \alpha$.
\label{cond:prod2}
\end{condenum}

\begin{remark}
\begin{enumerate}
\item The first condition \ref{cond:prod1} can always be satisfied simply by choosing a suitable open cover of $V$.
\item The second condition \ref{cond:prod2} is automatically fulfilled for unconstrained mixture models. One can imagine introducing functional dependencies among mixture component distributions, but this is not the case considered here. We make the assumption that: if we alter one distribution $\tilde{p}_\alpha\in \tilde{S}_\alpha$, it does not affect distributions in $\tilde{S}_\beta$ for $\beta \neq \alpha$. 
\end{enumerate} 
\end{remark}

We now discuss the implications of conditions \ref{cond:prod1} and \ref{cond:prod2} in further detail:

The first condition \ref{cond:prod1} implies: the component distributions $\tilde{p}_\alpha \in \tilde{S}_\alpha$ are linearly independent functions, and the map $\theta^0 \in \Xi_0 \mapsto \left\{ \varphi_\alpha\right\}_{\alpha=1}^\Lambda \mapsto \sum_{\alpha=1}^{\Lambda} \varphi_{\alpha} \tilde{p}_{\alpha} (x,\theta^{\alpha})$ is injective \footnote{Note that the mixture component densities $\tilde{p}_{\alpha} (x,\theta^{\alpha})$ are fixed.}.

The second condition \ref{cond:prod2} implies the following:

Consider two distributions $\tilde{p}, \tilde{q} \in \mathcal{L}_V$ sharing the same mixture coefficients $\left\{\varphi_\alpha \right\}_{\alpha=1}^\Lambda$, i.e. $\tilde{p}(x) = \sum_{\alpha = 1}^\Lambda \varphi_\alpha \tilde{p}_\alpha(x)$ and $\tilde{q} = \sum_{\alpha  = 1}^\Lambda \varphi_\alpha \tilde{q}_\alpha(x)$. If $\tilde{p}(x) = \tilde{q}(x)$ for all $x\in M$:
\begin{align*}
\frac{\partial \tilde{p}(x)}{\partial \theta_i^\alpha} &= \frac{\partial \tilde{q}(x)}{\partial \theta_i^\alpha} \quad , \quad \forall \alpha, \forall i \\
\frac{\partial^\ell \tilde{p}(x)}{\left(\partial \theta_i^\alpha \right)^\ell} &= \frac{\partial^\ell \tilde{q}(x)}{\left(\partial \theta_i^\alpha \right)^\ell} \quad , \quad \forall \ell \in \mathbb{N}_+ \quad ,
\end{align*}

hence by condition \ref{cond:prod2}, for each $\alpha \in \left[1,\ldots, \Lambda \right]$, there exists a constant $c_\alpha$ such that:
\begin{align*}
\varphi_\alpha \tilde{p}_\alpha (x) = \varphi_\alpha \tilde{q}_\alpha (x) + c_\alpha ,\quad \forall x \in M \quad .
\end{align*}

Since $\tilde{p}_\alpha$ and $\tilde{q}_\alpha$ are probability densities, we have the following:
\begin{align*}
\varphi_\alpha \cdot \underbrace{\int_M \tilde{p}_\alpha}_{= 1} =  \varphi_\alpha \cdot \underbrace{\int_M \tilde{q}_\alpha}_{= 1} + \int_M c_\alpha \quad .
\end{align*}

This means $\int_M c_\alpha = c_\alpha\cdot \int_M 1 = 0$. Since $M$ is orientable we have $\int_M 1  \neq 0$, which implies $c_\alpha = 0$ and $\tilde{p}_\alpha = \tilde{q}_\alpha$ for all $\alpha$. 

Hence by injectivity of the parametrization mapping of the local component families $\theta^\alpha \mapsto \tilde{p}_\alpha$ of $\tilde{S}_\alpha$ for $\alpha \in \left[1,\ldots,\Lambda \right]$, the parametrization of mixture component parameters $(\theta^1,\ldots,\theta^\Lambda) \mapsto (\tilde{p}_1,\ldots,\tilde{p}_\Lambda)$ is injective as well. This implies the local parametrization map $\xi: \left(\theta^0,\theta^1,\ldots,\theta^\Lambda\right)\in \Xi_0\times \Xi_1 \times \cdots \times \Xi_\Lambda \mapsto \tilde{p}(x) = \sum_{\alpha=1}^\Lambda \varphi_\alpha \tilde{p}_\alpha(x)$ onto $\mathcal{L}_V$ is also injective. 


We now show that the local parametrization map $\xi: \Xi_0\times \Xi_1 \times \cdots \times \Xi_\Lambda \mapsto \mathcal{L}_V$ is an immersion. By condition \ref{cond:prod2}, the parameters are also independent in the sense that for $\beta \neq \alpha$:
\begin{align*}
0 = \frac{\partial \tilde{p}_\alpha}{\partial \theta_k^\beta} = \frac{\partial \tilde{p}_\alpha}{\partial \theta_i^\alpha} \cdot \frac{\partial \theta^\alpha_i}{\partial \theta_k^\beta} \quad \forall \tilde{p}_\alpha \in \tilde{S}_\alpha \Leftrightarrow \frac{\partial \theta^\alpha_i}{\partial \theta_k^\beta}  = 0\quad .
\end{align*}

Moreover, let $\left(\overline{\theta}_i^\alpha := \theta_i^\alpha \circ \left(\rho^{-1^*}\right)^{-1} = \theta_i^\alpha \circ \rho^* \right)$ on $\tilde{S}_\alpha$ denote the pulled-back local coordinates maps (see Section \ref{Subsection:localDensity}), then:
\begin{align*}
\frac{\partial \theta^\alpha_i\circ \rho^*}{\partial \theta^\beta_k \circ \rho^*} = \left(\rho^{-1^*}\right)_*\frac{\partial}{\partial \theta_k^\beta} \left(\theta_i^\alpha \circ \rho^* \right) = \frac{\partial}{\partial \theta_k^\beta} \left( \theta_i^\alpha \circ \rho^* \circ \rho^{-1^*} \right) = 0 \quad .
\end{align*}

In other words, pullback by $\rho$ does not introduce additional functional dependencies among parameters. 

Let $\xi_0: \theta^0 \in \Xi_0 \mapsto \left\{ \varphi_\alpha\right\}_{\alpha=1}^\Lambda \mapsto \sum_{\alpha=1}^{\Lambda} \varphi_{\alpha} \tilde{p}_{\alpha}$ denote the parametrization of $\overline{S}_0$ and let $\xi_\alpha: \tilde{\theta}^\alpha \in \Xi_\alpha \mapsto \tilde{S}_\alpha$  denote the parametrization map of $\tilde{S}_\alpha$ for $\alpha = 1,\ldots \Lambda$. The Jacobian matrix of the parametrization map $\xi: (\theta^0,\theta^1,\ldots,\theta^\Lambda) \in \Xi_0\times \Xi_1\times \cdots \Xi_\Lambda \mapsto \sum_{\alpha=1}^{\Lambda} \varphi_{\alpha} \tilde{p}_{\alpha} (x,\theta^{\alpha}) =: \tilde{p}$ is thus given by:

\[
\operatorname{Jac}_{\xi}\left(\tilde{p} \right) = \left[
\begin{array}{c|c}
 \operatorname{Jac}_{\xi_0}\left(\tilde{p} \right) & 0 \\ \hline
0 & 
 \begin {array}{ccccc} \operatorname{Jac}_{\xi_1}\left(\tilde{p} \right) & 0 & \cdots & \cdots &0 \\ 
0 & \operatorname{Jac}_{\xi_2}\left(\tilde{p} \right) & 0 & \cdots & 0 \\
\vdots & \ddots &  \ddots & \ddots &0\\ 
0 & \cdots & 0 & \operatorname{Jac}_{\xi_{\Lambda-1}}\left(\tilde{p} \right) &0 \\
\noalign{\medskip}0 & 0 &\cdots &0& \operatorname{Jac}_{\xi_\Lambda}\left(\tilde{p} \right)
\end {array}
  \\
\end{array}
\right] \quad .
\]

$\overline{S}_0$ can be endowed with a statistical manifold structure (Remark \ref{rmk:s0struct}) \footnote{This is further described in Section \ref{sec:theo}.}, and by the discussion of Section \ref{SectionProbOnMfold}, $\tilde{S}_\alpha$ are statistical manifolds for $\alpha = 1,\ldots,\Lambda$. Since parametrization maps of statistical manifolds are immersions \cite{calin2014geometric}, the submatrices $\operatorname{Jac}_{\xi_0}\left(\tilde{p} \right), \operatorname{Jac}_{\xi_1}\left(\tilde{p} \right),\ldots, \operatorname{Jac}_{\xi_\Lambda}\left(\tilde{p} \right)$ are injective, and the Jacobian matrix $\operatorname{Jac}_{\xi}\left(\tilde{p} \right)$ described above is injective as well. This means the local parametrization map $\xi: \Xi_0\times \Xi_1 \times \cdots \times \Xi_\Lambda \mapsto \mathcal{L}_V$ is an immersion.


The set of mixture densities $\mathcal{L}_V$ over totally bounded subsets $V$ of $M$ is therefore an immersion submanifold of $\operatorname{Prob}(M)$, where the mixture densities $\tilde{p} \in  \mathcal{L}_V$ are be identified by the local coordinates $\mathcal{L}_V \ni \tilde{p}:= \sum_{\alpha=1}^{\Lambda} \varphi_{\alpha} \tilde{p}_{\alpha} (x,\theta^{\alpha}) \mapsto (\theta^0,\theta^1,\ldots,\theta^\Lambda) \in \Xi_0\times \Xi_1\times \cdots \Xi_\Lambda$.	


\subsubsection{Torsion-free dualistic structure on  \texorpdfstring{$\mathcal{L}_V$}{Mixture densities}}
In this section we construct a dualistic structure on $\mathcal{L}_V$. We begin by considering the following ``mixture divergence" function on $\mathcal{L}_V \times \mathcal{L}_V$:
\begin{align}
\label{productdivergence}
D: \mathcal{L}_V \times \mathcal{L}_V &\rightarrow \mathbb{R} \nonumber \\
D\left( \tilde{p},\tilde{q} \right) = D\left( \sum_{\alpha=1}^\Lambda \varphi_\alpha p_\alpha, \sum_{\alpha=1}^\Lambda \varphi'_\alpha q_\alpha \right) &:= D_{0}\left(\left\{\varphi_\alpha\right\}_{\alpha=1}^\Lambda,\left\{\varphi'_\alpha\right\}_{\alpha=1}^\Lambda\right) + \sum_{\alpha=1}^\Lambda \overline{D}_\alpha (\tilde{p}_\alpha, \tilde{q}_\alpha)  \quad ,
\end{align}

where $D_{0}$ is a divergence  $\overline{S}_0$ (see for example the Bregman  divergence defined in Equation \eqref{eqn:b-div-clS0}), and $\overline{D}_\alpha$ is the induced  divergence on smooth manifolds $\tilde{S}_\alpha$ described by Remark \ref{Rmk:InducedDivergence} in Section \ref{Section:InducedDualisticGeo}. It is immediate by construction that $D$ satisfies the conditions of a divergence function (Definition \ref{defn:divergence}):
\begin{enumerate}
\item {\textbf{Non-negativity}}:
Since $\overline{D}_\alpha$'s and $D_{0}$ are both non-negative, so is $D$:
\begin{gather*}
D = \underbrace{D_{0}}_{\geq 0} + \sum_{\alpha=1}^\Lambda \underbrace{\overline{D}_\alpha}_{\geq 0}
\end{gather*}
\item {\textbf{Identity}}: Since $\overline{D}_\alpha$'s and $D_{0}$ are divergences, the following is satisfied:
\begin{align*}
D = 0 &\Leftrightarrow 
\begin{Bmatrix}
D_{0} (\varphi) = 0\\ 
\overline{D}_\alpha(\tilde{p}_\alpha,\tilde{q}_\alpha) = 0 \quad \forall \alpha
\end{Bmatrix} 
&\Leftrightarrow \begin{Bmatrix}
\varphi_\alpha = \varphi'_\alpha \\ 
\tilde{p}_\alpha = \tilde{q}_\alpha 
\end{Bmatrix} \, \forall \alpha \Leftrightarrow \tilde{p} = \tilde{q} \quad .
\end{align*}
\end{enumerate}
Given a divergence function on a manifold, we can construct the corresponding (torsion-free) dualistic Riemannian structure on the manifold \cite{Amari2000}. In particular, the torsion-free dualistic structure on $\mathcal{L}_V$ induced by the mixture divergence $D$ (Equation \eqref{productdivergence}) is derived in the following proposition.
\begin{theorem}
\label{prop:lvdstruct}
The torsion-free dualistic structure $\left\{ g^D, \nabla^D, \nabla^{D^*} \right\}$  of $\mathcal{L}_V$ generated by $D$ (Equation \eqref{productdivergence}), is given by:
\begin{gather*}
\left\{g^{D_{0}} \oplus \bigoplus_{\alpha =1}^\Lambda g^{\overline{D}_\alpha}, \nabla^{D_{0}} \oplus \bigoplus_{\alpha=1}^\Lambda \nabla^{\overline{D}_\alpha}, \nabla^{D_{0}^*} \oplus \bigoplus_{\alpha=1}^\Lambda  \nabla^{\overline{D}_\alpha^*} \right\} \quad .
\end{gather*}
\end{theorem}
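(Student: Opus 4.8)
The plan is to derive the dualistic structure directly from the divergence function using the standard formulas from Eguchi's theory (recalled in Definition~\ref{defn:divergence} and the equations following it), exploiting the fact that the ``mixture divergence'' $D$ in Equation~\eqref{productdivergence} splits as a sum whose summands depend on disjoint blocks of the coordinates. Recall that $\mathcal{L}_V$ carries the global coordinate system $\xi = (\theta^0,\theta^1,\ldots,\theta^\Lambda)$, where $\theta^0$ parametrizes $\overline{S}_0$ and $\theta^\alpha$ parametrizes $\tilde{S}_\alpha$; by Condition~\ref{cond:prod2} and the discussion in Section~\ref{sec:lvsmoothmfold} these blocks are functionally independent, i.e. $\partial\tilde{p}_\alpha/\partial\theta^\beta_k = 0$ for $\beta\neq\alpha$, and likewise the mixture-coefficient block is independent of the component blocks.

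First I would write out the defining formulas for $g^D$ and $\Gamma^D_{ijk}$ (hence $\nabla^D$), and separately for $g^{D^*}$ and $\nabla^{D^*}$ using the dual divergence $D^*(p,q) := D(q,p)$. The key observation is that when we take the mixed partials $-\partial^1_i\partial^2_j D[p;q]|_{q=p}$, the cross-block second derivatives vanish: a derivative $\partial_i$ with respect to a coordinate in block $a$ and $\partial_j$ with respect to a coordinate in block $b\neq a$ annihilate every summand of $D$, because each summand $D_0$ or $\overline{D}_\alpha$ depends only on a single block. Hence the metric $g^D$ is block-diagonal with blocks exactly $g^{D_0}$ on the $\theta^0$-block and $g^{\overline{D}_\alpha}$ on the $\theta^\alpha$-block, i.e. $g^D = g^{D_0}\oplus\bigoplus_{\alpha} g^{\overline{D}_\alpha}$. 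The same bookkeeping applies to the third-order derivatives defining the Christoffel symbols $\Gamma^D_{ijk} = -\partial^1_i\partial^1_j\partial^2_k D[p;q]|_{q=p}$: a third-order mixed derivative spanning two distinct blocks kills every summand, so $\Gamma^D_{ijk}$ is nonzero only when $i,j,k$ all lie in the same block, and there it reduces to $\Gamma^{D_0}$ or $\Gamma^{\overline{D}_\alpha}$. This yields $\nabla^D = \nabla^{D_0}\oplus\bigoplus_\alpha\nabla^{\overline{D}_\alpha}$, and repeating verbatim with $D^*$ gives the dual connection.

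Next I would record that the component divergences $\overline{D}_\alpha$ are precisely the induced divergences on $\tilde{S}_\alpha$ of Remark~\ref{Rmk:InducedDivergence}, so that the dualistic structures they generate are $(\tilde{g}_\alpha,\tilde{\nabla}^\alpha,\tilde{\nabla}^{\alpha^*})$ — the pulled-back structures of Section~\ref{Subsection:localDensity}; I would cite this to match the notation of the statement. Torsion-freeness is inherited: a divergence-induced connection is automatically torsion-free (this is part of Eguchi's theorem, already invoked in the excerpt), and a direct sum of torsion-free connections is torsion-free. Finally, one should note the pull-back coordinate subtlety flagged in Remark~\ref{lazyparameter}: the divergence $\overline{D}_\alpha$ is expressed in the (un-pulled-back) parameters $\theta^\alpha$, and since pull-back by $\rho$ introduces no cross-dependencies among parameters (shown in Section~\ref{sec:lvsmoothmfold}), the block-diagonal structure is unaffected.

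The main obstacle is not any single hard computation but making the block-decomposition argument rigorous and clean: one must be careful that the evaluation $|_{q=p}$ is taken on the diagonal of $\mathcal{L}_V\times\mathcal{L}_V$ and that ``mixed partials across blocks vanish'' is justified at the level of the divergence as a function of the two full coordinate vectors, not just formally. A secondary point worth a sentence is confirming that $D$ genuinely is a smooth function on $\mathcal{L}_V\times\mathcal{L}_V$ in these coordinates (so that Eguchi's construction applies) — this follows from smoothness of $D_0$ on $\overline{S}_0$, smoothness of each $\overline{D}_\alpha$, and smoothness of the parametrization established in Section~\ref{sec:lvsmoothmfold}. Once these are in place, the block-diagonality of all three objects $g^D,\nabla^D,\nabla^{D^*}$ is immediate and the proof concludes by reading off the summands.
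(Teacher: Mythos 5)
Your proposal is correct and follows essentially the same route as the paper's own proof: both derive $g^D$ and the Christoffel symbols of $\nabla^D$, $\nabla^{D^*}$ from the Eguchi formulas applied to the sum $D = D_0 + \sum_\alpha \overline{D}_\alpha$, and use Condition~\ref{cond:prod2} (functional independence of the coordinate blocks) to kill all cross-block derivatives, yielding the block-diagonal direct-sum structure. The paper spells this out as an explicit four-case analysis on the inputs $\overline{\partial}_i,\overline{\partial}_j$, which is exactly the ``mixed partials across blocks vanish'' bookkeeping you describe; your additional remarks on torsion-freeness, smoothness, and the pulled-back coordinates are consistent with the paper's setup and do not change the argument.
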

\begin{proof}
Let $\theta^\alpha \in \Xi^\alpha \subset \mathbb{R}^{m_\alpha}$ be coordinates of $S_\alpha$, and let $\left(\xi_i\right)_{i=1}^d := (\theta^0,\tilde{\theta}^1,\ldots,\tilde{\theta}^\Lambda)$ denote the coordinates of $\mathcal{L}_V$, where $d:=\text{dim}\left(\Xi_0 \times \Xi_1 \times \cdots \times \Xi_\Lambda\right)$ and $\left( \tilde{\theta}^\alpha_j := \theta^\alpha_j \circ \rho_\alpha^{*} \right)_{j=1}^{m_\alpha}$ denote the local pulled-back coordinates of $\tilde{S}_\alpha$ discussed in section \ref{Subsection:computingInducedDualisticGeo} (see also Remark \ref{Rmk:pullbackcoord} and \ref{lazyparameter}). Let the local coordinate frame correspond to $\left(\xi_i\right)_{i=1}^d$ be denoted by $\left(\overline{\partial}_i := \frac{\partial}{\partial \xi_i}\right)$. We can then construct a dualistic structure $\left(g^D, \nabla^D, \nabla^{D^*} \right)$ on $\mathcal{L}_V$ \cite{eguchi1992geometry,Amari2000} corresponding to the  divergence function $D$ (defined in Equation \eqref{productdivergence}). The matrix $\left[g_{ij}^D\right]$ corresponding to the induced Riemannian metric $g^D$ is given by:
\begin{align}
\label{gd}
g^D_{ij}(\tilde{p}) := \left. g^D\left( \overline{\partial}_i, \overline{\partial}_j \right)\right|_{\tilde{p}} &= \left.-D[\overline{\partial}_i ; \overline{\partial}_j] \right|_{\tilde{p}} \nonumber \\
&= \left. -D_{0}[\overline{\partial}_i ; \overline{\partial}_j] \right|_{\tilde{p}} + \left. \sum_{\alpha=1}^\Lambda -\overline{D}_\alpha [\overline{\partial}_i ; \overline{\partial}_j]\right|_{\tilde{p}} \nonumber \\
&= \left. g^{D_{0}}\left(\overline{\partial}_i,\overline{\partial}_j\right)\right|_{\tilde{p}} + \sum_{\alpha=1}^\Lambda \left. g^{\overline{D}_\alpha}\left(\overline{\partial}_i,\overline{\partial}_j\right) \right|_{\tilde{p}}\quad .
\end{align}

We consider the following cases:
\begin{case}
If both the input of Equation \eqref{gd} $\left(\overline{\partial}_i, \overline{\partial}_i\right)$ belong to $\mathcal{E}\left(T\overline{S}_0\right)$: i.e. if $\overline{\partial}_i = \frac{\partial}{\partial \theta^0_i}$ and $\overline{\partial}_j = \frac{\partial}{\partial \theta^0_j} $ then:
\begin{align}
\label{gdkl}
\left. g^D\left( \overline{\partial}_i, \overline{\partial}_j \right)\right|_{\tilde{p}} &= \left. g^{D_{0}}\left(\overline{\partial}_i,\overline{\partial}_j\right)\right|_{\tilde{p}} + 0  \nonumber \\
& =  \left. g^{D_{0}}\left(\overline{\partial}_i,\overline{\partial}_j\right) \right|_{\left\{\varphi_\alpha \right\}_{\alpha=1}^\Lambda} \quad ,
\end{align}
where $\left\{\varphi_\alpha\right\}_{\alpha=1}^\Lambda \in \overline{S}_0$. 
\end{case}
\begin{case}
If both the input of Equation \eqref{gd} $\left(\overline{\partial}_i, \overline{\partial}_i\right)$ belong to the \textit{same} $\mathcal{E}\left(T\tilde{S}_\beta\right)$: i.e.  If $\overline{\partial}_i = \tilde{\partial}_i^\beta = \frac{\partial}{\partial \tilde{\theta}^\beta_i}$ and $\overline{\partial}_j = \tilde{\partial}_j^\beta = \frac{\partial}{\partial \tilde{\theta}^\beta_j}$, where $\left(\tilde{\theta}^\beta_j\right)_{j=1}^{m_\beta}$ denote the local parametrization of $\tilde{S}_\beta$ for some $\beta \in \left\{1,\ldots,\Lambda\right\}$. Then by condition \ref{cond:prod2}:
\begin{align}
\label{gdalpha}
\left. g^D \left( \overline{\partial}_i, \overline{\partial}_j \right)\right|_{\tilde{p}} &= 0 + \sum_{\alpha=1}^\Lambda \left. g^{\overline{D}_\alpha}\left(\tilde{\partial}^\beta_i,\tilde{\partial}^\beta_j\right) \right|_{\tilde{p}_\alpha} \nonumber \\
&= 0 + \sum_{\alpha=\beta} \left. g^{\overline{D}_\alpha} \left(\tilde{\partial}^\beta_i,\tilde{\partial}^\beta_j\right) \right|_{\tilde{p}_\alpha} \nonumber \\
&= \left. g^{\overline{D}_\beta} \left(\tilde{\partial}^\beta_i,\tilde{\partial}^\beta_j\right)\right|_{\tilde{p}_\beta} = \left. g^\beta (\partial^\beta_i,\partial^\beta_j) \right|_{p_\beta} \quad ,
\end{align}
where $g^\beta$ is the metric on $S_\beta$, and $p_\beta := \left(\rho^{-1^*} \right)^{-1} \tilde{p}_\beta = \rho^* \tilde{p}_\beta$. The last equality is due to the analysis towards the end of Section \ref{Section:InducedDualisticGeo}. 
\end{case}
\begin{case}If the first input of Equation \eqref{gd} belongs to $\mathcal{E}\left(T\overline{S}_0\right)$ and the second input belongs to $\mathcal{E}\left(T\tilde{S}_\beta\right)$ (or vice versa by symmetry): i.e. If $\overline{\partial}_i = \frac{\partial}{\partial \theta^0_i}$ and $\overline{\partial}_i = \tilde{\partial}_j^\beta = \frac{\partial}{\partial \tilde{\theta}^\beta_j}$ where $\left(\tilde{\theta}^\beta_j\right)_{j=1}^{m_\beta}$ denote the local parametrization of $\tilde{S}_\beta$ for some $\beta \in \left\{1,\ldots,\Lambda\right\}$, then
\begin{align*}
\left. g^D \left( \overline{\partial}_i, \overline{\partial}_j \right)\right|_{\tilde{p}} &= \left. g^{D_{0}}\left(\overline{\partial}_i,0 \right) \right|_{\left\{\varphi_\alpha \right\}_{\alpha=1}^\Lambda} + \left. g^{\overline{D}_\beta} \left(0,\tilde{\partial}^\beta_j\right)\right|_{\tilde{p}_\beta} \\
&= 0+ 0 = 0 \quad .
\end{align*}

\begin{case} Otherwise if the first and second input of Equation \eqref{gd} belongs to \textit{different}  $\mathcal{E}\left(T\tilde{S}_\alpha\right)$ and $\mathcal{E}\left(T\tilde{S}_\beta\right)$: i.e. If $\overline{\partial}_i = \tilde{\partial}_i^\alpha = \frac{\partial}{\partial \tilde{\theta}^\alpha_i}$ and $\overline{\partial}_j = \tilde{\partial}_j^\beta = \frac{\partial}{\partial \tilde{\theta}^\beta_j}$, where  $\left(\tilde{\theta}^\alpha_j\right)_{j=1}^{m_\alpha}$ and $\left(\tilde{\theta}^\beta_j\right)_{j=1}^{m_\beta}$ denote the local parametrization of $\tilde{S}_\alpha$ and $\tilde{S}_\beta$ respectively for some $\alpha \neq\beta \in \left\{1,\ldots,\Lambda\right\}$. Equation \eqref{gd} then becomes:
\begin{align*}
\left. g^D \left( \overline{\partial}_i, \overline{\partial}_j \right)\right|_{\tilde{p}} &= 0 +
\left. g^{\overline{D}_\alpha} \left(\tilde{\partial}^\alpha_i,0 \right)\right|_{\tilde{p}_\alpha} + \left. g^{\overline{D}_\beta} \left(0,\tilde{\partial}^\beta_j\right)\right|_{\tilde{p}_\beta} \\
&= 0+ 0 + 0 = 0 \quad .
\end{align*}
\end{case}
\end{case}

On the other hand, the Christoffel symbols of the connection $\nabla^D$ induced by $D$ are given by:
\begin{align}
\label{nablad}
\left. \Gamma^D_{ijk} \right|_{\tilde{p}} := \left. \langle \nabla^D_{\overline{\partial}_i} \overline{\partial}_j,\overline{\partial}_k \rangle_{g^D} \right|_{\tilde{p}} &= \left. - D \left[ \overline{\partial}_i \overline{\partial}_j ; \overline{\partial}_k \right]\right|_{\tilde{p}} \nonumber \\ 
&= \left.- D_{0}[ \overline{\partial}_i \overline{\partial}_j ; \overline{\partial}_k ] \right|_{\tilde{p}} + \sum_{\alpha=1}^\Lambda \left.- \overline{D}_\alpha[ \overline{\partial}_i \overline{\partial}_j ; \overline{\partial}_k ]\right|_{\tilde{p}} \nonumber \\
&= \left. \langle \nabla^{D_{0}}_{\overline{\partial}_i} \overline{\partial}_j,\overline{\partial}_k \rangle_{g^D}\right|_{\tilde{p}} + \sum_{\alpha=1}^\Lambda \left. \langle \nabla^{\overline{D}_\alpha}_{\overline{\partial}_i} \overline{\partial}_j,\overline{\partial}_k \rangle_{g^D} \right|_{\tilde{p}} \quad  ,
\end{align}
where $- D[ \partial_i \partial_j ; \partial_k ] := - \partial^1_i \partial^1_j \partial^2_k  \left. D[p;q] \right|_{q=p}$ (by Equation (\ref{Dabusenotation}) in Section \ref{Section:InducedDualisticGeo}).
By the similar  four case argument in the induced metric $g^D$ derivation above, we obtain the following \footnote{The third line of Equation \eqref{nabladsplit} encapsulates the last two cases of $g^D$.}:
\begin{equation}
\label{nabladsplit}
\left. \langle \nabla^D_{\overline{\partial}_i} \overline{\partial}_j,\overline{\partial}_k \rangle_{g^D}\right|_{\tilde{p}} = 
\left\{
\begin{array}{ll}
\left. \langle \nabla^{D_{0}}_{\overline{\partial}_i} \overline{\partial}_j,\overline{\partial}_k \rangle_{g^{D_{0}}} \right|_{\left\{\varphi_\alpha\right\}_{\alpha=1}^\Lambda} &\quad \text{if} \quad \overline{\partial}_\ell = \frac{\partial}{\partial \theta^0_\ell} \quad , \text{ for } \ell = i,j,k \quad, \\
\left. \langle \nabla^{\overline{D}_\beta}_{\overline{\partial}_i} \overline{\partial}_j,\overline{\partial}_k \rangle_{g^{D_\beta}}\right|_{p_\beta} &\quad \text{if} \quad \overline{\partial}_\ell = \tilde{\partial}_\ell^\beta = \frac{\partial}{\partial \tilde{\theta}^\beta_\ell} \quad , \text{ for } \ell = i,j,k \quad,\\
0 & \quad \text{Otherwise.}
\end{array}
\right.
\end{equation}
The above result implies that for vector fields $X_0,Y_0 \in \mathcal{E}\left(T\overline{S}_0\right)$, $X_\alpha,Y_\alpha \in \mathcal{E}\left(T\tilde{S}_\alpha\right)$, where $\alpha \in \left[1,\ldots,\Lambda \right]$, we have the following:
\begin{align}
\label{nabladproductform}
\nabla^D_{\left(Y_0 + \sum_{\alpha = 1}^\Lambda Y_\alpha\right)} \left(X_0 + \sum_{\alpha = 1}^\Lambda X_\alpha \right)&= \nabla^D \left( X_0 + \sum_{\alpha = 1}^\Lambda X_\alpha, Y_0 + \sum_{\alpha = 1}^\Lambda Y_\alpha \right) \nonumber \\
&=\nabla^{D_{0}}_{Y_0} X_0 + \sum_{\alpha=1}^\Lambda \nabla^{\overline{D}_\alpha}_{Y_\alpha} X_\alpha \quad .
\end{align}

By symmetry and the fact that $g^{D} = g^{D^*}$ \cite{Amari2000}, we have the following for the induced dual connection $\nabla^{D^*}$:
\begin{align}
\label{nabla*d}
\left. \langle \nabla^{D^*}_{\overline{\partial}_i} \overline{\partial}_j,\overline{\partial}_k \rangle_{g^D} \right|_{\tilde{p}} = \left.\langle \nabla^{D_{0}}_{\overline{\partial}_i} \overline{\partial}_j,\overline{\partial}_k \rangle_{g^D} \right|_{\tilde{p}} + \sum_{\alpha=1}^\Lambda \left.\langle \nabla^{\overline{D}_\alpha}_{\overline{\partial}_i} \overline{\partial}_j,\overline{\partial}_k \rangle_{g^D} \right|_{\tilde{p}} \quad .
\end{align}
Furthermore, we obtain the following result analogous to equation (\ref{nabladsplit}):
\begin{equation}
\label{nabla*dsplit}
\left. \langle \nabla^{D^*}_{\overline{\partial}_i} \overline{\partial}_j,\overline{\partial}_k \rangle_{g^D}\right|_{\tilde{p}} = 
\left\{
\begin{array}{ll}
\left. \langle \nabla^{D^*_{0}}_{\overline{\partial}_i} \overline{\partial}_j,\overline{\partial}_k \rangle_{g^{D_{0}}} \right|_{\left\{\varphi_\alpha\right\}_{\alpha=1}^\Lambda} &\quad \text{if} \quad \overline{\partial}_\ell = \frac{\partial}{\partial \theta^0_\ell} \quad , \text{ for } \ell = i,j,k \quad \\
\left. \langle \nabla^{\overline{D}^*_\beta}_{\overline{\partial}_i} \overline{\partial}_j,\overline{\partial}_k \rangle_{g^{\overline{D}_\beta}} \right|_{p_\beta} &\quad \text{if} \quad \overline{\partial}_\ell = \tilde{\partial}_\ell^\beta = \frac{\partial}{\partial \tilde{\theta}^\beta_\ell} \quad , \text{ for } \ell = i,j,k \quad, \\
0 & \quad \text{Otherwise.}
\end{array}
\right.
\end{equation}
Finally, for $X_0,Y_0 \in \mathcal{E}\left(T\overline{S}_0\right)$, $X_\alpha,Y_\alpha \in \mathcal{E}\left(T\tilde{S}_\alpha\right)$ for $\alpha \in \left[1,\ldots,\Lambda \right]$, we obtain:
\begin{align}
\label{nabla*dproductform}
\nabla^{D^*}_{\left(Y_0 + \sum_{\alpha = 1}^\Lambda Y_\alpha\right)} \left(X_0 + \sum_{\alpha = 1}^\Lambda X_\alpha\right) &= \nabla^{D^*} \left( X_0 + \sum_{\alpha = 1}^\Lambda X_\alpha, Y_0 + \sum_{\alpha = 1}^\Lambda Y_\alpha \right) \nonumber \\
&=\nabla^{D^*_{0}}_{Y_0} X_0 + \sum_{\alpha=1}^\Lambda \nabla^{\overline{D}^*_\alpha}_{Y_\alpha} X_\alpha \quad .
\end{align}

Therefore by Equations \eqref{gd}, \eqref{nabladproductform}, \eqref{nabla*dproductform}, the dualistic structure $\left\{ g^D, \nabla^D, \nabla^{D^*} \right\}$ induced by $D$ naturally decomposes into the parts corresponding to the mixture coefficients and mixture components. We abbreviate Equations \eqref{gd}, \eqref{nabladproductform}, \eqref{nabla*dproductform} by the following compact form:
\begin{gather}
\label{Eqn:productdualisticdivergenceform}
\left\{g^{D_{0}} \oplus \bigoplus_{\alpha =1}^\Lambda g^{\overline{D}_\alpha}, \nabla^{D_{0}} \oplus \bigoplus_{\alpha=1}^\Lambda \nabla^{\overline{D}_\alpha}, \nabla^{D_{0}^*} \oplus \bigoplus_{\alpha=1}^\Lambda  \nabla^{\overline{D}_\alpha^*} \right\} \quad .
\end{gather}
\end{proof}

\subsection{ \texorpdfstring{$\mathcal{L}_V$}{Mixture densities} as a Product Statistical Manifold}
\label{Subsection:DualGeoLv}
In this section we show that  $\mathcal{L}_V$ is a product Riemannian manifold of the statistical manifolds $\overline{S}_0, \tilde{S}_1,\ldots,\tilde{S}_\Lambda$. We first recall some properties of product Riemannian manifolds from \cite{sakai1996riemannian,do1992riemannian,lee2006riemannian}. 

\begin{remark}
Note that since $\mathcal{L}_V$ consists of mixture densities with finite number of mixtures, to show that $\mathcal{L}_V = \overline{S}_0 \times \tilde{S}_1 \times \cdots \tilde{S}_\Lambda$, it suffices to consider the dualistic structure of the product of two manifolds. 
\end{remark}

Given two Riemannian manifolds $(M,g_1), (N,g_2)$, and points $x_1\in M$ and $x_2\in N$. The tangent spaces of $M \times N$ can be expressed as: $T_{\left(x_1,x_2\right)} = T_{x_1} M \oplus T_{x_2} N$. The product Riemannian metric on $M\times N$ is given by $g:= g_1\oplus g_2$ \cite{lee2006riemannian}:
\begin{gather*}
g \left(\left.X_1\right|_{x_1}+\left.X_2\right|_{x_2},\left.Y_1\right|_{x_1} +\left.Y_2\right|_{x_2}\right)|_{(x_1,x_2} = g_1\left(\left.X_1\right|_{x_1},\left.Y_1\right|_{x_1}\right)|_{x_1} + g_2\left(\left.X_2\right|_{x_2},\left.Y_2\right|_{x_2}\right)|_{x_1} \quad,
\end{gather*}
where $\left.X_1\right|_{x_1},\left.Y_1\right|_{x_1} \in T_{x_1} M$ and $\left.X_2\right|_{x_2},\left.Y_2\right|_{x_2} \in T_{x_2} N$. For the rest of the discussion, when the context is clear, we abbreviate the above equation by:
\begin{align}
\label{eqn:productg}
    g\left(X_1 + X_2, Y_1 + Y_2 \right) = g_1\left(X_1,Y_1 \right) + g_2\left(X_2, Y_2 \right) \quad ,
\end{align}
where $X_1,Y_1 \in \mathcal{E}\left(TM\right)$ and $X_2,Y_2 \in \mathcal{E}\left(T N\right)$. Furthermore, suppose $\nabla^1$ and $\nabla^2$ are connections of $M,N$ respectively, then the product connection is given by \cite{do1992riemannian}:
\begin{gather}
\label{productconnection}
\nabla_{Y_1+ Y_2} X_1 + X_2 = \nabla^1_{Y_1} X_1 + \nabla^2_{Y_2}X_2 \quad,
\end{gather}
where $X_1,Y_1 \in \mathcal{E}\left(TM\right)$ and $X_2,Y_2 \in \mathcal{E}\left(T N\right)$. We abbreviate the product connection to a more compact notation for simplicity: $ \nabla = \nabla^1 \oplus \nabla^2$. Since the Lie bracket of $M\times N$ is given by:
\begin{gather*}
\left[X_1 + X_2, Y_1+ Y_2 \right]_{M\times N} = \left[X_1 , Y_1 \right]_M + \left[X_2,Y_2 \right]_N \quad ,
\end{gather*}
and the curvature tensor on manifolds given by:
\begin{gather*}
R(X,Y)Z = \nabla_X \nabla_Y Z - \nabla_Y \nabla_X Z - \nabla_{\left[X,Y \right]} Z \quad ,
\end{gather*}
we therefore obtain the curvature tensor on the $M\times N$ as follows:
\begin{align}
\label{productcurvature}
R(X_1+X_2,Y_1+Y_2,Z_1+Z_2,W_1+W_2) = R_1(X_1,Y_1,Z_1,W_1) + R_2(X_2,Y_2,Z_2,W_2) \quad ,
\end{align}
where $X_1,Y_1,Z_1,W_1 \in \mathcal{E}\left(TM\right)$ and $X_2,Y_2,Z_2,W_2 \in \mathcal{E}\left(T N\right)$, and $R_1, R_2$ denote the curvature tensor of $M,N$ respectively. Hence if $M$ and $N$ are flat, so is $M\times N$.

Therefore to show that the Riemannian structure derived from the divergence $D$ in equation (\ref{productdivergence}) (given by equation (\ref{Eqn:productdualisticdivergenceform})) coincides with the product Riemannian structure discussed above, it suffices to show the following result.

\begin{theorem}
\label{thm:productdual}
Let $(M,g_1,\nabla^1,\nabla^{1^*})$, $(N,g_2,\nabla^2,\nabla^{2^*})$ be two smooth manifolds with their corresponding dualistic structure, and consider the product manifold $M\times N$ with product metric $g = g_1 \oplus g_2$ and product connection $\nabla := \nabla^1 \oplus \nabla^2$. Then the connection $\nabla^{1^*} \oplus \nabla^{2^*}$ is $g$-dual to $\nabla$. In particular $\left(\nabla^1 \oplus \nabla^2\right)^* = \nabla^{1^*} \oplus \nabla^{2^*} $.
Furthermore, if $M$ and $N$ are dually flat, then so is $M \times N$.
\end{theorem}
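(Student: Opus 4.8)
\emph{Proof proposal.} The plan is to verify the defining conjugacy identity directly and then read off flatness from the curvature decomposition \eqref{productcurvature}. Recall that $\nabla'$ is $g$-dual to $\nabla$ exactly when $X\langle Y,Z\rangle_g = \langle\nabla_X Y,Z\rangle_g + \langle Y,\nabla'_X Z\rangle_g$ for all $X,Y,Z\in\mathcal{E}(T(M\times N))$. Both sides of this identity are $C^\infty(M\times N)$-linear in $X$, and a one-line Leibniz computation shows that if it holds for $Y$ (resp. $Z$) it holds for $fY$ (resp. $fZ$) for any smooth $f$; hence it is enough to check it with $X,Y,Z$ ranging over a local frame. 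I would take the frame obtained by lifting coordinate frames from the two factors, i.e. set $X = X_1 + X_2$, $Y = Y_1 + Y_2$, $Z = Z_1 + Z_2$ with $X_i,Y_i,Z_i$ lifts from $M$ (for $i=1$) and $N$ (for $i=2$), and take $\nabla' := \nabla^{1^*}\oplus\nabla^{2^*}$ as the candidate dual.

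The computation then separates cleanly into two halves that match term by term. For the left-hand side: by \eqref{eqn:productg} the pairing splits as $\langle Y,Z\rangle_g = \langle Y_1,Z_1\rangle_{g_1} + \langle Y_2,Z_2\rangle_{g_2}$; since $\langle Y_1,Z_1\rangle_{g_1}$ is (the pullback of) a function on $M$ it is annihilated by the lift $X_2$ of an $N$-field, while $\langle Y_2,Z_2\rangle_{g_2}$ is annihilated by the lift $X_1$ of an $M$-field, so $(X_1+X_2)\langle Y,Z\rangle_g = X_1\langle Y_1,Z_1\rangle_{g_1} + X_2\langle Y_2,Z_2\rangle_{g_2}$, and applying the factorwise conjugacy of $(g_i,\nabla^i,\nabla^{i^*})$ to each summand produces four terms. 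For the right-hand side: by the product-connection formula \eqref{productconnection}, $\nabla_{X}Y = \nabla^1_{X_1}Y_1 + \nabla^2_{X_2}Y_2$ and $\nabla'_{X}Z = \nabla^{1^*}_{X_1}Z_1 + \nabla^{2^*}_{X_2}Z_2$, with no cross terms; since $\nabla^1_{X_1}Y_1$ is tangent to the $M$-factor and $\nabla^2_{X_2}Y_2$ to the $N$-factor, the mixed $g$-pairings vanish by \eqref{eqn:productg}, leaving exactly the same four terms. This proves $\nabla^{1^*}\oplus\nabla^{2^*}$ is $g$-dual to $\nabla^1\oplus\nabla^2$; uniqueness of the conjugate connection then gives $(\nabla^1\oplus\nabla^2)^* = \nabla^{1^*}\oplus\nabla^{2^*}$.

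For the dual-flatness claim: the product connection $\nabla^1\oplus\nabla^2$ is torsion-free whenever $\nabla^1,\nabla^2$ are, because the torsion tensor decomposes over the factors in the same way the curvature does in \eqref{productcurvature}, and likewise for the dual connection. If $M$ and $N$ are dually flat then $R_1 = R_1^{*} = 0$ and $R_2 = R_2^{*} = 0$, so \eqref{productcurvature} applied to $\nabla = \nabla^1\oplus\nabla^2$ and (using the first part) to $\nabla^* = \nabla^{1^*}\oplus\nabla^{2^*}$ forces both curvature tensors of $M\times N$ to vanish; hence $(M\times N, g,\nabla,\nabla^*)$ is a torsion-free, dually flat structure.

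I do not anticipate a genuine obstacle: the entire content is the bookkeeping of mixed terms, which vanish on both sides for the same reason — a $g_i$-pairing of lifts from one factor does not see the other factor, and the product connection has no cross terms. The only points that deserve an explicit sentence are the reduction to a lifted local frame (justified by the Leibniz-type behaviour of the conjugacy identity in $Y$ and $Z$ together with its $C^\infty$-linearity in $X$) and the remark that the product dualistic structure inherits torsion-freeness from the factors.
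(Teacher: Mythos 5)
Your proposal is correct and follows essentially the same route as the paper's proof: split the product pairing via Equation \eqref{eqn:productg}, observe that all mixed terms vanish (because a $g_i$-pairing of lifts from one factor is killed by the projection to the other, and the product connection has no cross terms), apply factorwise conjugacy, and read off dual flatness from the curvature decomposition \eqref{productcurvature}. Your explicit reduction to a lifted local frame via $C^\infty$-linearity in $X$ and the Leibniz behaviour in $Y,Z$ is a small point of extra care that the paper leaves implicit, but it does not change the substance of the argument.
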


\begin{proof}
Let $(M,g_1,\nabla^1,\nabla^{1^*})$, $(N,g_2,\nabla^2,\nabla^{2^*})$ be two smooth manifolds with their corresponding dualistic structure. Let $\nabla = \nabla^1 \oplus \nabla^2$ denote the product connection on $M \times N$ given by equation (\ref{productconnection}) in compact notation. \footnote{Please refer to comments following equation (\ref{productconnection})} For simplicity, let $\langle \cdot,\cdot\rangle := \langle \cdot ,\cdot\rangle_g$, where $g = g_1 \oplus g_2$ denote the product Riemannian metric on $M \times N$.

Let $X_1,Y_1,Z_1 \in \mathcal{E}\left(TM\right)$ and $X_2,Y_2,Z_2 \in \mathcal{E}\left(T N\right)$. Given points $x_1 \in M$ and $x_2 \in N$, using the natural identification $T_{\left(x_1,x_2\right)} = T_{x_1} M \oplus T_{x_2} N$, we let: 
\begin{align*}
\left\{X,Y,Z\right\} := \left\{ X_1 + X_2, Y_1 + Y_2, Z_1 + Z_2 \right\} \in \mathcal{E}\left( T\left(M \times N\right) \right)
\end{align*}
Then by Equation \eqref{eqn:productg} we have:
\begin{align}
\label{lhs1}
\nabla_X \langle Y,Z \rangle = X\langle Y,Z \rangle &= (X_1 + X_2) \langle Y_1 + Y_2, Z_1 + Z_2 \rangle \\
&= X_1 \left( \langle Y_1,Z_1 \rangle + \langle Y_1, Z_2 \rangle + \langle Y_2,Z_1\rangle + \langle Y_2,Z_2\rangle \right) \nonumber \\ 
\, & \quad+ X_2 \left( \langle Y_1,Z_1 \rangle + \langle Y_1, Z_2 \rangle + \langle Y_2,Z_1\rangle + \langle Y_2,Z_2 \rangle \right) \quad ,
\end{align}
where $X_1,Y_1,Z_1 \in \mathcal{E}\left( TM \right)$, $X_2,Y_2,Z_2 \in \mathcal{E}\left(T N \right)$. 

Let $\pi_{*_1}: T(M\times N) \rightarrow TM$ and $\pi_{*_2}: T(M\times N) \rightarrow TN$ denote the natural projections, then we have:
\begin{align}
\label{zeroprojectinproduct}
\langle Y_1, Z_2 \rangle &= \langle Y_1, \pi_{*_1} Z_2 \rangle_{g_1} + \langle \pi_{*_2} Y_1 , Z_2 \rangle_{g_2} \nonumber \\
&= \langle Y_1, 0 \rangle_{g_1} + \langle 0 , Z_2 \rangle_{g_2} = 0 \quad , 
\end{align}
and by symmetry we obtain $\langle Y_2, Z_1 \rangle = 0 $ as well. 

Since $\langle Y_1,Z_1 \rangle$ is a function on $M$, we have $ \nabla^2_{X_2} \langle Y_1,Z_1 \rangle = 0$. By symmetry we obtain $\nabla^1_{X_1} \langle Y_2,Z_2 \rangle = 0$ as well. Equation \eqref{lhs1} therefore becomes:
\begin{align}
\label{lhs2}
&X_1\left( \langle Y_1,Z_1 \rangle + \langle Y_2,Z_2 \rangle\right) + X_2 \left( \langle Y_1,Z_1 \rangle + \langle Y_2,Z_2 \rangle \right) \nonumber \\
&= \nabla^1_{X_1} \langle Y_1,Z_1 \rangle + \nabla^1_{X_1} \langle Y_2,Z_2 \rangle  + \nabla^2_{X_2} \langle Y_1,Z_1 \rangle + \nabla^2_{X_2} \langle Y_2,Z_2 \rangle \nonumber \\
&= \nabla^1_{X_1} \langle Y_1,Z_1 \rangle + 0 + 0 + \nabla^2_{X_2} \langle Y_2,Z_2 \rangle \nonumber \\
&= \nabla^1_{X_1} \langle Y_1,Z_1 \rangle + \nabla^2_{X_2} \langle Y_2,Z_2 \rangle \quad .
\end{align}
By a similar argument, we obtain from Equation \eqref{productconnection} and Equation \eqref{eqn:productg}:
\begin{align}
\label{rhs1}
\langle \nabla_X Y,Z \rangle &= \langle \nabla^1_{X_1} Y_1,Z_1 + Z_2 \rangle + \langle \nabla^2_{X_2} Y_2,Z_ 1+ Z_2 \rangle \nonumber \\
&= \langle \nabla^1_{X_1} Y_1,Z_1 \rangle + \langle \nabla^1_{X_1} Y_1,Z_2 \rangle  + \langle \nabla^2_{X_2} Y_2,Z_1 \rangle + \langle \nabla^2_{X_2} Y_2,Z_2 \rangle \nonumber \\
& = \langle \nabla^1_{X_1} Y_1,Z_1 \rangle + \langle \nabla^2_{X_2} Y_2,Z_2 \rangle \quad .
\end{align}

The first equality is due to Equation \eqref{productconnection}, and the fact that $\nabla$ is a product connection. The last equality is due to the same argument as in  Equation \eqref{zeroprojectinproduct}: since $\pi_{*_i} Z_j = 0$, and $\pi_{*_i}\nabla^j_{X_j} Y_j = 0$ for $i\neq j$, we have $\langle \nabla^1_{X_1} Y_1,Z_2 \rangle = \langle \nabla^2_{X_2} Y_2,Z_1 \rangle = 0$.

Finally, subtracting equation (\ref{rhs1}) from equation (\ref{lhs2}), we have the following equality:
\begin{align*}
X\langle Y,Z \rangle - \langle \nabla_X Y,Z \rangle &= \nabla^1_{X_1} \langle Y_1,Z_1 \rangle + \nabla^2_{X_2} \langle Y_2,Z_2 \rangle - \left(\langle \nabla^1_{X_1} Y_1,Z_1 \rangle + \langle \nabla^2_{X_2} Y_2,Z_2 \rangle \right) \\
& = \left(\nabla^1_{X_1} \langle Y_1,Z_1 \rangle -  \langle \nabla^1_{X_1} Y_1,Z_1 \rangle\right) + \left( \nabla^2_{X_2} \langle Y_2,Z_2 \rangle - \langle \nabla^2_{X_2} Y_2,Z_2 \rangle \right)\\
&= \langle Y_1, \nabla^{1^*}_{X_1} Z_1 \rangle  + \langle Y_2, \nabla^{2^*}_{X_2} Z_2 \rangle \quad ,
\end{align*}

where $\nabla^{1^*},\nabla^{2^*}$ denote the $g_1,g_2$-dual connection to $\nabla^1,\nabla^2$ on $M,N$ respectively. The unique \cite{nomizu1994affine} $g$-dual connection to $\nabla = \nabla^1 \oplus \nabla^2$ of $M \times N$, denoted by  $\nabla^*:= \nabla{1^*} \oplus \nabla^{2^*}$, is thus given by the following:
\begin{align*}
\nabla^*_{Y_1 + Y_2} X_1 + X_2 =  \nabla^{1^*}_{Y_1} X_1 + \nabla^{2^*}_{Y_2} X_2 \quad ,
\end{align*}
where $X_1,Y_1 \in \mathcal{E}\left(TM\right)$ and $X_2,Y_2 \in \mathcal{E}\left(T N\right)$.

Furthermore, since the curvature of $M \times N$ satisfies the product curvature tensor described in equation (\ref{productcurvature}), if $(M,g_1,\nabla^1,\nabla^{1^*}), (N,g_2,\nabla^2,\nabla^{2^*})$ are both dually flat, then so is their product $(M\times N,g_1+g_2,\nabla^1 \oplus \nabla^2, \nabla^{1^*} \oplus \nabla^{2^*})$.
\end{proof}

\begin{remark}
\label{rmk:lvisprod}
By Proposition \ref{prop:lvdstruct} and Theorem \ref{thm:productdual}, the family of parametrized mixture densities $\mathcal{L}_V = \overline{S}_0 \times \tilde{S}_1 \times \cdots \tilde{S}_\Lambda = \overline{S}_0 \times \bigoplus_{\alpha =1} ^ \Lambda \tilde{S}_\alpha$ is therefore a product manifold with product dualistic structure:
\begin{align*}
&\left\{g_0 \oplus \bigoplus_{\alpha =1}^\Lambda \tilde{g}_\alpha, \nabla^0 \oplus \bigoplus_{\alpha=1}^\Lambda \tilde{\nabla}^\alpha, \nabla^{0^*} \oplus \bigoplus_{\alpha=1}^\Lambda  \tilde{\nabla}^{\alpha^*} \right\} \\
& \quad =  \left\{g^{D_{0}} \oplus \bigoplus_{\alpha =1}^\Lambda g^{\overline{D}_\alpha}, \nabla^{D_{0}} \oplus \bigoplus_{\alpha=1}^\Lambda \nabla^{\overline{D}_\alpha}, \nabla^{D_{0}^*} \oplus \bigoplus_{\alpha=1}^\Lambda  \nabla^{\overline{D}_\alpha^*} \right\} \quad .
\end{align*}

The closure of simplex of mixture coefficients $\overline{S}_0$ can be endowed with a dually flat dualistic structure (Remark \ref{rmk:s0struct} and Section \ref{sec:theo}). If the mixture component families $S_1,\ldots, S_\Lambda$ on orientation-preserving open cover $\left\{\left(\rho_\alpha,W_\alpha\right) \right\}_{\alpha=1}^\Lambda$ over totally bounded $V\subset M$ are all dually flat, then so is $\tilde{S}_1,\ldots, \tilde{S}_\Lambda$ (See Corollary \ref{df}). In particular, by Theorem \ref{thm:productdual}, this implies $\mathcal{L}_V = \overline{S}_0 \times \tilde{S}_1 \times \cdots \tilde{S}_\Lambda$ is also dually flat.
\end{remark}

Finally, consider $\left( \overline{S}_0, g_0, \nabla^0,\nabla^{0^*} \right),\left( \tilde{S}_1, \tilde{g}_1, \tilde{\nabla}^1,\tilde{\nabla}^{1^*} \right),\ldots, \left(\tilde{S}_\Lambda, g_\Lambda, \tilde{\nabla}^\Lambda,\tilde{\nabla}^{\Lambda^*} \right)$ as dually flat manifolds with their corresponding dualistic structures. \footnote{Please refer to the beginning of this section (\ref{Subsection:GeoLv}) for detailed discussion of the induced dualistic structure on $\tilde{S}_\alpha$ for $\alpha \in \left[1,\ldots,\Lambda\right]$.} We show that the ``mixture" divergence defined in Equation \eqref{productdivergence} is in fact the canonical divergence \cite{Amari2000} of dually flat manifold $\mathcal{L}_V$.

\begin{theorem}
If $\overline{S}_0,S_1,\ldots,S_\Lambda$ are all dually flat, the divergence function $D$ defined in Equation \eqref{productdivergence} is the canonical divergence of $\mathcal{L}_V = \overline{S}_0 \times \tilde{S}_1 \times \cdots \tilde{S}_\Lambda$ with respect to product dually flat dualistic structure:
\begin{gather*}
\left\{g_0 \oplus \bigoplus_{\alpha =1}^\Lambda \tilde{g}_\alpha, \nabla^0 \oplus \bigoplus_{\alpha=1}^\Lambda \tilde{\nabla}^\alpha, \nabla^{0^*} \oplus \bigoplus_{\alpha=1}^\Lambda  \tilde{\nabla}^{\alpha^*} \right\} \quad .
\end{gather*}
\end{theorem}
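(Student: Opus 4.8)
The plan is to reduce the statement to the closed form of the canonical divergence of a dually flat manifold, combined with the product decomposition established in Theorem \ref{thm:productdual} and Proposition \ref{prop:lvdstruct}. Recall that on a dually flat manifold with $\nabla$-affine coordinates $\theta$, dual $\nabla^*$-affine coordinates $\eta$, potential $\psi$, and Legendre--Fr\'echet transform $\psi^\dagger$, the canonical divergence is $\psi(p) + \psi^\dagger(q) - \langle \theta(p), \eta(q) \rangle$ (Equation \eqref{eqn:div}). By Proposition \ref{prop:lvdstruct} and Theorem \ref{thm:productdual}, under the dual-flatness hypotheses $\mathcal{L}_V$ is dually flat with the product dualistic structure, and its $\nabla^D$-affine coordinate chart is the concatenation $(\theta^0,\tilde{\theta}^1,\ldots,\tilde{\theta}^\Lambda)$ of the affine charts of the factors, with dual coordinates the corresponding concatenation $(\eta^0,\tilde{\eta}^1,\ldots,\tilde{\eta}^\Lambda)$.

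First I would verify that the potential of the product structure factorizes additively. Setting $\Psi := \psi_0 + \sum_{\alpha=1}^\Lambda \tilde{\psi}_\alpha$, the mixed second $\nabla^D$-derivatives across distinct factors vanish by Equation \eqref{nabladsplit} (a consequence of condition \ref{cond:prod2}), so the $\nabla^D$-Hessian of $\Psi$ is block-diagonal and reproduces exactly $g^{D_{0}} \oplus \bigoplus_{\alpha=1}^\Lambda g^{\overline{D}_\alpha}$; hence $\Psi$ is a valid potential for $\mathcal{L}_V$. Its differential $\overline{\partial}_i \Psi$ then equals $\eta^0$ on the $\overline{S}_0$-block and $\tilde{\eta}^\alpha$ on each $\tilde{S}_\alpha$-block, confirming the concatenation of dual coordinates. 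Consequently the Legendre transform factorizes, $\Psi^\dagger = \psi_0^\dagger + \sum_{\alpha=1}^\Lambda \tilde{\psi}_\alpha^\dagger$, and the canonical pairing splits as $\langle (\theta^0,\tilde{\theta}^1,\ldots),(\eta^0,\tilde{\eta}^1,\ldots) \rangle = \langle \theta^0, \eta^0 \rangle + \sum_{\alpha=1}^\Lambda \langle \tilde{\theta}^\alpha, \tilde{\eta}^\alpha \rangle$.

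Next I would substitute these factorizations into the canonical form. The canonical divergence of $\mathcal{L}_V$ at $(\tilde{p},\tilde{q})$ then equals $\bigl(\psi_0(\varphi) + \psi_0^\dagger(\varphi') - \langle \theta^0(\varphi), \eta^0(\varphi') \rangle\bigr) + \sum_{\alpha=1}^\Lambda \bigl(\tilde{\psi}_\alpha(\tilde{p}_\alpha) + \tilde{\psi}_\alpha^\dagger(\tilde{q}_\alpha) - \langle \tilde{\theta}^\alpha(\tilde{p}_\alpha), \tilde{\eta}^\alpha(\tilde{q}_\alpha) \rangle\bigr)$. The first bracket is, by the dual-flatness hypothesis on $\overline{S}_0$, the canonical (Bregman) divergence $D_{0}$ of $\overline{S}_0$; and by Proposition \ref{computenaturality} together with the canonical form in Equation \eqref{eqn:div}, each summand of the second term is precisely the induced divergence $\overline{D}_\alpha$ on the dually flat $\tilde{S}_\alpha$. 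Hence the canonical divergence of $\mathcal{L}_V$ coincides term by term with $D_{0}(\varphi,\varphi') + \sum_{\alpha=1}^\Lambda \overline{D}_\alpha(\tilde{p}_\alpha,\tilde{q}_\alpha)$, which is exactly $D$ of Equation \eqref{productdivergence}; since the canonical divergence of a dually flat manifold is uniquely determined by its structure \cite{Amari2000}, this proves the claim.

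The main obstacle I anticipate is the bookkeeping in the second step: rigorously establishing that the product $\nabla^D$-affine chart is genuinely the concatenation of the factor charts and that $\Psi$ as defined is an admissible potential, i.e.\ that no cross-terms survive in the Legendre duality. This is exactly where condition \ref{cond:prod2} and the vanishing of the mixed Christoffel symbols in Equation \eqref{nabladsplit} are essential; once the block-diagonal Hessian structure is in hand, the remainder is a direct substitution into the canonical-divergence formula.
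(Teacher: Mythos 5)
Your proposal is correct and follows essentially the same route as the paper: both arguments rest on the additive potential $\psi_0 + \sum_\alpha \tilde{\psi}_\alpha$, the splitting of the Legendre--Fenchel transform into $\psi_0^\dagger + \sum_\alpha \tilde{\psi}_\alpha^\dagger$ via the functional independence guaranteed by condition \ref{cond:prod2}, and the decomposition of the canonical pairing over the product coordinates. The only cosmetic difference is direction — you build the canonical divergence of the product and match it to $D$, while the paper expands $D$ into canonical form and verifies the Legendre relation between its first two groups — but the key computation is identical.
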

\begin{proof}
Let $\left( \theta^0,\eta_0 \right)$ denote the local $g_0$-dual coordinates of $\overline{S}_0$, and let $\left(\tilde{\theta}^\alpha\right)$ denote the local $\tilde{\nabla}^\alpha$-affine coordinates of $\tilde{S}_\alpha$ for $\alpha \in \left[1,\ldots, \Lambda\right]$. 

By the discussion in Section \ref{Section:InducedDualisticGeo}, let $\tilde{\psi}_\alpha$ denote the pulled-back potential function on $\tilde{S}_\alpha$ defined by local coordinates $\left(\tilde{\theta}^\alpha\right)$  and pulled-back metric $\tilde{g}_\alpha$ on $\tilde{S}_\alpha$. The $\tilde{g}_\alpha$-dual local coordinates to $\left(\tilde{\theta}^\alpha\right)$ can be defined via induced potential function $\tilde{\psi}_\alpha$ by: $\left( \tilde{\eta}_\alpha^i := \frac{\partial}{\partial \theta^\alpha_i} \tilde{\psi}_\alpha \right)$.

Since $\overline{S}_0$ and $\tilde{S}_\alpha$ are all dually flat for $\alpha \in \left[1,\ldots, \Lambda \right]$, we can write the divergences $D_{0}$ and $\overline{D}_\alpha$ of $\overline{S}_0$ and $\tilde{S}_\alpha$ in the canonical form \cite{Amari2000} respectively as follows:
\begin{align*}
D_{0}\left(\left\{\varphi_\alpha\right\}_{\alpha=1}^\Lambda,\left\{\varphi'_\alpha\right\}_{\alpha=1}^\Lambda\right) &:= \psi_0\left(\left\{\varphi_\alpha\right\}_{\alpha=1}^\Lambda \right) + \psi_0^\dagger\left(\left\{\varphi'_\alpha\right\}_{\alpha=1}^\Lambda \right) \\
& \quad - \left\langle \theta^0\left(\left\{\varphi_\alpha\right\}_{\alpha=1}^\Lambda \right), \eta_0\left(\left\{\varphi'_\alpha\right\}_{\alpha=1}^\Lambda\right) \right\rangle \quad , \\
\overline{D}_\alpha(\tilde{p}_\alpha, \tilde{q}_\alpha) &:= \tilde{\psi}_\alpha (\tilde{p}_\alpha) + \tilde{\psi}_\alpha^\dagger(\tilde{q}_\alpha) - \left\langle \tilde{\theta}^\alpha(\tilde{p}_\alpha) , \tilde{\eta}_\alpha(\tilde{q}_\alpha) \right\rangle \quad ,
\end{align*}
where $\psi_0$ denotes the canonical divergence on dually flat manifold $\overline{S}_0$ (see for example Bregman divergence defined in Equation \eqref{eqn:b-div-clS0}).

For $\alpha = 1,\ldots, \Lambda$, the functions $\psi^\dagger_0$ and $\tilde{\psi}_\alpha^\dagger$ denote the Legendre-Fenchel transformation of $\psi_0$ and $\tilde{\psi}_\alpha$ given by the following equations, respectively:
\begin{align*}
\psi_0^\dagger\left(\left\{\varphi'_\alpha\right\}_{\alpha=1}^\Lambda \right)  &:= \sup_{\varphi\in \overline{S}_0} \left\{ \left\langle \theta^0\left(\left\{\varphi_\alpha\right\}_{\alpha=1}^\Lambda \right), \eta_0\left(\left\{\varphi'_\alpha\right\}_{\alpha=1}^\Lambda\right) \right\rangle - \psi_0\left(\left\{\varphi_\alpha\right\}_{\alpha=1}^\Lambda \right) \right\} \quad , \\
\tilde{\psi}_\alpha^\dagger (\tilde{q}_\alpha) &:= \sup_{\tilde{p}_\alpha \in \tilde{S}_\alpha}\left\{\langle \tilde{\theta}^\alpha(\tilde{p}_\alpha) , \tilde{\eta}_\alpha(\tilde{q}_\alpha) \rangle - \tilde{\psi}_\alpha (\tilde{p}_\alpha) \right\} \quad .
\end{align*}
The divergence $D$ on $\mathcal{L}_V$ from Equation \eqref{productdivergence} can therefore be written as:
\begin{align}
\label{eqn:productdivergence:expand}
D\left( \tilde{p},\tilde{q} \right) &= D\left( \sum_{\alpha=1}^\Lambda \varphi_\alpha p_\alpha, \sum_{\alpha=1}^\Lambda \varphi'_\alpha q_\alpha \right) = D_{0}\left(\left\{\varphi_\alpha\right\}_{\alpha=1}^\Lambda,\left\{\varphi'_\alpha\right\}_{\alpha=1}^\Lambda\right) + \sum_{\alpha=1}^\Lambda \overline{D}_\alpha (\tilde{p}_\alpha, \tilde{q}_\alpha) \nonumber  \\ 
&= 
\psi_0\left(\left\{\varphi_\alpha\right\}_{\alpha=1}^\Lambda \right)  + \psi_0^\dagger\left(\left\{\varphi'_\alpha\right\}_{\alpha=1}^\Lambda \right) 
- \left\langle \theta^0\left(\left\{\varphi_\alpha\right\}_{\alpha=1}^\Lambda \right), \eta_0\left(\left\{\varphi'_\alpha\right\}_{\alpha=1}^\Lambda\right) \right\rangle \nonumber \\
&\quad + \sum_{\alpha=1}^\Lambda \left(\tilde{\psi}_\alpha (\tilde{p}_\alpha) + \tilde{\psi}_\alpha^\dagger(\tilde{q}_\alpha) - \langle \tilde{\theta}^\alpha(\tilde{p}_\alpha) , \tilde{\eta}_\alpha(\tilde{q}_\alpha) \rangle \right) \nonumber \\ 
& = \left( \psi_0\left(\left\{\varphi_\alpha\right\}_{\alpha=1}^\Lambda \right)  + \sum_{\alpha=1}^\Lambda  \tilde{\psi}_\alpha (\tilde{p}_\alpha) \right) + \left( \psi_0^\dagger\left(\left\{\varphi'_\alpha\right\}_{\alpha=1}^\Lambda \right) + \sum_{\alpha=1}^\Lambda  \tilde{\psi}_\alpha^\dagger(\tilde{q}_\alpha) \right)  \nonumber \\
& \quad -  \left( \left\langle \theta^0\left(\left\{\varphi_\alpha\right\}_{\alpha=1}^\Lambda \right), \eta_0\left(\left\{\varphi'_\alpha\right\}_{\alpha=1}^\Lambda\right) \right\rangle + \sum_{\alpha=1}^\Lambda \langle \tilde{\theta}^\alpha(\tilde{p}_\alpha) , \tilde{\eta}_\alpha(\tilde{q}_\alpha) \rangle \right) \quad .
\end{align}
The first and second part of $D$ in Equation \eqref{eqn:productdivergence:expand} are convex due to linearity of derivative, the independence of parameters given by condition \ref{cond:prod2}, and the fact that the Hessians of potential functions $\psi_0,\tilde{\psi}_1,\ldots,\tilde{\psi}_\Lambda$ are positive semi-definite. The third part of $D$ in Equation \eqref{eqn:productdivergence:expand} is a sum of inner products, which is again an inner product on the product parameter space in $\Xi_0 \times \Xi_1\times \cdots \Xi_\Lambda$. It remains to show that the Legendre–Fenchel transformation of the first component of $D$ is the second component in Equation \eqref{eqn:productdivergence:expand}.

Recall that  the parameters of the mixture distribution $\tilde{p}(x,\xi) \in \mathcal{L}_V$ are collected in $\left( \xi_i \right)_{i=1}^d := \left(\theta^0,\tilde{\theta}^1,\ldots,\tilde{\theta}^\Lambda\right) \in \Xi_0 \times \Xi_1 \times \cdots \times \Xi_\Lambda$, where $d:=\text{dim}\left(\Xi_0 \times \Xi_1 \times \cdots \times \Xi_\Lambda\right)$ \footnote{Recall that coordinates $\theta^0$ of mixture coefficients are not pulled-back.}. By linearity of derivative and condition \ref{cond:prod2}, the $g^D$-coordinate dual to $(\xi_i)_{i=1}^d$ is just the dual coordinates of $\overline{S}_0,\tilde{S}_1,\ldots,\tilde{S}_\Lambda$ expressed in product form: $\left(\eta_0,\tilde{\eta}_1,\ldots,\tilde{\eta}_\Lambda \right) = \left( \frac{\partial}{\partial \xi_i}\left( \psi_0\left(\left\{\varphi_\alpha\right\}_{\alpha=1}^\Lambda \right)  + \sum_{\alpha=1}^\Lambda  \tilde{\psi}_\alpha (\tilde{p}_\alpha) \right) \right)$. 

For simplicity, we let  $\left\langle \theta^0(\varphi), \eta_0(\varphi') \right\rangle := \left\langle \theta^0\left(\left\{\varphi_\alpha\right\}_{\alpha=1}^\Lambda \right), \eta_0\left(\left\{\varphi'_\alpha\right\}_{\alpha=1}^\Lambda\right) \right\rangle$. The dual potential of $\left( \psi_0\left(\left\{\varphi_\alpha\right\}_{\alpha=1}^\Lambda \right)  + \sum_{\alpha=1}^\Lambda  \tilde{\psi}_\alpha (\tilde{p}_\alpha) \right)$ ( the first part of $D$ in Equation \eqref{eqn:productdivergence:expand}) is given by the following Legendre-Fenchel transformation:

\begin{align*}
&\left( \psi_0\left(\left\{\varphi_\alpha\right\}_{\alpha=1}^\Lambda \right) + \sum_\alpha \tilde{\psi}_\alpha (\tilde{p}_\alpha) \right)^\dagger = \\  
&\sup_{\varphi\in \overline{S}_0,\tilde{p}_\alpha \in \tilde{S}_\alpha} 
 \left\{\left(\left\langle \theta^0(\varphi), \eta_0(\varphi') \right\rangle + \sum_{\alpha=1}^\Lambda\left\langle \tilde{\theta}^\alpha(\tilde{p}_\alpha) , \tilde{\eta}_\alpha(\tilde{q}_\alpha) \right\rangle \right)  - \left( \psi_0\left(\left\{\varphi_\alpha\right\}_{\alpha=1}^\Lambda \right) + \sum_\alpha  \tilde{\psi}_\alpha (\tilde{p}_\alpha) \right) \right\}\\
&= \sup_{\varphi\in \overline{S}_0,\tilde{p}_\alpha \in \tilde{S}_\alpha} \left\{ 
\left(\left\langle \theta^0(\varphi), \eta_0(\varphi') \right\rangle - \psi_0\left(\left\{\varphi_\alpha\right\}_{\alpha=1}^\Lambda \right) \right)
+  \sum_{\alpha=1}^\Lambda \left(\left\langle \tilde{\theta}^\alpha(\tilde{p}_\alpha) , \tilde{\eta}_\alpha(\tilde{q}_\alpha) \right\rangle - \tilde{\psi}_\alpha (\tilde{p}_\alpha) \right) \right\} \\
&= \sup_{\varphi\in \overline{S}_0} \left\{\left\langle \theta^0(\varphi), \eta_0(\varphi') \right\rangle - \psi_0\left(\left\{\varphi_\alpha\right\}_{\alpha=1}^\Lambda \right) \right\} + \sum_{\alpha=1}^\Lambda \sup_{\tilde{p}_\alpha \in \tilde{S}_\alpha}\left\{\left\langle \tilde{\theta}^\alpha(\tilde{p}_\alpha) , \tilde{\eta}_\alpha(\tilde{q}_\alpha) \right\rangle - \tilde{\psi}_\alpha (\tilde{p}_\alpha) \right\} \\
&= \psi_0^\dagger\left(\left\{\varphi'_\alpha\right\}_{\alpha=1}^\Lambda \right) + \sum_{\alpha=1}^\Lambda \tilde{\psi}_\alpha^\dagger (\tilde{q}_\alpha) \quad .
\end{align*}

The third equality follows from the functional independence of $\left\{\varphi_\alpha\right\}_{\alpha=1}^\Lambda \in \overline{S}_0$ and  $\tilde{p}_\alpha$'s in $\tilde{S}_\alpha$.  There the Legendre-Fenchel transform of the first component of $D$ is exactly the second component of $D$, and $D$ is the canonical divergence of the family of mixture densities:
\begin{align*}
\left( \mathcal{L}_V, g_0 \oplus \bigoplus_{\alpha =1}^\Lambda \tilde{g}_\alpha, \nabla^0 \oplus \bigoplus_{\alpha=1}^\Lambda \tilde{\nabla}^\alpha, \nabla^{0^*} \oplus \bigoplus_{\alpha=1}^\Lambda  \tilde{\nabla}^{\alpha^*} \right) \quad .
\end{align*}
\end{proof}

\subsection{Towards a Stochastic Optimization method on Riemannian Manifolds}

%

In this section, we extended the notion of locally inherited probability densities (Section \ref{app:localdistn}) beyond the normal neighbourhood, and constructed a family of parametrized mixture densities $\mathcal{L}_V$ on totally bounded subsets $V$ of $M$. We derived the geometrical structure of the family of mixture densities $\mathcal{L}_V$ and showed that $\mathcal{L}_V$ is a product statistical manifold of the simplex of mixture coefficients and the families of mixture component densities.

The significance of the product structure of $\mathcal{L}_V$ is twofold: 
it provides a computable geometrical structure for finitely parametrized statistical model on $M$ that extends beyond the confines of a single normal neighbourhood, and it allows us to handle statistical parameter estimations and computations of mixture coefficients and mixture components independently.

The product Riemannian structure of mixture densities $\mathcal{L}_V$ over Riemannian manifolds provides us with a geometrical framework to tackle the optimization problem described in Section \ref{sec:intro}. In the remainder of the paper, we apply this framework to construct a novel stochastic derivative-free optimization algorithm on Riemannian manifolds using the statistical geometry of the decision space and the Riemannian geometry of the search space, overcoming the local restrictions of the RSDFO under the Riemannian adaptation approach. 


%

\section{Extended Riemannian Stochastic Derivative-Free Optimization Algorithms}
\label{ch:paper2}

Towards the end of the Section \ref{sec:principle:manopt}, we discussed the local restrictions of Riemannian adaptations of Euclidean optimization algorithms. On Riemannian adaptation of Stochastic Derivative-Free Optimization (SDFO) algorithms such as Riemannian CMA-ES \cite{colutto2010cma}, this issue is particularly prevalent, as the local restrictions are imposed by \textit{both} the Riemannian adaptation framework of optimization algorithms (Section \ref{sec:principle:manopt}) and the notion of locally inherited probability densities (\cite{pennec2006intrinsic} and Section \ref{app:localdistn}).

Tangent spaces on general manifolds are disjoint spaces, families of locally inherited probability densities over them are \textit{different} statistical manifolds. Whereas on Euclidean spaces $\mathbb{R}^n$ the tangent space can be canonically identified with one another by translation, a family of finitely parametrized probability densities on $\mathbb{R}^n$ can be regarded as a \textit{single} statistical manifold. This makes it difficult to study Riemannian adaptation of SDFO from the Information Geometric perspective as its Euclidean counterparts.

In order to overcome the local restrictions of RSDFO and to extend it to a more global scale, we require families of finitely parametrized probability densities defined beyond the normal neighbourhood on Riemannian manifolds. This is accomplished by the notion of mixture densities over totally bounded subsets on Riemannian manifolds described in Section \ref{Subsection:MixtureDensity}.
%


Using the product Riemannian structure of mixture densities, in this section we propose a novel algorithm -- Extended RSDFO, which extends and augments an RSDFO core and addresses the local restriction of RSDFO using both the intrinsic Riemannian geometry of the manifold $M$ (search space) and the product statistical Riemannian geometry of families of mixture densities over $M$ (decision spaces). Moreover, the product statistical geometry of mixture densities also allows us to study the evolutionary step and convergence of Extended RSDFO from a geometrical perspective.

The remainder of the section is outlined as follows:

\begin{enumerate}
\item In Section \ref{sec:ereda}, we present an overview of the proposed novel algorithm -- Extended RSDFO, which addresses the local restrictions of RSDFO. 
\item In Section \ref{sec:global:distnmfold}, we recall the notion of parametrized mixture densities over totally bounded subsets of the search space manifold $M$ and describe it in the context of Extended RSDFO.
\item In Section \ref{sec:extrsdfo}, we describe Extended RSDFO in detail. We first describe Extended RSDFO in algorithm form, then we discuss the components of Extended RSDFO in detail. We show that the expected fitness of solutions in Extended RSDFO is non-decreasing.
\item In Section \ref{sec:theo}, we discuss the geometry of the evolutionary step of Extended RSDFO using a novel metric on the simplex of mixture coefficients, which allows us to study Extended RSDFO from the geometrical perspective with Riemannian geometry of the base space and Information Geometry of the decision space.
\item Finally in Section \ref{sec:conv}, we discuss the convergence behaviors of Extended RSDFO. In particular, we show that Extended RSDFO converges in \textit{finitely many} steps in compact connected Riemannian manifolds.
\end{enumerate}

\subsection{Overview of Extended RSDFO on Riemannian manifolds}
\label{sec:ereda}
In this section we propose a novel algorithm,  Extended RSDFO to address the local restrictions of RSDFO discussed in Section \ref{sec:rsdfo}.

For each iteration of RSDFO algorithm (Algorithm \ref{alg:greda}), search information is generated locally within the normal neighbourhood of each search iterate. This means at each iteration, the search region of the algorithm is confined by the normal neighbourhood of the search iterate.
%
In order to overcome this local restriction, the proposed algorithm considers multiple search centroids simultaneously, instead of a single search iterate. We optimize over a set of overlapping geodesic balls covering the manifold centering around each of the centroids, instead of the centroids themselves. This process is guided by families of parametric mixture densities over the manifold, allowing us to relate probability distributions across different tangent spaces.

We fix a core RSDFO algorithm throughout the rest of the discussion, and we summarize briefly a single step of the proposed algorithm as follows: at each iteration, Extended RSDFO initiates with a finite set of centroids on the Riemannian manifold. The search region of the iteration is thus formed by the union of the set of geodesic balls of injectivity radius centered at the centroids. We then perform one iteration of the chosen RSDFO algorithm for \textit{each} centroid, which then generates a new set of centroids (and the corresponding geodesic balls of injectivity radius) in the manifold. In addition, another set of ``exploration" centroids are also generated from the boundary of explored region. The sets of centroids are then compared based on the expected fitness over the geodesic balls centered at each centroid, and the fittest centroids are carried over to the next iteration. Each ``stream" of local RSDFO search shares a parametrized family of locally inherited densities, whereas a boundary exploration point initiates a new ``stream". An illustration of the above discussion is summarized in Figure \ref{fig:full2}.

\begin{figure*}[h!]
    \centering
    \begin{subfigure}[t]{0.32\textwidth}
        \centering
        \includegraphics[width=1\textwidth]{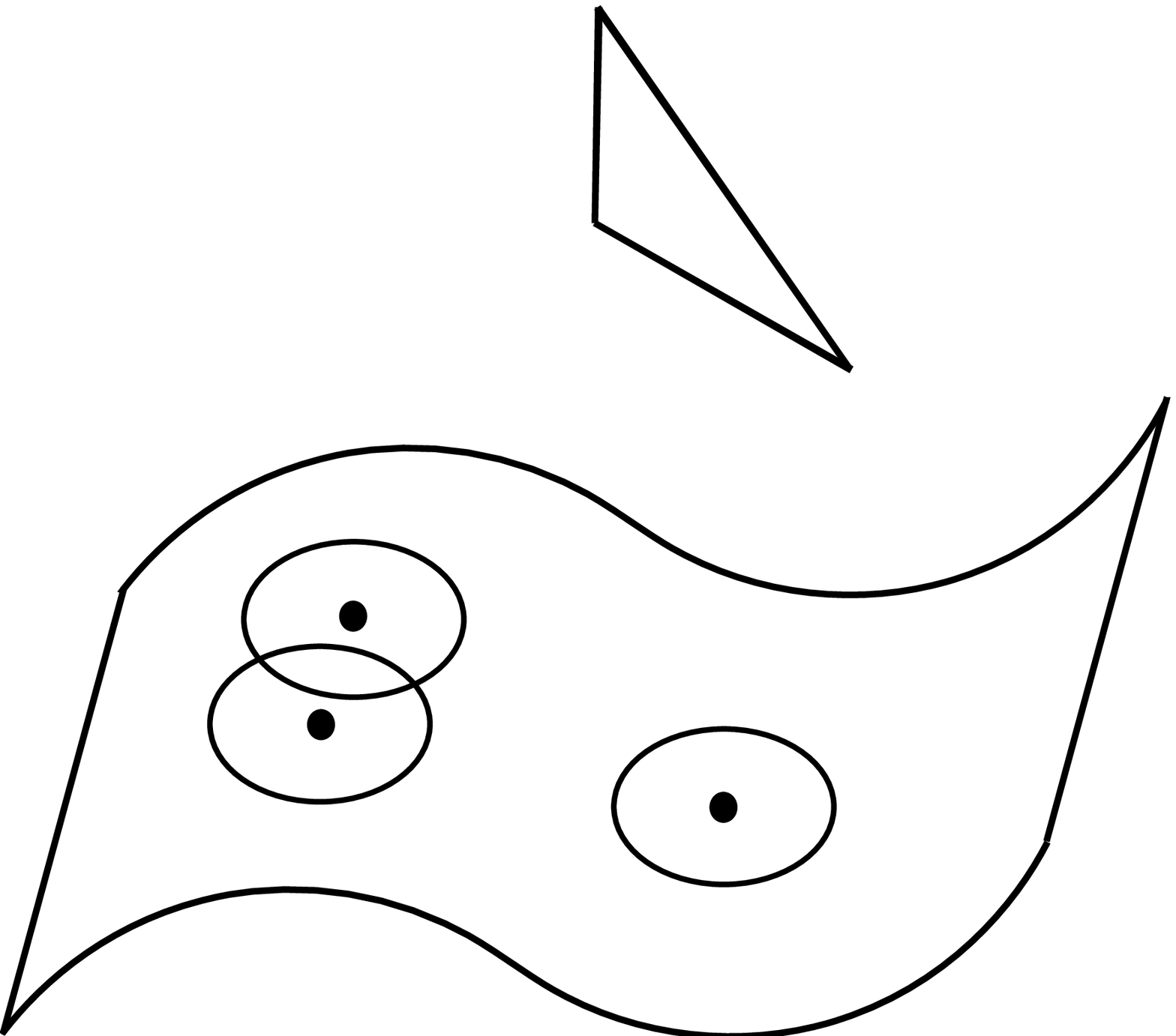} 
        \caption{Initiation.}    \label{fig:full2:a}
    \end{subfigure}%
    ~ 
    \begin{subfigure}[t]{0.32\textwidth}
        \centering
        \includegraphics[width=1\textwidth]{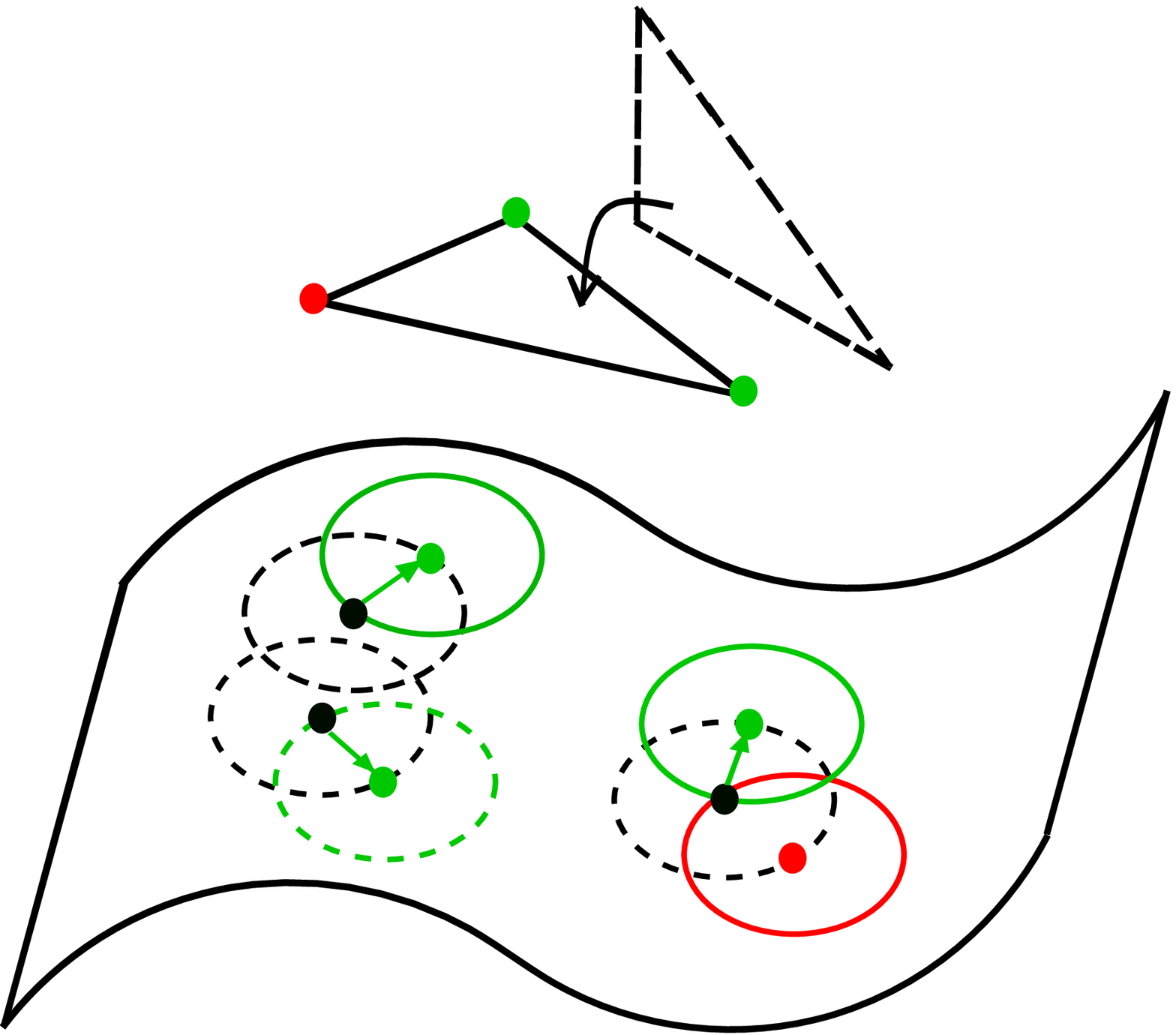} 
        \caption{Evolution of mixture coefficients and mixture components.}    \label{fig:full2:b}
    \end{subfigure}
        ~ 
    \begin{subfigure}[t]{0.32\textwidth}
    \centering
       \includegraphics[width=1\textwidth]{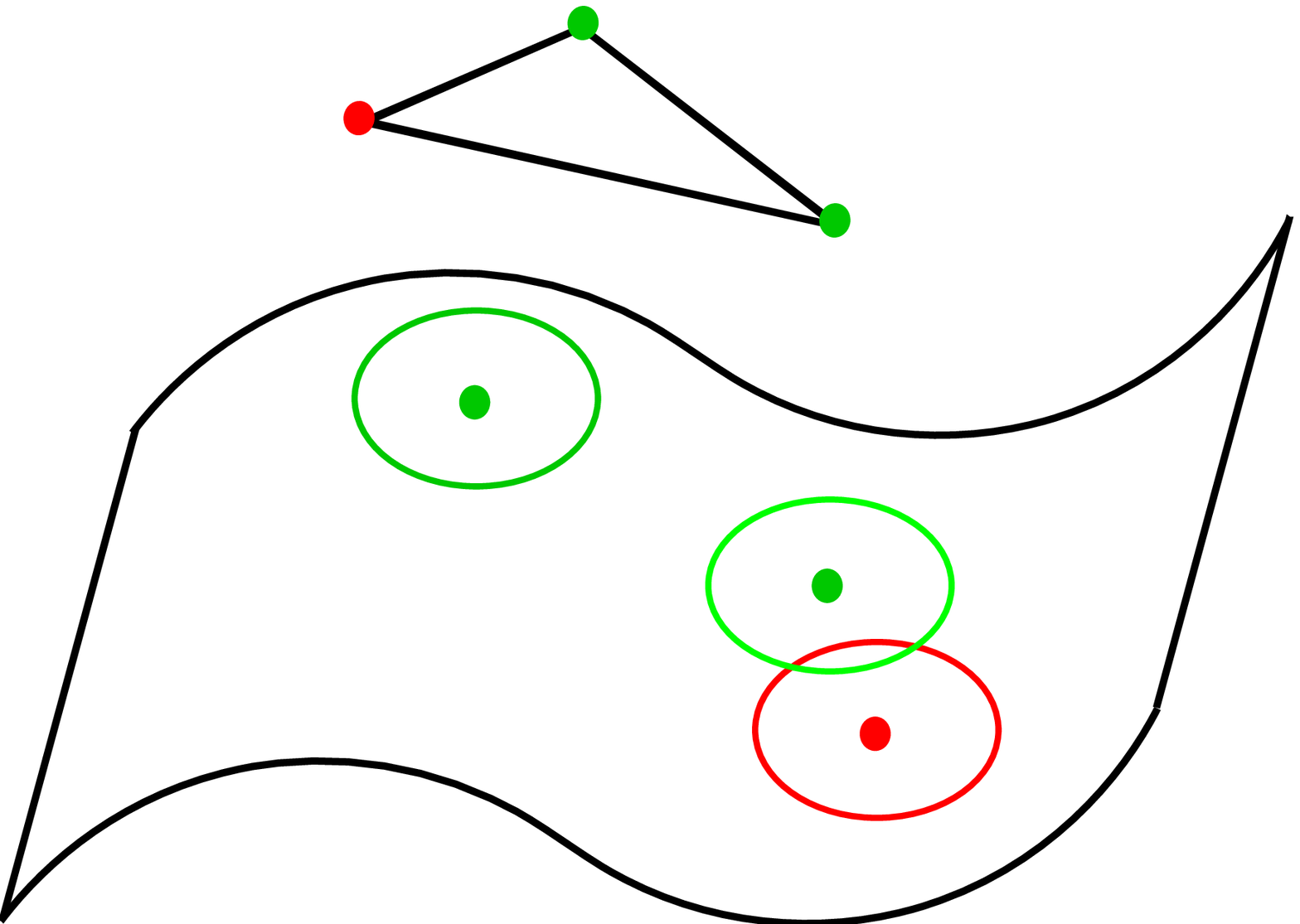} 
        \caption{Culling of centroids.}\label{fig:full2:c}
    \end{subfigure}
    \caption{Illustration (from left to right) of the ``full" evolutionary step of Extended RSDFO. At each iteration, Extended RSDFO begins with a set of centroids (black) on the manifold (Figure \ref{fig:full2:a} ). In Figure \ref{fig:full2:b}, each centroid generates a new ``offspring" with a step of RSDFO ``core" (green). An additional centroid (red), is generated on the boundary of the explored region. The centroids are then culled according their relative expected fitness. The culled centroids are marked by the dotted lines.  The remaining centroids move onto the next iteration, as illustrated on the right image (Figure \ref{fig:full2:c} ). \protect\footnotemark}
    \label{fig:full2}
\end{figure*}
\interfootnotelinepenalty=10000

\footnotetext{It is worth noting that for this illustration (Figure \ref{fig:full2}), the ``full" interim simplex in the middle image consists of $7$ vertices, where each vertex corresponds to the mixture coefficient of each of the $7$ interim centroids (initial, generated, boundary). The ``original" simplex (left picture) and the ``target" simplex share at most $(3+4)-4=3$ number of vertices (in this case none). For each step of the algorithm we transverse through a different interim simplex in this fashion. One can picture the evolutionary step of the algorithm in the context of simplex of mixture coefficients as jumping from one subsimplex to another.}

%

The objective of the extended algorithm is to find a \textit{set} of fittest centroids (and their corresponding geodesic balls), where the weight of each centroid is determined by the mixture coefficients of the overarching mixture densities, as we will show in the remainder of the section.

Furthermore, while the local families of parametrized densities on each geodesic ball may be different, they are related by an overarching family of \emph{mixture distributions} over the union of the geodesic balls and an accumulative open cover of $M$. This allows us to study the evolutionary step from an Information Geometric perspective and analyze the convergence behaviour.

It is also worth noting that, whilst parallel transport is used in RSDFO algorithms to inherit the statistical parameter from one tangent space to another, it is not completely necessarily in Extended RSDFO. The special case of Extended RSDFO algorithm (discussed in Section \ref{sec:theo}) shows that with sufficiently small search radius $j_x \ll \operatorname{inj}(x)$ and a sufficiently large number of geodesic balls covering the manifold, the fitness landscape can be described by varying the mixture coefficients, while keeping the mixture components fixed.

For each iteration, a typical search region of Extended RSDFO is given by a union of geodesic balls, forming a totally bounded subset in the searh space manifold $M$. In the next section, we recall the notion of parametrized mixture distributions on totally bounded subsets of Riemannian manifolds described in Section \ref{Subsection:MixtureDensity} in the context of Extended RSDFO \footnote{The detailed construction and dualistic geometry \cite{Amari2000} of the mixture densities on Riemannian manifolds is detailed in Section \ref{Subsection:MixtureDensity}.}. 

\subsection{Parametrized Mixture Distribution over search regions of Extended RSDFO}
\label{sec:global:distnmfold}


%
We recall from Section \ref{Subsection:MixtureDensity} the notion of finitely parametrized mixture densities on totally bounded subsets of $M$: Let $(M,g)$ be an smooth $n$-dimensional Riemannian manifold, an orientation-preserving open cover of $M$ is an at-most countable set of pairs $\mathcal{E}_M := \left\{\left(\rho_\alpha,U_\alpha \right) \right\}_{\alpha \in \Lambda_M}$  satisfying: 
\begin{enumerate}
\item $U_\alpha\subset \mathbb{R}^n$, $\rho_\alpha: U_\alpha \rightarrow M$ are orientation preserving diffeomorphisms for each $\alpha \in \Lambda_M$ 
\item the set $\left\{ \rho_\alpha(U_\alpha) \right\}_{\alpha \in \Lambda_M}$ is an open cover of $M$.
\end{enumerate} 



%
%
%
%

For each $x_\alpha \in M$, let $j_{x_\alpha} \leq \operatorname{inj}(x_\alpha)$ and let $\mathcal{E}_M := \left\{ \left( \exp_{x_\alpha}, B(\vec{0},j_{x_\alpha}) \right) \right\}_{\alpha \in \Lambda_M}$ denote an orientation-preserving open cover of $M$ indexed by at most countable  $\Lambda_M$.\footnote{This is due to the fact that manifolds are second-countable, thus Lindelof, meaning all open cover has an at-most countable sub-cover.} Given a finite set of centroids $\left\{ x_\alpha \right\}_{\alpha \in \Lambda_V}$, indexed by a finite subset $\Lambda_V \subset \Lambda_M$, a typical search region of Extended RSDFO can be represented by:
\begin{align}
\label{eqn:typicalsearchregion}
V := \bigcup_{\alpha \in \Lambda_V}  \exp_{x_\alpha}\left( B(\vec{0},j_{x_\alpha}) \right) \subset M \quad .
\end{align}
Let $\mathcal{E}_V := \left\{ \left( \exp_{x_\alpha}, B(\vec{0},j_{x_\alpha}) \right) \right\}_{\alpha \in \Lambda_V} \subset \mathcal{E}_M$, denote the finite sub-cover of $\mathcal{E}_M$ over $V$. 
For each $\alpha \in \Lambda_V$, let $m_\alpha \in \mathbb{N}$, and let $S_\alpha := \left\{p(\cdot \vert\theta^\alpha) \vert \theta^\alpha \in \Xi_\alpha \subset \mathbb{R}^{m_\alpha} \right\} \subset \operatorname{Prob}(B(\vec{0},j_{x_\alpha})) $ denote a family of finitely parametrized probability densities over $B(\vec{0},j_{x_\alpha}) \subset T_{x_\alpha} M$. 

By the discussion in Section \ref{app:localdistn}, we can define for each $\alpha \in \Lambda_V$ a family of locally inherited finitely parametrized densities over \textit{each} $\exp_{x_\alpha}\left(B(\vec{0},j_{x_\alpha}) \right) \subset M$: 
\begin{align*}
\tilde{S}_\alpha := \exp_\alpha^{-1^*}S_\alpha = \left\{ \tilde{p}(\cdot \vert \theta^\alpha) = \exp^{-1^*}_\alpha p(\cdot \vert\theta^\alpha) \right\}  \quad .
\end{align*} 


Consider the closure of the simplex of mixture coefficients:  
\begin{align}
\label{eqn:s:closure}
\overline{S}_0 := \left\{ \left\{\varphi_\alpha\right\}_{\alpha \in \Lambda_V} \, \middle| \, \varphi_\alpha \in \left[0,1\right], \sum_{\alpha \in \Lambda_V} \varphi_\alpha = 1 \right\} \subset \left[0,1\right]^{\vert\Lambda_V\vert-1} \quad .
\end{align}

A family of parametrized mixture densities on $V$ can thus be defined by: 
\begin{align}
\label{eqn:mixture-distn-mfold}
\mathcal{L}_V := \left\{\sum_{\alpha \in \Lambda_V} \varphi_\alpha \cdot \tilde{p}(\cdot \vert \theta^\alpha) \, \middle| \, \left\{\varphi_\alpha\right\}_{\alpha \in \Lambda_V} \in \overline{S}_0, \, \tilde{p}(\cdot \vert \theta^\alpha) \in \tilde{S}_\alpha \right\} \quad .
\end{align}

By the discussion in Section \ref{Subsection:DualGeoLv}, the set of mixture densities $\mathcal{L}_V$ is a product Riemannian manifold given by: $\mathcal{L}_V = \overline{S}_0 \times \bigoplus_{\alpha \in \Lambda_V} \tilde{S}_\alpha$. \footnote{The conditions required  in Section \ref{sec:lvsmoothmfold} are naturally satisfied by the construction of $V$ and the families of inherited densities described in this paper.}

\subsection{Extended RSDFO} 
\label{sec:extrsdfo}
%



Let $(M,g)$ be a Riemannian manifold, and let the function $f: M \rightarrow \mathbb{R}$ denote the objective function of a \emph{maximization} problem over $M$. We assume, without loss of generality, that $f$ is a strictly positive function, i.e. $f(x) >0 $ for all $x\in M$. The minimization case is discussed in Remark \ref{rmk:evo_min}. 

Given a point $x_\alpha \in M$, the \textbf{expected fitness} \cite{wierstra2008fitness}(or stochastic relaxation in \cite{malago2015gradient}) of $f$ over probability density $p(x \vert \theta^\alpha)$ locally supported in geodesic ball  $\exp_{x_\alpha}\left(B(\vec{0},j_{x_\alpha}) \right)$ is the real number \footnote{Note that this is an integral on the Riemannian manifold, for further detail see \cite{lee2001introduction}.}:
\begin{align}
\label{eqn:expectedfitness:ball}
    E_\alpha := \int_{\exp_{x_\alpha}\left(B(0,j_{x_\alpha}) \right)} f(x) \tilde{p}(x \vert \theta^\alpha) dx \quad .
\end{align}

In practice, this integral can be approximated using $\tilde{p}(x \vert \theta^\alpha)-$Monte Carlo samples for $x \in M$.

    
Choose and fix an RSDFO (Algorithm \ref{alg:greda}) method as the ``core", Extended RSDFO on Riemanian manifolds is summarized in Algorithm \ref{alg:ereda}. 

%

%

%
%
%


\begin{algorithm}[hbt!]
 \KwData{Initial set of centroids in $M$: $X^0 := \left\{x_\alpha \right\}_{\alpha \in \Lambda^0}$, $\Lambda^0\subset \Lambda_M$, initial set of parameters for the mixture distribution: $ \left\{ \varphi^0_\alpha, \theta_0^\alpha \right\}_{\alpha \in \Lambda^0}$. Set iteration count $k = 0$, choose non-increasing non-zero function $ 0 < \tau(k) \leq 1$. Integers $N_{cull}$, $N_{random} > 0$.}
 \While{stopping criterion not satisfied}{
	Generate $N_{random}$ non-repeating centroids $\overline{X}^{k+1}$ from the mixture distribution:
	\begin{align*}
	P_k(\cdot) = \left(1-\tau(k) \right) \cdot \sum_{\alpha \in \Lambda^k} \varphi^k_\alpha \delta_{x_\alpha} + \tau(k) \cdot U \quad ,
	\end{align*}
	where $\delta_{x_\alpha}$ is the delta function on $x_\alpha \in X^k$, and $U$ denote the exploration distribution described in Section \ref{sec:prac:expdistn} below  \label{alg:prac:generate} \;

	\textbf{Evolution of mixture components:} For each $x_\alpha \in \overline{X}^{k+1}$ perform one step of the chosen RSDFO, generate new set of centroids $\hat{X}^{k+1}$, and the corresponding statistical parameter $\theta^\alpha_{k+1}$ for each $x_\alpha \in \hat{X}^{k+1}$. \label{alg:prac:add} \; 
%
 
 	We now have interim centroids $X^k \cup \hat{X}^{k+1}$, indexed by  $\hat{\Lambda}^{k+1}$, where $\vert \hat{\Lambda}^{k+1} \vert =\vert X^k \cup \hat{X}^{k+1} \vert < \infty$. \label{alg:prac:interim} \; 
 	
	\textbf{Evolution of mixture coefficients:} Evaluate the expected fitness $E_\alpha$ for each $x_\alpha \in X^k \cup \hat{X}^{k+1}$.  \label{alg:prac:sr} \; 
	Preserve the fittest $N_{cull}$ centroids  $X^{k+1} := \left\{ x_\alpha\right\}_{\alpha \in \Lambda^{k+1}}$, indexed by $\Lambda^{k+1} \subset \hat{\Lambda}^{k+1}$. This defines the search region of the next iteration. \label{alg:prac:cull} \;
	
  Recalibration of mixture coefficients $\left\{\varphi_\alpha^{k+1} \right\}_{\alpha \in \Lambda^{k+1}}$ described in Equation \eqref{eqn:mixture-evo-reduced} in Section \ref{sec:prac:mixturecoeff}   \label{alg:prac:mixture}\;
  k = k+1\;
 } 
 \caption{Extended RSDFO}
 \label{alg:ereda}
\end{algorithm}
Note that in Line \ref{alg:prac:add} of Extended RSDFO (Algorithm \ref{alg:ereda}): each search centroid has its own parametrized family of locally inherited probability densities. The statistical parameters on the new centroids are parallel translated from their previous iterate, as described in Section \ref{sec:paralleltransport} and Algorithm \ref{alg:greda} in Section \ref{sec:principle:manopt}.






\subsubsection{Additional Parameters}
\label{sec:additionalparameter}
In addition to the native parameters inherited from the chosen RSDFO algorithm, Extended RSDFO requires three additional parameters: two positive integers $N_{random},N_{cull} \in \mathbb{N}_+$ to control the number of search centroids, and a non-increasing, non-zero function $\tau(k)$. The function $\tau(k)$ decays to an arbitrary small positive constant $1 \gg \epsilon_c > 0 $ as $k\rightarrow \infty$. That is, the sampling will eventually be mostly from $\sum_{\alpha \in \Lambda^k} \varphi^k_\alpha \delta_{x_\alpha}$.

\subsubsection{Evolutionary Step}
\label{sec:prac:mixturecoeff}

\begin{table*}[h]
\begin{tabular}{|p{0.09\textwidth}|l|l|p{0.09\textwidth}|l|p{0.35\textwidth}|}
\hline
Mixture family & Simplex                & Index             & Search region   & Centroids                & Description                                                                                                                                             \\ \hline
$\mathcal{L}_{V^k} $ & $\overline{S}^{k}_0$   & $\Lambda^k$       & $V^k$           & $X^k$                    & Generated by centroids from the $\left(k-1\right)^{th}$ iteration                                                                                                \\ \hline
$\mathcal{L}_{V^{k+1}} $ &$\overline{S}^{k+1}_0$ & $\Lambda^{k+1}$   & $V^{k+1}$       & $X^{k+1}$                & Generated by new centroids for the $\left( k+1\right)^{th}$ iteration from the $k^{th}$ iteration                                                                                 \\ \hline
$\mathcal{L}_{\hat{V}^{k+1}} $ & $\hat{S}_0^{k+1}$      & $\hat{\Lambda}^{k+1}$ & $\hat{V}^{k+1}$ & $X^k \cup \hat{X}^{k+1}$ & Generated by the interim centroids within the $k^{th}$ iteration.  \newline
$\hat{S}_0^{k+1} \supset \overline{S}^{k+1}_0 , \overline{S}^{k}_0$ \\ \hline
\end{tabular}
  \caption{Summary of notations related to closure of simplex of mixture coefficients in the evolutionary step of the $k^{th}$ iteration of Extended RSDFO.}
\label{table:evonotation}
\end{table*}

In this section, we discuss the evolutionary steps of Algorithm \ref{alg:ereda} in the context of family of mixture distributions described in Section \ref{sec:global:distnmfold} (illustrated in Figure \ref{fig:full2}). The geometrical interpretations and justifications will be fleshed out in Section \ref{sec:theo}.

In the $k^{th}$ iteration for $k>0$, a finite set of centroids $X^k := \left\{ x_\alpha \right\}_{\alpha \in \Lambda^k}$ is generated from the previous iteration on the Riemannian manifold, and a new set of centroids $\hat{X}^{k+1}$ is generated through lines \ref{alg:prac:generate} and \ref{alg:prac:add} of Extended RSDFO (Algorithm \ref{alg:ereda}). The union of the two sets of centroids: $X^k \cup \hat{X}^{k+1}$, indexed by $\hat{\Lambda}^{k+1}$, forms the centroids of the interim search region of the $k^{th}$ iteration.

The set of interim centroids is subsequently culled according to their expected fitness, and the fittest centroids $X^{k+1} := \left\{ x_\alpha\right\}_{\alpha \in \Lambda^{k+1}}$, indexed by $\Lambda^{k+1} \subset \hat{\Lambda}^{k+1}$, are preserved for the next iteration. This process is repeated until a termination criteria is satisfied (See Section \ref{sec:termin}).

%

More formally, consider $\left\{ \exp_{x_\alpha} \left(B(\vec{0},j_{x_\alpha} \right) \right\}_{\alpha \in \hat{\Lambda}^{k+1}}$ the set of (closed) (See Section \ref{sec:dg:prelim}.) geodesic ball of injectivity radius centered at $\left\{ x_\alpha \right\}_{\hat{\Lambda}^{k+1}} := X^k \cup \hat{X}^{k+1}$. The region in $M$ explored by the $k^{th}$ iteration of Algorithm \ref{alg:ereda} can be separated into three parts:
\begin{align}
V^k := \bigcup_{\alpha \in \Lambda^{k}} \tilde{B}_\alpha \quad &, \quad V^{k+1} := \bigcup_{\alpha \in \Lambda^{k+1}} \tilde{B}_\alpha , \quad \text{and} \label{eqn:searchvks} \\
\hat{V}^{k+1} &:= \bigcup_{\alpha \in \hat{\Lambda}^{k+1}} \tilde{B}_\alpha \supseteq  V^k \cup V^{k+1} \quad , \nonumber
\end{align}
where $\tilde{B}_\alpha := \exp_{x_\alpha} \left(B(\vec{0},j_{x_\alpha} ) \right) \subset M$ for simplicity. We begin the $k^{th}$ iteration with the search region $V^k$, which expands to the interim search region $\hat{V}^{k+1}$. The interim search region is then reduced to $V^{k+1} \subset \hat{V}^{k+1}$, the search region for the $\left( k +1\right)^{th}$ iteration, after the culling of centroids.



As a result, for each iteration of Algorithm \ref{alg:ereda}, we would transverse through \emph{three} families of parametrized mixture densities supported on the subsets $V^k, V^{k+1}, \hat{V}^{k+1} \subset M$ respectively. We describe the families of parametrized mixture densities as follows, a summary of the notation can be found in Table \ref{table:evonotation}.

Consider the closure of simplices of mixture coefficients given by:
\begin{align}
\overline{S}_0^{k} &:= \left\{ \left\{\varphi_\alpha\right\}_{\alpha \in \Lambda^{k}} \, \middle| \, \varphi_\alpha \in \left[0,1\right], \sum_{\alpha \in \Lambda^{k}} \varphi_\alpha = 1 \right\} \label{eqn:s:start}
\end{align}
\begin{align}
\hat{S}_0^{k+1} &:= \left\{ \left\{\varphi_\alpha\right\}_{\alpha \in \hat{\Lambda}^{k+1}} \, \middle| \, \varphi_\alpha \in \left[0,1\right], \sum_{\alpha \in \hat{\Lambda}^{k+1}} \varphi_\alpha = 1 \right\}  \label{eqn:s:midd}
\end{align}
\begin{align}
\overline{S}_0^{k+1} &:= \left\{ \left\{\varphi_\alpha\right\}_{\alpha \in \Lambda^{k+1}} \, \middle| \, \varphi_\alpha \in \left[0,1\right], \sum_{\alpha \in \Lambda^{k+1}} \varphi_\alpha = 1 \right\} \quad . \label{eqn:s:end}
\end{align} 
Note that the simplices $\overline{S}_0^k, \, \overline{S}_0^{k+1}$ are in fact two faces of $\hat{S}_0^{k}$ sharing \textit{at most} $\left( N_{cull} + N_{random}\right) - N_{random}  = N_{cull}$ vertices \footnote{This can be zero, as illustrated in Figure \ref{fig:full2}}. An example of the simplices is illustrated in Figure \ref{fig:simplexprac}.

\begin{figure}[hbt!]
    \centering
    \def\svgwidth{\columnwidth}
    \includegraphics[scale=0.7]{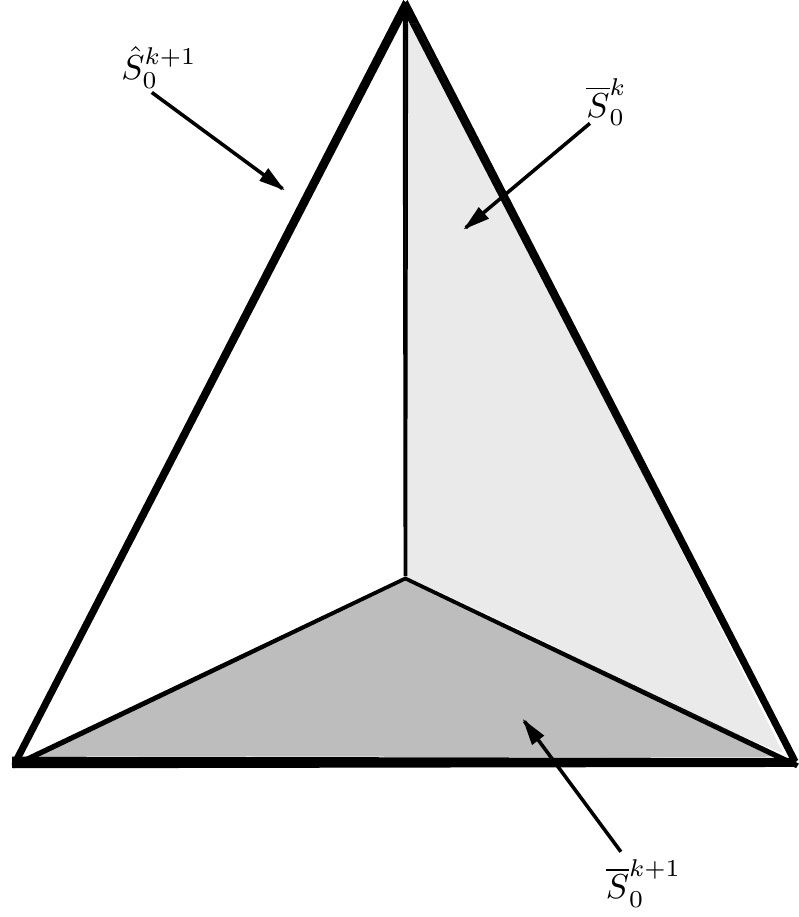}
    \caption{Simplicial illustration of the simplices in evolutionary step of Extended RSDFO (Algorithm \ref{alg:ereda}). The interim simplex $\hat{S}_0^k$ is given by the entire simplex with 4 vertices. The subsimplices $\overline{S}_0^k$ and $\overline{S}_0^{k+1}$ is given by the left and bottom simplices (shaded) with 3 vertices respectively.}
    \label{fig:simplexprac}
\end{figure}

The three mixture families in the $k^{th}$ iteration, summarized in Table \ref{table:evonotation}, are thus given by:
\begin{align}
\label{eqn:prac:mixture-distn-mfold}
\mathcal{L}_V = \left\{\sum_{\alpha \in \Lambda_V} \varphi_\alpha \cdot \tilde{p}(\cdot \vert \theta^\alpha) \middle| \left\{\varphi_\alpha\right\}_{\alpha \in \Lambda_V} \in \overline{S}_0, \, \tilde{p}(\cdot \vert \theta^\alpha) \in \tilde{S}_\alpha \right\} \quad ,
\end{align}
where $V = V^k, \hat{V}^{k+1}, V^{k+1} \subset M$ denoting the searching region generated set of centroids $X = X^k, X^k \cup \hat{X}^{k+1}, X^{k+1} \subset M$ respectively described in Equation \eqref{eqn:searchvks}. The corresponding sets of indices are given by $\Lambda_V = \Lambda^k, \hat{\Lambda}^{k+1},\Lambda^{k+1}$, and the corresponding closure of simplices of mixture coefficients is denoted by $\overline{S}_0 = \overline{S}_0^{k}, \hat{S}_0^{k+1}, \overline{S}_0^{k+1}$ respectively. For all $\alpha \in \Lambda_V$, where $\Lambda_V = \Lambda^k, \hat{\Lambda}^{k+1},\Lambda^{k+1}$, the set $\tilde{S}_\alpha := \exp_\alpha^{-1^*}S_\alpha$ are parametrized densities on $\tilde{B}_\alpha \subset M$ described in Section \ref{app:localdistn}. A summary of the notations can be found in Table \ref{table:evonotation}. For descriptions of the geometry of $\mathcal{L}_V$, please refer to Section \ref{Subsection:MixtureDensity}. \footnote{Detailed descriptions of the geometry of $\mathcal{L}_V$ is in Section \ref{Subsection:MixtureDensity}. } 


Due to the product manifold structure of the family of mixture densities $\mathcal{L}_V = \overline{S}_0 \times \bigoplus_{\alpha \in \Lambda_V} \tilde{S}_\alpha$, the evolution of statistical parameters of the mixture component densities $\tilde{S}_\alpha$ for $\alpha \in \Lambda_V$ can be handled \textit{separately} and \textit{independently} of the mixture coefficients in $\overline{S}_0$ in Extended RSDFO. Since the evolution of component densities of $\tilde{S}_\alpha$ is handled by the RSDFO (Algorithm \ref{alg:greda}) separately, it remains to  describe the evolution of mixture coefficients.


Given that for each center indexed by $\alpha$ we quantify the quality of solutions around it by the expected fitness $E_\alpha$, it is natural to assign to the individual mixture coefficients $\varphi_\alpha$ a value proportional to $E_\alpha$. In particular,
in line \ref{alg:prac:mixture} of Algorithm \ref{alg:ereda} the mixture coefficients are updated via the following equation:
\begin{align}
\varphi_\alpha^{k+1} = \frac{E_\alpha }{\sum_{\alpha \in {\Lambda^{k+1}}} E_\alpha} \label{eqn:mixture-evo-reduced} \quad .
\end{align}
where $E_\alpha$ is the expected fitness of $\tilde{B}_\alpha$ described in line \ref{alg:prac:sr} in Algorithm \ref{alg:ereda} above. Note that due to the assumption that $f$ is strictly positive, the expected fitness $E_\alpha$ is also strictly positive as well for all $\alpha$. A detailed theoretical justification will be discussed in Section \ref{sec:theo}. The case where we have a minimization problem is discussed in Remark \ref{rmk:evo_min}, in particular Equation \eqref{eqn:min_mixturecoeff}.



\subsubsection{Monotone improvement on expected fitness}
Recall the discussion in Section \ref{sec:shortcoming:rsdfo}: given two search centroids (or search iterates) $x_k, x_{k+1}$ in $M$ in Riemannian adaptated optimization algorithms,, the tangent spaces $T_{x_k} M$ and $T_{x_{k+1}} M$ centered at  $x_k, x_{k+1}$  respectively are \textit{different} (disjoint) spaces. In the case of RSDFO, the locally inherited parametrized probability densities (see Section \ref{app:localdistn}) over the tangent spaces $T_{x_k} M$ and $T_{x_{k+1}} M$ have \textit{different} supports, thus they belong to different statistical manifolds / models. In ``classical" RSDFO, the search information therefore cannot be compared beyond the objective value of the search iterate.

On the other hand in Extended RSDFO, the family of mixture densities $\mathcal{L}_V$ relates the sets of parametrized densities across separate tangent spaces. This allows us to derive the following result:

\begin{proposition}
\label{rmk:monoincreasing} 
The sequence of expected fitness values across iterations of Extended RSDFO on a maximization problem is monotonic, non-decreasing. In other words, for each $k> 0$: 
\begin{align*}
E^k \leq E^{k+1} \quad ,
\end{align*}
where $E^k$ denote the expected fitness of the solutions obtained in the $k^{th}$ iteration by Extended RSDFO.
\end{proposition}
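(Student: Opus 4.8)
The plan is to reduce the statement to an elementary inequality between two ``fitness ratios'' and then close that inequality using the combinatorial effect of the culling step. Write $\tilde p^{\,k}=\sum_{\alpha\in\Lambda^{k}}\varphi^{k}_{\alpha}\,\tilde p_{\alpha}\in\mathcal L_{V^{k}}$ for the mixture density carried into the $k^{th}$ iteration. Since each $\tilde p_{\alpha}$ is supported on $\tilde B_{\alpha}$, linearity of integration together with the definition \eqref{eqn:expectedfitness:ball} of the per-ball expected fitness gives $E^{k}=\int_{V^{k}}f\,\tilde p^{\,k}=\sum_{\alpha\in\Lambda^{k}}\varphi^{k}_{\alpha}E_{\alpha}$, and likewise $E^{k+1}=\sum_{\alpha\in\Lambda^{k+1}}\varphi^{k+1}_{\alpha}E_{\alpha}$. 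For $k>0$ the coefficients were produced by the recalibration \eqref{eqn:mixture-evo-reduced}, i.e.\ $\varphi^{k}_{\alpha}=E_{\alpha}\big/\sum_{\beta\in\Lambda^{k}}E_{\beta}$, and since $f>0$ forces every $E_{\alpha}>0$ the denominators are positive, so substitution yields the closed forms
\begin{equation*}
E^{k}=\frac{\sum_{\alpha\in\Lambda^{k}}E_{\alpha}^{2}}{\sum_{\alpha\in\Lambda^{k}}E_{\alpha}},\qquad
E^{k+1}=\frac{\sum_{\alpha\in\Lambda^{k+1}}E_{\alpha}^{2}}{\sum_{\alpha\in\Lambda^{k+1}}E_{\alpha}}.
\end{equation*}
Thus it suffices to compare these two ratios, and the only algorithmic input needed is how the multiset $\{E_{\alpha}\}_{\alpha\in\Lambda^{k+1}}$ arises from $\{E_{\alpha}\}_{\alpha\in\Lambda^{k}}$.

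The combinatorial core is a componentwise domination statement for these multisets. During the $k^{th}$ iteration the index set is first enlarged to the interim set $\hat\Lambda^{k+1}\supseteq\Lambda^{k}$ (no old centroid is removed before the culling, and the expected fitnesses of the surviving old centroids are literally unchanged, since the RSDFO core only creates new centroids and only reparametrises the densities attached to those), and then the $N_{cull}$ centroids of largest expected fitness are retained. Ordering the expected fitnesses of $\Lambda^{k}$ as $a_{1}\ge\cdots\ge a_{N_{cull}}$ and those of $\Lambda^{k+1}$ as $b_{1}\ge\cdots\ge b_{N_{cull}}$ (both lists have exactly $N_{cull}$ entries for $k>0$), the $i^{th}$ largest value of the superset $\hat\Lambda^{k+1}$ is at least $a_{i}$, and since all of the top $N_{cull}\ge i$ entries of $\hat\Lambda^{k+1}$ are kept, that value survives into $\Lambda^{k+1}$; hence $b_{i}\ge a_{i}\ge 0$ for every $i$. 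Equivalently, $\Lambda^{k+1}$ is reached from $\Lambda^{k}$ by finitely many moves, each replacing the currently smallest retained value by a strictly larger candidate from $\hat\Lambda^{k+1}$.

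It remains to show that such moves never decrease the ratio $\big(\sum E_{\alpha}^{2}\big)\big/\big(\sum E_{\alpha}\big)$, and this is the main obstacle: the one-variable function $x\mapsto(S_{2}+x^{2})/(S_{1}+x)$ is \emph{not} monotone in $x$ for arbitrary positive $S_{1},S_{2}$ (its derivative has numerator $x^{2}+2xS_{1}-S_{2}$, which can be negative), so a naive ``raise each coordinate'' argument fails and one must exploit the extra structure — that the raised coordinate is, before the move, the minimum of the current set, and that the recalibration has already aligned the coefficients with the $E_{\alpha}$ — to force that numerator to be non-negative along the relevant moves; I would discharge this by a short case analysis on the position of the dropped value among the retained ones. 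As a fallback, the same conclusion $E^{k}\le E^{k+1}$ holds verbatim for the coarser ``best-ball'' reading $E^{k}:=\max_{\alpha\in\Lambda^{k}}E_{\alpha}$ (immediate from $b_{1}\ge a_{1}$, since the fittest centroid is always kept) and for the uniform-mixture reading $E^{k}:=\tfrac1{N_{cull}}\sum_{\alpha\in\Lambda^{k}}E_{\alpha}$ (immediate from $\sum_{i}b_{i}\ge\sum_{i}a_{i}$). Chaining the per-iteration inequality over $k$ then yields $E^{1}\le E^{2}\le\cdots$, with strict positivity of $f$ ensuring every denominator stays positive so that all quantities remain well defined.
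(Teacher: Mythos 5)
Your reduction of $E^k$ to the self-weighted ratio $\bigl(\sum_{\alpha\in\Lambda^k}E_\alpha^2\bigr)/\bigl(\sum_{\alpha\in\Lambda^k}E_\alpha\bigr)$ is a correct consequence of the recalibration rule \eqref{eqn:mixture-evo-reduced}, and your worry about the last step is exactly right --- but the ``short case analysis'' you defer to cannot be completed, because the inequality you have reduced the proposition to is false. Take $N_{cull}=2$, retained fitnesses $\{E_1,E_2\}=\{10,1\}$ entering iteration $k$, and a single new interim centroid with $E_3=5$; culling keeps $\{10,5\}$, so the sorted vectors do satisfy $b_i\ge a_i$, yet
\begin{align*}
\frac{10^2+5^2}{10+5}=\frac{125}{15}\;<\;\frac{10^2+1^2}{10+1}=\frac{101}{11}\quad .
\end{align*}
Componentwise domination of the retained fitnesses is all that elitist culling provides, and it does not control the self-weighted average: replacing the smallest retained value by a larger one can shift weight onto a component that still sits below the current average. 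No case analysis on the position of the dropped value repairs this. Of your fallbacks, the best-ball and unweighted-average readings do go through, but they prove a different statement from the one asserted, whose $E^k$ is the expected fitness of the mixture with the recalibrated weights.

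The paper's own proof takes a different and shorter route that never substitutes the recalibrated coefficients: it writes $E^k=\sum_{\alpha\in\Lambda^k}\varphi_\alpha E^k_\alpha$ and $E^{k+1}=\sum_{\beta\in\Lambda^{k+1}}\varphi_\beta E^{k+1}_\beta$, uses only that the $\varphi$'s are convex weights summing to one, and asserts that the selection step yields $E^{k+1}_\beta\ge E^k_\alpha$ for \emph{every} $\beta\in\Lambda^{k+1}$ and \emph{every} $\alpha\in\Lambda^k$, whence $E^{k+1}\ge\min_\beta E^{k+1}_\beta\ge\max_\alpha E^k_\alpha\ge E^k$. You should observe that this all-pairs domination fails in the very same example: the newly retained centroid with $E_3=5$ only had to beat the \emph{discarded} old centroid ($E_2=1$), not the surviving one ($E_1=10$). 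So the obstruction you ran into is not an artifact of your substitution; it reflects a genuine delicacy in the proposition, which is secured only under additional hypotheses --- for instance that every newly admitted centroid outperforms every previous one, or that $E^k$ is read as $\max_{\alpha\in\Lambda^k}E^k_\alpha$, for which your observation $b_1\ge a_1$ is already a complete proof.
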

\begin{proof}
 Let $ \mathcal{L}_{V^k}$ be the family of mixture densities defined in Equation \eqref{eqn:mixture-distn-mfold}, and let $\xi^k := \left\{  \left\{\varphi^k_\alpha \right\}_{\alpha\in {\Lambda_V}}, \left\{\theta_k^\alpha \right\}_{\alpha\in {\Lambda^k}} \right\}$ denote the set of parameters of  $\mathcal{L}_{V^k} = \overline{S}^k_0 \times \bigoplus_{\alpha \in \Lambda^k} \tilde{S}_\alpha$. For each $p(x \vert \xi^k) := \sum_{\alpha \in \Lambda^k} \varphi^k_\alpha \cdot \tilde{p}(x \vert \theta_k^\alpha) \in \mathcal{L}_{V^k} \subset \operatorname{Prob}(V)$, the expected fitness \cite{wierstra2008fitness} of $f$ over $p(x \vert\xi^k)$ (supported in $V^k$) on Riemannian manifold $M$ is given by:
\begin{align}
\label{eqn:sr} 
E^k &:= \int_{x\in V^k} f(x) p(x \vert\xi^k) dx   \\
&= \sum_{\alpha \in \Lambda^k} \varphi^k_\alpha \cdot \int_{x\in \tilde{B}_\alpha} f(x)   \tilde{p}(x \vert \theta_k^\alpha) dx \quad , 
\nonumber
\end{align} 
where $\tilde{B}_\alpha :=  \exp_{x_\alpha}\left(B(\vec{0},j_{x_\alpha})\right)$ and $\tilde{p}(x\vert \theta_k^\alpha) \in \tilde{S}_\alpha := \exp_{x_\alpha}^{-1^*}S_\alpha$ are inherited parametrized probability densities described in Section \ref{app:localdistn}. 

We now show that the sequence of expected fitness values across iterations of Extended RSDFO is monotonic, non-decreasing \footnote{Recall we assume the optimization problem to be a maximization problem.}: $E^k \leq E^{k+1}$ for $k>1$.

Let $X^k \cup \hat{X}^{k+1}$ denote the interim centroids of the $k^{th}$ iteration, and let $\hat{V}^{k+1}$ denote the search region on $M$ corresponding to the interim centroids described in Equation \eqref{eqn:searchvks}. The expected fitness of the $k^{th}$ iteration is given by: 
\begin{align} 
\label{eqn:conv-expectedfitness}
E^k &=  \int_{\hat{V}^{k+1}} f(x) \sum_{\alpha \in {\Lambda^k}}  \varphi_\alpha \tilde{p}(x \vert\theta^\alpha) dx \nonumber + \int_{\hat{V}^{k+1}} f(x) \sum_{\alpha \in {\hat{\Lambda}^k}\setminus \Lambda^{k}}  \varphi_\alpha \tilde{p}(x \vert\theta^\alpha) dx  \nonumber \\
&= \sum_{\alpha \in {\Lambda^k}} \varphi_\alpha \cdot E^k_\alpha + 0 \quad ,
\end{align} 
where $E_\alpha^k := \int_{\exp_{x_\alpha}\left(B(\vec{0},j_{x_\alpha})\right)}f(x) \tilde{p}(x\vert\theta^\alpha) dx$ for $\alpha \in \Lambda^k$ for simplicity (see Equation \eqref{eqn:expectedfitness:ball}). Similarly the expected fitness of  $\left(k+1\right)^{th}$ iteration can be found in the interim search region $\hat{V}^{k+1}$, and is given by $E^{k+1} = \sum_{\beta \in {\Lambda^{k+1}}} \varphi_\beta \cdot E^{k+1}_\beta$.

By the selection step of line \ref{alg:prac:cull} of Algorithm \ref{alg:ereda}, new centroids $X^{k+1} := \left\{ x_\alpha\right\}_{\alpha \in \Lambda_{k+1}}$ are selected based on the corresponding expected fitness, therefore for any $\beta \in \Lambda^{k+1}, \alpha \in \Lambda^{k}$, we would have: $E^{k+1}_\beta \geq E^k_\alpha$.
Hence, by Equation \eqref{eqn:conv-expectedfitness}, since $\sum_{\alpha \in {\Lambda^k}}  \varphi_\alpha = 1$ and $\sum_{\beta \in {\Lambda^{k+1}}} \varphi_\beta = 1$, we obtain the following desired inequality:
\begin{align*}
E^{k+1} = \sum_{\beta \in {\Lambda^{k+1}}} \varphi_\beta \cdot E^{k+1}_\beta \geq \sum_{\alpha \in {\Lambda^k}} \varphi_\alpha \cdot E^k_\alpha = E^k \quad .
\end{align*}
\end{proof}

\subsubsection{Exploration Distribution of Extended RSDFO}
\label{sec:prac:expdistn}

In this section we describe the exploration distribution $U$ in line \ref{alg:prac:generate} of Algorithm \ref{alg:ereda}. At the $k^{th}$ iteration, the exploration distribution $U$ is defined to be the uniform distribution on the boundary of the \textit{explored region up-to the $k^{th}$ iteration}. This allows us to explore new search regions in the manifold $M$ extending beyond the current explored region using only local computations.


%

For each $k>0$, recall from Equation \eqref{eqn:searchvks}, the interim search region $\hat{V}^{k+1}$ denoting the closed search region explored in the $k^{th}$ iteration of Extended RSDFO. Consider for each $k$, the union of \textit{all} search regions of $M$ explored by the algorithm \emph{up-to} the $k^{th}$ iteration:
\begin{align}
\label{eqn:explored:wk}
W^k = \bigcup_{j = 0}^k \hat{V}^{j+1} \quad .
\end{align}
The exploration distribution $U$ in line \ref{alg:prac:generate} of Algorithm \ref{alg:ereda} is given by the uniform distribution over the boundary of the explored region up to the $k^{th}$ iteration.: 
\begin{align*}
U := \operatorname{unif}\left( \partial\left(W^k \right) \right) \quad .
\end{align*}

Points from the exploration distribution $U$ can be generated by an acceptance-rejection approach using local computations on the tangent spaces, this is illustrated in Figure \ref{fig:expdistn} below:
\begin{figure}[hbt!]
    \centering
    \def\svgwidth{\columnwidth}
    \includegraphics[width=0.45\textwidth]{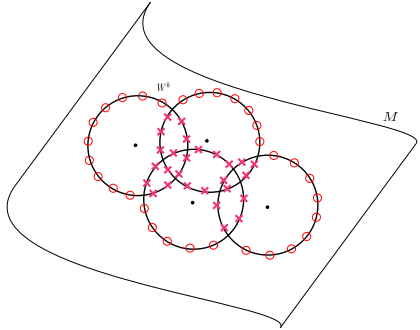} 
    \caption{An simplified illustration of the exploration distribution sampling using acceptance-rejection method. We first sample uniformly from the boundary of each of the geodesic balls, with centroids labelled by the filled circles. Points sampled from the boundary of the geodesic balls that are also in the interior of $W^k$ will be close to at least one of the centroids, hence are discarded according to Equation \eqref{eqn:expdistn:rej}. Therefore points labelled with hollow circles are accepted, whereas the crossed points are rejected.}
    \label{fig:expdistn}
\end{figure}

More formally, using the notation from Section \ref{sec:prac:mixturecoeff} summarized in Table \ref{table:evonotation}: for $k>0$, let $X^k \cup \hat{X}^{k+1}$ denote the set of centroids explored in the $k^{th}$ iteration of Extended RSDFO. The set of \textbf{accumulated centroids} explored by Extended RSDFO \emph{up-to} the $k^{th}$ iteration (this includes the explored but discarded ones) is given by:
\begin{align}
\label{eqn:accum:centroids}
X^k_A := \bigcup_{j=0}^k X^j \cup \hat{X}^{j+1} \quad .
\end{align}
The set of accumulated centroids  $X_A^k$ is indexed by the finite set $\Lambda_A^k := \cup_{j=0}^k \hat{\Lambda}^{j+1}$. 


To generate points on the boundary of the explored region, we first sample points from the union of the boundary of explored geodesic balls (the geodesic spheres): 
\begin{align}
\label{eqn:unif:bdd:all}
y \sim \operatorname{unif} \left( \bigcup_{\alpha \in \Lambda_A^k} \partial\left( \tilde{B}_\alpha \right) \right) \quad ,
\end{align}
where $\tilde{B}_\alpha = \exp_{x_\alpha} \left(B(\vec{0},j_{x_\alpha}) \right)$ for simplicity. This sampling can be done with strictly local computations on the geodesic sphere $\partial\left( \tilde{B}_\alpha \right)$ within the tangent space $T_{x_\alpha} M$ around each accumulated centroids $x_\alpha \in X_A^k$ on the manifold.

It remains to reject points that lies in the interior of $W^k$. Since every geodesic ball in $M$ is also a metric ball in $M$ under Riemannian distance with the same radius \cite{lee2006riemannian}: for any  $x_\alpha \in X_A^k$, a point $y \in \tilde{B}_\alpha$ satisfies:
\begin{align*}
d\left(x_\alpha,y \right) \leq j_{x_\alpha} \leq \operatorname{inj}(x_\alpha) \quad ,
\end{align*}
with equality if only if $y\in \partial\left(\tilde{B}_\alpha \right)$, where $d$ denote the Riemannian distance function \cite{lee2006riemannian} on $M$ \footnote{This can be computed locally within the normal neighbourhood of $x_\alpha$}. Therefore to preserve sampled points on the boundary of the explored region, we reject $y$ that lies in the interior of \textit{any} geodesic ball. In particular from Equation \eqref{eqn:unif:bdd:all} we reject $y$ if it satisfies: there exists $x_\alpha \in X_A^k$ such that 
\begin{align}
\label{eqn:expdistn:rej}
d\left(x_\alpha,y \right) < j_{x_\alpha} \quad . 
\end{align} 
\subsubsection{Termination Criterion}
\label{sec:termin}
Due to the ``multi-layer" structure of Extended RSDFO (see Figure \ref{fig:full2}), we may choose different termination criteria depending on the usage  of the algorithm. In this paper we discuss two examples of termination criteria:

For practical implementations (see experiments in Section \ref{sec:experiment}), due to limitations of computation power: we say that Extended RSDFO terminates when the local RSDFO's terminate in their own accord. In other words, as local RSDFO's inherit the termination criteria from their pre-adapted counterparts, we say that Extended RSDFO terminates when \textit{all} the local RSDFO steams terminate around the current search centroids.

For the theoretical studies (for geometry of evolutionary step in Section \ref{sec:theo-evo-step}, and convergence in Section Section \ref{sec:conv} and Section \ref{appendix:proof:conv}), when $M$ is a compact connected Riemannian manifold: Extended RSDFO terminates when the boundary of the explored region (see Section \ref{sec:prac:expdistn}) is empty. 

\section{Geometry of Evolutionary Step of Extended RSDFO}
\label{sec:theo}
In this section, we consider a special case of Extended RSDFO. The aim is to discuss and flesh out the geometry and dynamics of the evolutionary step for Extended RSDFO (Algorithm \ref{alg:ereda}).

Let $M$ be a Riemannian manifold, the special case of Extended RSDFO differs from the general case in one aspect: the local component densities around the search centroids in $M$ are fixed. In particular, the local RSDFO will determine new search centroids \textit{without} changing the statistical parameter around the search centroids. This allows us to focus on the dynamics of mixture coefficients without considering the mixture component parameters generated/altered by the RSDFO core. 
\begin{remark}
\label{remark:rad} 
We may assume the radius of geodesic ball $j_{x_\alpha} \ll \operatorname{inj}(x_\alpha)$ is sufficiently small for all $\alpha \in \Lambda_M$, such that varying the mixture coefficients while keeping the mixture components density fixed will describe the overall density in sufficient detail. 
Suppose if we cover $M$ with sufficient number of mixture components, while fixing the individual mixture components and allowing the mixture coefficients to vary. The mixture density formed by fixed mixture components and free mixture coefficients will sufficiently describe the search distributions appropriate for our optimization problem. This is illustrated in Figure \ref{fig:mixture}.

\begin{figure}[hbt!]
    \centering
    \def\svgwidth{\columnwidth}
    \includegraphics[width=0.8\textwidth]{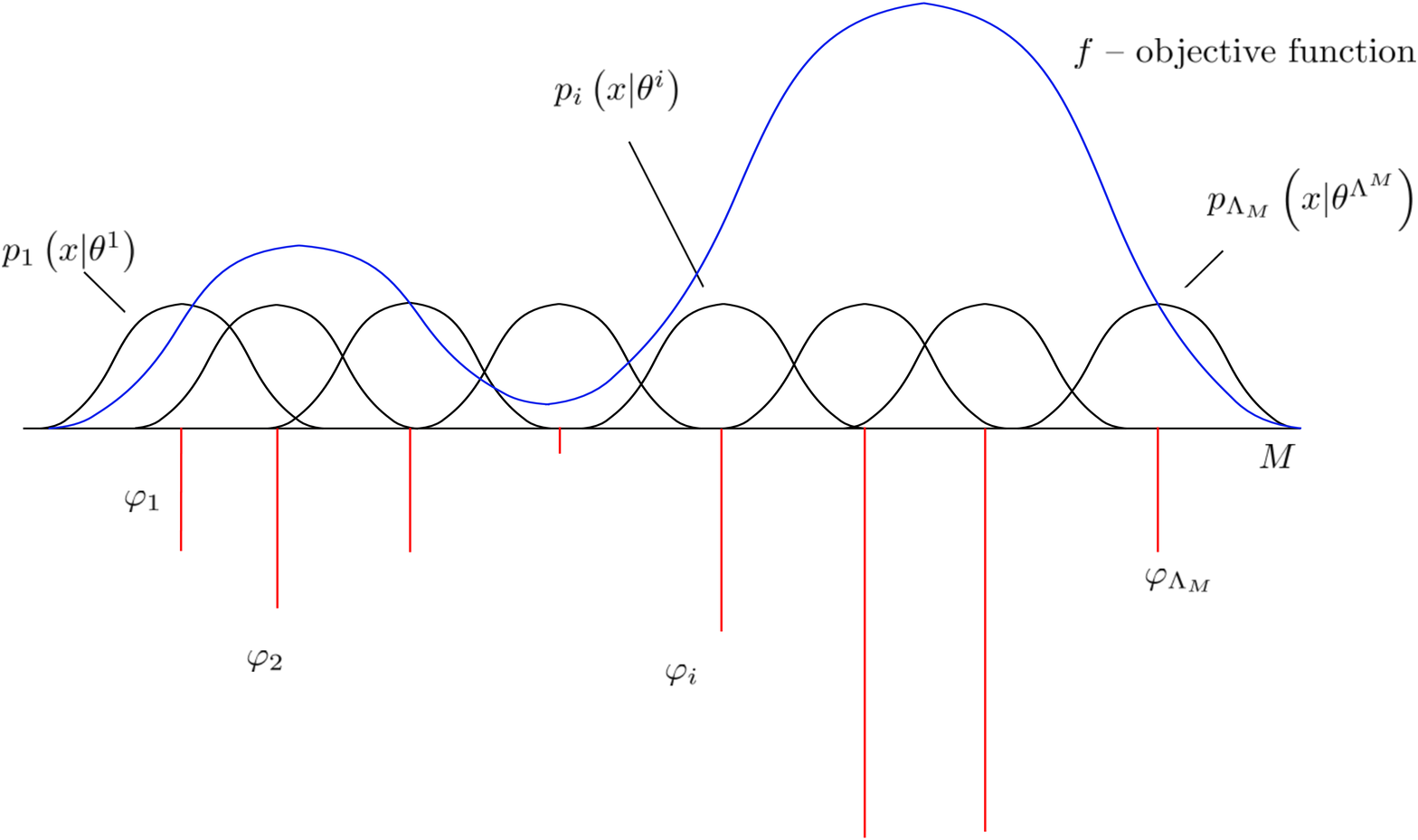} 
    \caption{A simplified illustrated of the mixture distribution in the special case if the search region is sufficiently well covered. The figure shows a $1$-dimensional manifold $M$, covered by fixed mixture componets and free mixture coefficents $\varphi_\alpha$. The promising regions in $M$ corresponding to the fitness landscape of objective function $f$ can be well expressed through the search distributions obtained solely by varying mixtures coefficients. The magnitude of mixture coefficients $\varphi$'s are showed in the figure above as downward bars.}
    \label{fig:mixture}
\end{figure}

\end{remark}

More formally, using the notation from Section \ref{sec:global:distnmfold}: Let $\mathcal{E}_M$ denote an orientation-preserving open cover of $M$ by geodesic balls indexed by $\Lambda_M$. Let $\Lambda_V \subset \Lambda_M$ be a finite subset of indices, and let $V$ denote a typical search region of Extended RSDFO define by the set of centroids $\left\{ x_\alpha \right\}_{\alpha \in \Lambda_V}$ (described as in Equation \eqref{eqn:typicalsearchregion}). Let $\mathcal{E}_V \subset \mathcal{E}_M$, denote the finite sub-cover of $\mathcal{E}_M$ over $V$. 


We begin the discussion by constructing a special case of the aforementioned (Section \ref{sec:global:distnmfold}) mixture family of densities where families of local component densities consisting of a \emph{single} fixed density: $S_\alpha = \left\{F_\alpha \right\}$. For example, for each $\alpha \in \Lambda_V$, we may let $F_\alpha := \overline{N}(0,I)$ denote the restriction of the spherical Gaussian density $N(0,I)$ to $B(0,j_{x_\alpha}) \subset T_{x_\alpha} M$ (properly renormalized). More generally, let the singleton $S_\alpha = \left\{F_\alpha \right\}$ denote the local component density over $B\left(\vec{0},j_{x_\alpha}\right) \subset T_{x_\alpha} M$. For each $\alpha \in \Lambda_V$, the inherited component density over $\exp_{x_\alpha}\left(B\left(\vec{0},j_{x_\alpha}\right)\right)$ consists of a single element: $\tilde{S}_\alpha = \left\{\tilde{F}_\alpha := \exp_{x_\alpha}^{-1^*}F_\alpha \right\}$.

The family of parametrized mixture densities on $M$ described in Equation \eqref{eqn:mixture-distn-mfold} thus reduces to the following special case: 
\begin{align}
\label{eqn:reduced-mixture-distn-mfold}
\overline{\mathcal{L}}_V := \left\{\sum_{\alpha\in \Lambda_V}  \varphi_\alpha \cdot \tilde{F}_\alpha \, \middle| \, \left\{\varphi_\alpha\right\}_{\alpha\in \Lambda_V} \in \overline{S}_0, \, \tilde{F}_\alpha \in \tilde{S}_\alpha \right\} \quad ,
\end{align}
where $\overline{S}_0 $ denote the closure of simplex described in Equation \eqref{eqn:s:closure}. The set of mixture densities $\overline{\mathcal{L}}_V$ over $V$ is therefore the product Riemannian manifold given by $\overline{\mathcal{L}}_V = \overline{S}_0 \times \bigoplus_{\alpha \in \Lambda_V} \tilde{S}_\alpha$, which in turn is diffeomorphic to $\overline{S}_0$. \footnote{This just means they can be thought of as the same space.} In this sense, this special case allows us to focus on the dynamics of mixture coefficients in $\overline{S}_0$.






%
 

%



In this paper we describe the geometry of $\overline{\mathcal{L}}_V \cong  \overline{S}_0$ with a novel modification of the Fisher information metric. This metric is used in \textbf{both} Extended RSDFO described in the previous section, and the following special case. There are two reasons to use the modified metric on the closure of the simplex $\overline{S}_0$:
\begin{enumerate}
    \item Fisher information metric is defined only on the interior of the simplex $S_0$, not on the closure of the simplex $\overline{S}_0$.
    \item Natural gradient ascent/descent \cite{calamai1987projected,amari1998natural} on the simplex $S_0$ under Fisher information metric favours the vertices of the simplex. On the other hand, Natural gradient ascent on the closure $\overline{S}_0$ under the modified metric, as we will discuss in the subsequent subsection, favours the interior point of the simplex with coordinates proportional to the relative weights of the vertices.
\end{enumerate}

We make explicit the geometrical structure of the modified metric as follows:  Let $F$ denote the Fisher information matrix on the mixture coefficient simplex $S_0$ (see for example \cite{lebanon2002learning}). Let $\epsilon_0 > 0$ be an arbitrarily small nonzero real number, consider the Riemannian metric on the \emph{closure} of the coefficient simplex $\overline{S}_0$ given by the matrix:
\begin{align}
\label{eqn:new-metric}
G := F^{-1} + \epsilon_0 \cdot I \quad .
\end{align}
Let $\xi := \left\{ \varphi_\alpha \right\}_{\alpha \in \Lambda_V}$ be the local coordinate of a point in $\overline{S}_0 \cong \overline{\mathcal{L}}_V$, and let $\left\{ \partial^\alpha \right\}_{\alpha \in \Lambda_V}$ denote the corresponding local coordinate frame on $T_\xi \overline{S}_0$. Let $Y := \sum_{\alpha \in \Lambda_V} y_\alpha \partial^\alpha$, $Z := \sum_{\alpha \in \Lambda_V} z_\alpha \partial^\alpha$ be vector fields over $\overline{S}_0$. The Riemannian metric $g$ on $\overline{S}_0$ corresponding to the modified matrix $G$ is given by:
\begin{align*}
g_{\xi}(Y,Z) := \sum_{\alpha \in \Lambda_V} y_\alpha z_\alpha \cdot \left(\varphi_\alpha + \epsilon_0 \right) \quad .
\end{align*} 

The corresponding divergence on $\overline{\mathcal{L}}_V \cong  \overline{S}_0$ is the Bregman  divergence \cite{amari2010information} given by the strictly convex function on $\overline{S}_0$:
\begin{align*}
\kappa: \overline{\mathcal{L}}_V \cong \overline{S}_0 &\rightarrow \mathbb{R} \\
\xi = \left(\varphi_\alpha \right)_{\alpha \in \Lambda_V} &\mapsto \frac{1}{6}  \sum_{\alpha \in \Lambda_V} \left( \varphi_\alpha + \epsilon_0 \right)^3 \quad ,
\end{align*}
hence the Bregman  divergence on $\overline{\mathcal{L}}_V$ is given by:
\begin{align}
\label{eqn:b-div-clS0}
D:\overline{\mathcal{L}}_V \times \overline{\mathcal{L}}_V \cong \overline{S}_0\times \overline{S}_0 &\rightarrow \mathbb{R} \nonumber \\
\left( \xi,\xi' \right) &\mapsto \kappa(\xi) - \kappa(\xi') - \left\langle \nabla\kappa(\xi'), \varphi - \xi' \right\rangle \quad .
\end{align}


The dualistic structure of the statistical manifold $\overline{\mathcal{L}}_V \cong \overline{S}_0$, induced by the Bregman  divergence described above, is in fact dually flat. Interested readers may find the general construction of the dualistic structure of $\overline{\mathcal{L}}_V \cong \overline{S}_0$ under the Bregman  divergence described above in \cite{amari2010information}.

Furthermore, let the function $f: M \rightarrow \mathbb{R}$ denote the objective function of an optimization problem over Riemannian manifold $M$. Let $\xi := \left\{ \varphi_\alpha \right\}_{\alpha\in {\Lambda_V}}$ denote the set of parameters of $\overline{\mathcal{L}}_V$, then for each $p(x,\xi) := \sum_{\alpha\in \Lambda_V} \varphi_\alpha \tilde{F}_\alpha \in \overline{\mathcal{L}}_V \subset \operatorname{Prob}(V)$, the expected fitness \cite{wierstra2008fitness} of $f$ over $p(x,\xi) \in \overline{\mathcal{L}}_V$ (supported in set $V \subset M$) in the special case is denoted by:
\begin{align}
\label{eqn:sr:theo}
J^V(\xi) &:= \int_{\operatorname{supp}(p(\xi))} f(x) p(x,\xi) dx \\
&= \int_{\operatorname{supp}(p(\xi))} f(x) \sum_{\alpha \in {\Lambda_V}}  \varphi_\alpha \tilde{F}_\alpha(x) dx \quad . \nonumber
\end{align} 
In practice, this integral can be approximated using $p(x , \xi)-$Monte Carlo samples for $x$.

\subsection{Geometry and Simplicial Illustration of Evolutionary step}
\label{sec:simplex}

By the product Riemannian structure of family of mixture densities $\overline{\mathcal{L}}_V$, the change of mixture coefficients in the evolutionary step of Extended RSDFO (Algorithm \ref{alg:ereda}) is \textit{independent} from the change of the parameter of the mixture component densities. Moreover, in the special case described above, the family of mixture component densities consists of a singleton: $\tilde{S}_\alpha = \left\{\tilde{F}_\alpha := \exp_{x_\alpha}^{-1^*}F_\alpha \right\}$, there will not be any changes in the parameter of the component densities. Therefore it remains to consider the changes in mixture coefficients in $\overline{S}_0$.

For each iteration, we transverse through three family of parametrized mixture densities on $M$, similar to the discussion of Section \ref{sec:prac:mixturecoeff}:

\begin{align}
\label{eqn:theo:mixture-distn-mfold}
\overline{\mathcal{L}}_V = \left\{\sum_{\alpha \in \Lambda_V} \varphi_\alpha \cdot \tilde{F}_\alpha \, \middle| \, \left\{\varphi_\alpha\right\}_{\alpha \in \Lambda_V}  \in \overline{S}_0, \, \tilde{F}_\alpha \in \tilde{S}_\alpha  \right\} \quad ,
\end{align}
where the search region $V$ and the corresponding centroids $X$, index sets $\Lambda_V$, and simplices $\overline{S}_0$ are summarized in Table \ref{table:evonotation} with $\overline{\mathcal{L}}_V$ replacing $\mathcal{L}_V$. Note that for all $\alpha \in \Lambda_V$, where $\Lambda_V = \Lambda^k, \hat{\Lambda}^{k+1},\Lambda^{k+1}$, the set of inherited density $\tilde{S}_\alpha = \left\{\tilde{F}_\alpha := \exp_{x_\alpha}^{-1^*}F_\alpha \right\}$ consists of a single density on $\tilde{B}_\alpha \subset M$ for each $\alpha$ (as described in the beginning of Section \ref{sec:theo}).


The evolution of  $\overline{\mathcal{L}}_V \cong \overline{S}_0$ is thus split into the following three parts, and the corresponding geometrical implications is illustrated in Figure \ref{fig:simplex} (extending Figure \ref{fig:simplexprac}). 

%

\begin{figure}[hbt!]
    \centering
    \def\svgwidth{\columnwidth}
    \includegraphics[scale=0.8]{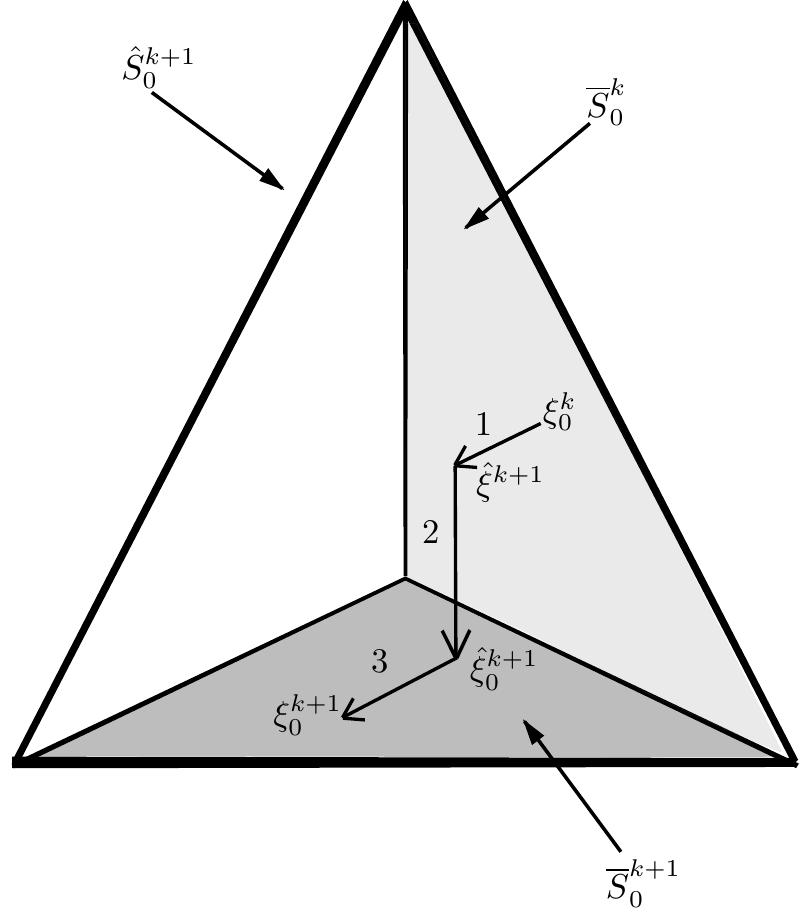}
    \caption{Simplicial illustration of the evolutionary step of Algorithm \ref{alg:ereda}. The number on the arrows corresponds to the Part number of the evolutionary step. The interim simplex $\hat{S}_0^k$ is illustrated by the entire simplex with 4 vertices. The subsimplices $\overline{S}_0^k$ and $\overline{S}_0^{k+1}$ is given by the left and bottom subsimplices (shaded) with 3 vertices respectively.}
    \label{fig:simplex}
\end{figure}

\begin{description}[align=left]
\item[Part one:] Determine the fixed point of natural gradient ascent \footnote{We perform natural gradient ascent for maximization problem and descent for minimization problem. For minimization case please refer to Remark \ref{rmk:evo_min}.} \cite{calamai1987projected,amari1998natural} in $\hat{S}_0^{k+1}$ with respect to the Riemannian structure described in Equation \eqref{eqn:new-metric} starting at $\xi_0^k \in \overline{S}_0^k \subset \hat{S}_0^{k+1}$. This is illustrated by arrow 1 in Figure \ref{fig:simplex}, and the fixed point is denoted by $\hat{\xi}^{k+1}$.
\item[Part two:] Project $\hat{\xi}^{k+1} \in \hat{S}_0^{k+1}$ onto $\overline{S}_0^{k+1}$, the $\left(N_{cull} -1 \right)$-dimensional faces of $\hat{S}_0^{k}$. This is illustrated by arrow 2 Figure \ref{fig:simplex}.
\item[Part three:] Determine the new fixed point $\xi_0^{k+1}\in \overline{S}_0^{k+1}$ of natural gradient ascent \footnote{Similar to part one: we perform natural gradient ascent for maximization problem and descent for minimization problem. For minimization case, please refer to Remark \ref{rmk:evo_min}. } \cite{calamai1987projected,amari1998natural} in $\hat{S}_0^{k+1}$ with respect to the Riemannian structure described in Equation \eqref{eqn:new-metric} on $\overline{S}_0^{k+1}$. This is labelled by arrow 3 on Figure \ref{fig:simplex}. 
\end{description}

$\overline{S}_0^k, \, \overline{S}_0^{k+1}$ are two faces of $\hat{S}_0^{k+1}$ sharing \textit{at most} $\left( N_{cull} + N_{random}\right) - N_{random}  = N_{cull}$ vertices \footnote{This can be zero, as illustrated in Figure \ref{fig:full2}}. A summary of the notation can be found in Table \ref{table:evonotation}.


\subsection{Detailed Description of Evolutionary step}
\label{sec:theo-evo-step}


In the previous section, we summarized and discussed geometrically the simplicial structure of the evolutionary step of the $k^{th}$ iteration of (the special case of) Extended RSDFO, a more rigorous mathematical discussions will be fleshed out in this section.

Let $f: M \rightarrow \mathbb{R}$ denote the objective function of a \emph{maximization} problem over Riemannian manifold $M$. We assume, without loss of generality, that $f$ is a strictly positive function by translation, i.e. $f(x) >0 $ for all $x\in M$. The case where we have a minimization problem is discussed in Remark \ref{rmk:evo_min}.


By line \ref{alg:prac:interim} of Algorithm \ref{alg:ereda}, at the $k^{th}$ iteration the algorithm produces a set of intermediate centroids $X^k \cup \hat{X}^{k+1} \subset M$. The search region of the $k^{th}$ iteration is thus given by the  subset $\hat{V}^{k+1} := \cup_{\alpha \in \hat{\Lambda}^{k+1}} \exp_{x_\alpha}\left( B(0,j_{x_\alpha}) \right)$ of $M$.

Let $\mathcal{E}_{\hat{V}^{k+1}} := \left\{ \left( \exp_{x_\alpha}, B(0,j_{x_\alpha}) \right) \right\}_{X_\alpha \in X^k \cup \hat{X}^{k+1}} \subset \mathcal{E}_M$ denote the finite subset of the orientation-preserving open cover $\mathcal{E}_M$ of $M$. Consider the ``special case" family of mixture densities over $\hat{V}^{k+1}$, denoted by $\overline{\mathcal{L}}_{\hat{V}^{k+1}}$, defined as in Equation \eqref{eqn:reduced-mixture-distn-mfold}. In the special case, the locally inherited component densities consist of a single element, therefore it suffices to consider the dynamics on the simplex of mixture coefficients.


We describe the three parts of the evolutionary step of Extended RSDFO (Algorithm \ref{alg:ereda}) in detail:

\textbf{Part one}: determine the fixed point of the natural gradient ascent in $\hat{S}_0^{k+1}$ on the modified maximization problem \cite{wierstra2008natural} : 
\begin{align*}
\max_{x \in M} f(x) \mapsto \max_{\xi^k \in \hat{S}_0^k} J^{\hat{V}^{k+1}} \left( \xi^k \right) \quad ,
\end{align*}
where  $\xi^k := \left( \varphi_\alpha \right)_{\alpha\in {\hat{\Lambda}^{k+1}}}$ denote the set of parameters of $\overline{\mathcal{L}}_{\hat{V}^{k+1}} \cong \hat{S}_0^{k+1}$. $J^{\hat{V}^{k+1}} \left( \xi^k \right)$ is the expected fitness of $f$ over $p(x,\xi^k) \in \overline{\mathcal{L}}_{\hat{V}^{k+1}}$ described in Equation \eqref{eqn:sr:theo}.

Let $\xi_0^k := \begin{cases}
      \varphi_\alpha^k & \text{if } \alpha \in \Lambda^k \subset \hat{\Lambda}^{k+1}\\
      0 & \text{otherwise. } 
    \end{cases}\, \in \overline{S}_0^{k} \subset \hat{S}_0^{k+1}$ denote the initial point on the subsimplex  $\overline{S}_0^{k}$ of $\hat{S}_0^{k+1}$. Let $\tilde{\nabla}_{\xi^k} J^{\hat{V}^{k+1}}\left( \xi^k \right)$ denote the natural gradient of $J^{\hat{V}^{k+1}}\left( \xi^k \right)$ on $\overline{\mathcal{L}}_{\hat{V}^{k+1}}$, and $\operatorname{proj}_{\hat{S}_0^{k}}$ the projection mapping onto the interim simplex of mixture coefficients $\hat{S}_0^{k}$ described above. We perform constraint natural gradient ascent \cite{calamai1987projected,amari1998natural} on $J^{\hat{V}^{k+1}} \left( \xi^k \right)$ at $\xi_0^k$, summarized by Algorithm \ref{alg:ng} below:

\begin{algorithm}[hbt!]
 \KwData{Initial point $\xi_0^k \in \overline{S}_0^{k} \subset \hat{S}_0^{k+1}$, objective function $J^{\hat{V}^{k+1}}\left( \xi^k \right)$, step size $s_i$. Set iteration count $i=0$}
 initialization\;
 \While{stopping criterion not satisfied}{
$\overline{\xi}_{i+1}^{k} =  \xi_i^k + s_i \cdot \tilde{\nabla}_{\xi^k} J^{\hat{V}^{k+1}}\left( \xi_i^k \right)$ \label{alg:ng:1} \;
$\xi_{i+1}^{k} = \operatorname{proj}_{\hat{S}_0^{k+1}} \left(\overline{\xi}_{i+1}^k \right)$ \label{alg:ng:2} \;
  i = i+1\;
 }
  \caption{Natural Gradient Ascent}
\label{alg:ng}
 \end{algorithm}
Given sufficiently small step sizes $s_i >0$, the fixed point of Algorithm \ref{alg:ng} can be determined explicitly.\footnote{The step sizes $s_i$'s are chosen to be sufficiently small such that the orthogonal projection $\operatorname{proj}_{\hat{S}_0^{k+1}} \left(\overline{\xi}_{i+1}^k \right)$ along the normal vector $\left[ 1,1,\ldots,1 \right] = \vec{1}$ of $\hat{S}_0^{k+1}$ onto the simplex $\hat{S}_0^{k+1}$ remains in the $\hat{S}_0^{k+1}$} In particular, a point $\xi_i^k \in \hat{S}_0^{k+1}$ is the fixed point of (lines \ref{alg:ng:1} and \ref{alg:ng:2} of) Algorithm \ref{alg:ng} if and only if $\operatorname{proj}_{\hat{S}_0^{k+1}} \left(\xi_i^k + s_i \cdot \tilde{\nabla}_{\xi^k} J^{\hat{V}^{k+1}}\left( \xi^k \right) \right) =\xi_i^k $. 

The above condition is satisfied if and only if $\tilde{\nabla}_{\xi^k} J^{\hat{V}^{k+1}}\left( \xi_i^k \right)$ is parallel to the normal vector: $\left[ 1,1,\ldots,1 \right]$ of $\hat{S}_0^{k+1}$ of the interim simplex $\hat{S}_0^{k+1}$.

Therefore to determine the fix point, we perform the following computation: for $k>0$, at the $k^{th}$ iteration of the special case of Extended RSDFO (Algorithm \ref{alg:ereda}), the inverse of interim Riemannian metric matrix (Equation \eqref{eqn:new-metric}) $\hat{G}_k^{-1}$ at a point $\xi_i^k = \left\{\varphi_\alpha^k \right\}_{\alpha \in \hat{\Lambda}^{k+1}}$ on the interim simplex $\hat{S}_0^{k+1}$ is given by :
\begin{equation}\hat{G}_k^{-1} :=  
 \left[  \begin {array}{ccccc} \frac{1}{\varphi_1^k + \epsilon_0 } & 0 & \cdots & \cdots &0 \\ 
0 & \frac{1}{\varphi_2^k + \epsilon_0 } & 0 & \cdots & 0 \\
\vdots & \ddots &  \ddots & \ddots &0\\ 
0 & \cdots & 0 & \frac{1}{\varphi^k_{\hat{\Lambda}^{k+1}-1} + \epsilon_0 } &0 \\
\noalign{\medskip}0 & 0 &\cdots &0& \frac{1}{\varphi^k_{\hat{\Lambda}^{k+1}} + \epsilon_0 }
\end {array}
 \right]\quad .
\end{equation}
The natural gradient \cite{amari1998natural} $\tilde{\nabla}_{\xi^k} J^{\hat{V}^{k+1}}\left( \xi_i^k \right)$ is therefore given by:
\begin{align*}
\tilde{\nabla}_{\xi^k} J^{\hat{V}^{k+1}}\left( \xi_i^k \right) &= \hat{G}_k^{-1} \cdot \nabla_{\xi^k} J^{\hat{V}^{k+1}}\left( \xi_i^k \right) \\
&= \hat{G}_k^{-1} \cdot \left(\frac{\partial}{\partial \varphi^k_\alpha} \int_x f(x) \sum_{\alpha \in \hat{\Lambda}^{k+1}}  \varphi_\alpha^k \tilde{F}_\alpha(x) dx \right)_{\alpha \in {\hat{\Lambda}^{k+1}}} \\
&= \hat{G}_k^{-1} \cdot \left( \int_x f(x) \tilde{F}_\alpha(x) dx  \right)_{\alpha \in {\hat{\Lambda}^{k+1}}} \\
&= \hat{G}_k^{-1} \cdot \left( E_\alpha^k \left[f\right] \right)_{\alpha \in {\hat{\Lambda}^{k+1}}} \\
&= \left( \frac{1}{\varphi_\alpha^k+\epsilon_0} \cdot E_\alpha^k \left[f\right] \right)_{\alpha \in {\hat{\Lambda}^{k+1}}} \quad .
\end{align*}

For simplicity and when the context is clear, for $\alpha \in \hat{\Lambda}^{k+1}$ we let  $E_\alpha := E_\alpha^k \left[f\right]$ denote the expected fitness of $f$ over the geodesic ball $ \exp_{x_\alpha}\left( B(0,j_{x_\alpha}) \right)$  on the $k^{th}$ iteration of Extended RSDFO  (see Equation \eqref{eqn:expectedfitness:ball}). Since $f$ is strictly positive, we may assume by translation that $E_\alpha > 0$ for all $\alpha$. Therefore the natural gradient $\tilde{\nabla}_{\xi^k} J^{\hat{V}^{k+1}}\left( \xi_i^k \right)$ is parallel to $\vec{1}$ if and only if for all $\alpha \in {\hat{\Lambda}^{k+1}}$:
\begin{align}
\label{eqn:parallel_theo}
\frac{1}{\varphi_\alpha^k+\epsilon_0} \cdot E_\alpha = c\cdot 1 \, ,  \, \, c\in \mathbb{R}_+ \quad .
\end{align}

Assume without loss of generality that $c =1$, the above condition is then satisfied if and only if $\varphi_\alpha^k = E_\alpha - \epsilon_0$, for $\alpha \in {\hat{\Lambda}^{k+1}}$. Moreover, since the mixture coefficients $\left( \varphi_\alpha^k \right)_{\alpha \in {\hat{\Lambda}^{k+1}}} \in \hat{S}_0^{k+1}$ belong to a simplex, they must sum to $1$: $\sum_{\alpha \in {\hat{\Lambda}^{k+1}}} \varphi_\alpha^k = 1$. The renormalized set of mixture coefficients thus becomes:
\begin{align}
\varphi_\alpha^k &= \frac{E_\alpha - \epsilon_0}{\sum_{\alpha \in {\hat{\Lambda}^{k+1}}} \left(E_\alpha - \epsilon_0 \right)} \label{eqn:renorm-mixture} \\
&=\frac{E_\alpha - \epsilon_0}{\sum_{\alpha \in {\hat{\Lambda}^{k+1}}} \left( E_\alpha\right) - \vert \hat{\Lambda}^{k+1} \vert \cdot \epsilon_0} \quad . \nonumber
\end{align}

An demonstration of the natural gradient of \textbf{Part One} (and similarly \textbf{Part three}) on the interim simplex $\hat{S}_0^k$ is give in Figure \ref{fig:fixedpoint} below:

\begin{figure}[hbt!]
    \centering
    \def\svgwidth{\columnwidth}
    \includegraphics[scale=0.7]{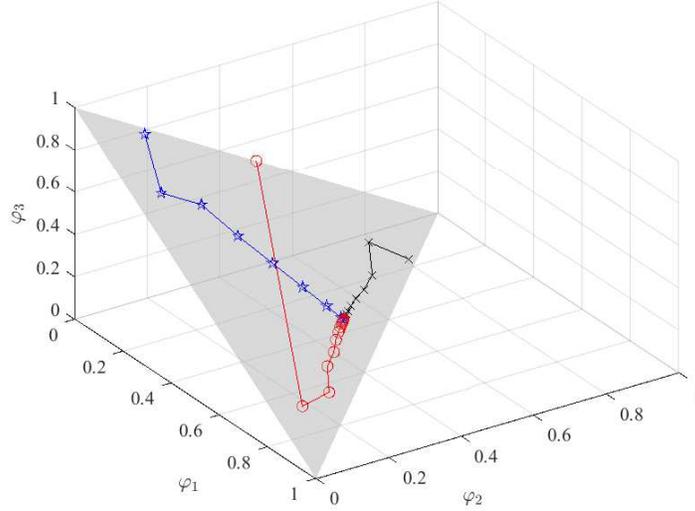}
    \caption{A demonstration of natural gradient ascent under the metric $G$ (Equation \eqref{eqn:new-metric}) in Part One of evolutionary step of Algorithm \ref{alg:ereda}. The shaded surface represents the interim simplex $\hat{S}_0^k$ with three vertices $\varphi_1,\varphi_2,\varphi_3$. This search paths in the figure illustrates how the  fixed point $\hat{\xi}^{k+1}$ of natural gradient ascent under $G$ is attained regardless of where the initial point is located in the simplex, whether it is an interior point or a boundary point.}
    \label{fig:fixedpoint}
\end{figure}

\textbf{Part two}: Let $\hat{\xi}^{k+1} := \left( \varphi_\alpha^k \right)_{\alpha \in {\hat{\Lambda}^{k+1}}} = \left( \frac{E_\alpha - \epsilon_0}{\sum_{\alpha \in {\hat{\Lambda}^{k+1}}} \left(E_\alpha - \epsilon_0\right)} \right)_{\alpha \in {\hat{\Lambda}^{k+1}}}$ denote the renormalized interim point described in Equation \eqref{eqn:renorm-mixture} above. We now project this point to a $N_{cull}$-dimensional subsimplex $\overline{S}_0^{k+1}$ of $\hat{S}_0^{k+1}$. 

We first sort the interim mixture coefficients  $\hat{\xi}^{k+1} := \left( \varphi_\alpha^k \right)_{\alpha \in {\hat{\Lambda}^{k+1}}}$ in ascending order (or descending order for minimization problem). Let the indices $\Lambda^{k+1} \subset \hat{\Lambda}^{k+1}$ denote the the largest (or smallest for minimization problem) $N_{cull}$ elements. The renormalized interim point is thus projected to the subsimplex $\overline{S}_0^{k+1}$ by:
\begin{align*}
{\xi}_0^k := \begin{cases}
      \frac{\varphi_\alpha^k}{\sum_{\alpha \in \Lambda^{k+1}} \varphi_\alpha^k} & \text{if } \alpha \in \Lambda^{k+1} \subset \hat{\Lambda}^{k+1} \\
      0 & \text{otherwise. } 
    \end{cases}\, \in \overline{S}_0^{k+1} \subset \hat{S}_0^{k+1} \quad .
\end{align*}
The subsimplex $\overline{S}_0^{k+1}$ therefore represents the set of mixture coefficients of the next iteration.
%

\textbf{Part three}: Finally, we determine the fixed point of natural gradient ascent (Algorithm \ref{alg:ng}) on the subsimplex $\overline{S}_0^{k+1} \subset \hat{S}_0^{k +1 }$ with the projected interim point $\xi_0^k$ as the initial point.
By the derivations in part one, we compute the fixed point of the natural gradient ascent in the subsimplex $\overline{S}_0^{k+1}$ in a similar fashion. The inverse of Riemannian metric matrix $G_{k+1}^{-1}$ at a point $\xi_i^{k+1} = \left\{\varphi_\alpha^k \right\}_{\alpha \in \Lambda^{k+1}}$ on\textit{ the subsimplex} $\overline{S}_0^{k+1}$ is given by :

\begin{equation*}G_{k+1}^{-1} := 
 \left[  \begin {array}{ccccc} \frac{1}{\varphi_1^k + \epsilon_0 } & 0 & \cdots & \cdots &0 \\ 
0 & \frac{1}{\varphi_2^k + \epsilon_0 } & 0 & \cdots & 0 \\
\vdots & \ddots &  \ddots & \ddots &0\\ 
0 & \cdots & 0 & \frac{1}{\varphi^k_{\Lambda^{k+1}-1} + \epsilon_0 } &0 \\
\noalign{\medskip}0 & 0 &\cdots &0& \frac{1}{\varphi^k_{\Lambda^{k+1}} + \epsilon_0 }
\end {array}
 \right] \quad ,
\end{equation*}
and the fixed point $\xi^{k+1} := \left( \varphi_\alpha^{k+1} \right)_{\alpha \in {\Lambda^{k+1}}}$ in $\overline{S}_0^{k+1}$ is therefore given by (similar to Equation \eqref{eqn:renorm-mixture}):
\begin{align}
\varphi_\alpha^{k+1} = \frac{E_\alpha - \epsilon_0}{\sum_{\alpha \in {\Lambda^{k+1}}} \left(E_\alpha - \epsilon_0 \right)} \label{eqn:mixture-end-evo} \quad .
\end{align}

\begin{remark}
\label{rmk:coeff-coincide}
The renormalized fixed point $\xi^{k+1}$ from Equation \eqref{eqn:mixture-end-evo} is an interior point of the simplex $\overline{S}_0^{k+1}$. Notice if for all $\alpha \in \Lambda^{k+1}$, the constant $\epsilon_0 <0$ is sufficiently small, such that it satisfies: $\vert \Lambda^{k+1} \vert \cdot \epsilon_0 \ll E_\alpha$, then we would have $\varphi_\alpha^k \cong \frac{E_\alpha}{\sum_{\alpha \in {\Lambda^{k+1}}} E_\alpha}$, and we retrieve Equation \eqref{eqn:mixture-evo-reduced}. This fixed point therefore directly reflects the relative expected fitness of each component $\exp_{x_\alpha}\left(B(0,j_{x_\alpha})\right)$ of the search region $V^{k+1} = \cup_{\alpha \in \Lambda^{k+1}} \exp_{x_\alpha}\left( B(0,j_{x_\alpha}) \right)$.
Hence the metric introduced in Equation \eqref{eqn:new-metric} describes the geometry of the evolutionary step of Extended RSDFO and  the use of the relative fitness of mixture components in  Equation \eqref{eqn:mixture-evo-reduced} is rigorously derived from first principles. 
\end{remark}
 
\begin{remark}
\label{rmk:evo_min}
In the case where we have a minimization problem over $M$, we would mirror the assumptions and operations of the maximization case. We assume, by translation, that the objective function is strictly negative: i.e. $f(x) <0$ for all $x \in M$, equivalently this means $-f(x)>0$ for all $x\in M$.

The natural gradient ascent in \textbf{Part One} (and similarly \textbf{Part three}) are replaced by natural gradient \emph{descent} over the following converted minimization problem:
\begin{align*}
\min_{x \in M} f(x) \mapsto \min_{\xi^k \in \hat{S}_0^k} J^{\hat{V}^{k+1}} \left( \xi^k \right) \quad .
\end{align*}
Since mixture coefficients $\left( \xi^k \right)$ in the simplices points towards the negative quadrant (see for example Figure \ref{fig:fixedpoint}) under natural gradient \emph{descent} direction, the normal vector of the simplices is taken to be $-\vec{1}$ instead. Equation \eqref{eqn:parallel_theo} thus becomes:
\begin{align*}
\frac{1}{\varphi_\alpha^k+\epsilon_0} \cdot E_\alpha = c\cdot -1 \, ,  \, \, c\in \mathbb{R}_+ \quad .
\end{align*}
The assumption that $f$ is strictly negative implies the expected value of $f$ over any geodesic ball $ \exp_{x_\alpha}\left( B(0,j_{x_\alpha}) \right)$ is strictly negative. In other words, the negative expected value is always positive: $- E_\alpha >0$ for all $\alpha$, which in turn implies  $\varphi_\alpha^k = -E_\alpha - \epsilon_0 > 0$. Equation \eqref{eqn:renorm-mixture} (similarly Equation \eqref{eqn:mixture-end-evo} and Equation \eqref{eqn:mixture-evo-reduced}) will then become:
\begin{align}
\label{eqn:min_mixturecoeff}
\varphi_\alpha^k &= \frac{-E_\alpha - \epsilon_0}{\sum_{\alpha' \in {\Lambda}} \left(-E_\alpha - \epsilon_0 \right)}  \cong 
\frac{-E_\alpha}{\sum_{\alpha \in {\Lambda}} -E_\alpha} \quad .
\end{align} 
Note that since we assumed $f(x)$ is strictly negative, 
$\varphi_\alpha^k$ can be equivalently expressed as
$|E_{\alpha}|/(\sum_{\alpha \in \Lambda} |E_\alpha|)$.
The rest of the arguments and computation are the same.
\end{remark}

\section{Convergence of Extended RSDFO on Compact Connected Riemannian Manifolds}
\label{sec:conv}


In this section we discuss the convergence behavior of Extended RSDFO on compact connected Riemannian manifolds. In particular, we show how Extended RSDFO converges globally eventually within \textit{finitely many} steps on compact connected Riemannian manifolds under the assumption that the optima is attainable in the manifold. The formal definitions and detailed proof will be provided in Section \ref{appendix:proof:conv}.

Global convergence of SDFOs on Euclidean spaces, such as Estimation of Distribution Algorithms (EDA) on $\mathbb{R}^n$ studied in \cite{zhang2004convergence}, is derived based on the explicit relations between the sampling probability distribution on the $k^{th}$ and the $\left(k+1\right)^{th}$ iteration of the EDA algorithm. This relation in turn depends on the chosen selection scheme, and more importantly: the fact that the selection distributions share the same support in the Euclidean space. Similarly, convergence results of Evolutionary Strategies \cite{beyer2014convergence} on Euclidean spaces share the same underlying assumption: selection distributions across iterations share the same support. Therefore, the selection distributions belong to the same statistical manifold.
%
%

%

However, such global convergence results of EDAs cannot be translated directly to Riemannian manifolds. Let $M$ be a Riemannian manifold and two points $x_k, x_{k+1}$ in $M$, the tangent spaces $T_{x_k} M$ and $T_{x_{k+1}} M$ centered $x_k, x_{k+1}$  respectively, are different (disjoint) spaces. Locally inherited parametrized probability densities (described in Section \ref{app:localdistn}) over $T_{x_k} M$ and $T_{x_{k+1}} M$ have different supports thus belong to different families. On the other hand, in Extended RSDFO, family of parametrized densities on separate tangent spaces can be related by mixture densities (described in Section \ref{sec:global:distnmfold}), and the quality of solutions is monotonically non-decreasing from iteration to iteration (Proposition \ref{rmk:monoincreasing}).


Suppose a new boundary exploration point is generated for each iteration of Extended RSDFO, and suppose the algorithm terminates only if no boundary point is available (i.e. when the boundary of the explored region is empty). The explored region of Extended RSDFO consists of the union of geodesic balls in $M$ with non-zero radius. Let $j_M$ denote the smallest of the radii, then the sequence of ``exploration centroids", generated on the boundary of the previous explored region, must be at least $j_M$ apart.

The sequence of boundary exploration points cannot continue indefinitely, otherwise this sequence will have no limiting point in $M$, contradicting the fact the $M$ is compact. Therefore the explored region generated by Extended RSDFO must exhaust the manifold $M$ in finitely many steps. Together with the use of elitist selection of the mixture components, the global optima will eventually be attainable in the explored region within finitely many steps.

\subsection{Detailed Exposition of Convergence Behaviour of Extended RSDFO}
\label{appendix:proof:conv}

In this section, we provide the detailed proof of the convergence of Extended RSDFO in connected compact Riemannian manifolds discussed in Section \ref{sec:conv}. In particular, we show that if Extended RSDFO generates a boundary exploration point for each iteration  (Section \ref{sec:termin}), then it converges eventually globally in finitely many steps in compact connected Riemannian manifolds.

Let $M$ be a Riemannian manifold. For $k>0$, let $X_A^k$ denote the finite set of accumulated centroids described in Equation \eqref{eqn:accum:centroids}. Let $\mathcal{E}_M$ denote the orientation-preserving open cover of $M$ described in Section \ref{sec:global:distnmfold} with set of centroids given by  $X_M := \left\{ x_\alpha \right\}_{\alpha \in \Lambda_M} \subset M$. The sets of search centroids $X_M$ and $X_A^k$ describes an open cover of $M$:

\begin{definition}
For each $k>0$, let $X_M^k := X_M \cup X_A^k$. The \textbf{accumulative open cover} of $M$ described by $X_M^k$  is given by: 
\begin{align*}
\mathcal{E}^k_M := \left\{ \left( \exp_{x_\alpha}, B(\vec{0},j_{x_\alpha})  \right) \right\}_{\alpha \in \Lambda^k_M} \quad ,
\end{align*}
where $j_{x_\alpha} \leq \operatorname{inj}(x_\alpha)$.
\end{definition}
Let $V^k$ denote the search region of the $k^{th}$ iteration of Extended RSDFO defined in Equation \eqref{eqn:searchvks}. Using the notation of Table \ref{table:evonotation} in Section \ref{sec:prac:mixturecoeff}, $V^k$ is defined by the set of centroids $X^k \subset X_M^k$ indexed by the finite subset $\Lambda^k \subset \Lambda_M^k$ 


Let the function $f: M \rightarrow \mathbb{R}$ denote the objective function of an optimization problem over $M$. Assume without loss of generality that the optimization problem is a maximization problem and $f$ is strictly positive. Furthermore, we assume that $f$ is bounded above in $M$ and that there exists an attainable global optima $x^* \in M$ such that: $f(x^*) \geq f(x) $ for all $x\in M$. 

The existence of a global optima implies for all $k>0$, there exists $\alpha^* \in \Lambda^k_M$ such that $E_{\alpha^*} := \int_{\exp_{x_\alpha^*}\left(B(\vec{0},j_{x_\alpha^*} ) \right)}f(x) p(x \vert\theta^{\alpha^*}) dx \geq E_\beta$ for all $\beta \in\Lambda_M$. We shall denote this by $E^* := E_{\alpha^*}$ for simplicity.

Motivated by the convergence condition described in \cite{zhang2004convergence}, we describe a global convergence condition of Extended RSDFO:
\begin{definition}
Extended RSDFO \textbf{converges globally eventually} if it satisfies the following condition:
\begin{align}
\label{eqn:conv-cond}
\lim_{k\rightarrow \infty} \sup_{\alpha \in \Lambda^k} E_\alpha^k = E^* \quad ,
\end{align}
where $E_\alpha^k := \int_{\exp_{x_\alpha}\left(B(\vec{0},j_{x_\alpha} ) \right)}f(x) p(x \vert\theta^\alpha) dx$ for $\alpha \in \Lambda^k$.
\end{definition}

This means asymptotically, the best component of the accumlative open cover $\mathcal{E}^k_M$ will be contained in the preserved centroids contained in $V^k$. In other words, there exists an integer $K_0$ such that for $k \geq K_0$ the set of retained centroids will contain the ``best" neighbourhood $\exp_{x_\alpha^*}\left(B(\vec{0},j_{x_\alpha^*} ) \right)$. 

Recall from Section \ref{sec:termin} that Extended RSDFO terminates when the boundary of the explored region is empty. The following theorem guarantees Extended RSDFO will terminate in finitely many steps in compact connect Riemannian manifolds.

\begin{restatable}{theorem}{thmconv}
\label{thm:conv}
Let $(M,g)$ be a compact connected Riemannian manifold. Suppose for each iteration Extended RSDFO generates at least one exploration point from the boundary of the explored region \footnote{See Section \ref{sec:prac:expdistn}} when the boundary is nonempty.  Then Extended RSDFO satisfies the global convergence condition Equation (\ref{eqn:conv-cond}) on $M$ within finitely many steps. That is, there exists finite integer $N > 0$ such that: 
\begin{align*}
\sup_{\alpha \in \Lambda^N} E_\alpha^N = \lim_{k\rightarrow \infty} \sup_{\alpha \in \Lambda^k} E_\alpha^k = E^* \quad ,
\end{align*}
where $E_\alpha^k := \int_{\exp_{x_\alpha}\left(B(\vec{0},j_{x_\alpha} ) \right)}f(x) p(x \vert\theta^\alpha) dx$ for $\alpha \in \Lambda^k$.
\end{restatable}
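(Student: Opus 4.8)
The plan is to exploit compactness together with the fact that the exploration centroids generated on successive boundaries are uniformly separated in the Riemannian distance. First I would fix the smallest radius $j_M := \min_{\alpha \in \Lambda_M} j_{x_\alpha} > 0$, which exists and is strictly positive because the open cover $\mathcal{E}_M$ can be chosen locally finite on the compact manifold $M$ (equivalently, the radii are bounded below on any finite accumulated index set, and compactness forces only finitely many genuinely distinct geodesic balls to be needed). I would then argue that if the boundary $\partial(W^k)$ of the explored region is nonempty at every iteration, then by hypothesis Extended RSDFO generates a new exploration centroid $x_{k+1} \in \partial(W^k)$ at each step. By the rejection rule in Equation \eqref{eqn:expdistn:rej}, every accepted exploration point $y$ satisfies $d(x_\alpha, y) \geq j_{x_\alpha} \geq j_M$ for all previously accumulated centroids $x_\alpha \in X_A^k$; in particular the sequence of exploration centroids $(x_{k})_k$ satisfies $d(x_{k}, x_{k'}) \geq j_M$ for all $k \neq k'$.

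Next I would derive the contradiction: an infinite sequence of points in a compact metric space must have a convergent subsequence, hence a Cauchy subsequence, contradicting the uniform separation $d(x_k, x_{k'}) \geq j_M > 0$. Therefore the exploration process cannot continue indefinitely, so there is a finite integer $N_0$ at which $\partial(W^{N_0})$ is empty; equivalently $W^{N_0} = M$ since $M$ is connected and $W^{N_0}$ is a nonempty closed set with empty topological boundary, hence clopen, hence all of $M$. By the termination criterion of Section \ref{sec:termin} (for the theoretical setting), Extended RSDFO terminates at step $N_0$, and the accumulated open cover $\mathcal{E}^{N_0}_M$ covers all of $M$. In particular the ``best'' geodesic ball $\exp_{x_{\alpha^*}}(B(\vec 0, j_{x_{\alpha^*}}))$ realizing $E^*$ has been explored: its centroid $x_{\alpha^*}$ lies in $X_A^{N_0}$.

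Finally I would invoke the elitist selection step (line \ref{alg:prac:cull} of Algorithm \ref{alg:ereda}) together with the monotone-improvement Proposition \ref{rmk:monoincreasing}. Once $x_{\alpha^*}$ has been generated as an interim centroid at some iteration $k^* \leq N_0$, the selection step preserves the $N_{cull}$ fittest centroids by expected fitness; since $E_{\alpha^*} = E^* \geq E_\beta$ for every explored $\beta$, the centroid $x_{\alpha^*}$ is never culled, so $\alpha^* \in \Lambda^k$ for all $k \geq k^*$. Hence $\sup_{\alpha \in \Lambda^k} E_\alpha^k = E^*$ for all $k \geq k^*$, and taking $N := \max(k^*, N_0) \leq N_0 < \infty$ gives $\sup_{\alpha \in \Lambda^N} E_\alpha^N = \lim_{k \to \infty} \sup_{\alpha \in \Lambda^k} E_\alpha^k = E^*$, which is exactly the claimed finite-step global convergence.

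The main obstacle I anticipate is justifying the uniform lower bound $j_M > 0$ on the radii together with the clopen argument $W^{N_0} = M$: the accumulated index set is a priori only countable, and one must be careful that the injectivity-radius-based radii $j_{x_\alpha}$ do not degenerate to zero along the sequence of exploration centroids. On a \emph{compact} manifold the injectivity radius is bounded below by a positive constant (a standard fact), so choosing $j_{x_\alpha} \leq \operatorname{inj}(x_\alpha)$ with, say, $j_{x_\alpha} \geq \tfrac{1}{2}\operatorname{inj}_M > 0$ resolves this; the clopen step then needs only that $W^{N_0}$ is closed (a finite union of closed geodesic balls) with empty boundary, which forces it to equal the connected manifold $M$. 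The rest is bookkeeping with the notation of Table \ref{table:evonotation} and Equation \eqref{eqn:accum:centroids}.
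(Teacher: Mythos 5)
Your proposal is correct and follows essentially the same route as the paper's proof: both extract a uniform positive radius $j_M$, observe that successive boundary-exploration centroids are pairwise at least $j_M$ apart, invoke the Bolzano--Weierstrass property of the compact manifold to force the exploration to halt after finitely many steps, use connectedness (the clopen argument) to conclude $W^N = M$, and finish with the elitist selection step together with Proposition \ref{rmk:monoincreasing}. The only cosmetic difference is that you phrase the compactness contradiction as ``a uniformly $j_M$-separated sequence cannot have a convergent subsequence,'' whereas the paper extracts two close points of a convergent subsequence and contradicts boundary membership directly; these are the same argument.
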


\begin{proof}

%

For each $k \geq 0$, consider the region in $M$ explored by Extended RSDFO up to the $k^{th}$ iteration: $W^k = \cup_{j = 1}^k \hat{V}^j \subset M$ (see Equation \eqref{eqn:explored:wk}). $W^K$ is the union of closed geodesic balls in $M$ hence a nonempty closed subset of $M$. If $W^k \subsetneq M$, then the boundary $\partial \left( W^k \right)$ of $W^k$ must be nonempty. \footnote{Otherwise $W^k = \operatorname{int}(W^k)$, meaning $W^k$ is a clopen proper subset of $M$, which contradicts the fact that $M$ is connected.} This means we can always find a search centroid on the boundary $\partial \left( W^k \right)$ \textit{unless} $W^k = M$.


\begin{claim}
Extended RSDFO explored $M$ in finitely many steps. In other words, there exists finite integer $N>0$ such that $W^N = M$.
\end{claim}
\begin{claimproof}
Let $k \geq 0$, we generate a sequence of points $\left\{y^k\right\}_{k\in \mathbb{N}}$ in $M$ as follows:
\begin{itemize}
    \item If $W^k \subsetneq M$, then by the argument above the boundary of the explored region is nonempty: $\partial \left( W^k \right) \neq \emptyset$. Hence by the assumption Extended RSDFO can generate a new exploration centroid from $\partial \left( W^k \right)$. In particular, at the $k^{th}$ iteration, there exists  $y^{k+1} \in X^k \cup \hat{X}^{k+1} \subset M$ in the set of interim centroids such that $y^{k+1} \in \partial \left( W^k \right)\subset M$. Let $j_M := \inf_{x\in M} j_x \leq \inf_{x\in M} \operatorname{inj}\left(x\right)$, then $j_M >0$ and we obtain the following relation: 
\begin{align*}
W^{k} \subset \exp_{y^{k+1}}\left(B(\vec{0},j_M) \right) \cup W^k \subset W^{k+1} \quad ,
\end{align*} 
where $\exp_{y^{k+1}}\left(B(\vec{0},j_M )\right)$ denotes the closed geodesic ball around $y^{k+1}$ with radius $j_{M} \leq j_{y^{k+1}} \leq \operatorname{inj}\left(j_{y^{k+1}} \right)$. Note that geodesic balls in $M$ are also metric balls of the same radius \cite{lee2006riemannian}.

\item Otherwise if $W^k = M$, then $\partial \left( W^k \right) = \emptyset$. We therefore set $y^{k+1} = y^k$ and $W^{k+1} = W^{k} = M$.
\end{itemize}

Since $M$ is compact, it admits the Bolzano-Weierstrass Property \cite{rudin1964principles}: every
sequence in $M$ has a convergent subsequence. In particular the sequence $\left\{y^k\right\}_{k\in \mathbb{N}}$ has a convergent subsequence $\left\{y^{k^\ell}\right\}_{{k^\ell} \in \mathbb{N}}$. This means there exists $\hat{y} \in M$ such that:
\begin{align*}
    \hat{y} = \lim_{\ell \rightarrow \infty} y^{k^\ell} \quad .
\end{align*}
Let $d_g : M \times M \rightarrow \mathbb{R}$ denote the Riemannian distance induced by the Riemannian metric $g$ of $M$. The above equation implies there exists finite $k^\alpha, k^\beta >0$, $k^\alpha < k^\beta$ such that $d_g\left(y^{k^\alpha},\hat{y}\right), d_g\left(y^{k^\beta},\hat{y}\right) < \frac{j_M}{2}$, hence $d_g\left(y^{k^\alpha},y^{k^\beta}\right) < j_M$ \footnote{In the normal neighbourhood, the Euclidean ball in the tangent space is mapped to the geodesic ball of the same radius. \cite{lee2006riemannian}}. Therefore we have:
\begin{align*}
    y^{k^\beta} \in \operatorname{int}\left(\exp_{y^{k^\alpha}}\left(B(\vec{0},j_M )\right)\right) \subset \exp_{y^{k^\alpha}}\left(B(\vec{0},j_M )\right) \subset W^{k^\alpha} \subset W^{k^\beta -1} \quad .
\end{align*}
which implies $y^{k^\beta} \in \operatorname{int}\left(W^{k^\beta -1}\right)$.

Finally, the result above means $W^{k^\beta -1} = M$: Suppose by contrary that $W^{k^\beta -1} \neq M$, then by construction of the sequence $\left\{y^k\right\}_{k\in \mathbb{N}}$: $y^{k^\beta} \in \partial\left(W^{k^\beta -1}\right) := W^{k^\beta -1} \setminus \operatorname{int}\left(W^{k^\beta -1}\right)$, contradicting the result above.

Therefore there exists finite $k^\beta > 0$ such that $W^{k^\beta -1} = M$, which completes the proof of the claim.

\end{claimproof}

%



Let $\Lambda_A^k = \cup_{j=0}^k \hat{\Lambda}^{j+1}$ denote the index of accumulated centroids (see Section \ref{sec:prac:expdistn} in particular Equation \eqref{eqn:accum:centroids}), by the claim above, there exists $N>0$ such that:
\begin{align*}
W^N = \lim_{k\rightarrow\infty} W^k = \lim_{k\rightarrow\infty} \bigcup_{\alpha \in \Lambda_A^k} \exp_{x_\alpha}\left(B(\vec{0},j_{x_\alpha}) \right) = M
\end{align*}


Suppose $W^N$ is indexed by $\Lambda^N$. Since the the fittest neighbourhoods are preserved by the evolutionary step (see Section \ref{sec:theo-evo-step}, and Proposition \ref{rmk:monoincreasing}) of Extended RSDFO, we obtain the following relation:
\begin{align*}
\sup_{\alpha \in \Lambda^N} E_\alpha^N = \lim_{k\rightarrow \infty} \sup_{\alpha \in \Lambda^k} E_\alpha^k &= \lim_{k \rightarrow \infty}\sup_{\alpha \in \Lambda^k} \int_{\tilde{B}_\alpha  \subset V^k} f(x)  p(x\vert\theta^\alpha) dx  \\
&= \lim_{k \rightarrow \infty}\sup_{\alpha \in \Lambda_A^k} \int_{\tilde{B}_\alpha \subset W^k} f(x)  p(x\vert \theta^\alpha) dx  \quad , \\
& = \sup_{\alpha \in \Lambda_A^k} \int_{\tilde{B}_\alpha  \subset M} f(x)  p(x \vert \theta^\alpha) dx \\ 
&= E^* \quad .
\end{align*}
where  $\tilde{B}_\alpha :=  \exp_{x_\alpha}\left(B(\vec{0},j_{x_\alpha})\right)$ for simplicity.
\end{proof}

\section{Experiments}
\label{sec:experiment}
%
%
%

In this section we compare Extended RSDFO against three manifold optimization algorithms from the literature reviewed in \ref{ch:litreview:manopt}: manifold trust-region method (RTR) \cite{absil2007trust,absil2009optimization}, Riemannian CMA-ES (RCMA-ES) \cite{colutto2010cma}, and Riemannian Particle Swarm Optimization (R-PSO) \cite{borckmans2010modified, borckmans2010oriented}. The three algorithms are detailed in \ref{sec:rtr}, \ref{sec:rpso} and \ref{sec:rcma} respectively. For the remainder of the section, we will discuss the local restrictions and implementation assumptions of the  manifold optimization algorithms in the literature, and discuss why they are not necessary for Extended RSDFO (Section \ref{sec:manopt:assmption}). The experiments in this section are implemented with ManOpt version 4.0 \cite{boumal2014manopt} in MATLAB R2018a. For the following experiments we demonstrate the optimization methods on \textbf{minimization} problems over lower dimensional manifold such as: $M = S^2$ the unit $2$-sphere (Section \ref{sec:sphere}), Grassmannian manifolds $Gr_{\mathbb{R}}(p,n)$ (Section \ref{sec:gr}), and Jacob's ladder (Section \ref{sec:jl}). Jaccob's ladder, in particular, is a manifold of potentially infinite genus and cannot be addressed by traditional (constraint) optimization techniques on Euclidean spaces, this experiment necessitates the development of manifold optimization algorithms. Expected fitness in the experiments are approximated by Monte Carlo sampling with respect to their corresponding probability distributions. Finally in Section \ref{sec:discuss:exp}, we discuss the experimental results in detail in the end of the section.

We begin by describing the setup of the algorithms.

\textbf{ \textit{Setup of Extended RSDFO}} (Section \ref{ch:paper2}): the RSDFO core is given by the generic framework described in Algorithm \ref{alg:greda}. Let $M$ denote a Riemannian manifold. The families of parametrized probability distributions $S_x$ on each tangent space $T_x M$ are set to be multivariate Gaussian densities centered at $\vec{0} \subset T_x M$ restricted to $B(\vec{0},\operatorname{inj}(x)) \subset U_x \subset T_{x} M$ (properly renormalized), denoted by $N(\cdot \vert \vec{0},\Sigma)$ . The initial statistical parameter is set to be $\left(\vec{0},\Sigma\right) = \left(\vec{0},\text{Id}_{\text{dim}(M)}\right)$ (spherical Gaussian), and the covariance matrix is iteratively re-estimated (see Algorithm \ref{alg:greda}). Each iteration of local RSDFO is given a fixed budget of sample ``parents" and the estimation is based on a fixed number of ``off-springs". The local RSDFO algorithms are terminated if the improvement of solutions is lower than the threshold $10^{-14}$. Extended RSDFO is terminated when all local RSDFO algorithms around the current centroids terminate.
The non-increasing non-zero function $\tau(k)$ modulating the amount of exploration on the $k^{th}$ iteration of Extended RSDFO (see Section \ref{sec:additionalparameter}) is be given by:
\begin{align*}
    \tau(k) = \frac{6}{10}\cdot e^{-\left(0.015\right)\cdot k} \quad .
\end{align*} 

Furthermore, for the sake of computational efficiency, the exploration distribution $U = \operatorname{unif}\left( \partial\left(W^k \right) \right)$ in line\ref{alg:prac:generate} of Extended RSDFO (see Section \ref{sec:prac:expdistn}) will be simplified as follows:
For each step we pick $5$ explored centroids randomly. For each chosen centroids generate $50$ points on the \emph{sphere} of injectivity radius: $\exp_{x_\alpha} \left(B(\vec{0},\operatorname{inj}(x_\alpha)) \right)$. \footnote{Equivalently, this means we set $j_{x_\alpha} = \operatorname{inj}(x_\alpha)$ in Section \ref{sec:prac:expdistn}.} If a sufficient number of boundary points is accepted (according to Equation \eqref{eqn:expdistn:rej}), then the appropriate number of boundary points will be added to the current centroids. However, if all the sampled points are rejected, then Extended RSDFO will not sample new boundary points. The algorithm will proceed on with the current set of centroids.

\textbf{ \textit{Setup of RTR}} (\ref{sec:rtr}): The parameters of RTR are inherited from the classical Euclidean version of Trust Region \cite{nocedal2006numerical}, and are set to be their default values in ManOpt's implementation summarized in Table \ref{Table:RTR} below.
\begin{table}[hbt!]
\begin{tabular}{|l|l|l|}
\hline
Parameter           & Value                          & Meaning                                                        \\ \hline
$\overline{\Delta}$ & $\pi$                          & Upperbound on step length                                      \\ \hline
$\Delta_0$          & $ \frac{\overline{\Delta}}{8}$ & Initial step length                                            \\ \hline
$\rho'$             & 0.1                            & Parameter that determines whether the new solution is accepted \\ \hline
\end{tabular}
\caption{Parameters of RTR.}
\label{Table:RTR}
\end{table}

\textbf{\textit{ Setup of RCMA-ES}} (\ref{sec:rcma}) For RCMA-ES, the parameters of inherited directly from the Euclidean version \cite{hansen2006cma}. The parameters in the following experiments are set to be the default parameters described in \cite{colutto2010cma} Let $N$ denote the dimension of the manifold, the parameters of RCMA-ES are summarized in Table \ref{Table:cmaparamer:exp} below:
\begin{table}[hbt!]
\centering
\begin{tabular}{|l|l|p{0.3\textwidth}|}
\hline
Parameter          & Value                                                                                                                                                                            & Meaning                                          \\ \hline
$m_1$               & Depends on the manifold                                                                                                                                                                            & Number of sample parents per iteration                  \\ \hline
$m_2$               & $\frac{m_1}{4}$                                                                                                                                                                  & Number of offsprings.                            \\ \hline
$w_i$               & $\log\left( \frac{m_2 +1}{i}\right) \cdot \left(\sum_{j=1}^{m_2} \log\left( \frac{m_2 +1}{j}\right) \right)^{-1}$                                                                & Recombination coefficient                        \\ \hline
$m_{\text{eff}}$    & $ \left(\sum_{i=1}^{m_2} w_i^2 \right)^{-1}$                                                                                                                                     & Effective samples                                \\ \hline
$c_c$               & $\frac{4}{N+4}$                                                                                                                                                                  & Learning rate of anisotropic evolution path      \\ \hline
$c_\sigma$          & $\frac{m_{\text{eff}} +2}{N + m_{\text{eff}}+ 3}$                                                                                                                                & Learning rate of isotropic evolution path        \\ \hline
$\mu_{\text{cov}} $ & $m_{\text{eff}}$                                                                                                                                                                 & Factor of rank-$m_2$-update of Covariance matrix \\ \hline
$c_{\text{cov}}$    & $ \frac{2}{ \mu_{\text{cov}}\left(N+ \sqrt{2}\right)^2} + \left(1- \frac{1}{\mu_{\text{cov}}}\right)\min\left(1, \frac{2\mu_{\text{cov}}-1}{(N+2)^2 + \mu_{\text{cov}}} \right)$ & Learning rate of covariance matrix update        \\ \hline
$d_\sigma =$        & $ 1+ 2\max\left(0,\sqrt{\frac{m_{\text{eff}}-1}{N+1}} \right) + c_\sigma$                                                                                                        & Damping parameter                                \\ \hline
\end{tabular}
\caption{Parameters of RCMA-ES described in \cite{colutto2010cma}.}
\label{Table:cmaparamer:exp}
\end{table}

\textbf{\textit{Setup of R-PSO}} (\ref{sec:rpso}): Finally, the parameters of R-PSO are set to be the default parameters in ManOpt \cite{boumal2014manopt}  and the function of inertial coefficient is defined according to \cite{borckmans2010modified}:
\begin{align*}
    &\text{nostalgia coefficient} = 1.4, \quad \text{social coefficient} = 1.4, \quad \\
    &\text{inertia coefficient } = \text{monotonic decreasing linear function from } 0.9 \text{ to } 0.4 \quad .
\end{align*}

The parameters of the manifold optimization algorithms in the literature are chosen according to their original papers ((RTR) \cite{absil2009optimization}, (RCMA-ES) \cite{colutto2010cma}, and (R-PSO) \cite{borckmans2010modified}) as the dimensions of the experiments in this paper are similar the ones described in their original papers. 

\subsection{On the Assumptions of Manifold Optimization Algorithms in the Literature}
\label{sec:manopt:assmption}

Manifold optimization algorithms in the literature (described in Section \ref{sec:principle:manopt} and \ref{ch:litreview:manopt}) assume the manifolds to be complete. That is, they require the Riemannian exponential map to be defined on the entire tangent space. Given a tangent vector (search direction) of an arbitrary length, this assumption allows us to ``project" our search iterate along any such direction for time $1$.

This assumption is used in gradient-based line search such as RTR (\ref{sec:rtr}, \cite{absil2007trust}), in particular if the upperbound of step length is set to be larger than the injectivity radius, the algorithm can generate search iterates beyond the current normal neighbourhood.

In Extended RSDFO, this assumption is unnecessary: As all computations and estimations are done in a strictly local fashion around the normal neighbourhood, the search direction would not leave the geodesic ball of injectivity radius. This same observation applies to Riemannian CMA-ES (\ref{sec:rcma}, \cite{colutto2010cma}) as well. This means their assumption that the manifold is complete is not necessary for Extended RSDFO.


On the other hand, Riemannian PSO (\ref{sec:rpso}, \cite{borckmans2010modified}) requires a stronger assumption to operate properly: the Riemannian logarithm map to be defined between any two points. This assumption essentially requires any two points to be able to ``communicate" on the manifold via the $\log$ map, which is generally impossible.

In particular, the Riemannian logarithm map is \textit{only} defined in the region where Riemannian exponential map is a diffeomorphism (in other words it is only defined within the normal neighbourhood and undefined else where). Therefore for the Riemannian adapation of PSO described in \cite{borckmans2010modified}, the full PSO step is only possible if all points are within \textit{one single} normal neighourhood. This assumption is not necessary for Extended RSDFO and RCMA-ES.





For ``smaller" manifolds such as $n$-spheres and Grassmannian manifolds, the Riemannian logarithm map between (almost) any two points is well-defined due to a ``sufficiently global parametrization" or embedding from an ambient Euclidean space.\footnote{For Grassmannian manifolds see \cite{absil2004riemannian}.} However, for ``larger" manifolds such as Jacob's ladder, some components of the Riemannian PSO step will be impossible, as we shall discuss in Section \ref{sec:jl}.




\clearpage

\subsection{Sphere $S^2$}
\label{sec:sphere}
The first experiment considers the objective function discussed in \cite{colutto2010cma} on $(\theta,\phi)$, the spherical coordinates on $S^2$:
\begin{align*}
f: S^2 &\rightarrow \mathbb{R} \\
f(\theta,\phi) &= -2\,\cos \left( 6 \cdot\theta \right) +2\,\cos \left( 12 \cdot \phi \right) + \left( \frac{\pi}{12}-\phi \right)^{2} + \frac{7}{2}{\theta}^{2}+4 \quad .
\end{align*}
$f$ is a multi-modal problem with global minimum located at $(\theta^*,\phi^*) = \left(0,\frac{\pi}{12} \right)$ with objective value $0$. The heat map of the objective function is illustrated in Figure \ref{fig:s2:heat}:

\begin{figure}[hbt!]
    \centering
    \includegraphics[width=0.5\linewidth]{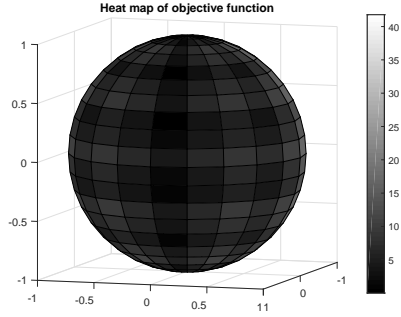}
    \caption{Heat map of $f$ on $S^2$}
    \label{fig:s2:heat}
\end{figure}


Injectivity radius on $S^2$ is $\operatorname{inj}(x) = \pi$ for all $x\in S^2$\cite{tron2013riemannian}, and the Riemannian exponential map of $S^2$ is given by \cite{absil2009optimization}:
\begin{align*}
\exp_x(v) = \left. \gamma_v(t) \right|_{t=1} = \left. \cos\left(\vert\vert v \vert \vert\cdot t \right) x + \sin\left(  \vert\vert v \vert \vert\cdot t\right) \frac{v}{\vert\vert v \vert \vert} \right|_{t=1} 
\end{align*}
where $\gamma_v(t)$ is the geodesic on $S^2$ starting at $x$ with initial velocity $v \in T_x S^2$ .  For the following experiment, the additional parameters of Extended RSDFO are given by:
\begin{align*}
    N_{random} = 2, \quad N_{cull} = 2 \quad .
\end{align*}
Each optimization algorithm is given a budget of 10000 function evaluations. Extended RSDFO initializes with $2$ initial centroids $X^0$ randomly generated on the sphere $S^2$. The local RSDFO core is given a budget of $50$ ``parents", and the parameter estimation will be based on the best $10$ ``offsprings".  RTR and RCMA-ES is initialized with one centroid randomly chosen from $X^0$. RCMA-ES is given a budget of $m_1 = 40$ parents per iteration. R-PSO initialized with $10$ copies of $X^0$, which allows $20$ ``agents" to perform $20$ function evaluations per iteration. The expected fitness of the stochastic algorithms are approximated using $10$ Monte Carlo samples with respect to their corresponding probability distributions. Each algorithm is repeated $200$ times with the parameters described in the beginning of this section. The gradient and the Hessian of the objective function is not provided. Each estimation of the gradient or the Hessian in RTR counts as a function evaluation respectively, even though in practice it may take up additional resources. The algorithms (and the RSDFO steams in Extended RSDFO)  terminate when they converge locally or when they are sufficiently ($10^{-6}$) close to the global optimum with objective value $0$.

Furthermore, we perform an additional set of experiments with a slightly relaxed exploration distribution described in Section \ref{sec:prac:expdistn}. In particular, we choose non-zero real number $1>\epsilon_b > 0$, such that for each iteration $k>0$ the ``exploration'' centroids (described in Section \ref{sec:prac:expdistn}) is generated from $\partial\left(\bigcup_{\alpha \in \Lambda_A^k} \tilde{B}_\alpha \right)$, with $\tilde{B}_\alpha := \exp_{x_\alpha} \left(B(\vec{0},\epsilon_b \cdot \operatorname{inj}(x_\alpha)) \right) \subset  \exp_{x_\alpha} \left(B(\vec{0},\operatorname{inj}(x_\alpha)) \right)$. 

The result of 200 experimental runs is summarized in Table \ref{Table:exp_s2} below. The second and third column of the Table \ref{Table:exp_s2} shows the number of local minimum and global minimum attained by the optimization methods within the 200 experiments respectively.

\begin{table}[hbt!]
\centering
\begin{tabular}{|l|l|l|l|}
\hline
Method & No. of local min & No. of global min & avg. $f$ eval \\ \hline
Ext. RSDFO with $\epsilon_b = 1$  & 72 & 128 &   5154  \\ \hline
RCMA-ES    & 104 & 96 &   5518.6 \\ \hline
R-PSO    & 33 & 167 & 10000   \\ \hline
RTR     & 192 & 8  &  3093.9  \\ \hline
Ext. RSDFO with $\epsilon_b = 0.4$   & 46  & 154 & 4648 \\ \hline    
\end{tabular}
\caption{Experimental results on $S^2$ with 200 runs.}
\label{Table:exp_s2}
\end{table}

From Table \ref{Table:exp_s2} above, we observe that Extended RSDFO with a basic RSDFO core (Algorithm \ref{alg:greda}) performs better than RCMA-ES even in the sphere. While R-PSO out-performs the rest of the rival algorithms at the expense of higher number of objective function evaluations, the slightly relaxed version of Extended RSDFO with $\epsilon_b = 0.4$ is close comparable using less than half the computational resources. We will discuss the experimental results in further detail in Section \ref{sec:sphere:gr:discuss}.


Experimental results of a sample run is provided in Figure \ref{fig:s2exp:a} below. In this experiment both versions of Extended RSDFO and RCMA-ES attained the global minimum, whilst R-PSO and RTR are stuck in local minima. The search centroids of model-based algorithms (Extended RSDFO and RCMA-ES) and sampled points of R-PSO are presented in Figure \ref{fig:s2exp:b} and Figure \ref{fig:s2exp:c} respectively. We observe that the movement of search centroids of model-based algorithms (Extended RSDFO and RCMA-ES) exhibit varing but converging sequence towards the global optima. On the other hand, the search agents of R-PSO gravitates towards the local min, similar to its Euclidean counterpart. 

\begin{figure*}[hbt!]
    \centering
    \begin{subfigure}[t]{\textwidth}
        \centering
        \includegraphics[width=1\textwidth]{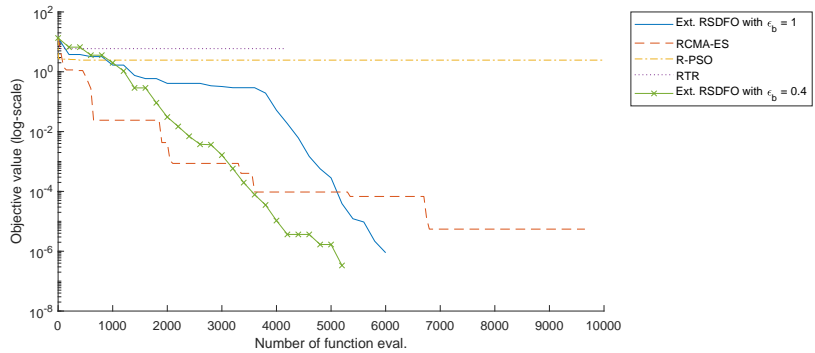}
        \caption{A sample comparison of quality of solution obtained by each algorithm.}    \label{fig:s2exp:a}
    \end{subfigure}%
    ~ \\
    \begin{subfigure}[t]{0.49\textwidth}
        \centering
        \includegraphics[width=1\textwidth]{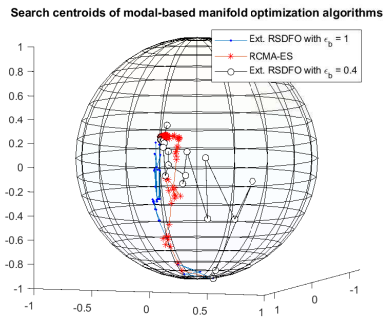}
        \caption{Movement of search centroids in model-based algorithms. Notice that even the algorithms share the same initial search centroid position at the bottom of the sphere, their search paths differ as they depends on the estimated statistical model. }   
        \label{fig:s2exp:b}
    \end{subfigure}
        ~ 
    \begin{subfigure}[t]{0.49\textwidth}
    \centering
       \includegraphics[width=1\textwidth]{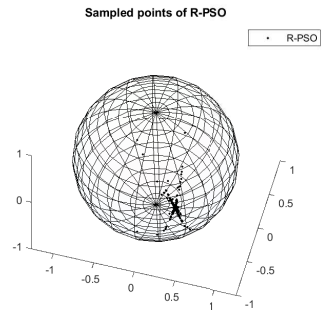}
        \caption{Search agents generated by R-PSO.}\label{fig:s2exp:c}
    \end{subfigure}
    \caption{Experimental results of one of the experiments on $S^2$. }
    \label{fig:s2:exp}
\end{figure*}

\clearpage
\subsection{Grassmannian Manifolds}
\label{sec:gr}
For experiments on Grassmannian manifolds $Gr_{\mathbb{R}}(p,n)$, we consider a composition of Gramacy-Lee Function \cite{benchmarkfcns,gramacy2012cases}, and the Riemannian distance function $d_{Gr}$ on $Gr_{\mathbb{R}}(p,n)$:
\begin{align*}
 f: Gr_{\mathbb{R}}(p,n) &\rightarrow \mathbb{R} \\
    f(x) &= \frac{\sin\left(10\pi \cdot d_{Gr}\left(x,I_{n\times p}\right)\right)}{2\cdot d_{Gr}\left(x,I_{n\times p}\right)} + \left(d_{Gr}\left(x,I_{n\times p}\right)-1\right)^4 \quad ,
\end{align*}
where $d_{Gr}$ denote the Riemannian distance function on $Gr_{\mathbb{R}}(p,n)$. The objective function is illustrated in Figure \ref{fig:grcost}, notice the $x$-axis is given by $d_{Gr}\left(x,I_{n\times p}\right)$.
\begin{figure}[hbt!]
    \centering
    \def\svgwidth{\columnwidth}
    \includegraphics[width=0.55\textwidth]{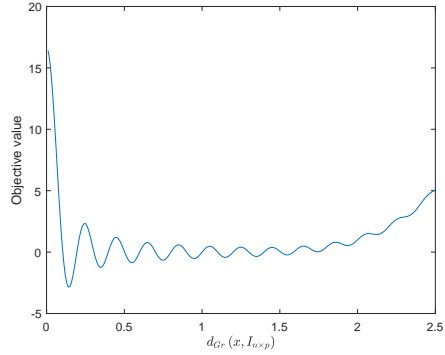}
    \caption{Illustration of objective function $f$ on $Gr_{\mathbb{R}}(p,n)$, notice the $x$-axis is given by the Riemannian distance $d_{Gr}$ of $x$ relative to $I_{n\times p}$.}
    \label{fig:grcost}
\end{figure}

The injectivity of Grassmannian manifolds $Gr_{\mathbb{R}}(p,n)$ is $\frac{\pi}{2}$ for all $x\in Gr_{\mathbb{R}}(p,n)$ \cite{prastaro1996geometry,tron2013riemannian}, and the Riemannian exponential map of is given by \cite{absil2004riemannian}:
\begin{align*}
\exp_x(v) = \left. \gamma_v(t) \right|_{t=1} = \left. \cos\left(\vert\vert v \vert \vert\cdot t \right) x + \sin\left(  \vert\vert v \vert \vert\cdot t\right) \frac{v}{\vert\vert v \vert \vert} \right|_{t=1}
\end{align*}
where $\gamma_v(t)$ is the geodesic on $Gr_{\mathbb{R}}(p,n)$ starting at $x$ with initial velocity $v \in T_x Gr_{\mathbb{R}}(p,n)$.  For the following experiment, the additional parameters of Extended RSDFO are given by:
\begin{align*}
    N_{random} = 2, \quad N_{cull} = 2 \quad .
\end{align*}
The initialization of the following experiments is similar to that of $S^2$ discussed in the previous section, the specification is included for the sake of completeness: Extended RSDFO initializes with $2$ initial centroids $X^0$ randomly generated on the sphere $S^2$. The local RSDFO core is given a budget of $120$ and $200$ ``parents" with parameter estimation based on the best $40$, $50$ ``offsprings" on $Gr_{\mathbb{R}}(2,4)$ and $Gr_{\mathbb{R}}(2,5)$ respectively. RTR and RCMA-ES is initialized with one centroid randomly chosen from $X^0$. RCMA-ES is given a budget of $m_1 = 80$ parents per iteration.  R-PSO initialized with $20$ copies of $X^0$, which allows $40$ ``agents" performing $40$ function evaluations per iteration. Each algorithm is repeated $100$ times with the parameters described in the beginning of this section. The gradient and the Hessian of the objective function is not provided. The expected fitness of the stochastic algorithms are approximated using $40$,$50$ Monte Carlo samples with respect to their corresponding probability distributions on $Gr_{\mathbb{R}}(2,4)$ and $Gr_{\mathbb{R}}(2,5)$ respectively. Each estimation of the gradient or the Hessian in RTR counts as a function evaluation respectively.

Furthermore, we once again perform an additional set of experiments with a slightly relaxed exploration distribution described in Section \ref{sec:prac:expdistn}. Following the discussion in the previous section, we choose non-zero real number $1>\epsilon_b > 0$, such that for each iteration $k>0$ the ``exploration'' centroids (described in Section \ref{sec:prac:expdistn}) is generated from $\partial\left(\bigcup_{\alpha \in \Lambda_A^k}  \tilde{B}_\alpha \right)$, with $\tilde{B}_\alpha := \exp_{x_\alpha} \left(B(\vec{0},\epsilon_b \cdot \operatorname{inj}(x_\alpha)) \right) \subset  \exp_{x_\alpha} \left(B(\vec{0},\operatorname{inj}(x_\alpha)) \right)$. 

We perform two sets of experiments on $Gr_{\mathbb{R}}(2,4)$, $Gr_{\mathbb{R}}(2,5)$ respectively and each optimization algorithm is given a budget of $24000$ and $40000$ function evaluations respectively. Due to the complexity of the problem,  the convergence criteria is slightly relaxed: the algorithms (and the RSDFO steams in Extended RSDFO) are said to converge globally if they are ``sufficiently close" ($10^{-8}$) to the global optimum with objective value $-2.87$, meaning the search iterate is within the basin of the global optimum. 

The result of 100 experimental runs on $Gr_{\mathbb{R}}(2,4)$ and $Gr_{\mathbb{R}}(2,5)$ is summarized in Tables \ref{Table:exp_gr_24}, \ref{Table:exp_gr_25} respectively below. The second and third column of the tables shows the number of local minimum and global minimum attained by the optimization methods within the 100 experiments respectively.

\begin{table}[hbt!]
\centering
\begin{tabular}{|l|l|l|l|}
\hline
Method & Local min & Global min & avg. $f$ eval \\ \hline
Ext. RSDFO with $\epsilon_b = 1$  & 4 & 96 &   24000  \\ \hline
RCMA-ES    & 61 & 39 &   9786.4  \\ \hline
R-PSO    & 3 & 97 &  24000  \\ \hline
RTR     & 100 & 0  &   22719.54 \\ \hline
Ext. RSDFO with $\epsilon_b = 0.5$   &  5 & 95 & 24000 \\ \hline    
\end{tabular}
\caption{Experimental results on $Gr_{\mathbb{R}}(2,4)$ with 100 runs.}
\label{Table:exp_gr_24}
\end{table}


\begin{table}[hbt!]
\centering
\begin{tabular}{|l|l|l|l|}
\hline
Method & Local min & Global min & avg. $f$ eval \\ \hline
Ext. RSDFO with $\epsilon_b = 1$  & 70 & 30 &   40000  \\ \hline
RCMA-ES    & 91 & 9 &  13271.2  \\ \hline
R-PSO    & 44 & 56 &  40000  \\ \hline
RTR     & 100 & 0  &  39445.47  \\ \hline
Ext. RSDFO with $\epsilon_b = 0.5$   & 59 & 41 & 40000 \\ \hline    
\end{tabular}
\caption{Experimental results on $Gr_{\mathbb{R}}(2,5)$ with 100 runs.}
\label{Table:exp_gr_25}
\end{table}

From Table \ref{Table:exp_gr_24} and Table \ref{Table:exp_gr_25}, we observe that both versions of Extended RSDFO are comparable to R-PSO, which out-performs the rival algorithms. While these three algorithms achieves a high success rate in determining the global solution of the experiments on  $Gr_{\mathbb{R}}(2,4)$, the success rates of the algorithms dropped as the dimension increases on  $Gr_{\mathbb{R}}(2,5)$. We will discuss the experimental results in further detail in Section \ref{sec:sphere:gr:discuss}.

Experimental results of a sample run on $Gr_{\mathbb{R}}(2,4)$ and $Gr_{\mathbb{R}}(2,5)$ is provided in Figure \ref{fig:grexp:a}  and Figure \ref{fig:grexp:b} respectively below. In the experiment on $Gr_{\mathbb{R}}(2,4)$, all algorithms \textit{except} RTR and RCMA-ES attained the global minimum. The plot shows how the convergence rate of the algorithms vary; Extended RSDFO takes more steps to explore the manifold before converging to an optimum. In the experiment on $Gr_{\mathbb{R}}(2,5)$, both versions of Extended RSDFO attained the global optimum, while the other algorithms converged to local optima. It is worth noting that, whilst RCMA-ES convergences sharply, it is prone to be stuck in local minima at higher dimensions.

\begin{figure*}[hbt!]
    \begin{subfigure}[t]{\textwidth}
        \centering
        \includegraphics[width=1\textwidth]{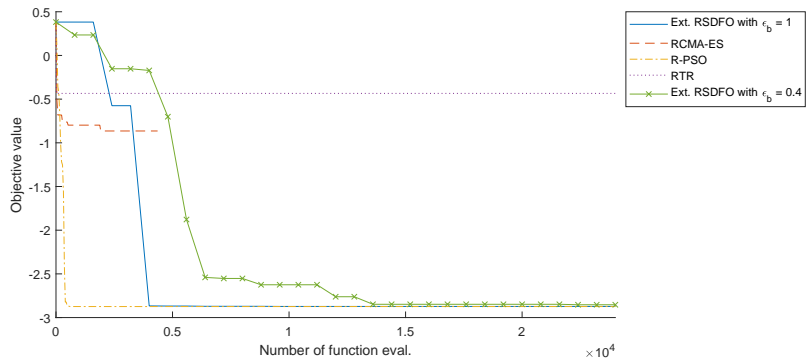}
        \caption{Comparison of quality of solution obtained by each algorithm in $Gr_{\mathbb{R}}(2,4)$.}   
        \label{fig:grexp:a}
    \end{subfigure}
        ~ 
    \begin{subfigure}[t]{\textwidth}
    \centering
       \includegraphics[width=1\textwidth]{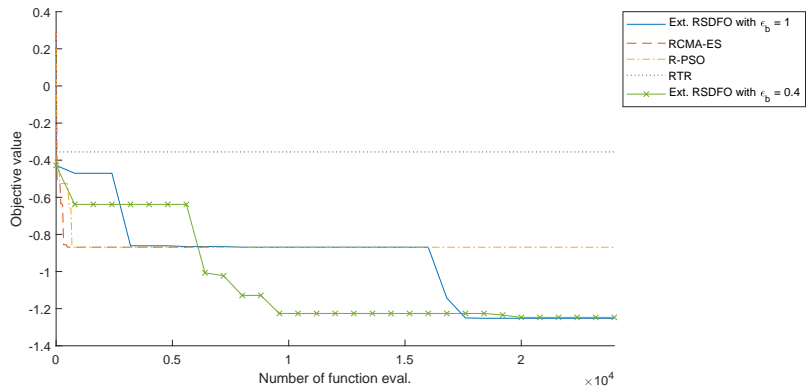}
        \caption{Comparison of quality of solution obtained by each algorithm in $Gr_{\mathbb{R}}(2,5)$.}\label{fig:grexp:b}
    \end{subfigure}
    \caption{Experimental results of one of the experiments on $Gr_{\mathbb{R}}(p,n)$. }
    \label{fig:gr:exp}
\end{figure*}

\clearpage

\subsection{Jacob's ladder}
\label{sec:jl}

In this section we discuss the experimental results of an optimization problem on a representation of Jacob's ladder surface -- a manifold of infinite genus \cite{ghys1995topologie}. Jacob's ladder will be represented by the connected sum of countable number of tori \cite{spivak1979comprehensive}, illustrated in Figure \ref{fig:donutchain} below. 
\footnote{The general geometrical structure of manifolds of infinite genus is beyond the scope of this paper. Interested readers are referred to \cite{phillips1981geometry,feldman1995infinite} and the relevant publications.}

\begin{figure}[hbt!]
    \centering
    \def\svgwidth{\columnwidth}
    \includegraphics[width=\textwidth]{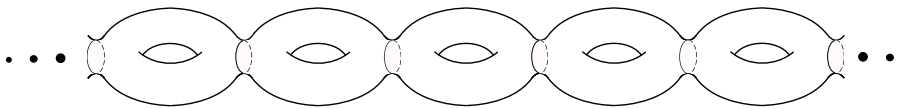}
    \caption{Illustration of Jacob's ladder surface as an infinite connected sum of tori.}
    \label{fig:donutchain}
\end{figure}

In certain conditions, manifold optimization can be performed in the framework of constraint optimization in a Euclidean space. Indeed, due to Whitney's embedding theorem \cite{whitney1944self,whitney1944singularities}, all manifolds can be embedded in a sufficiently large ambient Euclidean space. As traditional optimization techniques are more established and well-studied in Euclidean spaces, one would be more inclined to address optimization problems on Riemannian manifold by first finding an embedding onto the manifold, and then applying familar classical optimization techniques.

Riemannian manifolds discussed in the manifold optimization literature \cite{absil2009optimization,absil2019collection} are mostly compact manifolds such as $n$-spheres, Steifel manifolds, or Grassmannian manifolds. These manifolds all have global vectorial (matrix) representations, which provide a global parametrization of the manifold. Moreover, these manifolds are all compact manifolds, which can be expressed as zero set of polynomials also known as real affine algebraic varieties \cite{tognoli1973congettura,bochnak1989algebraic}. This means compact manifolds can be written as polynomial constraints in an ambient Euclidean space. 

To motivate the need for Riemannian manifold optimization, we present an optimization problem on Jacob's ladder as an example of a Riemannian manifold that does not have a global parametrization. Furthermore, Jacob's ladder is a surface of countably infinite genus, meaning it cannot be expressed as polynomial constraints in the manifold \interfootnotelinepenalty=10000 \footnote{The Euler characteristic $\chi$ of a manifold can be express as the alternating sum of Betti numbers:$\chi = b_0 - b_1 + b_2 - \ldots$, where $b_i$ denote the $i^{th}$ Betti number. In the case of Jacob's ladder we have $b_0 = 1 = b_2$, and $b_j = 0$ for $j > 3$ due to the dimension of Jacob's ladder. Let $g$ denote the genus of the surface, then the Euler characteristic is given by $\chi = 2-2\cdot g$. By combining the two equations we obtain $b_1 = 2\cdot g$. Furthermore, the (sum of) Betti number of algebraic varieties is bounded above \cite{milnor1964betti}:, which means a manifold of infinite genus cannot be a real affine algebraic variety.} It is therefore difficult to formulate optimization problems on Jacob's ladder as a constraint optimization problem. 

For the rest of the discussion and in the following experiment we assume no upperbound on the number of tori in the Jacob's ladder chain. A priori the optimization algorithms does not know the size of the tori chain and so the manifold in the optimization optimization problem cannot be formulated as constraints beforehand. In this situation, the only natural approach is to use Riemannian manifold optimization methods that ``crawl" on the manifold.



Jacob's ladder represented by an infinite connected sum of tori is a non-compact complete Riemannian manifold, as the connected sum of complete manifolds is also complete. The local geometry of each torus in the Jacob's ladder is given by $S^1_R \times S^1_r$, where $R>r$ and $S^1_R$ and $S^1_r$ denote the major and minor circles of radius $R$ and $r$ respectively. A torus can be locally parametrized by the angles $\left(\theta,\varphi\right)$, where $\theta$ and $\varphi$ denote the angle of the minor circle and major circle respectively. The local exponential map at a point $x$ on a torus can be decomposed as $\left(exp_x^{S^1_R}, exp_x^{S^1_r} \right)$, this is due to the direct sum nature of the product Riemannian metric described in Section \ref{Subsection:DualGeoLv}. If the geodesic ball of injectivity radius around a point $x$ intersects two tori in the Jacob's ladder, then the exponential map follows the transition function described below:

Let $T_1$ and $T_2$ denote two tori of the Jacob's ladder, and let $\psi_1,\psi_2$ denote smooth coordinate functions of $T_1$ and $T_2$ respectively.  On the gluing part of the adjunction space, the transition map $\psi_2 \circ \psi_1^{-1}$ is given by the identity map. This is illustrated in Figure \ref{fig:donutchainarrow}.
\begin{figure}[hbt!]
    \centering
    \def\svgwidth{\columnwidth}
    \includegraphics[width=0.5\textwidth]{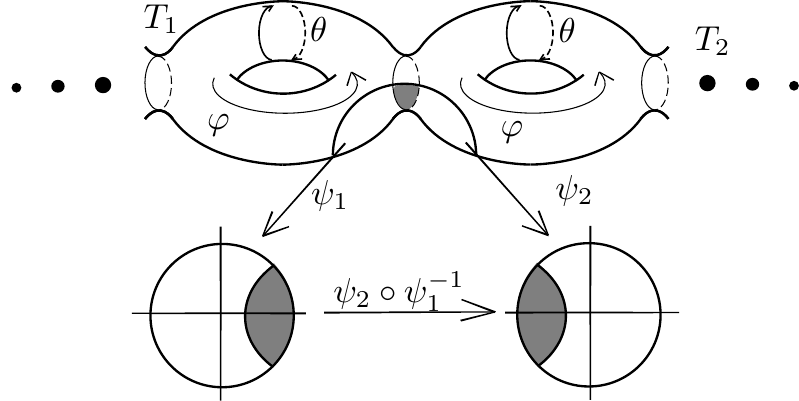}
    \caption{Illustration of minor and major rotational angles in the Jacob's ladder. The vertical smaller rotational angle represents the minor angle $\theta$, whereas the horizontal arc represents the major angle $\varphi$.}
    \label{fig:donutchainarrow}
\end{figure}

Denote by $T_i$ the $i^{th}$ torus in Jacob's ladder, denoted by $M$, described above. Due to the lack of global parametrization of $M$, we consider the \textbf{minimization} problem on $M$ with the following optimization function, composed of a ``global" part and a ``local part:
\begin{align}
f:M &\rightarrow \mathbb{R} \\
x &\mapsto f_G(x) \cdot f_L(x).
\end{align} 

$f_G$ denote the ``global" part of the objective function, which acts on the ``torus number" of a point $x\in M$. To be precise, the function is given by:
 \begin{align*}
 f_G: M &\rightarrow \mathbb{R} \\
n(x) &\mapsto := \begin{cases}
        0.05 & \text{if } n(x) = 0,15, and -25 \\
        \left[ -\left(\frac{\sin \left(\frac{4}{5}n(x) - 15\right)}{\frac{4}{5}\left(n(x)-15\right)} \right)^2 + 1.05 \right]\cdot \left| \frac{n(x)}{20}\right|^{\frac{1}{10}} 
                & \text{if } n(x) > \frac{15}{2} \\
        \left[ -\left(\frac{\sin \left(\frac{4}{5}n(x) +25 \right)}{\frac{4}{5}\left(n(x) + 25\right)} \right)^2 + 1.05 \right]\cdot \left| \frac{n(x)}{20}\right|^{\frac{1}{10}} 
                & \text{if } n(x) < -\frac{15}{2} \\
        -\left(\frac{\sin \left(\frac{4}{5}n(x) \right)}{\frac{4}{5}n(x)} \right)^2 \cdot \left| \frac{n(x)}{20}\right|^{\frac{1}{10}} & \text{otherwise,}
    \end{cases}
\end{align*}
where $n(x)$ is the integer denoting the numerical label of the torus in which the point $x \in M$ is located. The numerical label increases as we move towards the positive $x$-axis direction in the ambient Euclidean space and decreases as we move towards the negative $x$-axis direction. For instance, in the current implementation the torus centered at the origin is labelled torus number zero, and the next one towards the positive $x$-axis direction in the ambient Euclidean space is subsequently labelled $1$. The function $f_G$ is illustrated in Figure \ref{fig:fg}.


The local component of the objective function is given by Levy Function N.13 \cite{simulationlib}:
\begin{align*}
    f_L: T_i \setminus \bigcup_{j\neq i} T_j &\rightarrow \mathbb{R} \\
    \theta, \varphi &\mapsto \sin^2\left(3 \pi \theta\right) + \left(\theta-1 \right)^2 \left(1+\sin^2\left(3 \pi\varphi \right)\right)+ \left(\varphi-1\right)^2\left(1+\sin^2\left(2 \pi \varphi \right) \right) + 35 \quad .
\end{align*}
The function $f_L$ is illustrated in Figure \ref{fig:fl} on the $\left(\theta,\varphi\right)$ axes. Levy N.13 is a non-convex, differentiable, multimodal function \cite{benchmarkfcns} with one global minimum at radian angles $\left( \theta^*, \varphi^* \right) = \left( 1,1\right)$ with objective value $f_L \left( \theta^*, \varphi^* \right) = 0 + 35 = 35$.

The ``global-global" optima (the desired solution) is thus located in torus number $0,15,-25$ with minor-major rotational angles $\left( \theta^*, \varphi^* \right) = \left( 1,1\right)$ with objective value $0.05 \cdot 35 = 1.75$. The torus numbered $0,15,-25$ are therefore called ``optimal" for the following experiments.

\begin{figure}[hbt!]
    \centering
    \begin{minipage}{0.45\textwidth}
        \centering
        \includegraphics[width=0.9\textwidth]{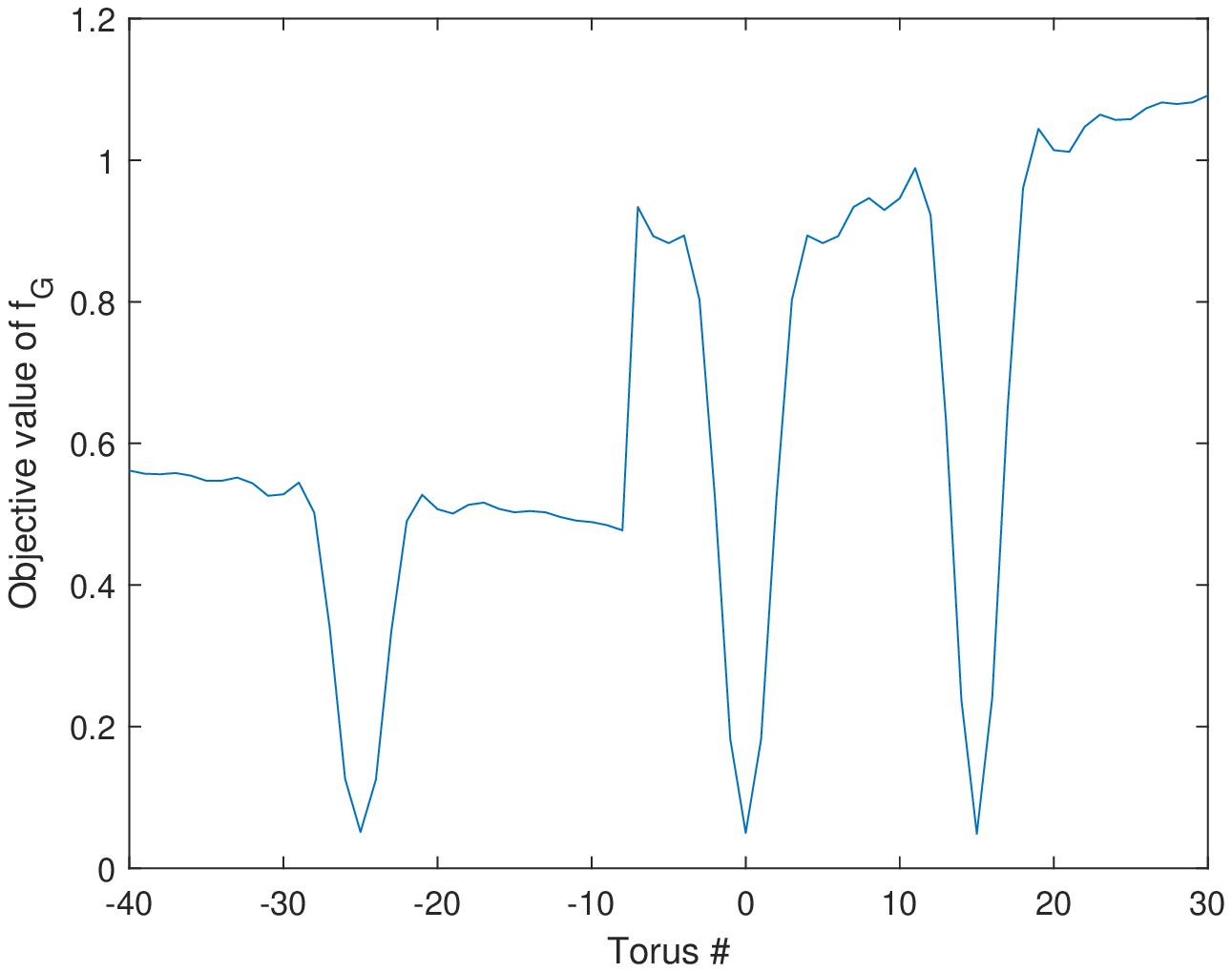} 
        \caption{$f_G$: the ``global" part of the objective function $f$ on Jacob's ladder.}
        \label{fig:fg}
    \end{minipage}\hfill
    \begin{minipage}{0.45\textwidth}
        \centering
        \includegraphics[width=0.9\textwidth]{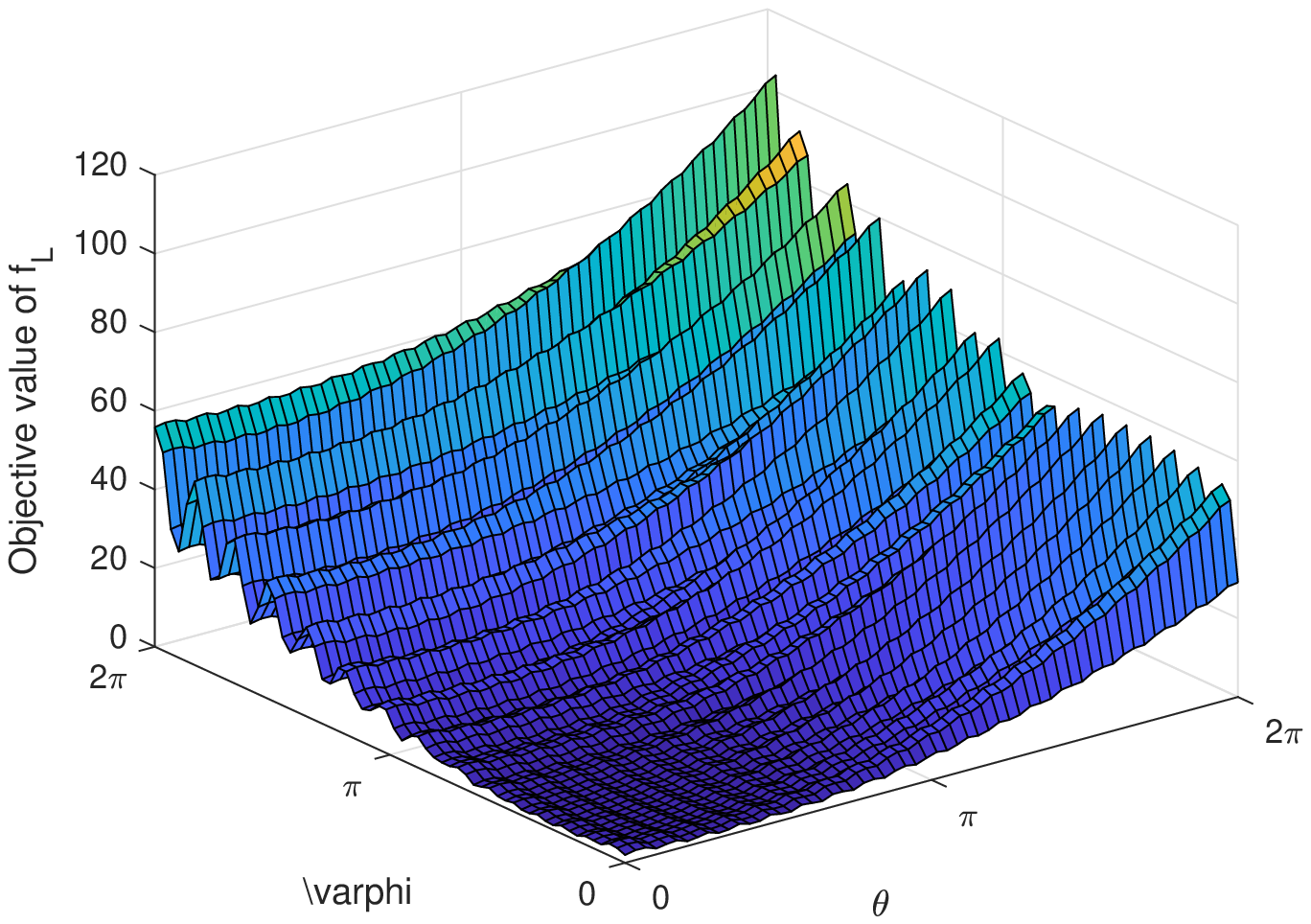} 
        \caption{$f_L$: the ``local" part of the objective function $f$ on Jacob's ladder.}
        \label{fig:fl}
    \end{minipage}
\end{figure}

For the following experiment, the additional parameters of Extended RSDFO are given by:
\begin{align*}
    N_{random} = 6 , \quad N_{cull} = 3 \quad .
\end{align*}

For each execution, Extended RSDFO initializes with a set of $5$ initial centroids $X^0 := \left\{x^0_i \right\}_{i=1}^5$ randomly generated on the Jacob's ladder between torus number $-30$ to $30$. The initial set of centroids $X^0$ is generated using a heuristic approach. RTR and RCMA-ES is initialized with one centroid randomly chosen from $X^0$. RCMA-ES is given a budget of $m_1 = 40$ parents per iteration.  R-PSO initialized with $8$ copies of $X^0$, which allows $40$ ``agents" performing $40$ function evaluations per iteration. The expected fitness of the stochastic algorithms are approximated using $10$ Monte Carlo samples with respect to their corresponding probability distributions . Each algorithm is repeated $100$ times with the parameters described in the beginning of this section. For each execution, each algorithm is budgeted by $20,000$ function evaluations. The algorithms (and the RSDFO steams in Extended RSDFO) terminate when they converge locally or when they are sufficently ($10^{-8})$) close to the optimal value at $1.75$. The gradient and the Hessian of the objective function is not provided, that is we are treating it as a black-box optimization problem. Each estimation of the gradient or the Hessian in RTR counts as a function evaluation respectively. The results of the experiments are summarized in Table \ref{table:JL_exp}. The meanings of the column labels of Table \ref{table:JL_exp} are given in Table \ref{table:JL_meaning}.

\begin{remark}
By the discussion in Section \ref{sec:manopt:assmption}, two out of three components (nostalgia and social component) of the R-PSO step would be impossible without a Riemannian logarithm map between any two points on the manifold. 

For the following experiments, since the true Riemannian logarithm map is difficult to compute, we provide R-PSO with a heuristic version of the Riemannian logarithm map on the Jacob's ladder for the sake of comparison. In particular we allow ``communication" between points beyond each other's normal neighbourhood. 

To be precise, we provide a heuristic Riemannian logarithm map between points that are either within the same torus, or if the points are ``close enough" (difference of torus number is $1$). If two points are within the same tori, we compute the Riemannian logarithm map using the product geometry of $S^1 \times S^1$. For points that are ``close enough", we provide a ``general direction" if they are not in the same torus. That is, we first embed the two points in an ambient Euclidean space, determine the direction of the torus of the target point from the original point, then project the vector $\pm \vec{1}$ onto the tangent space of the major circle of the original torus. Otherwise if the pair of points $x,y$ is too far apart, we set $\log_x(y) = \vec{0}$. This is a sufficiently good approximation to the true Riemannian logarithm map on Jacob's ladder. The experimental results is given in Table \ref{table:JL_exp}.


It is important to note that this provides an unfair advantage to R-PSO over the other algorithms, as the other algorithms do not have this additonal requirement (see Section \ref{sec:manopt:assmption}). Moreover, this generates additional computational time not recorded by function evaluations.
\end{remark}

\begin{table}[hbt!]
\begin{tabular}{|c|l|p{0.5\textwidth}|}
\hline
Column label & Meaning              & Description                                                                               \\ \hline
$\ell+\ell$  & Local-local optima   & Algorithm converges without reaching optimal angles nor the optimal tori                  \\ \hline
$\ell+g$     & Local-global optima  & Algorithm attains the optimal angles in a local torus \textbf{but} does not reach a global optimal torus \\ \hline
$g + \ell$   & global-local optima  & Algorithm reaches a global optimal torus \textbf{but} fails to find the local optimal angles within it.                  \\ \hline
$g+g$        & global-global optima (desired solution) & Algorithm converges to \textbf{both} an optimal torus and the optimal angles within the torus. This is the best possible outcome.      \\ \hline
avg. $f$ eval & Average function evaluation & The average function evaluation needed to either converge or reach the global optima. \\ \hline
\end{tabular}
\caption{Detailed description of column labels in Table \ref{table:JL_exp}}
\label{table:JL_meaning}
\end{table}

\begin{table}[hbt!]
\centering
\begin{tabular}{|l|l|l|l|l|l|}
\hline
Method    & $\ell+\ell$ & $\ell+g$ & $g+ \ell$ & $g+g$ & avg. $f$ eval \\ \hline
Ext-RSDFO &   2     &             27          &          10       & 61  & 8483.8  \\ \hline
RCMA-ES   &      1    &            76           &       0          & 23  & 
          4230.1  \\ \hline
RTR       &          96 &           0            &              4        & 0  &  3086.2 \\ \hline
R-PSO       & 2             &            5           &  56                     & 37  & 20000  \\ \hline
\end{tabular}
\caption{Experimental results on Jacob's ladder}
\label{table:JL_exp}
\end{table}


Experimental results of a sample run is provided in Figure \ref{fig:jl:exp} below. In Figure \ref{fig:jlexp:a}, we observe that only Extended RSDFO attained the global-global solution. Figure \ref{fig:jlexp:b} provides a close-up view of Figure \ref{fig:jlexp:a}, which illustrates the performance of Extended RSDFO and R-PSO towards the global-global optima in the experiment. We observe that Extended RSDFO attained the global-global solution with around $15000$ function evaluations, while R-PSO was only able to find a global-local solution (optimal torus but not the optimal angle) with the given budget of $20000$ function evaluations.


\begin{figure*}[hbt!]
    \begin{subfigure}[t]{0.5\textwidth}
        \centering
        \includegraphics[width=1\textwidth]{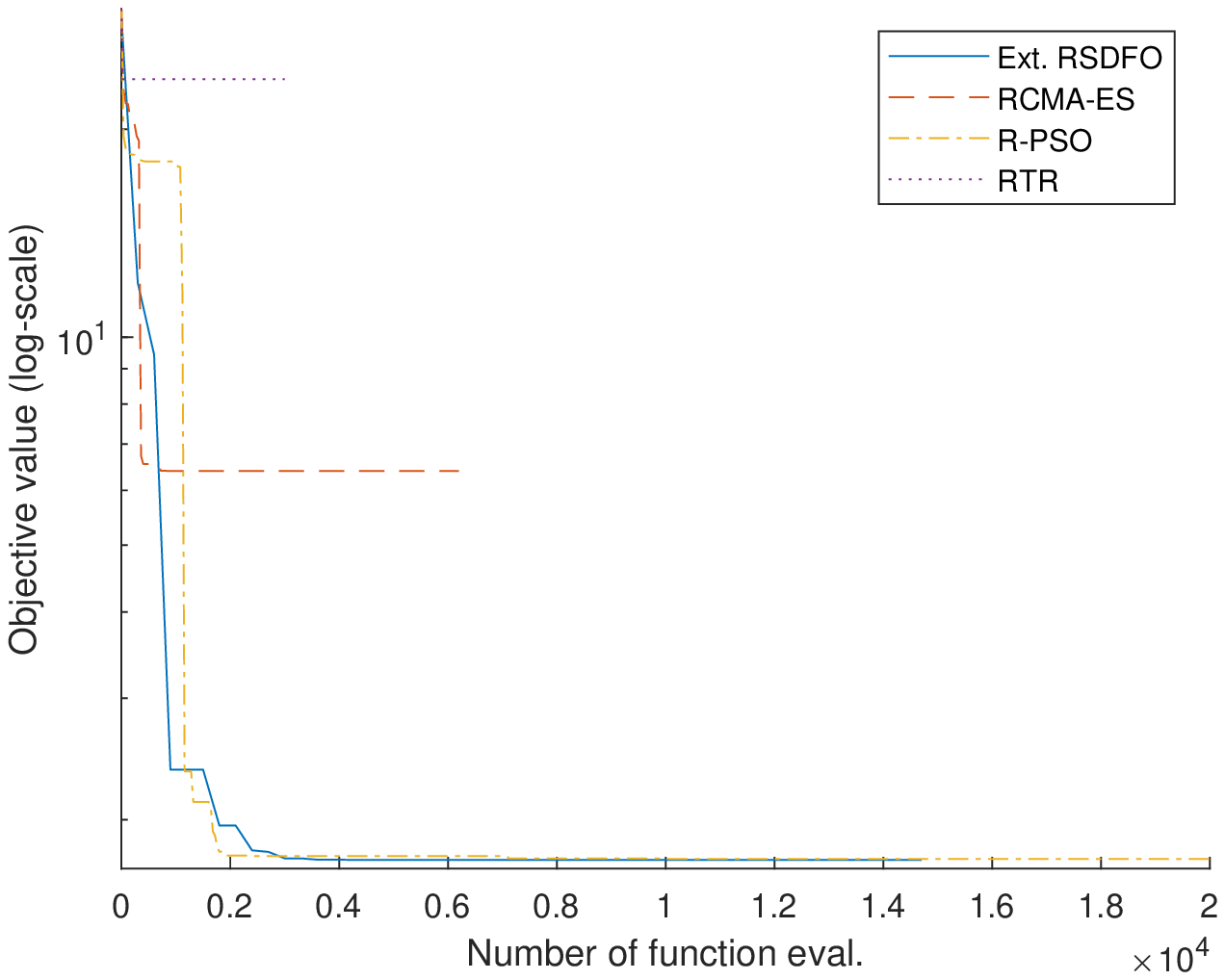}
        \caption{Comparison of quality of solution obtained by each algorithm on Jacob's ladder }   
        \label{fig:jlexp:a}
    \end{subfigure}
        ~ 
    \begin{subfigure}[t]{0.5\textwidth}
    \centering
       \includegraphics[width=1\textwidth]{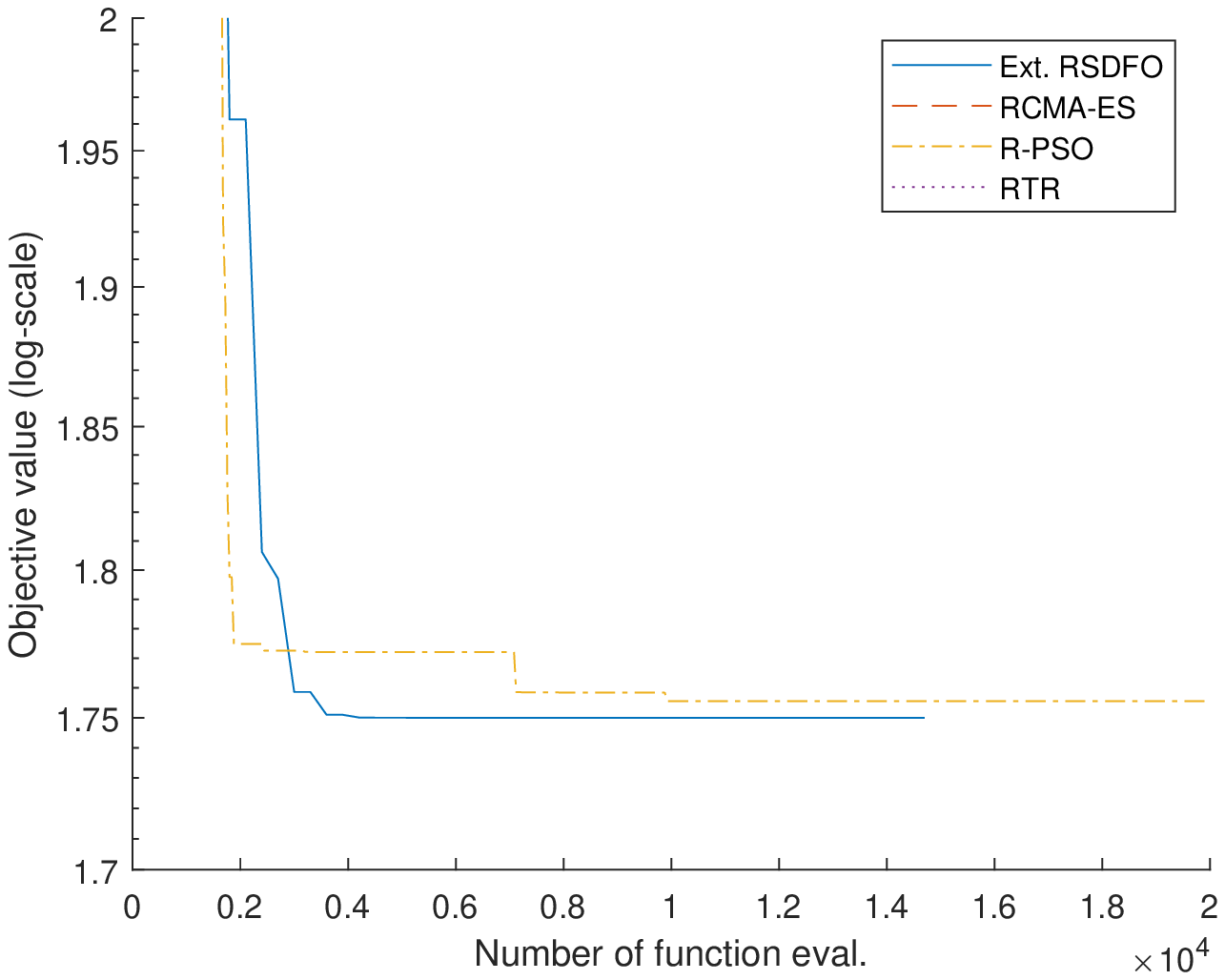}
        \caption{Close-up of Figure \ref{fig:jlexp:a} towards the global-global optima.}\label{fig:jlexp:b}
    \end{subfigure}
       \caption{Experimental results of one of the experiments on Jacob's ladder. }
    \label{fig:jl:exp}
\end{figure*}


\subsection{Discussion}
\label{sec:discuss:exp}

From the experimental results above, we observe that gradient-based line search methods (such as RTR) is less effective compared to model-based approaches (Extended RSDFO, RCMA-ES) and meta-heuristic evolutionary strategies (R-PSO) when solving multi-modal optimization problems on Riemannian manifolds. We discuss the experiments in further detail in this section.

\subsubsection{On Sphere and Grassmannian manifold}
\label{sec:sphere:gr:discuss}


The Riemannian manifolds considered in the first two experiments are $2$-sphere $S^2$ and low dimensional Grassmannian manifolds $Gr_{\mathbb{R}}(p,n)$. These two types of are ``small'', in the sense that they are low dimensional, compact, coverable by a small number of geodesic balls, and most importantly they admit a global parametrization (matrix representation).

The optimization on $S^2$ was taken from \cite{colutto2010cma}, and was used for the sake of comparison. We observe that Extended RSDFO with the basic version of RSDFO core (Algorithm \ref{alg:greda}) performs better than RCMA-ES even in the sphere. On the other hand, R-PSO out-performs the rest of the algorithms, this is partly due to the abundance of computational resources (available computational budget relative to the size of the search manifold)  and the amount of search agents available (relative to the size of the manifold). 

The manifold $S^2$ is small, in the sense that it can be covered by geodesic balls centered around a handful of points. In particular, the implemented exploration distribution (described in the beginning of this section) often fails to find a boundary point before the the point rejection threshold is reached after a few iterations. This resulted in less effective searches (in rejected points) and higher sensitivity to initial condition. 

In light of this, we ran Extended RSDFO with additional flexibility in selecting new exploration centroids. By setting the geodesic balls to be $\epsilon_b = 0.4$ times the injectivity radius, we allow more overlapping between search regions generated by the search centroids, thus enhancing the exploration of search region. Indeed, the ``relaxed version" of Extended RSDFO with $\epsilon_b = 0.4$ is less sensitive to initial conditions and has shown improvements over the version with $\epsilon_b = 1$. The relaxed version is also closely competitive with R-PSO in determing the global optima.




From the experiments on $Gr_{\mathbb{R}}(2,4)$, we observe a similar behavior as the experiment on $S^2$. For model-based algorithms: Extended RSDFO out performs RCMA-ES by searching with multiple centroids at the expense of additional resources. On the other hand both versions of Extended RSDFO are comparable to the meta-heuristic R-PSO.


On $Gr_{\mathbb{R}}(2,4)$ (a $4$- dimensional manifold), the both versions of Extended RSDFO and R-PSO achieve a high success rate in determining the global solution. However, as we increase the dimension to $Gr_{\mathbb{R}}(2,5)$ (a $6$- dimensional manifold), the success rates of the algorithms dropped.

The drop in success rate for model-based algorithms such as Extended RSDFO and RCMA-Es is is partly due to the typical exponential increase in the amount of resources necessary to correctly estimate the underlying (local) stochastic modal. In particular, if we assume the amount of resources increases exponentially: for $Gr_{\mathbb{R}}(2,4)$, the average function evaluation of Extended RSDFO is $24000 \approx \left(12.45 \right)^4$, this means on $Gr_{\mathbb{R}}(2,5)$ the expected function evaluation requirement would be approximately $\left( 12.45\right)^6 \approx 3724055$, which is two orders of magnitude higher than our budget of $40000$.

On the other hand, a manifold of higher dimension also means a larger amount of search centroids is needed to provide better exploration of the search space. This in turn affects the solution quality of both R-PSO and Extended RSDFO as the search agents remain unchanged.




From the two sets of experiments on $Gr_{\mathbb{R}}(2,4)$ and $Gr_{\mathbb{R}}(2,5)$ on, we observe that R-PSO is more scalable to the dimensional increases. On the other hand, model-based algorithms such as Extended RSDFO and RCMA-ES both require additional resources to estimate the correct model as the dimension increase.

\subsubsection{On Jacob's ladder}
\label{sec:jl:discuss}
The optimization problem on Jacob's ladder described above was deliberately set to be difficult: the objective function $f$ is multi-modal, non-convex, non-separable with three global optima. There are two notably challenging features of the objective function $f = f_G \cdot f_L$ from the ``global" and ``local" part respectively.

First of which comes from the the ``global'' part of the objective function: $f_G$, which has a plateau for points with torus number within the range $\left[-19,-8 \right]$. The aforementioned algorithms are prone to be stuck on centroids with tori number $-19\ldots -8$, as $f_G(n_1) < f_G(n_2)$ for $n_1 \in \left[-19,-8 \right]$ and $n_2 \in \left\{-3,-2,2,3,\ldots \right\}$. In particular, the search centroids within the plateau have slightly better function value than the other tori around the global optima, while being far away from global optima.

Secondly, the ``local" part of the objective function $f_L$ also introduces complications due to the local parametrization of the tori. As illustrate in Fig \ref{fig:fl_1}, \ref{fig:fl_2} below, as the search landscape of $f_L$ wraps around the tori, the solution estimate depends on how the search path travels between the tori in Jacob's ladder. The algorithms might wrongly estimate the overall objective function values of the neighboring torus depending on where the search path is.

\begin{figure}[hbt!]
    \centering
    \begin{minipage}{0.45\textwidth}
        \centering
        \includegraphics[width=1\textwidth]{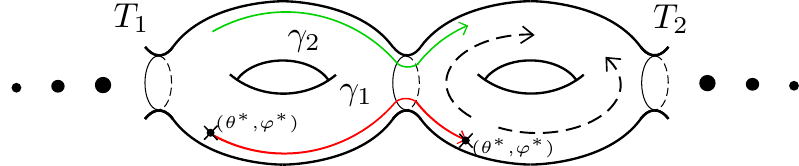} 
        \caption{Illustration of the ``trap'' of $f_L$. The local optimal angles $\left(\theta^*,\varphi^*\right)$ are marked by the points labelled with $\times$. The objective value increases along dotted lines on $T_2$ (as well as $T_1$ and the other tori), as the ``objective landscape", described by Figure \ref{fig:fl}, ``wraps around'' the local coordinates of the tori. If an algorithm searches along $\gamma_2$ from $T_1$ to $T_2$, it may think $T_1$ is better. On the other hand, if an algorithm searches along $\gamma_1$, then $T_2$ may seem like the better torus.}
        \label{fig:fl_1}
    \end{minipage}\hfill
    \begin{minipage}{0.45\textwidth}
        \centering
        \includegraphics[width=1\textwidth]{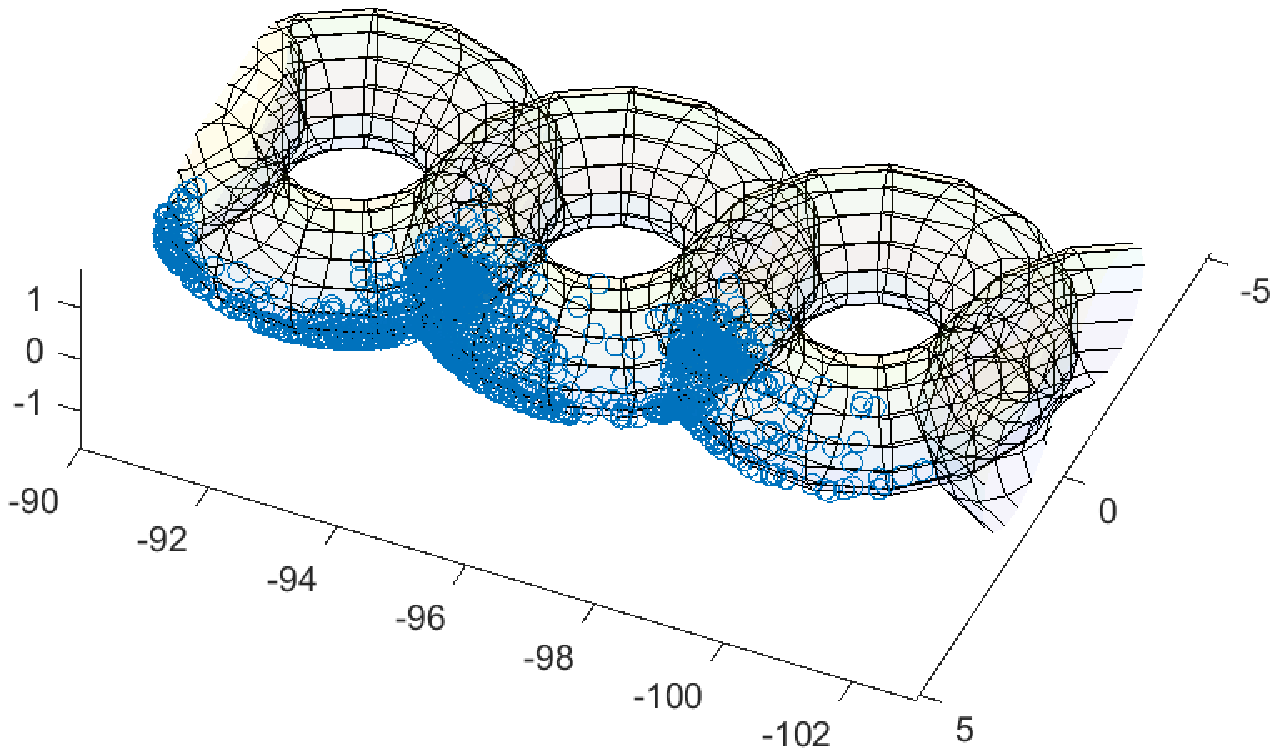} 
        \caption{An illustration of path $\gamma_1$ of Figure \ref{fig:fl_1} from one of the experiments. This figure shows the final converging steps of Extended RSDFO. The search points generated from the converging distributions (from the RSDFO core) are concentrated around $\left(\theta^*,\varphi^*\right) = \left(1,1\right)$ of each tori.}
        \label{fig:fl_2}
    \end{minipage}
\end{figure}


From the experiments, we observe that Extended RSDFO achieves the highest global-global convergence rate amongst the manifold optimization algorithms in the experiments. 

The current implementation of Extended RSDFO, with the basic version of SDFO (Algorithm \ref{alg:eda}) and a simplified version of exploration point selection (discussed in the beginning of this section), is competitive with R-PSO and RCMA-ES on the global and local scale respectively:

On one hand, Extended RSDFO has shown to be competitive with the current state-of-the-art manifold optimization heuristics R-PSO  (even with the ``extra" Riemannian logarithm map) in determining the global optimal tori. Whilst R-PSO is effective in determining the global optimal tori, it often fails to converge to the local optimal angles. At the same time, Extended RSDFO is more accurate when converging to the local optimal angles. 

On the other hand, Extended RSDFO is also shown to be competitive with the current state-of-the-art model-based algorithm RCMA-ES in terms of convergence to the local optimal angles within each tori, while more effective when determing the global optimal tori. RCMA-ES out-performs the other algorithms in determining the local optimal angles, but it is unable to move far away from the initial torus. That is, when given an initial point $\mu_0$ in torus number $i$, RCMA-ES can at-most explore torus numbers within $\left[ i-2, i+2\right]$. 

In this sense, Extended RSDFO gets the ``best of both worlds'' of R-PSO and RCMA-ES in the expense of additional computational resources. Extended RSDFO generally requires more resources (function evaluation, evaluation of boundary points, exponential maps) compared to RCMA-ES when evaluating the results across different centroids.




Finally, RTR is efficient in finding the ``local-local" optima, but will almost always get stuck in them (unless we initialize at the global optimal tori).

\section{Conclusion}
This paper addresses multi-modal, black-box manifold optimization problems on Riemannian manifolds using population-based stochastic optimization methods from a geometrical perspective. We summarize our investigations and the implications as follows:

%
%
%



\begin{enumerate}
\item \textbf{Geometrical framework for stochastic optimization on Riemannian
manifolds:}

The statistical geometry of locally inherited probability densities and mixture densities over Riemannian manifolds (Section \ref{app:localdistn}, Section \ref{Subsection:MixtureDensity} respectively) provided us with a geometrical framework for stochastic optimization on Riemannian manifolds. This framework relates the statistical parameters of the statistical manifold decision space and local point estimations on the Riemannian manifold. Mixture densities, in particular, overcomes the local restrictions of parametric probability distributions on Riemannian manifolds and Riemannian adapted manifold optimization methods.

The significance of parametrized mixture densities extends to statistical analysis on manifolds. This is because closed form expression of parametrized densities beyond normal neighbourhhods whose statistical parameters are consistent with the local statistical point estimiations are generally difficult to obtain.

\item \textbf{Novel population-based stochastic optimization algorithm based on the geometrical framework:} Using the product statistical geometry of mixture densities, we proposed Extended RSDFO (Algorithm \ref{alg:ereda}), a population-based stochastic optimization algorithm on Riemannian manifolds, which overcomes the local restrictions and implicit assumptions (Section \ref{sec:manopt:assmption}) of manifold optimization in the literature. 

Under the aforementioned geometrical framework, the components and properties of Extended RSDFO admit geometrical interpretations. This enabled theoretical analysis of the geometry of the evolutionary step (Proposition \ref{rmk:monoincreasing}, Section \ref{sec:theo}) and the convergence behavior (Theorem \ref{thm:conv}).

\item \textbf{Experiments on Jacob's ladder:} We compared Extended RSDFO against state-of-the-art manifold optimization methods in the literature, such as Riemannian Trust-Region method \cite{absil2007trust,absil2009optimization}, Riemannian CMA-ES \cite{colutto2010cma} and Riemannian Particle Swarm Optimization  \cite{borckmans2010modified,borckmans2010oriented}, using optimization problems defined on the $n$-sphere, Grassmannian manifolds, and Jacob's ladder. 

Jacob's ladder, in particular, is a novel synthetic example designed to motivate and necessitate manifold optimization (as oppose to classical constraint optimization) \footnote{Jacob's ladder is a non-compact manifold of countably infinite genus, which cannot be expressed as polynomial constraints and does not have a global representation in an ambient Euclidean space. Optimization problems on Jacob's ladder therefore cannot be addressed by traditional (constraint) optimization techniques on Euclidean spaces, which necessitates the development of manifold optimization algorithms.}. 

From the experimental results, in particular the ones on Jacob's ladder, we observed that population-based approaches out-perform gradient-based methods on mutli-modal manifold optimization problems. Even though Extended RSDFO is implemented with a basic RSDFO as the ``core" algorithm in the experiments, we demonstrated that Extended RSDFO is comparable to R-PSO (meta-heuristic) in finding global optimal, whilst also comparable to the accuracy of RCMA-ES (modal-based algorithm)  in terms of attaining the local optima. In this sense, Extended RSDFO has the ``best of both worlds" scenario at the expense of possibly additional computational resources. 
 
\end{enumerate}

\newpage 

\bibliographystyle{elsarticle-harv}

\bibliography{main.bbl}
\newpage
\appendix

\section{Riemannian Adaptation of Optimization Algorithms in the literature}
\label{ch:litreview:manopt}
In this section we illustrate the Riemannian adaptation approach with algorithms from the literature. More precisely, we describe Riemannian adaptations of trust-region method \cite{absil2007trust}, Particle Swarm Optimization (PSO) \cite{borckmans2010modified} and Covariance Matrix Adaptation Evolution Strategy (CMA-ES) \cite{colutto2010cma}.

For the remainder of the section, let $M$ denote the Riemannian manifold search space, and let $f : M \rightarrow \mathbb{R}$ denote the objective function of a \textbf{minimization} problem over $M$. For each $x \in M$, let $\tilde{f}_x := f\circ \exp_x: T_x M \rightarrow \mathbb{R}$ denote the locally converted objective function on each tangent space $T_x M$ of $M$. For each $x \in M$, let  $U_x := \exp_x^{-1} N_x \subset T_x M$, centered at $\vec{0} \in T_x M$, denote the pre-image of a normal neighbourhood $N_x$ of $x$ under the Riemannian exponential map.

We begin by reviewing the notion of Riemannian gradient and Riemannian Hessian \cite{lee2001introduction}.

\subsection{Riemannian Gradient and Hessian}
\label{sec:riemanniangradhess}
Given a Riemannian manifold $(M,g)$ and affine connection $\nabla$ on $M$. Let $f : M \rightarrow \mathbb{R}$ denote a smooth real-valued function on $M$. For any point $x\in M$ the \textbf{Riemannian gradient} of $f$ at $x$, denoted by $\operatorname{grad}f(x) \in T_x M$, is the tangent vector at $x$ \textit{uniquely} associated to the differential of $f$. In particular, $\operatorname{grad}f(x)$ is the unique tangent vector satisfying:
\begin{align*}
\langle \operatorname{grad}f(x), v \rangle_g = \left. df(v)\right|_x \quad, \quad  v \in T_x M \quad .
\end{align*}
Thus $\operatorname{grad}f \in \mathcal{E}(TM)$ is a vector field on $M$. In local coordinates  $\left( x^1, \ldots, x^n \right)$ of $M$, Riemannian gradient $\operatorname{grad}f$ can be expressed as:
\begin{align*}
\operatorname{grad}f = \sum_{i,j = 1}^n g^{ij} \frac{\partial f}{\partial x^i} \frac{\partial}{\partial x^j} \quad ,
\end{align*}
where $\left[ g^{ij} \right]$ denote the inverse of Riemannian metric matrix associated to $g$ and $\left(\frac{\partial}{\partial x^j}\right)_{j=1}^n$ denote the local coordinate frame. Fixing the local coordinate frame  $\left(\frac{\partial}{\partial x^i}\right)_{i=1}^n$  corresponding to the local coordinate system, we may associate $\operatorname{grad}f$ to a column vector for each $x\in M$:
\begin{align*}
\tilde{\nabla} f(x) := \sum_{i=1}^n g^{ij} \frac{\partial f}{\partial x^i} =  G^{-1} (x) \nabla f (x) \quad ,
\end{align*}
where $G$ denote the Riemannian metric matrix associated to $g$ and $\nabla f (x)$ denote the gradient in Euclidean spaces from elementary calculus. $\tilde{\nabla} f(x)$ is otherwise known as \textbf{natural gradient} \cite{amari1998natural} in the machine learning community.

On the other hand, the \textbf{Riemannian Hessian} of a smooth function $f$ is the $(0,2)$ tensor field given by:
\begin{align*}
\operatorname{Hess}f: TM \times TM &\rightarrow \mathbb{R} \\
\left( X, Y\right) &\mapsto \nabla \nabla f =  \langle \nabla_X \operatorname{grad} f, Y \rangle_g \quad .
\end{align*}

\subsection{Riemannian Gradient-based Optimization}
\label{sec:rtr}
In this section we consider Riemannian adaptation \cite{absil2007trust} of Trust Region method \cite{nocedal2006numerical} as an example of how gradient-based optimization methods from Euclidean spaces can be translated to the context of Riemannian manifolds. Other gradient-based algorithms in the literature are translated under the same principle described in Section \ref{sec:principle:manopt} \cite{absil2009optimization}. The following side-by-side comparison of Euclidean Trust-Region \cite{nocedal2006numerical} and Riemannian Trust Region \cite{absil2007trust} illustrates the Riemannian adaption process of gradient-based methods.

The parameters and the numerical values of both versions of Trust-Region, summarized in Table \ref{Table:review:RTR} below, are taken from the corresponding references \cite{nocedal2006numerical,absil2007trust}. 
It is worth noting that while the numerical values may differ across different implementations, the purpose of this comparison is to illustrate the Riemannian adaptation process highlighted in the algorithms below.

\begin{table}[hbt!]
\begin{tabular}{|l|l|l|}
\hline
Parameter           & Value                          & Meaning                                                        \\ \hline
$\overline{\Delta}$ & $\pi$                          & Upperbound on step length                                      \\ \hline
$\Delta_0$          & $\Delta_0 \in (0,\Delta)$ & Initial step length                                            \\ \hline
$\rho'$             & $\rho' \in \left[ 0,\frac{1}{4} \right)$                & Parameter that determines whether the new solution is accepted \\ \hline
\end{tabular}
\caption{Parameters of Trust-region.}
\label{Table:review:RTR}
\end{table}

\begin{minipage}{0.49\textwidth}
\begin{algorithm}[H]
    \caption{Euclidean Trust-Region }\label{alg:etr}
 \KwData{Initial point $x_0 \in \mathbb{R}^n$, $\Delta_0 \in (0,\Delta)$.}
 \For{$k = 0,1,2,\ldots$}{
	\HiLi{ Obtain $\eta_k$ by solving \textbf{Euclidean} Trust-Region sub-problem. \label{alg:tr:subproblem}} \;
	\HiLi{ Compute reduction ratio $p_k$ \label{alg:tr:ratio} }\;
	\uIf{$p_k < \frac{1}{4}$}{
	$\Delta_{k+1} = \frac{1}{4} \Delta_k$ \;
	\uElseIf{$p_k > \frac{3}{4}$ \textbf{and} $\vert \vert \eta_k \vert \vert = \Delta_k$}{
	$\Delta_{k+1} = \min(2\cdot \Delta_k, \Delta)$\;
	}
	\uElse{}{
	$\Delta_{k+1} = \Delta_k$ \;
	}
	}
	\uIf{$p_k > \rho'$}{
	\HiLi{ $x_{k+1} = x_k + \eta_k$} \label{alg:tr:progress }\;
	\uElse{}{
	$x_{k+1} = x_k$ \;
	}
	}
 } 
\end{algorithm}
\end{minipage}
\hfill
\begin{minipage}{0.49\textwidth}
\begin{algorithm}[H]
    \caption{Riemannian Trust-Region}\label{alg:RTR}
  \KwData{Initial point $x_0 \in M$, $\Delta_0 \in (0,\Delta)$.}
 \For{$k = 0,1,2,\ldots$}{
	\HiLi {Obtain $\eta_k$ by solving \textbf{Riemannian} Trust-Region sub-problem. } \;
	\HiLi{ Compute reduction ratio $p_k$} \;
	\uIf{$p_k < \frac{1}{4}$}{
	$\Delta_{k+1} = \frac{1}{4} \Delta_k$ \;
	\uElseIf{$p_k > \frac{3}{4}$ \textbf{and} $\vert \vert \eta_k \vert \vert = \Delta_k$}{
	$\Delta_{k+1} = \min(2\cdot \Delta_k, \Delta)$\;
	}
	\uElse{}{
	$\Delta_{k+1} = \Delta_k$ \;
	}
	}
	\uIf{$p_k > \rho'$}{
	\HiLi{ $x_{k+1} = \exp_{x_k}\left(\eta_k \right)$ }\;
	\uElse{}{
	$x_{k+1} = x_k$ \;
	}
	}
 } 
\end{algorithm}
\end{minipage}

The two algorithms differ in the Trust-Region sub-problem in line \ref{alg:tr:subproblem}, computation of reduction ratio in line \ref{alg:tr:ratio} and the translation to the new search iterate in line \ref{alg:tr:progress } (highlighted lines). Since we have already discussed the translation of search iterate in Section \ref{sec:principle:manopt}, it remains to discuss the translation of trust-region subproblem and reduction ratio from Euclidean spaces to Riemannian manifolds.

The Euclidean trust-region subproblem at the current search iterate $x_k$ on $\mathbb{R}^n$ is given by:
\begin{align*}
\eta_k := \argmin_{\eta \in \mathbb{R}^n} m_k(\eta) =  \argmin_{\eta \in \mathbb{R}^n} f(x_k) + \langle \nabla f(x_k), \eta \rangle + \frac{1}{2} \eta^\top H_f(x_k) \eta \, , \quad \text{s.t. } \vert\vert \eta_k \vert\vert \leq \Delta_k
\end{align*}
where $\nabla f(x_k)$ is the gradient on Euclidean space from elementary calculus,  $\langle\cdot,\cdot\rangle$ denote the standard inner product on $\mathbb{R}^n$ and $H_f(x_k)$ is the Hessian matrix of $f$ at $x_k$.

On Riemannian manifold $M$, the Riemannain trust-region subproblem is therefore translated \textit{locally} around the search iterate $x_k \in M$ from Euclidean version using tools described in Section \ref{sec:principle:manopt}:
\begin{align*}
\eta_k := \argmin_{\eta \in T_{x_k} M} m_{x_k}(\eta) =  \argmin_{\eta \in T_{x_k} M} f(x_k) + \langle \operatorname{grad} f(x_k), \eta \rangle_g + \frac{1}{2} \operatorname{Hess}f(x_k)\left(\eta,\eta\right) \, , \quad \text{s.t. } \sqrt{\langle \eta,\eta \rangle}_g \leq \Delta_k \quad ,
\end{align*}
where $\operatorname{grad} f(x_k)$ and $\operatorname{Hess}f(x_k)\left(\eta,\eta\right)$ denote the Riemannian gradient and Hessian function described in Section \ref{sec:riemanniangradhess}, and $\langle\cdot,\cdot\rangle_g$ is the Riemannian metric. Note that the above formulation is strictly local within the tangent space $T_{x_k} M$ and can be equivalently formulated as minimization problem on the tangent space with traditional gradient and Hessian of $\tilde{f}$:
\begin{align*}
\eta_k := \argmin_{\eta \in T_{x_k} M} m_{x_k}(\eta) =  \argmin_{\eta \in T_{x_k} M} \tilde{f}(x_k) + \langle \nabla \tilde{f}(x_k), \eta \rangle_g + \frac{1}{2} \eta^\top H_{\tilde{f}}(x_k) \eta \, , \quad \text{s.t. } \sqrt{\langle \eta,\eta \rangle}_g \leq \Delta_k \quad .
\end{align*}

Finally, the translation of reduction ratio $p_k$ is described as follows:
\begin{align*}
p_k = \underbrace{ \frac{f(x_k) - f(x_k + \eta_k)}{m_k(0) - m_k(\eta_k)} }_{\text{Euclidean case}} \longrightarrow \underbrace{ \frac{f(x_k) - f\left(\exp_{x_k}(\eta_k) \right)}{m_{x_k}(0) - m_{x_k}(\eta_k)} }_{\text{Riemannian case}} \quad .
\end{align*}

\begin{remark}[Retraction]
Since exponential map is difficult to compute, authors of \cite{absil2009optimization} proposed a slightly relaxed map from the tangent spaces to $M$ called the retraction:
\begin{definition}
A \textbf{retraction} on a manifold is a (smooth) function $R_x: T_x M \rightarrow M$ satisfying
\begin{enumerate}
\item $R_x(\vec{0}) = x$, where $\vec{0} \in T_x M$ and
\item $DR_x(\vec{0}) = \text{id}_{T_x M}$ .
\end{enumerate}
\end{definition}
This establishes the sufficient conditions to locally preserve the gradient at $x\in M$. This generalization is similar to the use of orientation-preserving diffeomorphism in Section \ref{SectionProbOnMfold}
\end{remark}

\subsection{Riemannian Particle Swarm Optimizaiton}
\label{sec:rpso}
The adaptation of Particle Swarm Optimization (PSO) \cite{eberhart1995particle}, a population-based meta-heuristic, from Euclidean spaces to Riemannian manifolds \cite{borckmans2010modified} is similar to the translation of trust-region method described in the previous section. In particular, the Riemannian version and the Euclidean version differ only in the evolution of search iterates while the structure of the algorithm remains the same. Furthermore, the computation remains largely the same: Riemannian adapted version perform local computations on \textit{pre-images of normal neighbourhoods} within the tangent space, just as in the Euclidean case.

In this section, we consider Riemannian PSO discussed in \cite{borckmans2010modified}. In classical PSO on Euclidean space $\mathbb{R}^n$ discussed in \cite{eberhart1995particle}, search agents (particles) are randomly generated on $\mathbb{R}^n$, and on the $k^{th}$ iteration each particle $x_i^k$ ``evolves" under the following equation (Equation (5) and (6) of \cite{borckmans2010modified}):
\begin{align}
    v_i^{k+1} &= \underbrace{w^k\cdot v_i^k}_{\text{inertia}} + \underbrace{c\cdot \alpha_i^k \left( y_i^k - x_i^k\right)}_{\text{nostalgia}} + \underbrace{s\cdot \beta_i^k \left(\hat{y}^k - x_i^k \right)}_{\text{social}} \label{eqn:pso:1}\\
    x_i^{k+1} &= x_i^k + v_i^{k+1} \label{eqn:pso:2} \quad .
\end{align}
The meaning of the symbols are summarized in Table \ref{Table:pso} below.
\begin{table}[hbt!]
\centering
\begin{tabular}{|l|p{0.8\textwidth}|}
\hline
Symbols                & Meaning                                                                                \\ \hline
$x_i^k$                & position of the $i^{th}$ particle at the $k^{th}$ iteration                            \\ \hline
$v_i^{k}$              & search direction (overall velocity) of the $i^{th}$ particle at the $k^{th}$ iteration \\ \hline
$w^k$                  & inertial coefficient, assigned by a predefined real valued function on the iteration counter $k$                                                                   \\ \hline
$c$                    & weight of the nostalgia component, a predefined real number                                                                \\ \hline
$s$                    & weight of the social component, a predefined real number                                                                     \\ \hline
$\alpha_i^k,\beta_i^k$ & random numbers generate with $\left[0,1 \right]$ for each of the $i^{th}$ particle at the $k^{th}$ iteration of the nostalgia and social components respectively. The purpose of the random components is to avoid premature convergence \cite{eberhart1995particle}                     \\ \hline
$y_i^k$                & ``personal best" of the $i^{th}$ particle up to the $k^{th}$ iteration                 \\ \hline
$\hat{y}^k$            & overall best of all the particles up to the $k^{th}$ iteration                         \\ \hline
\end{tabular}
\caption{Meaning of symbols in PSO step (Equation \eqref{eqn:pso:1}).}
\label{Table:pso}
\end{table}

The translation of the second PSO equation (Equation \eqref{eqn:pso:2}) to Riemannian manifolds is straightforward: using the Riemannian exponential map, we simply rewrite:
\begin{align*}
    x_i^{k+1} = x_i^k + v_i^{k+1} \rightarrow x_i^{k+1} = \exp_{x_i^k} \left(v_i^{k+1}\right) \quad ,
\end{align*}
where the completeness assumption of $M$ is used when $\vert\vert v_i^{k+1}\vert \vert > \operatorname{inj} \left(x_i^k\right)$.

The adaptation of the first PSO equation (Equation \eqref{eqn:pso:1}) involves the difference of points on the manifold in the nostalgia and social component. In \cite{borckmans2010modified}, the nostalgia component (similarly the social component) of the first PSO equation is adapted using the Riemannian logarithm map:
\begin{align*}
    c\cdot \alpha_i^k \left( y_i^k - x_i^k\right) \longrightarrow c\cdot \alpha_i^k \cdot \log_{x_i^k}\left(y_i^k \right) \quad .
\end{align*}
This construction is not a problem when $y := y_i^k$ and $x := x_i^k$ are ``close enough''. Indeed, when $y$ is within the normal neighourhood of $x$ (and vice versa), $\log_x\left(y\right)$ is the tangent vector at $x$ that points towards $y$ such that if we follow the geodesic starting at $x$ with velocity $\log_x\left(y\right)$ for time $1$ we obtain $y$. Therefore, the full evolutionary step of Riemannian adaptation of PSO \textit{only} works when \textit{all} the points (particles) are within a \textit{single} normal neighbourhood.

\subsection{Riemannian CMA-ES}
\label{sec:rcma}
In this section, we review the Riemannian adaptation of Covariance Matrix Adaptation Evolutionary Strategies (CMA-ES) \cite{hansen1996adapting,kern2004learning, hansen2006cma}, a state-of-the-art SDFO algorithm that has been adapted to the context of spherical manifolds \cite{colutto2010cma} using the same principle described in Section \ref{sec:principle:manopt}. Further studies of properties of step length of Riemannian CMA-ES were made in \cite{arnold2014use}.

We begin by summarizing the essence and structure of CMA-ES in Algorithm \ref{alg:cma-es} below. The steps involved in the Riemannian adaption process is once again highlighted in the algorithm. It is worth noting that while a variety of different implementations of CMA-ES exists in the literature, the purpose of this section is to illustrate the Riemannian adaptation process highlighted in the algorithms. The parameters of CMA-ES, described in \cite{hansen2016cma,colutto2010cma}, is summarized in Table \ref{Table:cmaparamer} below:
\begin{table}[hbt!]
\centering
\begin{tabular}{|l|l|p{0.3\textwidth}|}
\hline
Parameter          & Default value \cite{hansen2016cma}                                                                                                                                                                            & Meaning                                          \\ \hline
$m_1$               & Depends on the search space                                                                                                                                                                            & Number of sample parents per iteration                  \\ \hline
$m_2$               & $\frac{m_1}{4}$                                                                                                                                                                  & Number of offsprings.                            \\ \hline
$w_i$               & $\log\left( \frac{m_2 +1}{i}\right) \cdot \left(\sum_{j=1}^{m_2} \log\left( \frac{m_2 +1}{j}\right) \right)^{-1}$                                                                & Recombination coefficient                        \\ \hline
$m_{\text{eff}}$    & $ \left(\sum_{i=1}^{m_2} w_i^2 \right)^{-1}$                                                                                                                                     & Effective samples                                \\ \hline
$c_c$               & $\frac{4}{N+4}$                                                                                                                                                                  & Learning rate of anisotropic evolution path      \\ \hline
$c_\sigma$          & $\frac{m_{\text{eff}} +2}{N + m_{\text{eff}}+ 3}$                                                                                                                                & Learning rate of isotropic evolution path        \\ \hline
$\mu_{\text{cov}} $ & $m_{\text{eff}}$                                                                                                                                                                 & Factor of rank-$m_2$-update of Covariance matrix \\ \hline
$c_{\text{cov}}$    & $ \frac{2}{ \mu_{\text{cov}}\left(N+ \sqrt{2}\right)^2} + \left(1- \frac{1}{\mu_{\text{cov}}}\right)\min\left(1, \frac{2\mu_{\text{cov}}-1}{(N+2)^2 + \mu_{\text{cov}}} \right)$ & Learning rate of covariance matrix update        \\ \hline
$d_\sigma $        & $ 1+ 2\max\left(0,\sqrt{\frac{m_{\text{eff}}-1}{N+1}} \right) + c_\sigma$                                                                                                        & Damping parameter                                \\ \hline
\end{tabular}
\caption{Parameters of CMA-ES described in \cite{colutto2010cma}.}
\label{Table:cmaparamer}
\end{table}
\begin{algorithm}[hbt!]
 \KwData{Initial mean $\mu_0 \in \mathbb{R}^n$, set initial covariance matrix $C$ the identity matrix $I$, step size $\sigma = 1$, $p_c = 0$, $p_\sigma = 0$.}
 \While{stopping criterion not satisfied}{
	\HiLi{Sample $\left\{v_i \right\}_{i=1}^{m_1} \sim N\left(\vec{0},(\sigma)^2\cdot C \right) $} \;
	\HiLi{Update mean (search center) $\mu_{k+1} = \mu_k + \sum_{i=1}^{m_2} w_i v_i$} \;
	Update $p_\sigma$ with $C$, $\sigma$, $m_{\text{eff}}$, $c_\sigma$ (isotropic evolution path):
	\begin{align*}
	p_\sigma = \left(1 - c_\sigma \right) p_\sigma + \sqrt{\left( 1- \left( 1-c_\sigma\right)^2\right)} \frac{\sqrt{m_{\text{eff}}}}{\sigma} \cdot C^{\frac{-1}{2}} \cdot  \sum_{i=1}^{m_2} w_i v_i
	\end{align*}	 \;
	Update $p_c$ with  $\sigma$, $m_{\text{eff}}$, $c_c$ (anisotropic evolution path): 
	\begin{align*}
	p_c = \left(1 - c_c \right) p_c + \sqrt{\left( 1- \left( 1-c_\sigma\right)^2\right)} \frac{\sqrt{m_{\text{eff}}}}{\sigma} \cdot  \sum_{i=1}^{m_2} w_i v_i
	\end{align*}
	\;
	Update covariance matrix $C$ with $\mu_{\text{cov}}$, $c_{\text{cov}}$, $p_\sigma$, $p_c$:
	\begin{align*}
C =	c_{\text{cov}} \cdot \left( 1- \frac{1}{\mu_{\text{cov}}} \right) \frac{1}{\sigma^2} \sum_{i=1}^{m_2} w_i v_i v_i^\top + \left(1 - c_{\text{cov}} \right)\cdot C + \frac{c_{\text{cov}}}{\mu_{\text{cov}}} p_c p_c^\top
	\end{align*}		
	\;
	Update step size $\sigma$ with $c_\sigma$, $d_\sigma$, $p_\sigma$:
	\begin{align*}
	\sigma = \sigma \cdot \exp{\left(\frac{c_\sigma}{d_\sigma}\left(\frac{\vert p_\sigma \vert}{E\vert \operatorname{N}\left(0,I\right)\vert} - 1 \right)\right)}
	\end{align*}		
	 \;
	k = k+1 \;
 } 
 \caption{CMA-ES on Euclidean space }
 \label{alg:cma-es}
\end{algorithm}
\\
%


\clearpage 

Riemannian CMA-ES \cite{colutto2010cma} is a special case of RSDFO (Algorithm \ref{alg:greda}). The adaptation of CMA-ES from Euclidean spaces to Riemannian manifolds \cite{colutto2010cma} employs the Riemannian adaptation approach described in Section \ref{sec:principle:manopt}: all the local computations and update of search information are done within (a subset of) the tangent  space of the search iterate just as in the Euclidean case, while the structure of the algorithm remains unchanged. Therefore, it remains to describe the changes in sampling process and translation to new search center highlighted in Algorithm \ref{alg:cma-es} above.

For $k>0$, at the $k^{th}$ iteration of RCMA-ES, a new set of sample vectors is generated from the tangent space $T_{\mu_k} M$ \textit{within the pre-image of normal neighbourhood $N_{\mu_k}$} around the search iterate $\mu_k \in M$:
\begin{align*}
\left\{v_{i,\mu_k} \right\}_{i=1}^{m_1} \subset T_{\mu_k} M \sim N\left(\vec{0},(\sigma)^2\cdot C \right) \quad , \quad \text{s.t. } \vert\vert v_{i,\mu_k} \vert\vert \leq \operatorname{inj}(\mu_k) \quad . 
\end{align*}

The obtained vectors are sorted according to their corresponding function values with respect to the locally converted objective function $\tilde{f}_{\mu_k} = f \circ \exp_{\mu_k}$. The new search iterate (center) is thus obtained via the exponential map similar to Riemannian trust-region and Riemannian PSO:
\begin{align*}
\mu_{k+1} = \exp_{\mu_k} \left(\sum_{i=1}^{m_2} w_i v_i \right) \in N_{\mu_k}\quad ,
\end{align*}
where $N_{\mu_k}$ is the normal neighbourhood of $\mu_k \in M$.

However, there is one caveat: Since tangent spaces are disjoint spaces, the search information has to be carried to the new search iterate in a way that coincides with the manifold structure of the search space. Multi-dimensional search information such as covariance matrix $C$ and the evolutionary paths $p_\sigma$, $p_c$ thus have to be parallel transported to the new search iterate. Since all operations are done within a \textit{single} normal neighbourhood, the parallel transport is described by Equation \eqref{eqn:parallelinnormalnhbd}.

Riemannian adaptation of CMA-ES is thus summarized by Algorithm \ref{alg:rcma-es} below, where the adaptation process described above are highlighted.

\begin{algorithm}[hbt!]
 \KwData{Initial mean $\mu_0 \in \mathbb{R}^n$, set initial covariance matrix $C$ the identity matrix $I$, step size $\sigma = 1$, $p_c = 0$, $p_\sigma = 0$.}
 \While{stopping criterion not satisfied}{
	\HiLi{Sample $\left\{v_{i,\mu_k} \right\}_{i=1}^{m_1} \subset T_{\mu_k} M \sim N\left(\vec{0},(\sigma)^2\cdot C \right) $} \;
	\HiLi{Update mean (search center) $\mu_{k+1} = \exp_{\mu_k} \left(\sum_{i=1}^{m_2} w_i v_i \right)$} \;
	Update $p_\sigma$ with $C$, $\sigma$, $m_{\text{eff}}$, $c_\sigma$ (isotropic evolution path):
	\begin{align*}
	p_\sigma = \left(1 - c_\sigma \right) p_\sigma + \sqrt{\left( 1- \left( 1-c_\sigma\right)^2\right)} \frac{\sqrt{m_{\text{eff}}}}{\sigma} \cdot C^{\frac{-1}{2}} \cdot  \sum_{i=1}^{m_2} w_i v_i
	\end{align*}	 \;
	Update $p_c$ with  $\sigma$, $m_{\text{eff}}$, $c_c$ (anisotropic evolution path): 
	\begin{align*}
	p_c = \left(1 - c_c \right) p_c + \sqrt{\left( 1- \left( 1-c_\sigma\right)^2\right)} \frac{\sqrt{m_{\text{eff}}}}{\sigma} \cdot  \sum_{i=1}^{m_2} w_i v_i
	\end{align*}
	\;
	Update covariance matrix $C$ with $\mu_{\text{cov}}$, $c_{\text{cov}}$, $p_\sigma$, $p_c$:
	\begin{align*}
C =	c_{\text{cov}} \cdot \left( 1- \frac{1}{\mu_{\text{cov}}} \right) \frac{1}{\sigma^2} \sum_{i=1}^{m_2} w_i v_i v_i^\top + \left(1 - c_{\text{cov}} \right)\cdot C + \frac{c_{\text{cov}}}{\mu_{\text{cov}}} p_c p_c^\top
	\end{align*}		
	\;
	Update step size $\sigma$ with $c_\sigma$, $d_\sigma$, $p_\sigma$:
	\begin{align*}
	\sigma = \sigma \cdot \exp{\left(\frac{c_\sigma}{d_\sigma}\left(\frac{\vert p_\sigma \vert}{E\vert \operatorname{N}\left(0,I\right) \vert} - 1 \right)\right)}
	\end{align*}		 \;
	\HiLi{Parallel transport $C$, $p_\sigma$, $p_c$ from $T_{\mu_k} M$ to $T_{\mu_{k+1}} M$} \;
	k = k+1 \;
 } 
 \caption{Riemannian CMA-ES}
 \label{alg:rcma-es}
\end{algorithm}

\clearpage

\section{Proofs of results in Section 4.1}
\label{app:proof:induceddualisticgeo}
\naturality*
\begin{proof} 
Let $\tilde{S}$ be a smooth n-dimensional manifold, and let $S$ be a smooth n-dimensional manifold with torsion-free dualistic structure $(g,\nabla,\nabla^*)$ then the following condition is satisfied:

\begin{align*}
X\langle Y,Z \rangle_g = \langle \nabla_X Y,Z \rangle_g + \langle Y, \nabla^*_X Z \rangle_g\quad, \quad \forall X,Y,Z \in \mathcal{E}\left(TS\right)\quad .
\end{align*}

Let $\varphi:\tilde{S} \rightarrow S$ be a diffeomorphism. The pullback of $g$ along $\varphi$ is thus given by $\tilde{g}=\varphi^* g$, which defines a Riemannian metric on $\tilde{S}$. Consider pullback of $\nabla$ via $\varphi$ given by:
\begin{align*}
\varphi^* \nabla:\mathcal{E}(T\tilde{S}) \times \mathcal{E}(T\tilde{S}) &\rightarrow \mathcal{E}(T\tilde{S}) \\ 
(\varphi^* \nabla)(\tilde{X},\tilde{Y}) &= \varphi_*^{-1}\nabla (\varphi_* \tilde{X}, \varphi_* \tilde{Y}) = \varphi_*^{-1}\nabla_{\varphi_* \tilde{X}} \varphi_* \tilde{Y}  \quad ,
\end{align*}

where $\mathcal{E}(T\tilde{S})$ denote the set of smooth sections of tangent bundle over $\tilde{S}$, $\varphi_*$ denote the push-forward of $\varphi$, and $\varphi^*$ denote the pullback of $\varphi$.
Since pullback of torsion-free connection by diffeomorphism is a torsion-free connection, the pullback connections $\varphi^*\nabla$ and $\varphi^*\nabla^*$ are torsion-free connections on the tangent bundle over $\tilde{S}$.

It remains to show that $(\tilde{\nabla},\tilde{\nabla}^*) := (\varphi^* \nabla,\varphi^* \nabla^*)$ is a $\tilde{g}$-conjugate pair of connections on $\tilde{S}$. In particular, we show that the pair $(\tilde{\nabla},\tilde{\nabla}^*)$ satisfies the following equation:
\begin{gather*}
\tilde{X}\langle \tilde{Y},\tilde{Z} \rangle_{\tilde{g}} = \langle \tilde{\nabla}_{\tilde{X}} \tilde{Y},\tilde{Z} \rangle_{\tilde{g}} + \langle \tilde{Y}, \tilde{\nabla}^*_{\tilde{X}} \tilde{Z} \rangle_{\tilde{g}},\quad \forall \tilde{X},\tilde{Y},\tilde{Z} \in \mathcal{E}(T\tilde{S}) \quad .
\end{gather*}
Let $\tilde{X},\tilde{Y},\tilde{Z} \in \mathcal{E}(T\tilde{S})$, and let $ \tilde{p} \in \tilde{S}$ be an arbitrary point:
\begin{align*}
\langle \tilde{\nabla}_{\tilde{X}} \tilde{Y},\tilde{Z} \rangle_{\tilde{g}}(\tilde{p}) &= \langle (\varphi^* \nabla)_{\tilde{X}} \tilde{Y},\tilde{Z}\rangle_{\tilde{g}}(\tilde{p}) = \langle \varphi_*^{-1} \nabla_{\varphi_* \tilde{X}} \varphi_* \tilde{Y}_{\tilde{p}}, \tilde{Z}_p\rangle_{\tilde{g}} \\ 
&= \langle \varphi_*^{-1} \left(\nabla_{\varphi_* \tilde{X}} \varphi_* \tilde{Y}\right)_{\varphi(\tilde{p})}, \varphi_*^{-1} \left(\varphi_* \tilde{Z}\right)_{\varphi(\tilde{p})}\rangle_{\tilde{g}} \\
&= \left(\varphi^{*^{-1}}\tilde{g}\right)_{\varphi(\tilde{p})} \left(\nabla_{\varphi_* \tilde{X}} \varphi_* \tilde{Y}_{\varphi(\tilde{p})}, \varphi_* \tilde{Z}_{\varphi(\tilde{p})}\right) \\
&= \langle  \nabla_{\varphi_* \tilde{X}} \varphi_* \tilde{Y}_{\varphi(\tilde{p})}, \varphi_* \tilde{Z}_{\varphi(\tilde{p})} \rangle_{g} = \langle  \nabla_{\varphi_* \tilde{X}} \varphi_* \tilde{Y}, \varphi_* \tilde{Z} \rangle_{g} (\varphi(\tilde{p})) \quad .
\end{align*}
Similarly, we have the following ``symmetric" argument:
\begin{gather*}
\langle \tilde{Y},\tilde{\nabla}^*_{\tilde{X}} \tilde{Z} \rangle_{\tilde{g}}(\tilde{p}) =  \langle   \varphi_* \tilde{Y},\nabla_{\varphi_* \tilde{X}} \varphi_* \tilde{Z} \rangle_{g} (\varphi(\tilde{p})) \quad .
\end{gather*}
Since $(\nabla,\nabla^*)$ is $g$-conjugate pair of connection on $S$, we have for each $\tilde{p} \in \tilde{S}$:
\begin{align*}
\langle  \nabla_{\varphi_* \tilde{X}} \varphi_* \tilde{Y}_{\varphi(\tilde{p})}, \varphi_* \tilde{Z}_{\varphi(\tilde{p})} \rangle_{g} + \langle \varphi_* \tilde{Y}_{\varphi(\tilde{p})}, \nabla^*_{\varphi_* \tilde{X}}\varphi_* \tilde{Z}_{\varphi(\tilde{p})} \rangle_{g} =\varphi_* \tilde{X} \left( \langle \varphi_* \tilde{Y}, \varphi_* \tilde{Z} \rangle_g \right) (\varphi(\tilde{p})) \quad .
\end{align*}
Hence:
\begin{align*}
\varphi_* \tilde{X} \left( \langle \varphi_* \tilde{Y}, \varphi_* \tilde{Z} \rangle_g \right) (\varphi(\tilde{p})) &= \varphi_* \tilde{X} \left( \langle \varphi_* \tilde{Y}, \varphi_* \tilde{Z} \rangle_g \circ \varphi \circ \varphi^{-1} \right)( \varphi(\tilde{p})) \\
&= \varphi_* \tilde{X} \left((\varphi^* g)(\tilde{Y},\tilde{Z}) \circ \varphi^{-1} \right) (\varphi(\tilde{p})) \\
&= \tilde{X} \left( \langle \tilde{Y},\tilde{Z} \rangle_{\tilde{g}} \circ \varphi^{-1} \circ \varphi \right) \circ \varphi^{-1} (\varphi(\tilde{p})) \\
&= \tilde{X} \langle \tilde{Y},\tilde{Z} \rangle_{\tilde{g}}(\tilde{p}) \quad .
\end{align*}

Combining the above results we obtain the following:
\begin{gather*}
\langle \tilde{\nabla}_{\tilde{X}} \tilde{Y},\tilde{Z} \rangle_{\tilde{g}}(\tilde{p}) + \langle \tilde{Y},\tilde{\nabla}^*_{\tilde{X}} \tilde{Z} \rangle_{\tilde{g}}(\tilde{p}) = \tilde{X} \langle \tilde{Y},\tilde{Z} \rangle_{\tilde{g}}(\tilde{p}) \quad , \quad \forall \tilde{X},\tilde{Y},\tilde{Z}\in \mathcal{E}(T\tilde{S}) \quad .
\end{gather*}

Therefore $\tilde{S}$ can be equipped with the induced torsion-free dualistic structure $(\tilde{g},\tilde{\nabla},\tilde{\nabla}^*)=(\varphi^*g,\varphi^*\nabla,\varphi^*\nabla^*)$ as desired.
\end{proof}

\curvaturenaturality*
\begin{proof}
Let $\varphi: (\tilde{S},\tilde{g},\tilde{\nabla},\tilde{\nabla}^*) \rightarrow (S,g,\nabla,\nabla^*)$ be a local isometry, where $(\tilde{g},\tilde{\nabla},\tilde{\nabla}^*) = (\varphi^* g,\varphi^* \nabla, \varphi^* \nabla^*)$, then by the proof of Proposition \ref{naturality}:
\begin{gather*}
\tilde{\nabla}_{\tilde{X}} \tilde{Y} = (\varphi^* \nabla)_{\tilde{X}} \tilde{Y} = \varphi_*^{-1}(\nabla_{\varphi_* \tilde{X}} \varphi_* \tilde{Y}), \quad \forall \tilde{X},\tilde{Y}\in \mathcal{E} \left(T \tilde{S}\right) \quad .
\end{gather*}
Hence we have:
\begin{align*}
\varphi_*(\tilde{\nabla}_{\tilde{X}} \tilde{\nabla}_{\tilde{Y}} \tilde{Z}) &= \nabla_{\varphi_* \tilde{X}} \nabla_{\varphi_* \tilde{Y}} \varphi_* \tilde{Z} \quad , \\
\varphi_*(\tilde{\nabla}_{\left[\tilde{X},\tilde{Y} \right]} \tilde{Z}) &= \nabla_{\varphi_* \left[\tilde{X},\tilde{Y} \right]} \varphi_* \tilde{Z} = \tilde{\nabla}_{\left[\varphi_* \tilde{X},\varphi_* \tilde{Y} \right]} \varphi_* \tilde{Z} \quad . 
\end{align*}
The above Equation is obtained by:
\begin{align*}
\varphi_* \left(\tilde{\nabla}_{\tilde{X}} \tilde{\nabla}_{\tilde{Y}} \tilde{Z} \right) &= \varphi_* \tilde{\nabla} \left(\tilde{X},  \tilde{\nabla}_{\tilde{Y}} \tilde{Z} \right) = \nabla \left(\varphi_* \tilde{X}, \varphi_* \tilde{\nabla}_{\tilde{Y}} \tilde{Z} \right) \\
&= \nabla \left( \varphi_* \tilde{X}, \nabla \left( \varphi_* \tilde{Y},\varphi_* \tilde{Z} \right) \right) = \nabla_{\varphi_* \tilde{X}} \nabla_{\varphi_* \tilde{Y}} \varphi_* Z \quad .
\end{align*}
For vector fields $X,Y,Z \in \mathcal{E}\left(TS\right)$:, the Riemannian curvature tensor on $S$ is given by: 
\begin{gather*}
\langle R(X,Y)Z,W \rangle = \langle \nabla_X \nabla_Y Z, W \rangle - \langle \nabla_Y \nabla_X Z, W \rangle - \langle \nabla_{\left[X,Y \right]} Z, W \rangle \quad .
\end{gather*}
Let $\tilde{X}, \tilde{Y}, \tilde{Z}, \tilde{W} \in \mathcal{E}\left( T\tilde{S}\right)$. The pullback curvature tensor $\varphi^*R$ is thus given by:
\begin{align*}
\langle (\varphi^* R)(\tilde{X},\tilde{Y})\tilde{Z}, \tilde{W} \rangle & = \langle\tilde{\nabla}_{\varphi_* \tilde{X}} \tilde{\nabla}_{\varphi_* \tilde{Y}} \varphi_* \tilde{Z}, \tilde{W} \rangle \\ 
& \quad - \langle \tilde{\nabla}_{\varphi_* \tilde{Y}} \tilde{\nabla}_{\varphi_* \tilde{X}} \varphi_* \tilde{Z}, \tilde{W} \rangle - \langle \tilde{\nabla}_{\left[\varphi_* \tilde{X},\varphi_* \tilde{Y} \right]} \varphi_* \tilde{Z}, \tilde{W} \rangle \\
& = \varphi_*(\nabla_X \nabla_Y Z) - \varphi_*(\nabla_Y \nabla_X Z) - \varphi_*(\nabla_{\left[X,Y \right]} Z) \\ 
& = \varphi_* (\langle R(X,Y)Z,W \rangle) \quad .
\end{align*}
And by symmetry, $\varphi^*R^*$ satisfies:
\begin{align*}
\langle (\varphi^* R^*)(\tilde{X},\tilde{Y})\tilde{Z}, \tilde{W} \rangle = \varphi_* (\langle R^*(X,Y)Z,W \rangle) \quad .
\end{align*}
Therefore if $S$ is dually flat, meaning $R = 0 = R^*$, we have the following equality:
\begin{gather*}
R^* \equiv 0 \Leftrightarrow R \equiv 0 \Leftrightarrow \varphi^* R \equiv 0 \Leftrightarrow \varphi^* R^* \equiv 0 \quad .
\end{gather*}
\end{proof}

\computenaturality*
\begin{proof}
By Corollary \ref{df} if $\left(S,g,\nabla,\nabla^* \right)$ is dually flat, then so is $\left( \tilde{S}, \varphi^* g, \varphi^*\nabla, \varphi^* \nabla^* \right)$. Let $(\theta_i)_{i=1}^n$ denote local $\nabla$-affine coordinates on $\left(S,g,\nabla,\nabla^*\right)$, and let $\left(\overline{\theta_i}:= \theta_i \circ \varphi\right)_{i=1}^n$ denote the  pulled-back local coordinate system on $\tilde{S}$ via $\varphi:\tilde{S} \rightarrow S$.

Let $\left(\partial_i := \frac{\partial}{\partial \theta_i}\right)_{i=1}^n$ denote the local coordinate frame for $TS$ corresponding to local $\nabla$-affine coordinates $\left(\theta_i \right)_{i=1}^n$, and let $\mathcal{U} \subset T\tilde{S}$ denote the tangent subspace spanned by vector fields $\left( \tilde{\partial}_i := \varphi^{-1}_* \partial_i \right)_{i=1}^n$. For $i,j = 1,\ldots, n$, since $\left[\varphi^{-1}_* \partial_i, \varphi^{-1}_* \partial_j \right] = \varphi^{-1}_* \left[\partial_i, \partial_j \right]  =0$, the vector fields $\left( \tilde{\partial}_i := \varphi^{-1}_* \partial_i \right)_{i=1}^n$ commute, hence the space $\mathcal{U} $, spanned by $\left(\tilde{\partial}_i \right)_{i=1}^n$, is involutive. Therefore by theorem of Frobenius, $\mathcal{U}$ is completely integrable, hence there exists $(\varphi^* \nabla)$-affine coordinates $\left(\tilde{\theta}_i \right)_{i=1}^n$ on $\tilde{S}$ with corresponding local frame $\left( \tilde{\partial}_i \right)_{i=1}^n$. 
\begin{remark}
\label{Rmk:pullbackcoord}
The pulled-back coordinates and the corresponding local coordinate frame described in this fashion does not depend on the dual flatness of $S$ and $\tilde{S}$. Of course, when $S$ and $\tilde{S}$ are not dually flat, the coordinate systems may no longer be $\nabla$,$\tilde{\nabla}$-affine. 
\end{remark}

Let $\left(\partial^i := \frac{\partial}{\partial \eta_i} \right)_{i=1}^n$ denote local coordinate frame of $TS$ corresponding to local $\nabla^*$-affine coordinates $\left(\eta_i \right)_{i=1}^n$. We can define local $(\varphi^* \nabla^*)$-affine coordinates $\left(\tilde{\eta}_i \right)_{i=1}^n$ on $\tilde{S}$ with corresponding local coordinate frame $\left( \tilde{\partial}^i := \varphi^{-1}_* \partial^i \right)_{i=1}^n$. It is immediate that $\left(\tilde{\theta}_i\right)_{i=1}^n,\left(\tilde{\eta}_i\right)_{i=1}^n$ are $\varphi^* g$-dual coordinates on $\tilde{S}$: For $i,j = 1,\ldots, n$:
\begin{align*}
\langle \tilde{\partial}_i,\tilde{\partial}^j \rangle_{\tilde{g}} &= \left( \varphi^* g\right) \left(\varphi_*^{-1} \partial_i, \varphi^{-1}_* \partial^j \right) \\
&= g\left(\varphi_*\varphi_*^{-1}\partial_i, \varphi_* \varphi_*^{-1} \partial^j \right) \\
&= g\left(\partial_i,\partial^j \right) = \delta_i^j \quad .
\end{align*}
Moreover, if we consider smooth pulled back coordinates $\left(\overline{\theta_i}:= \theta_i \circ \varphi\right)_{i=1}^n$ and the corresponding local coordinate frame $\left(\overline{\partial_i}\right)_{i=1}^n :=\left(\frac{\partial}{\partial \overline{\theta}_i} \right)_{i=1}^n$ on $T\tilde{S}$. Then we obtain for $i,j = 1,\ldots, n$:
\begin{align*}
\tilde{\partial}_i(\overline{\theta}_j) &= \varphi^{-1}_* \partial_i (\theta_j \circ \varphi) \\
    &= \partial_i (\theta_j \circ \varphi \circ \varphi^{-1}) = \partial_i \theta_j = \delta_i^j \quad .
\end{align*}
This implies for each $i = 1,\ldots, n$, there exists a constant  $c_i$ such that $\tilde{\theta}_i = \overline{\theta}_i + c_i$, which in turn implies $\tilde{\partial}_i = \overline{\partial}_i$ for all $i = 1,\ldots n$.

Recall from Section \ref{Subsection:computingInducedDualisticGeo} Equation \eqref{eqn:div}: the divergence function $\overline{D}$ generated by the induced potential function $\tilde{\psi}$ in the local pulled-back coordinates $\left(\overline{\theta_i} \right)_{i=1}^n$:
\begin{align*}
\overline{D}: \tilde{S} \times \tilde{S} &\rightarrow \mathbb{R}_+ \\
(\tilde{p},\tilde{q}) &\mapsto \overline{D}(\tilde{p},\tilde{q}) =  \tilde{\psi}(\tilde{p}) + \tilde{\psi}^\dagger(\tilde{q}) - \langle \overline{\theta}(\tilde{p}), \overline{\eta}(\tilde{q}) \rangle \quad ,
\end{align*}
where $\tilde{\psi}^\dagger$ is a smooth function on $\tilde{S}$ representing the Legendre-Fr\'echet transformation of $\bar{\psi}$ with respect to the pair of $\tilde{g}$-dual local coordinates $\left(\overline{\theta}_i\right)_{i=1}^n,\left(\overline{\eta}_i\right)_{i=1}^n$ on $\tilde{S}$.

Let $\overline{g}$ denote the Hessian metric generated by $\overline{D}$, then  $\left. \overline{g}_{ij} \right|_{\tilde{p}} := \left. g^{\overline{D}}_{ij}\right|_{\tilde{p}}$. By definition and the above arguments $\left.\tilde{g}_{ij}\right|_{\tilde{p}} = \tilde{\partial}_i \tilde{\partial}_j \tilde{\psi} = \overline{\partial}_i \overline{\partial}_j \tilde{\psi} =  - \overline{\partial}^1_i \overline{\partial}^2_j \overline{D} = \left. g^{\overline{D}}_{ij}\right|_{\tilde{p}}$ for $i,j = 1,\ldots, n$, hence $\tilde{g} = \overline{g}$.

Let $\left(\nabla^{\overline{D}}, \nabla^{\overline{D}^*} \right)$ denote pair of $\tilde{g}$-dual connections defined by $\overline{D}$.  We now show $\left(\tilde{\nabla},\tilde{\nabla}^* \right) = \left(\overline{\nabla},\overline{\nabla}^* \right)$. Let $\tilde{X},\tilde{Y},\tilde{Z} \in \mathcal{E} \left(T \tilde{S}\right)$ and $\tilde{p} \in \tilde{S}$, the following is satisfied by construction:
\begin{align*}
\tilde{X}\langle \tilde{Y},\tilde{Z} \rangle_{\tilde{g}} &= \langle \tilde{\nabla}_{\tilde{X}} \tilde{Y},\tilde{Z} \rangle_{\tilde{g}} + \langle \tilde{Y}, \tilde{\nabla}_{\tilde{X}}^* \tilde{Z} \rangle_{\tilde{g}} \\
\tilde{X}\langle \tilde{Y},\tilde{Z} \rangle_{\overline{g}} &= \langle \overline{\nabla}_{\tilde{X}} \tilde{Y},\tilde{Z} \rangle_{\overline{g}} + \langle \tilde{Y}, \overline{\nabla}_{\tilde{X}}^* \tilde{Z} \rangle_{\overline{g}} \quad .
\end{align*}
Since $\tilde{g} = \overline{g}$, the two equations are equal. For $x\in \tilde{S}$, let $\left(\overline{\partial}_i = \tilde{\partial}_i\right)_{i=1}^n$ denote the local frame of $T_p \tilde{S}$, and let $\tilde{X} = \tilde{\partial}_i = \overline{\partial}_i$, $\tilde{Y} = \tilde{\partial}_j = \overline{\partial}_k$, and $\tilde{Z} = \tilde{\partial}_k = \overline{\partial}_k$ then:
\begin{align*}
\langle \tilde{\nabla}_{\tilde{X}} \tilde{Y},\tilde{Z} \rangle_{\tilde{g}} 
	+ \langle \tilde{Y}, \tilde{\nabla}_{\tilde{X}}^* \tilde{Z} \rangle_{\tilde{g}} 
&= \langle \overline{\nabla}_{\tilde{X}} \tilde{Y},\tilde{Z} \rangle_{\overline{g}} 
	+ \langle \tilde{Y}, \overline{\nabla}_{\tilde{X}}^* \tilde{Z} \rangle_{\overline{g}} 
\\
= \langle \tilde{\nabla}_{\tilde{\partial}_i} \tilde{\partial}_j,\tilde{\partial}_k \rangle_{\tilde{g}} 
	+ \langle \tilde{\partial}_j, \tilde{\nabla}_{\tilde{\partial}_i}^* \tilde{\partial}_k \rangle_{\tilde{g}} 
&= \langle \overline{\nabla}_{\overline{\partial}_i} \overline{\partial}_j, \overline{\partial}_k \rangle_{\overline{g}} 
	+ \langle \overline{\partial}_j, \overline{\nabla}_{\overline{\partial}_i}^* \overline{\partial}_k \rangle_{\overline{g}} \\
= 0 + \langle \tilde{\partial}_j, \tilde{\nabla}_{\tilde{\partial}_i}^* \tilde{\partial}_k \rangle_{\tilde{g}} 
&= 0+ \langle \overline{\partial}_j, \overline{\nabla}_{\overline{\partial}_i}^* \overline{\partial}_k \rangle_{\overline{g}}
\end{align*}
Since $\tilde{\partial}_i = \overline{\partial}_i$ for all $i$, and $\tilde{g} = \overline{g}$, we have for all $\tilde{p} \in \tilde{S}$:
\begin{align*}
 \langle \tilde{\partial}_j, \tilde{\nabla}_{\tilde{\partial}_i}^* \tilde{\partial}_k \rangle_{\tilde{g}}
&=  \langle \tilde{\partial}_j, \overline{\nabla}_{\tilde{\partial}_i}^* \tilde{\partial}_k \rangle_{\tilde{g}} \quad, \quad \forall i,j,k\quad.
\end{align*}
Therefore $\tilde{\nabla}^* = \overline{\nabla}^*$, and by symmetry $\left(\tilde{\nabla},\tilde{\nabla}^* \right) = \left(\overline{\nabla},\overline{\nabla}^* \right)$.
Furthermore, we can determine the explicit expression of the Christoffel symbols of the induced connection $\overline{\nabla}^* = \tilde{\nabla}^* = \varphi^*\nabla^*$ at $\tilde{p} \in \tilde{S}$ in pulled-back coordinates $\left(\overline{\theta}_i = \theta_i \circ \varphi\right)_{i=1}^n$ as follows.
Let $x\in S$ be an arbitrary, and let $\tilde{p} := \overline{\varphi}(p) = \varphi^{-1}(p) \in \tilde{S}$, then the Christoffel symbols coincide: 
\begin{align*}
\left. \tilde{\Gamma}^*_{ijk}\right|_{\tilde{p}} &= \overline{\partial}_i \overline{\partial}_j \overline{\partial}_k \tilde{\psi} (\tilde{p}) =  \overline{\partial}_i \tilde{g}_{jk}(\tilde{p}) \\
&= \tilde{\partial}_i \tilde{g}_{jk}(\tilde{p}) =  \left( \varphi^{-1}_* \partial_i\right) \tilde{g}_{jk} (\tilde{p}) \\
&= \partial_i \left( g_{jk} \circ \varphi \circ \varphi^{-1} \right) (\varphi(\tilde{p})) = \left. \Gamma^*_{ijk}\right|_p \quad .
\end{align*}
\end{proof}

\section{Proofs of Lemma 5.2} 
\label{app:proof:OPcover}

\lemmamakediffeoOP*
\begin{proof}
Since $f:M\rightarrow N$ is a local diffeomorphism, the pushforward $f_* :T_p M \rightarrow T_{f(p)} N $ is a linear isomorphism for all $x\in M$. 
Since $f_*$ is a linear isomorphism, the determinant of the matrix $Df$ is non-zero: $\text{det}Df \neq 0$. If $f$ is orientation-preserving, then there's nothing left to prove. Hence we will now assume $f$ is orientation reversing, in other words: $\text{det}Df < 0$.  

Let $\left(x^1,\ldots, x^n \right)$ denote local coordinates in $M$, let $\overline{f}$ denote coordinate representation of $f$, then we can write:
\begin{align*}
\overline{f}\left(x^1(p),\ldots, x^n(p)\right) = \left( \overline{f}_1(x),\ldots, \overline{f}_n(x)\right) \quad ,
\end{align*}
where $x := \left( x^1(p), \ldots, x^n(p) \right)$.

Choose $a\in \left[1,\ldots, n\right]$, and let $\tilde{f}: M \rightarrow N$ denote the diffeomorphism from $M$ to $N$ defined by the following coordinate representation:
\begin{align*}
\tilde{f}\left(x^1(p),\ldots, x^n(p)\right) = \left( \overline{f}_1(x),\ldots, \overline{f}_{a+1} (x), \overline{f}_{a}(x),\ldots,\overline{f}_n(x)\right) \quad .
\end{align*}

In other words, we define $\tilde{f}$ by swap the $a^{th}$ and ${a+1}^{th}$ coordinates of $f$.
The matrix representation of $\tilde{f}_*$ in standard coordinates is thus given by:
\begin{align*}
D\tilde{f} = I' \cdot Df \quad ,
\end{align*}
where $I'$ is the matrix given by:
\[
I' = 
\left[
\begin{array}{ccc}
I_{a-1} & & \\
& \begin{array}{cc}
	0 & 1\\
	1 & 0
  \end{array} & \\
& & I_{n-(a+1)}
\end{array}
\right] \quad ,
\]
where $I_k$ is the identity matrix of the dimension $k$, the sub-matrix $ \left[ \begin{array}{cc}
	0 & 1\\
	1 & 0
  \end{array} \right] $ is located at the $\left(a,a\right)^{th}$ to the $\left(a+1,a+1\right)^{th}$ position of $I'$, and the rest of the entries are all zero.
Since $\text{det}Df < 0$, this means $\text{det}D\tilde{f}  = \text{det}I' \cdot \text{det}Df  =  -1 \cdot\text{det}Df  > 0$. 
\end{proof}

\end{document}